\definecolor{gray}{gray}{0.4}
\title{Minimal Entropy of $3$-manifolds}
\author{Erika Pieroni}
\newcommand{\R}{\mathbb{R}}
\newcommand{\f}{\rightarrow}
\newcommand{\JSJ}{\textit{JSJ}}
\newcommand{\psiz}{\psi_{0,t_0}}
\newcommand{\psitz}{\psi_{1,t_0}}
\newcommand{\psittz}{\psi_{2,t_0}}
\newcommand{\ent}{\operatorname{Ent}}
\newcommand{\minent}{\operatorname{MinEnt}}
\newcommand{\tr}{\operatorname{Tr}}
\newcommand{\inj}{\operatorname{inj}}
\newcommand{\diam}{\operatorname{diam}}
\newcommand{\Id}{\operatorname{Id}}
\newcommand{\jac}{\operatorname{Jac}}
\newcommand{\id}{\operatorname{id}}
\newcommand{\Vol}{\operatorname{Vol}}
\newcommand{\bary}{\operatorname{bar}}
\newcommand{\Isom}{\operatorname{Isom}}
\newcommand {\piorb}{\pi_1^{orb} }
\newcommand{\EntVol}{\operatorname{EntVol}}
\newcommand{\VolEnt}{\operatorname{VolEnt}}
\newcommand{\Rmnum}[1]{\expandafter\@slowromancap\romannumeral #1@}
\newtheorem{thm}{Theorem}[chapter]
\newtheorem*{namedthm*}{\namedthmname}
\newenvironment{namedthm}[1]
{\newcommand\namedthmname{#1}\begin{namedthm*}}
	{\end{namedthm*}}
\newtheorem{prop}[thm]{Proposition}
\newtheorem{lem}[thm]{Lemma}
\newtheorem{cor}[thm]{Corollary}
\theoremstyle{remark}
\newtheorem*{claim}{Claim}
\newtheorem*{oss}{Observation}
\newtheorem{rmk}[thm]{Remark}
\theoremstyle{definition}
\newtheorem{defn}[thm]{Definition}
\begin{document}
	
	\frontmatter
	
	\maketitle
	\dedication{A Babbo e Mamma}

	\begin{abstract}
		\noindent
		We compute the Minimal Entropy for every closed, orientable $3$-manifold, showing that its cube  equals the sum of the cubes of the minimal entropies of each hyperbolic component arising from the $JSJ$ decomposition of each prime summand.
		%	The value has been conjectured by Anderson and Paternain for irreducible manifolds in \cite{anderson2003minimal}; however the value we found does not coincide with the conjectural one.
		As a consequence we show that the cube of the Minimal Entropy is additive with respect to both the prime and the $JSJ$ decomposition.  This answers a conjecture asked by Anderson and Paternain for irreducible manifolds in \cite{anderson2003minimal}.
		% Then we obtain a sharp lower estimate by using the barycenter method developed by Besson, Courtois and Gallot suitably adapted to the case of connected sums or $JSJ$ decompositions with at least one hyperbolic component.
	\end{abstract}

	\begin{acknowledgments}[Ringraziamenti]
		\begin{otherlanguage*}{italian}

	\noindent	Ringrazio il mio relatore, Andrea Sambusetti, per avermi proposto questo appassionante problema di tesi, un ottimo ponte verso la Geometria Riemanniana per una persona con una formazione topologica come ero io all’inizio del Dottorato, e per le discussioni di questi anni.\\

	\noindent	Ringrazio Sylvain Gallot, per avermi offerto una prospettiva privilegiata sul metodo del baricentro da lui sviluppato, e per l’attenzione e la pazienza con cui ha ascoltato la prima parte del mio lavoro.\\

		\noindent Grazie a Juan Souto, per aver accettato di fare da referee per questa tesi, per la generosità con cui condivide le sue idee, e per l’entusiasmo ed il trasporto con cui discute di Matematica: uno degli esempi più sinceri che mi sia mai capitato di incontrare.\\

	\noindent	Grazie a Roberto Frigerio, per la cura e l’attenzione con cui ha revisionato e valorizzato il mio lavoro e per i preziosi consigli. E grazie, ancora, per avermi saputo motivare durante i miei primi anni universitari.\\
		
	\noindent	Un grazie affettuoso a Marco Isopi, per la sua grande gentilezza e profonda umanità. Sono davvero felice di averlo incontrato lungo questo percorso.\\
		
	\noindent	Un grazie di cuore al collega Filippo Cerocchi. Per l’aiuto che mai mi ha fatto mancare in questi anni, per avermi trasmesso il suo entusiasmo per la Matematica, per la sconfinata generosità che ha dimostrato nei miei confronti, per la sua competenza e sensibilità, per avermi saputo motivare nei momenti di sconforto. Perché studiare insieme le 3-varietà è stata la parte più bella della mia carriera accademica.\\
		
	\noindent	Ringrazio tutti i colleghi del Dottorato, che con la loro presenza hanno reso più briosa questa esperienza. Una menzione speciale va a Veronica, per il suo affetto e la sua generosità. E per tutti i tè sorseggiati insieme.\\
		
	\noindent	Grazie ai NoKoB, per tutte le risate che ci siamo fatti ai margini di questa esperienza (o meglio al bordo).\\
		
	\noindent	Vorrei ringraziare Silvia, per la nostra bella amicizia nata durante il Dottorato. Sei davvero un’Amica con la A maiuscola.\\
		
	\noindent	Un grazie speciale ai miei genitori, che pur non comprendendo appieno le mie scelte, e seppur a volte soffrendone, non mi hanno mai fatto mancare il loro supporto. Grazie per tutto ciò che mi hanno insegnato, lezioni e valori che vanno ben oltre qualsiasi insegnamento accademico. \\
		
	\noindent	Infine, un ulteriore grazie a Filippo. Per tutto ciò che di Matematica e di Vita abbiamo condiviso in questi anni.\\
	\noindent	“Galeotto fu il libro e chi lo scrisse, quel giorno più non vi leggemmo avante”.\\
	\noindent (see \cite{scott1983geometries} and \cite{bonahon2002geometric} for further details).
		\end{otherlanguage*}
	\end{acknowledgments}

	\tableofcontents
	\listoffigures
	\newpage
	\pagestyle{empty}
	\chapter*{Notation}
	We summarize here the notation used in the thesis.
	
	\section*{Chapter \ref{section_a_conjectural_minimizing_sequence_irreducible}}
	
\noindent 	--- $Y$  orientable, closed, irreducible $3$-manifold, with at least one hyperbolic $JSJ$-component and non-trivial $JSJ$ decomposition.\\
\noindent	--- $X_1, \dots, X_n$ the $JSJ$-components of $Y$.\\
\noindent --- $X_1, \dots, X_k$ the hyperbolic $JSJ$-components, and $(hyp_i)$ the complete, finite volume hyperbolic metric on $X_i$.\\
\noindent --- $\{g_{\delta}\}$ a family of $C^2$ Riemannian metrics defined on $Y$.\\
\noindent --- $\{k_{\delta, \zeta}^j\}$ a family of $C^2$ Riemannian metrics on the Seifert component $X_j$ of $Y$. \\
\noindent --- $\{hyp_{i, \delta}^0\}$ a family of $C^2$ Riemannian metrics on the hyperbolic component $X_i$ of $Y$.\\

\section*{Chapter \ref{section_a_conjectural_minimizing_sequence_reducible}}

\noindent --- $Y$ orientable, closed, reducible $3$-manifold. \\
\noindent --- $Y_1, \dots, Y_m$ the prime summands of $Y$, \textit{i.e.} $Y= Y_1 \# \dots, \# Y_m$.\\
\noindent --- $X_i^1, \dots, X_i^{n_i}$ the hyperbolic components of $Y_i$, and $hyp_i^j$ the corresponding complete, finite volume, hyperbolic metrics on them. \\
\noindent --- $\{g_{L,r}\}$ a family of $C^{\infty}$ Riemannian metrics defined on $Y$.

\section*{Chapter \ref{section_a_lower_bound_irreducible}} 
\noindent 	--- $Y$  orientable, closed, irreducible $3$-manifold, with at least one hyperbolic $JSJ$-component and non-trivial $JSJ$ decomposition.\\
\noindent	--- $X_1, \dots, X_n$ the $JSJ$-components of $Y$.\\
\noindent  --- $g$ a Riemannian metric on $Y$.\\
\noindent --- $(\widetilde Y, \tilde g)$ the Riemannian universal cover of $(Y,g)$.\\
\noindent --- $ \tilde d$ the Riemannian distance on $(\widetilde Y, \tilde g )$.\\
\noindent --- $(X,g_0)$ a non-positively curved Riemannian manifold, and $( \widetilde X, \tilde g_0)$ its Riemannian universal covering, which is a  complete, simply connected Riemannian manifold with non-positive curvature.\\
\noindent --- $\tilde d_0$ the Riemannian distance on $\widetilde X, \tilde g_0$.\\
\noindent --- $(\tilde \rho_0)_z(\cdot)= \tilde d_0(z, \cdot)$, the distance function from a fixed point $z$.\\
\noindent --- $\{(Y, g_{\delta})\}$ the family of target spaces for the barycentre method.\\
\noindent --- $f:(Y, g) \rightarrow (Y, g_{\delta})$ the initial map, and $\tilde f: (\widetilde Y, \tilde g) \rightarrow (\widetilde Y, \tilde g_{\delta})$ its lift.\\
\noindent --- $\mu_{c,y}$ a family of measures on $ \widetilde Y$, namely $d\mu_{c, y}(y')= e^{-c \tilde d(y,y')}dv_{\tilde g}(y')$.\\
\noindent --- $  F_{c, \delta}: (Y,g) \rightarrow (Y, g_{\delta})$ the family of maps constructed via the barycentre method.

\section*{Chapter \ref{section_a_lower_bound_reducible}}

\noindent --- $Y$ orientable, closed, reducible $3$-manifold. \\
\noindent --- $Y_1, \dots, Y_m$ the prime summands of $Y$, \textit{i.e.} $Y= Y_1 \# \dots, \# Y_m$.\\
\noindent --- $X_i^1, \dots, X_i^{n_i}$ the hyperbolic components of $Y_i$, and $hyp_i^j$ the corresponding complete, finite volume, hyperbolic metrics on them. \\
\noindent --- $Y_1, \dots, Y_k$ the prime summands having at least a hyperbolic $JSJ$ component in their decomposition.\\
\noindent --- $Y_{k+1}, \dots, Y_m$ the prime summands which are graph manifolds.\\
\noindent ---$X=\bigvee_{y_1=\cdots=y_k} Y_i$ is
 the space obtained by shrinking $Y\smallsetminus \bigcup_{i=1}^k int(\bar Y_i)$ to a single point $x_0$.\\
 \noindent ---$(X, d_{\delta})$ the family of (locally $CAT(0)$) singular target spaces for the barycentre method.\\
 \noindent --- $(\widetilde X, \tilde d_{\delta})$ the ($CAT(0)$) Riemannian  universal covering of $(X, d_{\delta})$.\\
 \noindent ---$f:(Y,g) \rightarrow (X, d_{\delta})$ the initial map, and $\tilde f: (\widetilde Y, \tilde g) \rightarrow (\widetilde X, \tilde d_{\delta})$ its lift.\\
 \noindent --- $d\mu_{c, y}(y')= e^{-c\, \tilde d(y,y')}dv_{\tilde g}(y')$ a family of measures on $\widetilde Y$.\\
 \noindent ---$F_{c, \delta}: (Y, g) \rightarrow (X, d_{\delta})$ the family of maps constructed via the barycentre method.

	\mainmatter
	
	\noindent

	\bigskip

\chapter{Introduction}

Let $(Y,g)$ be a closed or finite volume Riemannian manifold of dimension $n$.
The \textit{volume entropy} of $Y$ is defined as:
\small
$$\ent(Y,g)= \lim_{R \to \infty} \dfrac{\log (\Vol (B_{\widetilde{Y}}(\widetilde{y},R)))}{R} ,$$
\normalsize
where $\widetilde{y}$ is any point in the Riemannian universal cover $\widetilde{Y}$ of $Y$, and where $\Vol (B_{\widetilde{Y}}(\widetilde{y},R))$ is the volume of the ball of radius $R$ centered at $\widetilde{y}$. It is well known from the work of Manning (see \cite{manning1979topological}) that this quantity does not depend on $\widetilde{y}$, and in the compact case is related to the \textit{topological entropy} of the geodesic flow of $(Y,g)$, denoted $\ent_{top}(Y,g)$; namely, $\ent(Y,g) \le \ent_{top}(Y,g)$, and the equality holds if $(Y,g)$ is non-positively curved.
On the other hand, $\ent(Y,g)$ can even be greater then $\ent_{top}(Y,g)$ for negatively curved, finite volume manifolds, see \cite{dal2009growth}. \\
It is readily seen that the volume entropy scales as the inverse of $\Vol(Y,g)^n$, hence one obtains a scale invariant by setting:
$$\VolEnt(Y,g)=  \Vol(Y,g)^{1/n} \ent(Y,g).$$
\noindent The \textit{minimal entropy} $\minent$ is thus defined as:
$$ \minent(Y)=\inf_{g}
\VolEnt(Y,g)=\inf_{\Vol(Y,g)=1}  \ent(Y,g).$$
where $g$ varies among all Riemannian metrics on $Y$. It turns out that this is a homotopy invariant, as shown in \cite{babenko1993asymptotic}.

Notice that if $Y$ is not compact it makes sense to define  the  Minimal Entropy in the same way as for closed manifolds, except that we shall take the infimum over all the complete, Riemannian metrics of volume $1$ on $M$.

The computation of $\minent$, when nonzero, is in general hard, and is known only for very few classes of manifolds.
In the middle seventies, Katok computed the $\minent$ of every closed surface $S$ of genus $g \ge 2$, namely, $\minent (S)= 2 \pi  \lvert \chi(S) \rvert $, see \cite{katok1982entropy} (when $g \le 1, \minent=0$ trivially, the fundamental group being of sub-exponential growth). Moreover, he proved that, in this case, the infimum is indeed a minimum, and it is realized precisely by the hyperbolic metrics. Katok's main interest was computing the \textit{topological entropy} (see \cite{manning1979topological}) and the proof uses techniques of dynamical systems. Moreover, its proof strongly relies on the Uniformization Theorem for surfaces, so cannot be generalized to higher dimensions.

%Later on, Gromov in \cite{gromov1982volume} introduced the \textit{minimal volume}, and bounded the $\minent$ from below in terms of this new topological invariant. Furthermore, since when the manifold $Y$ admits an hyperbolic metric $g_0$ the minimal volume can be computed in function of $\Vol(Y,g_0)$, Gromov obtained an estimate of $\minent$ in function of the entropy and volume of $g_0$. \textcolor{blue}{Check}

The following major step was made by Besson, Courtois and Gallot, who in the middle '90s proved the following theorem, showing in particular that locally symmetric metrics of strictly negative curvature are characterized by being minima of the volume-entropy functional on any closed manifold $X$ (provided that such a metric exists on $X$), in any dimension, as conjectured by Katok and Gromov, see \cite{gromov1983filling}. 

\begin{thm} [\cite{besson1995entropies}] \label{theorem_BCG}
	Let $Y$ and $X$ be closed, connected, oriented manifolds of the same dimension $n$, and let $f: Y \rightarrow X$ be a continuous map of non-zero degree. Suppose $X$ endowed with a locally symmetric metric $g_0$ with negative curvature. Then for every Riemannian metric on $Y$ the following inequality holds:
	\begin{equation} \label{equation_BCG}
\VolEnt(Y,g) ^n \ge |\deg f|  \VolEnt(X, g_0)^n.
	  	\end{equation}
	Furthermore, if $n \ge 3$ the equality holds if and only if $(Y,g)$ is
	homothetic to a Riemannan covering $f_0:Y \rightarrow X$.
\end{thm}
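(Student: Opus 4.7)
The plan is to follow the \emph{barycentre method} of Besson, Courtois and Gallot. First, I would lift $f$ to a $\pi_1$-equivariant map $\tilde f\colon \tilde Y \to \tilde X$ between universal covers, where $(\tilde X,\tilde g_0)$ is a rank one symmetric space of non-compact type (real, complex, quaternionic hyperbolic space, or the Cayley hyperbolic plane). For each $c > \ent(Y,g)$ and each $y\in \tilde Y$ introduce the finite measures
$$d\mu_{c,y}(y') = e^{-c\,\tilde d(y,y')}\,dv_{\tilde g}(y')$$
on $\tilde Y$; their finiteness is precisely what the definition of volume entropy affords. Pushing these measures forward under $\tilde f$ to $\tilde X$ and projecting them to the ideal boundary produces, in the limit, a family $\{\sigma_{c,y}\}_{y\in \tilde Y}$ of probability measures on $\partial_\infty \tilde X$ that transforms equivariantly under $\pi_1(Y)$ (acting on $\tilde X$ through $f_\ast$).

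Next I would define $\tilde F_c\colon \tilde Y\to \tilde X$ by sending $y$ to the \emph{barycentre} of $\sigma_{c,y}$, i.e.\ the unique critical point of the strictly convex function
$$z\longmapsto \int_{\partial_\infty \tilde X} B_\xi(z)\,d\sigma_{c,y}(\xi),$$
where $B_\xi$ is a normalised Busemann function at $\xi$. Strict convexity, visibility, and equivariance yield a well defined descent $F_c\colon Y\to X$, and a standard convex-combination homotopy along geodesics shows $\deg F_c = \deg f$.

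The heart of the proof is a pointwise Jacobian estimate of the form
$$|\jac_g F_c|(y)\ \le\ \left(\frac{c}{\ent(X,g_0)}\right)^{n}\Phi_n,$$
obtained by implicit differentiation of the barycentre equation and then applied through a sharp spectral inequality between two symmetric positive matrices built from $d\tilde F_c$ and the Hessian of the Busemann functions on $\tilde X$. The optimal constant $\Phi_n\le 1$ is dictated by the curvature operator of the rank one symmetric space, and is attained only by very specific pairs of matrices. This algebraic inequality, specific to rank one, is the main technical obstacle of the whole argument.

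Integrating the Jacobian estimate and using $\int_Y |\jac F_c|\,dv_g\ \ge\ |\deg F_c|\,\Vol(X,g_0)$ yields
$$|\deg f|\,\Vol(X,g_0)\ \le\ \left(\frac{c}{\ent(X,g_0)}\right)^{n}\Vol(Y,g),$$
and letting $c\searrow \ent(Y,g)$ gives inequality \eqref{equation_BCG}. For the rigidity part in dimension $n\ge 3$, I would trace back the equality case: equality in the spectral inequality forces $d\tilde F_c$ to be a homothety at almost every point, and then a regularity/bootstrap argument together with a Mostow-type rigidity argument in rank one promotes $F_c$ to a homothetic Riemannian covering. The dimension restriction is essential because in dimension two the relevant spectral inequality degenerates and rigidity fails.
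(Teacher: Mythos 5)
This statement is \emph{not proved in the thesis}: it is Theorem~1.1 of Besson--Courtois--Gallot and is cited from \cite{besson1995entropies} as a background result. There is therefore no ``paper's own proof'' to compare against. Your sketch is a faithful high-level summary of the original BCG argument (equivariant lift, measures $e^{-c\tilde d(y,\cdot)}dv_{\tilde g}$, barycentre on $\tilde X$ defined through Busemann functions on the ideal boundary, implicit differentiation of the critical-point equation, the sharp algebraic inequality in rank one, co-area integration, and the rigidity analysis for $n\ge 3$), and the broad steps are in the right order; so as a sketch of BCG it is essentially correct.

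Two remarks are worth making. First, a small imprecision: in BCG the Jacobian bound obtained after the spectral lemma is already of the sharp form $|\jac F_c|\le\bigl(c/\ent(X,g_0)\bigr)^n$ (in the real-hyperbolic case this comes from the bound $\det(H)^{1/2}/\det(\mathrm{Id}-H)\le n^{n/2}/(n-1)^n$ for symmetric $H$ with trace $1$ and eigenvalues in $[0,1]$, and its analogues in the complex/quaternionic/octonionic cases); there is no separate loss factor $\Phi_n\le 1$ floating in front. Second, the thesis deliberately does \emph{not} follow the Busemann/ideal-boundary route when it actually needs a barycentre argument: in Chapters~5 and~6 it replaces the Busemann potential by the squared distance function, following Sambusetti \cite{sambusetti1999minimal}. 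That replacement makes the Leibniz function $B_{\mu}(x)=\int\tilde d_0(x,\cdot)^2\,d\mu$ strictly convex and proper without ever passing to the ideal boundary, which both avoids the analytic subtleties you allude to and, crucially, makes the method applicable to the $\mathrm{CAT}(0)$ singular target spaces used in the reducible case (Chapter~6), where an ideal boundary is not available in the same form. The price is that one obtains an almost-optimal Hessian estimate only on the locus where the target metric is hyperbolic on a large ball, and one recovers the sharp constant only in the limit; this is exactly what the thesis does via Proposition~\ref{proposition_jacobi_fields} and Lemma~\ref{lem_estimate_Hessian_distance}. So your outline is correct for BCG's theorem as stated, but if you intend to adapt the method to the thesis' setting you should be aware that the Busemann-based barycentre is replaced by the distance-squared barycentre precisely because the targets here are not globally locally symmetric (and in the reducible case not even manifolds).
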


% the next conjecture, early understood in the works by Katok \cite{katok1982entropy} and Gromov \cite{gromov1982volume}, and then made explicit in \cite{gromov1983filling}.
%\begin{conj}[Gromov-Katok]

% If $Y$ can be endowed with a locally symmetric metric of negative curvature $g_0$, then this metric realizes $\minent$. 
%\end{conj}
%In the middle '90s, Besson, Courtois and Gallot proved that the conjecture is true. Namely, they proved the following theorem, see \cite{besson1995entropies}.
%\begin{thm} [\cite{besson1995entropies}] \label{theorem_BCG}
%	Let $Y$ and $X$ be compact, connected, oriented manifolds of the same dimension $n$, and let $f: Y \rightarrow X$ be a continuous map of non-zero degree. Suppose $X$ endowed with a locally symmetric metric $g_0$ with negative curvature. Then for every Riemannian metric on $Y$ the following inequality holds:
%	$$ \ent(Y,g)^n \Vol(Y,g) \ge |\deg f| \ent(X,g_0) \Vol(X, g_0)$$
%	Furthermore, if $n \ge 3$ the equality holds if and only if $(Y,g)$ is locally symmetric ---f the same type of $(X,g_0)$--- and there exists a Riemannian covering of $(Y,g) $ on $(X,g_0)$ homotopic to $f$.
%	
%\end{thm}

 The main tool behind the result of Besson, Courtois and Gallot is the so called \textit{barycenter method}, a technique that was broadly used in other works by several authors, see \cite{besson1996minimal}, \cite{besson1999lemme}, \cite{boland2005volume}, \cite{cerocchientropyandconvergence}, \cite{merlin2014minimal},  \cite{peigne2017entropy}, \cite{sambusetti1999minimal},  \cite{storm2006minimal}.
 Recently, Merlin computed the minimal entropy of the compact quotients of the polydisc $(\mathbb{H}^2)^n$, using the original idea, already present in \cite{besson1995entropies}, of explicitly constructing a \textit{calibrating form} (see \cite{merlin2014minimal}).\\\\
\indent In this thesis we will be concerned with the case where $Y$ is a closed, orientable $3$-manifold. Every closed, orientable $3$-manifold admits a $2$-step topological decomposition: namely, it can be firstly decomposed as the connected sum of \textit{prime} manifolds (\textit{i.e.}, not decomposable as non trivial connected sum), and then each of these summands can be further decomposed, along suitable embedded incompressible\footnote{An embedded surface $S \subset M$ in a closed, orientable $3$-manifold  is called \textit{incompressible} if, for every disc $D \subset M$ such that $\partial D= S \cap D$, then $\partial D$ bounds a disc in $S$.} tori, in \textit{$JSJ$ pieces}. Furthermore, each of the pieces (manifolds with boundary) arising from this decomposition is either a \textit{Seifert manifold}, or is \textit{homotopically atoroidal} (see Section \ref{section_topology_and_geomety}) and its interior can be endowed with a complete, hyperbolic metric of finite volume: this is the content of Thurston's Hyperbolization Theorem in the Haken case (see  \cite{thurston82}, \cite{thurstonhyperbolic1}, \cite{thurstonhyperbolic2}, \cite{thurstonhyperbolic3}, \cite{otalhyperbolization}, \cite{otalhyperbolization1} and \cite{kapovichhyperbolicmanifolds}), now fully proved in all generality by the work of Perelman, see \cite{perelman2002entropy}, \cite{perelman2003finite}, \cite{perelman2003ricci}; see also the book \cite{bessieres2010geometrisation} for a more detailed discussion of Perelman's results. %The entropy of the hyperbolic pieces arising from this decomposition is known by the work of Storm, see \cite{storm2006minimal}. 
In this $3$-dimensional context, the work by Anderson and Paternain  \cite{anderson2003minimal}  implies that $\minent$ vanishes for every Seifert manifold ---and, more generally, for every \textit{graph manifold}, \textit{i.e.}, a prime $3$-manifold admitting no hyperbolic pieces in its $JSJ$ decomposition. However, the general behaviour of $\minent$ with respect to the prime and the $JSJ$ decomposition was not known. In \cite{anderson2003minimal}, Anderson and Paternain ---probably inspired by the behaviour of the topological entropy--- conjectured that, if $Y$ is a closed, irreducible and orientable $3$-manifold, its $\minent$ equals the maximum of the entropies of the complete, possibly non compact hyperbolic pieces. While this turns out to be true in the case where there is only one $JSJ$-component of hyperbolic type ---as a consequence of our Theorem \ref{thm_minimal_entropy}--- we will show that the cube of $\minent$ is actually \emph{additive} in the prime summands and in the $JSJ$ pieces.
\newpage
 
 \begin{thm} 
	\label{thm_minimal_entropy}
	Let $Y$ be a closed, orientable, connected $3$-manifold:
	\begin{itemize}
		\item [a)]
			If $Y$ is irreducible, let $Y=  X_1 \cup \dots \cup X_k \cup X_{k+1} \cup \dots X_n$ be its decomposition in $JSJ$ components, where for $i=1, \dots, k$ the $X_i$'s are the hyperbolic $JSJ$ components, whose interior can be endowed with the complete, finite volume, hyperbolic metric $hyp_i$ (unique up to isometry), while for $i= k+1, \dots, n$ the component $X_i$ is Seifert.
	Then:
	\begin{equation} \label{eq_min_ent_irreducible}
	\minent(Y)^3= \sum_{i=1}^k \VolEnt(int(X_i), hyp_i)^3= 2^3 \cdot \sum_{i=1}^{k} \left(\Vol(int(X_i),hyp_i)\right).
	\end{equation}
	\item [b)]
	If Y is reducible, namely $Y= Y_1  \#\dots \#Y_m$ then:
	\begin{equation} \label{eq_min_ent_reducible}
 \minent(Y)^3= \sum_{j=1}^{m} \minent(Y_j)^3.
 \end{equation}
 	\end{itemize}
\end{thm}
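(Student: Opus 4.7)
The overall plan is to prove both statements by establishing matching upper and lower bounds on $\minent(Y)^3$. The second equality in \eqref{eq_min_ent_irreducible} is immediate, since a finite-volume hyperbolic $3$-manifold has volume entropy $2$ so that $\VolEnt^3 = 2^3 \Vol$; the content of the theorem therefore lies in the additivity of the cubed minimal entropy along the $\JSJ$ decomposition in part (a) and along the prime decomposition in part (b). I would handle the four inequalities in turn, following the chapter structure already advertised in the notation section.

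For the upper bounds, the strategy is to build explicit almost-minimising sequences of metrics. In part (a), on each hyperbolic component $X_i$ I would take a metric $hyp_{i,\delta}^0$ obtained from $hyp_i$ by truncating the cusps at a height depending on $\delta$ and inserting a short collar interpolating to a flat product metric on the boundary $\JSJ$ torus; on each Seifert component $X_j$ I would use a Cheeger--Gromov collapsed metric $k_{\delta,\zeta}^j$ along the Seifert fibres, whose volume is $o(1)$ as $\delta \to 0$ and whose universal cover has sub-exponential growth in the collapsed direction. Gluing by a partition of unity yields $g_\delta$ with $\Vol(Y,g_\delta) \to \sum \Vol(int(X_i), hyp_i)$ and $\ent(Y,g_\delta) \to 2$, hence $\VolEnt(Y,g_\delta)^3 \to 8 \sum \Vol(int(X_i), hyp_i)$. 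In part (b), I would glue near-optimal metrics on each $Y_j$ via long thin cylinders realising each connected sum; this is the family $g_{L,r}$, whose volume and entropy contributions outside the summands vanish as $L \to \infty$ and $r \to 0$, yielding $\minent(Y)^3 \le \sum_j \minent(Y_j)^3$.

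For the lower bounds I would adapt the Besson--Courtois--Gallot barycentre method to the present non-compact and singular targets. In part (a), given any Riemannian metric $g$ on $Y$, I would take as target the family $(Y, g_\delta)$ above together with the identity $f:(Y,g) \to (Y,g_\delta)$, define $\tilde F_{c,\delta}(y) = \bary_{g_\delta}(\tilde f_* \mu_{c,y})$ using the strict convexity of the squared distance on the non-positively curved universal cover of each hyperbolic piece, and run a pointwise Jacobian estimate in the spirit of BCG; integrating over $Y$ and letting $c \searrow \ent(Y,g)$ and then $\delta \to 0$ gives $\VolEnt(Y,g)^3 \ge 8 \sum \Vol(int(X_i), hyp_i)$. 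In part (b) the same method is run with the singular target $(X, d_\delta)$ obtained by collapsing the non-hyperbolic-bearing summands to the basepoint $x_0$; since $(\widetilde X, \tilde d_\delta)$ is globally CAT($0$), the barycentre is well defined on the pushforward measure, and a summand-by-summand Jacobian estimate delivers $\minent(Y)^3 \ge \sum_j \minent(Y_j)^3$.

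The principal obstacle, pervading both parts, is that the targets are not smooth manifolds of strictly negative curvature: $g_\delta$ degenerates on the Seifert pieces, $(X, d_\delta)$ is genuinely singular at the wedge point, and non-positive curvature in the CAT sense has to be verified across $\JSJ$ tori and the collapsed regions. Accordingly, the technical heart of the argument is a version of the BCG Jacobian inequality valid in this setting, combined with a concentration estimate showing that the mass of $\tilde f_* \mu_{c,y}$ relevant to the barycentre lies in the almost-hyperbolic region, so that the Seifert and singular contributions become negligible in the limit; once this is in place, the matching of the upper and lower bounds follows from the identity $\ent(int(X_i), hyp_i) = 2$.
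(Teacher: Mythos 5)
Your four-inequality architecture matches the paper's exactly: upper bound in the irreducible case by almost-hyperbolic metrics $g_\delta$ with controlled curvature $-1-\delta\le\sigma_\delta\le 0$ and collapsing Seifert pieces; upper bound in the reducible case via long thin tubes $g_{L,r}$ (though note the actual mechanism is a free-product length estimate $|g|_y\ge\sum(|h_i'|_{y'}+2L)+\sum(|h_i''|_{y''}+2L)$ turning the Poincar\'e series of $Y$ into a convergent geometric series, not merely that the tube's volume and entropy contributions vanish); lower bounds by the barycentre method against $(Y,g_\delta)$ and the wedge space $(X,d_\delta)$, respectively, with the latter requiring the CAT(0) extension of the barycentre.

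However, your description of the technical heart of the lower bounds is off in a way that would misdirect the work. You propose ``a concentration estimate showing that the mass of $\tilde f_*\mu_{c,y}$ relevant to the barycentre lies in the almost-hyperbolic region.'' The paper proves no such concentration, and needs none. What replaces the sharp Rauch comparison (which fails because the target only satisfies $\sigma_0\le 0$, not $\sigma_0\le -1$) is Proposition~\ref{proposition_jacobi_fields} on Jacobi fields: if the unique minimizing geodesic $c$ from $z$ to $x$ satisfies $\sigma\le 0$ along its whole length and $\sigma\equiv -1$ on its final segment of length $R$, then $(Dd\rho_z)_x(w,w)\ge(1-2e^{-2R})\|w\|^2$ for $w\perp\dot c(\ell)$, \emph{regardless of what happened before entering the hyperbolic ball}. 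This yields, for \emph{every} $z$, the almost-optimal Hessian bound at $x=\widetilde F_c(y)$ under the sole hypothesis that $\widetilde F_c(y)$ possesses a hyperbolic ball of radius $R_\varepsilon=\ln\sqrt{2/\varepsilon}$; plugging into the BCG algebraic lemma gives $|\jac_y\widetilde F_c|\le(1-\varepsilon)^{-3}(c/2)^3$ at all such $y$. The conclusion is then a coarea argument restricted to $F_c^{-1}(V_{\delta,\varepsilon}^i)$, the preimages of the deep-hyperbolic subsets whose volumes exhaust $\sum_i\Vol(int(X_i),hyp_i)$ as $\delta\to 0$, using only that $F_c$ is surjective. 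For $y$ not in these preimages no Jacobian estimate is invoked at all. If you instead tried to prove a concentration estimate on $\tilde f_*\mu_{c,y}$, the statement would be false for $y$ deep in a Seifert region: the barycentre simply does not land in the hyperbolic part for such $y$. Replace ``concentration of the pushforward measure'' with ``Hessian control at the barycentre via Jacobi fields, applied only where the barycentre admits a large hyperbolic ball,'' and your outline aligns with the proof.
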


For open manifolds which admit complete Riemannian metrics of finite volume it makes sense to define the Volume Entropy along the same line of the closed case restricting to the metrics of finite volume. It is worth noting that, with this definition, the Minimal Entropy is allowed to be equal to $+\infty$.
 In \cite{storm2006minimal} Storm proved that when $M$ is a manifold which admits a complete, finite volume, locally symmetric metric of rank $1$, then such a metric realizes the Minimal Entropy of $M$. Furthermore, if an open manifold admits volume-collapsing with bounded curvature (as it is the case of Seifert fibred manifolds fibering over hyperbolic orbifolds) its Minimal Entropy vanishes. Moreover, for those open manifolds admitting a finite volume metric and whose fundamental group has polynomial growth it is readily seen that the Minimal Entropy is equal to $0$, a volume-collapsing sequence being provided by rescaling the finite volume metric by smaller and smaller constants (it is the case of $K\widetilde{\times}I$).  Hence, in view of the definition of Minimal Entropy for open manifolds and thanks to  Storm's work we see that part a) of Theorem \ref{thm_minimal_entropy} can be read as an Additivity Theorem.\\
 
 \noindent{\bf Additivity Theorem.} {\it Let $Y$ be an irreducible, closed, orientable $3$-manifold and let $X_1,..., X_n$ be the $JSJ$ components of $Y$. Then:}
 	$$\minent(Y)^3=\sum_{i=1}^n\minent(int(X_i))^3.$$
 	
%Therefore, this result can be read as an Additivity Theorem for the Minimal Entropy of irreducible, closed, orientable $3$-manifolds (meaning that it behaves additively with respect to the gluings along $\pi_1$-injective tori): indeed, the Minimal Entropy of the interior of every Seifert fibred $JSJ$ component is equal to zero. The vanishing of the Minimal Entropy on open Seifert fibred manifolds follows on one hand from the fact that open Seifert fibred manifolds fibred over (open) $2$-dimensional orbifolds admit geometric metrics with finite volume and with arbitrarily small length of the regular fibre ---thus they can be collapsed with bounded curvature; on the other hand it is easy to construct a sequence of complete Riemannian metrics with curvatures uniformly bounded from below and volumes converging to zero on the interior of $K\widetilde\times I$.\\

 It is worth observing that our computation allow us to state that, for every connected, orientable, closed $3$-manifold $Y$, the simplicial volume $\lVert Y \rVert$ and  $\minent(Y)^3$ are thus proportional invariants, the constant of proportionality depending only on the dimension $n= 3$. 
 \footnote{Recall that for every closed, orientable $n$-manifold $M$, the {\it simplicial volume} is defined as:
 $\lVert M\rVert=\inf \sum_{i=1}^\infty |a_i|$
 where $[M] =\left[\sum_{i=1}^\infty a_i\sigma_i\right]\in H_n(M,\mathbb R)$ is the fundamental class of $M$ and the infimum is taken over all real cycles $\sum_{i=1}^\infty a_i\sigma_i$ representing $[M]$.} Actually Gromov's Additivity Theorem (see \cite{frigerio2017bounded}, Theorem 7.6 and references in there), and Gromov's computation of the simplicial volume of finite volume hyperbolic manifolds (\cite{gromov1982volume}, \S0.3), together with the vanishing of the (relative) simplicial volume on Seifert fibred manifolds and the additivity of the simplicial volume with respect to the connected sum (see \cite{gromov1982volume}, \S3.5) yield the well known formula:
 \bigskip
 
\textbf{Gromov's Formula} \label{thm_simplicial_vol_3mfd}
 \textit{For every closed, orientable, connected $3$-manifold $Y$, whose prime decomposition is $Y=Y_1\#\cdots \# Y_m$, for every $j=1,..., m$  denote the hyperbolic $JSJ$ components of $Y_j$ by $X_1^j,...,X_{k_j}^j$, endowed with the complete, finite volume metric $hyp_i^j$. Then:
 	$$\lVert Y\rVert=v_3\cdot\sum_{j=1}^m\sum_{i=1}^{k_j}\Vol(int(X_{i}^j), hyp_i^j).$$
 	where $v_3$ is the volume of an ideal  regular geodesic simplex\footnote{ A geodesic simplex in $\mathbb H^n$ is said to be {\it ideal} if all its vertices lie in $\partial H^n$, and {\it regular} if every permutation of its vertices is induced by an isometry of $\mathbb H^n$.} in $\mathbb H^3$. }
 \bigskip

 Hence, from Theorem \ref{thm_minimal_entropy} and Formula \ref{thm_simplicial_vol_3mfd} we obtain the following:
 \begin{cor}
 	For every closed, orientable $3$-manifold $Y$ we have:
 	$$ \lVert Y \rVert= \frac{v_3}{8}  \minent(Y)^3.$$
 \end{cor}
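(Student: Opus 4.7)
The plan is to combine Theorem \ref{thm_minimal_entropy} with Gromov's Formula in a purely algebraic way: both expressions are sums indexed over the same data (the hyperbolic $JSJ$-pieces of the prime summands of $Y$), so the corollary should reduce to matching the two indexing conventions and comparing constants.

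First I would write $Y=Y_1\#\cdots\# Y_m$ as its prime decomposition and, for each $j$, denote by $X_1^j,\dots,X_{k_j}^j$ the hyperbolic $JSJ$-components of $Y_j$, endowed with the complete, finite volume hyperbolic metrics $hyp_i^j$. Applying part (b) of Theorem \ref{thm_minimal_entropy} I would reduce the computation of $\minent(Y)^3$ to the irreducible case, obtaining
\begin{equation*}
\minent(Y)^3=\sum_{j=1}^m \minent(Y_j)^3.
\end{equation*}
Then applying part (a) to each $Y_j$ (with the convention that the sum is empty, hence zero, when $Y_j$ is a graph manifold) yields
\begin{equation*}
\minent(Y)^3=\sum_{j=1}^m 2^3\sum_{i=1}^{k_j}\Vol(int(X_i^j),hyp_i^j)=8\sum_{j=1}^m\sum_{i=1}^{k_j}\Vol(int(X_i^j),hyp_i^j).
\end{equation*}

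Next I would invoke Gromov's Formula, which provides the identical sum of hyperbolic volumes weighted by $v_3$:
\begin{equation*}
\lVert Y\rVert = v_3\cdot\sum_{j=1}^m\sum_{i=1}^{k_j}\Vol(int(X_i^j),hyp_i^j).
\end{equation*}
Dividing the two identities term by term yields $\lVert Y\rVert=\frac{v_3}{8}\minent(Y)^3$, as required.

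There is essentially no obstacle here beyond bookkeeping, since both deep ingredients (Theorem \ref{thm_minimal_entropy} and Gromov's Formula) are already available. The only minor point worth spelling out is the consistency of the constants: recall that for a closed or finite volume hyperbolic $n$-manifold $(M,hyp)$ one has $\ent(M,hyp)=n-1$, so in dimension $3$ the factor $\VolEnt(int(X_i^j),hyp_i^j)^3=\Vol(int(X_i^j),hyp_i^j)\cdot 2^3$ appearing in Theorem \ref{thm_minimal_entropy}(a) matches precisely the $2^3$ one needs to divide $v_3$ by in order to recover $\lVert Y\rVert$, while the vanishing of $\minent$ on graph manifolds and of the simplicial volume of Seifert pieces ensure that the Seifert $JSJ$-components contribute zero on both sides simultaneously.
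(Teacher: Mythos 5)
Your argument is exactly the one the paper intends: combine parts (a) and (b) of Theorem \ref{thm_minimal_entropy} to obtain $\minent(Y)^3=8\sum_{j,i}\Vol(int(X_i^j),hyp_i^j)$, then compare directly with Gromov's Formula $\lVert Y\rVert=v_3\sum_{j,i}\Vol(int(X_i^j),hyp_i^j)$. The paper presents the corollary as immediate from these two statements, and your write-up correctly supplies the bookkeeping.
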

 
\bigskip

Some of the ideas to prove part $1$ of Theorem \ref{thm_minimal_entropy} have been developed in the PhD thesis of Souto (see \cite{soutophd}), with whom we are in debt for explaining us the overall strategy. However Souto's work relies on a result (presented as a  consequence of Leeb's argument \cite{leeb19953}), which is given without proof (see Proposition 8, \cite{soutophd}).
% We have not been able to prove
% %(or disprove)
%  Souto's statement. Nevertheless, two slightly weakened versions of his proposition are provided in this PhD thesis, namely in Chapter \ref{section_a_conjectural_minimizing_sequence_irreducible} and in Appendix \ref{appendix_Cinfty_sequence}. Both constructions allow us to achieve the results; however, for the sake of clarity we have chosen the construction that shortens the argument.
As we shall explain later in Section \ref{section_the_extension_problem},  Leeb's construction of non-positively curved metrics on irreducible $3$-manifolds with non-trivial $JSJ$-decomposition and at least one hyperbolic $JSJ$-component does not ensure, alone, a sufficiently precise control on the geometric invariants 
%Leeb's work alone does not guarantee the relevant properties of the non-positively curved metrics 
which are needed for the estimate of $\minent$ in the irreducible case; we shall give all the necessary details for the construction of these metrics in Chapter \ref{section_a_conjectural_minimizing_sequence_irreducible},  where a precise estimate on the sectional curvatures and the volumes of each $JSJ$ component is given. To prove $(a)$, we endow $Y$ with a sequence of $C^2$, non-positively curved Riemannian metrics, which look more and more like hyperbolic metrics with cusps on the $JSJ$ components of hyperbolic type, and  shrink the volumes of the non-hyperbolic $JSJ$-components. Then we adapt a slightly different version of the barycentre method, using the square of the distance function instead of the Busemann functions (as developed in \cite{sambusetti1999minimal}) which avoids the analythical difficulty of considering measures on the ideal boundary, and obtaining the sharp lower estimate of $\minent$ given by Equations (\ref{eq_min_ent_irreducible}) and (\ref{eq_min_ent_reducible}).\\

The case $(b)$ of reducible manifolds $Y=Y_1 \# \cdots  \#Y_m$ is achieved through a direct computation applied to a suitable, geometrically well-chosen sequence of metrics which expand the horospherical collars to long and arbitrarily thin tubes: we can then decompose optimally the Poincaré series of $(Y_i,g_i)$ as a geometric series with common ratio given by the Poincaré series of the summands $Y_i$.\\

The paper is organized as follows. In Chapter \ref{chapter_basics} we recall some basics on the  topology of  Seifert manifolds and the construction of geometric metrics on them. In particular, in Section \ref{section_JSJ_decomposition} we focus on the $JSJ$-decomposition Theorem for irreducible manifolds, and the geometries carried by the $JSJ$ components. 
%For further details on this topic, we recommend the book of Thurston \cite{thurston1997three}, the survey of Scott \cite{scott1983geometries} and the recent monograph written by Aschenbrenner, Friedl and Wilton, \cite{aschenbrenner3}.
Chapter \ref{section_a_conjectural_minimizing_sequence_irreducible} is devoted to obtain the upper estimate of $\minent$ in the case where $Y$ is irreducible, by explicitly exhibiting a (conjecturally) minimizing sequence of metrics. 
%We shall develop a quantitative version of Leeb's construction in \cite{leeb19953}, satisfying some additional properties on the sectional curvatures and the volume of each $JSJ$ component.\\
%In Section \ref{section_a_conjectural_minimizing_sequence_reducible} we shall deal with the same estimate, but for reducible manifolds. Also in this case, we will construct a sequence of metrics on the manifold, and then estimating $\minent$ via a computation based on Poincaré series.
In Chapter \ref{section_a_conjectural_minimizing_sequence_reducible} we will get the (sharp) upper estimate for reducible manifolds.
In Chapters \ref{section_a_lower_bound_irreducible} and \ref{section_a_lower_bound_reducible} we shall prove the lower estimate of $\minent$ in the irreducible and reducible case, respectively. In both cases, the cornerstone of the proof is the \textit{barycentre method} developed by Besson, Courtois and Gallot (see \cite{besson1995entropies}, \cite{besson1996minimal}, \cite{besson1999lemme}). In particular, in Chapter \ref{section_a_lower_bound_reducible} we will prove that the barycentre is also defined in the case where the target space is a  $CAT(0)$ metric space ---and not a non-positively curved manifold, as in the classical setting--- and apply it to a specific $CAT(0)$ space constructed starting from the manifold $Y$. Appendix \ref{Appendix_orbifolds}  is devoted to some standard notions about smooth and Riemannian orbifolds, which will be needed in the construction of the minimizing sequence of metrics. Appendix \ref{appendix_proposition} contains the proof of some technical lemmata and propositions exploited in Chapter \ref{section_a_conjectural_minimizing_sequence_irreducible}.

\noindent

\bigskip

%\textsc{Acknowledgements} 
%We shall use some geometric group theory argument, namely the characterization of $\minent$ via the Poincaré series.
%In both cases, we shall use the barycenter method developed by Besson, Courtois and Gallot, suitably adapted to our context. In particular, in section 5 we will prove that the barycenter is also defined in the case where the target space is $CAT(0)$ ---and not a non-positively curved manifold, as in the classical setting--- and apply it to a specific $CAT(0)$ space constructed starting from the manifold $Y$.

\begin{comment}
%\section{Upper estimate}

In this section we shall construct a sequence of riemannian metrics providing an upper estimate of the minimal entropy.  We shall treat the irreducible and reducible cases separately. In order to deal with the irreducible case, we shall construct the required sequence of metrics by defining a sequence of suitable representations of $\pi_1(Y)$ in $\Isom(\widetilde{Y} _{\varepsilon}\times \mathbb{R})$, where $\widetilde{Y}_{\varepsilon}$ is a non-positively curved manifold.
Later on, we shall approach the reducible case by computing the Poincaré series of another appropriate sequence of metrics.
\end{comment}

\chapter{Basics of Geometry and Topology of $3$-manifolds}\label{chapter_basics}
\section{Topology of Seifert fibred manifolds}\label{section_seifert_fibered} \label{section_topology_and_geomety}
This section is devoted to introduce some basic facts and definitions about the topology of  Seifert manifolds. A complete treatise of this topic is far from the aim of this paper: for more details, we recommend the book by Thurston \cite{thurston1997three}, the surveys by Scott \cite{scott1983geometries}, Bonahon \cite{bonahon2002geometric}, and the book by Martelli \cite{martelligeometric}.\\
%and the recent monograph written by Aschenbrenner, Friedl and Wilton, \cite{aschenbrenner3}.
 \subsection{Fibered tori}
In this section we shall introduce Seifert fibred manifolds; our exposition is based on the work by Scott, see \cite{scott1983geometries} for full details.
Informally, a Seifert fibred manifold is a $3$-manifold which can be described as the disjoint union of circles, called fibres, satisfying some additional properties. In order to make this notion precise, we need to give some preliminary definitions.
A \textit{trivial fibred solid torus} is $T=S^1 \times D^2$ endowed with the standard foliation by circles, \textit{i.e.}, the foliation whose fibres are of type $S^1 \times \{y\}$ for $y \in D^2$.
A \textit{fibred solid torus} is a solid torus with a foliation by circles which is finitely covered (as a circle foliation) by a trivial fibred solid torus. This object can be realized as follows: take a trivial fibred solid torus, cut it along a meridian disk $\{x\} \times D^2$ for some point $x \in S^1$, rotate through $q/p$ of a full turn, where $p$ and $q$ are coprime integers, one of the two discs obtained via this procedure, and glue these discs back together, see Figure \ref{fig:ibredtorus32}. The resulting fibred solid torus $T(p,q)$ has a $p$-fold covering which is a trivial fibred solid torus.

\begin{figure}
	\centering
	\includegraphics[width=0.2\linewidth]{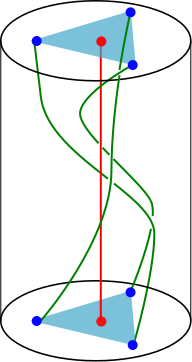}
	\caption[A fibred solid torus]{A fibred solid torus $T(2,3)$.}
	\label{fig:ibredtorus32}
\end{figure}

\subsection{Seifert fibered manifolds}
Now we recall some basic facts about Seifert manifolds. 
We can define a \textit{Seifert manifold} as a $3$-manifold $S$ with a decomposition into disjoint circles, called \textit{fibers}, such that every fibre has a neighbourhood in $S$ which is a union of fibres, and is fiberwise isomorphic to a fibred solid torus.
Clearly any circle bundle over a surface is a Seifert manifold. It follows by the definition that a Seifert manifold is foliated by circles. Furthermore, Epstein (see \cite{epstein1972periodic}) proved that the converse statement holds for compact, orientable $3$-manifolds; namely, any foliation by circles of a compact, orientable $3$-manifold is a Seifert fibration. \footnote{Epstein actually proved this result for compact $3$-manifolds, possibly not orientable. However, this requires to consider a more general notion of Seifert fibrations, admitting that the neighbourhood in $S$ of a fibre is a \textit{fibred Klein bottle}. For our purposes we are interested only in the orientable case.}
To start with, we want to determine when two fibered solid tori $T(p,q)$ and $T(p',q')$ are isomorphic (\textit{i.e.}, they are homeomorphic through a homeomorphism which preserves the fibration).
It follows from the previous description that in a fibred solid torus $T(p,q)$ all the fibres but the central one represent $p$ times the generator of $\pi_1(T(p,q))$, and wind $q$ times around the central fibre.
It is not hard to prove that if $T(p,q)$ and $T(p',q')$ are isomorphic, then $p=p'$ and $q \equiv q' \, \mod(p)$.
We observe that the operation of cutting $T(p,q)$ along a meridian disc and gluing it back with a full $2 \pi$ twist affects $q$ adding an integer multiple of $p$. Hence $p$ is an invariant of the fibred solid torus $T(p,q)$, while $q$ can be made an invariant of $T(p,q)$ provided that we take $0 \le q \le \frac{1}{2}p$.
Furthermore, if we orient the solid tori and care about other orientations, we need only to normalise $q$ so that $0 \le q <p$ in order to obtain an invariant of oriented fibred solid tori.   The invariants $(p,q)$ normalised in this way are called \textit{orbital invariants} of the central fibre. We use the following orientation convention. Given an oriented manifold $S$, its boundary $\partial S$ is endowed with the orientation such that the orientation of the boundary, together with the inward normal coincides with the orientation of $S$. Then, we assume that the oriented fibred solid torus $T(p,q)$ is constructed from the standard  oriented trivial fibred solid torus $S^1 \times D^2$ by performing a $2 \pi q/p$ anticlockwise twist on the $D^2$ factor.\footnote{We remark that reversing the orientation of both factors $S^1$ and $D^2$ provides an object which is orientation preservingly isomorphic to $T(p,q)$. }
Now we call a fibre \textit{regular} if it has a neighbourhood isomorphic to a trivial fibred solid torus, \textit{critical} otherwise. Since a fibred solid torus has at most one critical fibre, it turns out that the critical fibers are isolated. 
\subsection{Topological construction of Seifert fibered manifolds}
All Seifert manifolds can be constructed via the following procedure. Let $\Sigma_{g,h}$ be the genus $g$ surface with $h$ boundary components $d_1, \dots, d_h$ (with the convention that $g \in \mathbb{Z}$ is positive if $\Sigma_{g,h}$ is orientable and negative otherwise). Remove a collection of $k$ disks $D_1, \dots D_k$ from $\Sigma_{g,h}$, thus obtaining a surface $\Sigma_{g,h}^{\ast}$ having $h+k$ boundary components. Consider the (unique) orientable $S^1$-bundle over $\Sigma_{g,h}^{\ast}$, which is isomorphic to $\Sigma_{g,h}^{\ast} \times S^1$ or to $\Sigma_{g,h}^{\ast} \widetilde{\times} S^1$, where the product is twisted or not depending on $g$ being negative or not. In this way we get a $3$-manifold having $k+h$ toroidal boundary components. 
% \footnote{Let $\Sigma_{g,h}$ with $g <0$ be a non-orientable surface with boundary. Represent it as a disc with stripes glued to it and cut each strip denoting the appearing segments, the traces of the cuts, by $I_1, I_1', \dots, I_k, I_k'$. Now multiply the disc with the cut stripes by a circle, and glue the annuli $I_i \times S^1$ and $I_i' \times S^1$ by the identity if the corresponding strip is not twisted, and by the symmetry (relative to the diameter passing through the point $1 \in S^1$) if the corresponding strip is twisted. This way we obtain the orientable manifold $\Sigma_{g,h}^{\ast} \widetilde{\times} S^1$. }
 Denote by $f$ the homotopy class of the fiber of this $S^1$-bundle --- which we will refer to as the 	\textit{regular fiber}--- and call $\gamma_i$ the boundary loop of the disk $D_i$ for $i=1, \dots, k$. Denote by $T_1, \dots, T_k$ the boundary tori of $\Sigma_{g,l}$ fibering on the $\gamma_i$.
We recall that a \emph{slope} on a torus $\mathbb{T}$ is the isotopy class of an unoriented, homotopically non trivial, simple closed curve. 
For every $i=1, \dots, k$ fix the basis $\langle f, \gamma_i \rangle$ of $H_1 \left(\mathbb{T}, \mathbb{Z}\right)\simeq \pi_1(\mathbb{T})$; then every slope can be written as $p \gamma_i+qf$ for a suitable pair of integer coefficients $(p,q)$.
Let $M$ be a $3$-manifold, and $\mathbb{T} \subseteq \partial M$ be a boundary torus. A \emph{Dehn filling} of $M$ along $\mathbb{T}$ is the operation of gluing a solid torus $D \times S^1$ to $M$ via a diffeomorphism. We call $M^{fill}$ the resulting manifold, which has a boundary component less then $M$.

\begin{prop} [10.1.2 , \cite{martelligeometric}]
	\label{prop_dehn_filling}
	Let $M$ be a $3$-manifold and $T \subset \partial M$ be a boundary component. 
	Perform a Dehn filling along $T$ such that the meridian $\partial D \times \left \lbrace x \right \rbrace $ is glued to some simple closed curve $\gamma \subset T$. Then the resulting manifold $M^{fill}$ only depends on the isotopy class of the unoriented curve $\gamma$.
	We shall say that the Dehn filling \emph{kills} the curve $\gamma$.
\end{prop}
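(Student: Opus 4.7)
The plan is to reduce the statement to the extension of certain self-homeomorphisms of the boundary torus $\partial(D \times S^1)$ over the solid torus. Suppose $\phi_0, \phi_1 \colon \partial(D \times S^1) \to T$ are two gluings whose meridians $\gamma_i = \phi_i(\partial D \times \{x\})$ are isotopic as unoriented simple closed curves in $T$; one must construct a homeomorphism between $M^{fill}_0 = M \cup_{\phi_0}(D \times S^1)$ and $M^{fill}_1 = M \cup_{\phi_1}(D \times S^1)$.

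First I would normalize the gluings. By the isotopy extension theorem, an isotopy of $T$ taking $\gamma_0$ to $\gamma_1$ extends to an ambient isotopy of $T$, which in turn extends by the identity outside a collar neighbourhood of $T$ to a self-homeomorphism $h$ of $M$. Replacing $\phi_0$ by $h|_T \circ \phi_0$, we may assume $\gamma_0 = \gamma_1 = \gamma$ as subsets of $T$. Then $\psi := \phi_1^{-1} \circ \phi_0$ is a self-homeomorphism of $\mathbb{T}^2 = \partial(D \times S^1)$ that preserves the meridian $m = \partial D \times \{x\}$ as an unoriented isotopy class. If $\psi$ extends to a self-homeomorphism $\Psi$ of $D \times S^1$, then $\mathrm{id}_M \sqcup \Psi$ descends to the desired homeomorphism between $M^{fill}_0$ and $M^{fill}_1$.

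The core step is therefore to show that every self-homeomorphism of $\partial(D\times S^1)$ preserving the meridian (unoriented, up to isotopy) extends over the solid torus. I plan to invoke the classification $MCG(\mathbb{T}^2)\cong GL(2,\mathbb{Z})$: in a basis $\{m,\ell\}$ with $\ell$ any longitude, the classes fixing $\pm m$ are exactly the matrices $\bigl(\begin{smallmatrix}\pm 1 & n \\ 0 & \pm 1\end{smallmatrix}\bigr)$. Each generator of this subgroup extends to $D \times S^1$: the Dehn twist along $m$ extends because $m$ bounds the meridional disk $D \times \{x\}$, so one twists across that disk; the longitude reversal extends via complex conjugation on the $S^1$ factor; the meridian reversal extends via a reflection of the $D^2$ factor composed with a compensating reflection of $S^1$. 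Composing these extensions produces a self-homeomorphism $\Psi_0$ of $D \times S^1$ whose boundary restriction is isotopic to $\psi$.

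The main obstacle is the final matching between $\Psi_0|_{\partial}$ and $\psi$, which a priori agree only up to isotopy rather than exactly. This I would resolve by applying the isotopy extension theorem inside a collar $\partial(D\times S^1)\times[0,1]\hookrightarrow D\times S^1$ to realize the isotopy from $\Psi_0|_{\partial}$ to $\psi$ and modify $\Psi_0$ accordingly, yielding the required $\Psi$. The remainder of the argument is straightforward bookkeeping with the gluings, and gives a well-defined homeomorphism $M^{fill}_0 \cong M^{fill}_1$ depending only on the isotopy class of the unoriented curve $\gamma$.
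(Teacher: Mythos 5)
The paper does not prove this statement; it is quoted verbatim from Martelli's book (Proposition 10.1.2 in \cite{martelligeometric}), so there is no in-paper proof to compare against. Your argument is correct and follows the same standard route one finds in Martelli and in the classical references: normalize the two gluings so the killed curves coincide, observe that the discrepancy is a self-homeomorphism of $\partial(D\times S^1)$ preserving the meridian up to unoriented isotopy, identify the corresponding subgroup $\bigl\{\bigl(\begin{smallmatrix}\pm 1 & n\\ 0 & \pm 1\end{smallmatrix}\bigr)\bigr\}\le GL(2,\mathbb Z)$, extend each generator over the solid torus, and correct in a collar via isotopy extension. One small inaccuracy: the map you describe for the meridian reversal, a reflection of $D^2$ composed with a ``compensating'' reflection of $S^1$, realizes $-I$ rather than $\bigl(\begin{smallmatrix}-1 & 0\\ 0 & 1\end{smallmatrix}\bigr)$; the plain reflection $(z,w)\mapsto(\bar z,w)$ of the $D^2$-factor already extends and is the correct choice. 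This is harmless, since $-I$ together with the longitude reversal generates the same subgroup, but it is worth stating precisely.
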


Proposition \ref{prop_dehn_filling} implies that every pair of coprime integers $(p,q)$ determines a Dehn filling killing the slope $p \gamma_i+ q f$. Now for every boundary torus $\mathbb{T}_i$, where $i=1, \dots, k$ we choose a pair of coprime integers $(p_i, q_i)$, and perform the Dehn filling along $T_i$ killing the slope $p_i \gamma_i+ q f$.
 Carrying out this construction, we obtain a Seiert manifold having $l$ toroidal boundary components; we shall denote it by $S=S(g,h; (p_1,q_1), \dots, (p_k, q_k))$.
Calling $\bar{a}_i,\bar{b}_i,\bar{d}_i$ the canonical generators of $\pi_1(\Sigma_{g,h})$ when $g >0$ and $\bar{a}_i,\bar{d}_i$ the standard generators of $\Sigma_{g,h}$ when $g<0$, with in both cases $d_i$ being the loop representing the boundary components, the computation of $\pi_1(S)$ is straightforward by the Van Kampen Theorem:
\begin{equation}  \label{equation_presentation_orientable}
 g \ge 0  
\end{equation}
$$\small
\pi_1(S)=\left\langle a_1, b_1,..,a_g, b_g, c_1,..,c_k,d_1,..,d_h, f\left |\begin{array}{l}
\prod_{i=1}^{g}[a_i, b_i]\prod_{j=1}^{k} c_j\prod_{l=1}^{h} d_l=1\\ 
c_i^{p_i}=f^{q_i}\\ 
\left[ f, a_i\right] \!=\! \left[ f, b_i\right] \!=\! \left[ f, c_i\right] \!=\! \left[ f, d_i\right] \!=\!1\\
\end{array}
\right. \!\!\right\rangle$$

\normalsize

\begin{equation} \label{equation_presentation_not_orientable}
 g<0
 \end{equation}
\small
$$\pi_1(S)=\left\langle a_1,...\,a_g,c_1,...,c_k, d_1,...,d_h, f\,\left |\,\begin{array}{l}
\prod_{i=1}^{g}a_i^2\cdot\prod_{j=1}^{k} c_j\cdot\prod_{l=1}^{h} d_l=1\\ 
c_i^{p_i}=f^{q_i}\\ 
a_i f a_i^{-1}=f^{-1}
\end{array}
\right.\right\rangle$$

\normalsize
	where we used the notation $f$ for the class of the regular fibre, and $a_i, b_i, c_i, d_i$ in order to distinguish the classes $\bar{a}_i, \bar{b}_i, \bar{c}_i, \bar{d}_i$ in $\pi_1(S(g,h; (p_1, q_1), \dots, (p_k,q_k))$ from the corresponding projected classes in $\pi_1(\Sigma_{g,h})$.

Given any Seifert fibration $S(g,h; (p_1,q_1), \dots, (p_k,q_k))$, its \textit{Euler number} is $e(S)= \sum_{i=1}^k \frac{q_i}{p_i}$, where we take the sum $\mbox{mod } \mathbb{Z}$ when $S$ is not closed. 
 Two Seifert manifolds are \textit{isomorphic} if there exists a fiber-preserving homeomorphism between them. We recall two results on Seifert manifolds which classify them up to isomorphism and homeomorphism.

\begin{prop} [Corollary 10.3.13 in \cite{martelligeometric}]
	Let $S=S(g,h; (p_1, q_1), \dots, (p_k,q_k))$ and $S'=S(g',h'; (p'_1, q'_1 ), \dots (p'_{k'}, q'_{k'}))$ be two Seifert fibrations with $p_i,p'_i \ge 2$. Then $S$ and $S'$ are isomorphic if and only if $g=g', h=h',k=k'$ (up to reordering), $p_i=p'_i$ and $q_i=q'_i \mod p_i$, for all $1 \le i \le k$ and, if the manifolds are closed, $e(S)=e(S')$.
\end{prop}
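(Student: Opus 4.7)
The plan is to prove both implications by recognizing that each of the listed invariants is intrinsically attached to the Seifert fibration, and then reconstructing a fiber-preserving homeomorphism when all these invariants agree.

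First I would address the easier direction $(\Leftarrow)$. Suppose all the numerical invariants match (and $e(S)=e(S')$ in the closed case). Since $g=g'$ and $h=h'$, the base surfaces $\Sigma_{g,h}$ and $\Sigma_{g',h'}$ are homeomorphic, and after removing $k=k'$ open disks from each I get homeomorphic surfaces $\Sigma_{g,h}^\ast\cong\Sigma_{g',h'}^\ast$. Over such a surface with non-empty boundary the orientable circle bundle is unique (trivial or twisted according to the sign of $g$), so I can choose a fiber-preserving homeomorphism between $\Sigma_{g,h}^\ast\ortimes S^1$ and its primed counterpart. Since $p_i=p_i'$ and $q_i\equiv q_i'\pmod{p_i}$, the slopes $p_i\gamma_i+q_if$ and $p_i\gamma_i+q_i'f$ coincide as unoriented isotopy classes on each boundary torus, so by Proposition \ref{prop_dehn_filling} the two Dehn fillings produce fiber-preservingly homeomorphic solid tori. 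Gluing everything together yields the required isomorphism. The only subtlety is in the closed case: the trivialization of the $S^1$-bundle over $\Sigma^\ast$ is not canonical, and different sections change each $q_i$ by a multiple of $p_i$ while preserving $\sum q_i/p_i\in\mathbb Q/\mathbb Z$. The additional hypothesis $e(S)=e(S')$ in $\mathbb Q$ rules out this ambiguity and ensures the sections can be chosen coherently.

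For the direction $(\Rightarrow)$, let $\phi\colon S\to S'$ be a fiber-preserving homeomorphism. Quotienting by the circle fibers produces an induced homeomorphism $\bar\phi$ of base $2$-orbifolds. The underlying topological surface has genus $g$ and $h$ boundary components, and the cone points correspond bijectively to the critical fibers and carry the orders $p_i$. Hence $\bar\phi$ forces $g=g'$, $h=h'$, $k=k'$ and matches the multisets $\{p_i\}=\{p_i'\}$. For each critical fiber the pair $(p_i,q_i)$ with $0\le q_i<p_i$ was shown in the discussion preceding the statement to be a complete invariant of the oriented fibered solid torus neighbourhood, so a fiber-preserving homeomorphism carrying the $i$-th critical fiber of $S$ to the $\sigma(i)$-th critical fiber of $S'$ forces $q_i\equiv q_{\sigma(i)}'\pmod{p_i}$. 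When $S$ is closed, the Euler number $e(S)=\sum q_i/p_i\in\mathbb Q$ is a genuine invariant of the Seifert structure (the local ambiguities in the $q_i$'s coming from the choice of section now add up to an integer, so $e(S)$ is well defined), and must be preserved by $\phi$.

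The main obstacle is the closed case, where I must be careful about the distinction between Seifert invariants that are well defined in $\mathbb Q/\mathbb Z$ (locally at each critical fiber) and the global Euler number which lives in $\mathbb Q$: the proof hinges on the fact that once the local orbital invariants $(p_i,q_i)$ are normalized by $0\le q_i<p_i$, the sum $\sum q_i/p_i$ is meaningful and topologically invariant precisely because the absence of boundary components removes the freedom to shift a $q_i$ by $p_i$ at the expense of an unfilled torus. The remaining arguments are bookkeeping using Van Kampen and the presentations (\ref{equation_presentation_orientable})--(\ref{equation_presentation_not_orientable}), but I would not carry out the explicit calculations, referring instead to Scott's survey \cite{scott1983geometries} and the book of Martelli \cite{martelligeometric} for the technical details.
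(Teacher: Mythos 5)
The paper does not actually prove this proposition --- it is quoted verbatim as Corollary 10.3.13 of Martelli's book, so there is no proof of record here to compare against. Judged on its own merits, your sketch has the right shape but one step in the $(\Leftarrow)$ direction is stated incorrectly in a way that masks where the real work lies.

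You claim that since $q_i\equiv q_i'\pmod{p_i}$, the slopes $p_i\gamma_i+q_if$ and $p_i\gamma_i+q_i'f$ ``coincide as unoriented isotopy classes.'' This is false: if $q_i\neq q_i'$, say $q_i'=q_i+p_i$, then the two classes differ by $p_if$ in $H_1(\mathbb T_i;\mathbb Z)$ and are distinct slopes, so Proposition \ref{prop_dehn_filling} does not directly give the same filled manifold. What actually makes $q_i$ well defined only modulo $p_i$ is the freedom in choosing a section (equivalently a trivialization) of the orientable circle bundle over $\Sigma_{g,h}^\ast$: replacing the section near the $i$-th removed disk shifts $\gamma_i\mapsto\gamma_i+n_if$ and hence $q_i\mapsto q_i-n_ip_i$. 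You do mention this mechanism, but you relegate it to ``the only subtlety in the closed case''; in fact it is the essential step in \emph{every} case (open or closed), since without it the hypothesis $q_i\equiv q_i'\pmod{p_i}$ buys you nothing. The genuine distinction between the two cases is the one you correctly identify at the end: with non-empty boundary the section can be altered independently near each filled torus (absorbing the global change into the boundary loops $d_j$), whereas in the closed case the total change to $\sum q_i/p_i$ is constrained to be an integer, which is exactly why $e(S)\in\mathbb Q$ must be added as an invariant. With the slope claim corrected to ``after a change of section, the slopes coincide,'' and the section-change argument placed front and centre rather than as a closing caveat, the $(\Leftarrow)$ direction is sound; the $(\Rightarrow)$ direction via passage to the base orbifold and the local orbital invariants is fine as a sketch.
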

Moreover, it turns out that every Seifert manifold admits a unique Seifert fibration, but for few exceptional cases. Namely, the following proposition holds (see \cite{jaco1980lectures}, Theorems VI.17 and VI.18 or Propositions 10.4.16 and 10.4.17 in \cite{martelligeometric}).

\begin{prop} [Classification of Seifert fibred manifolds up to homeomorphism] $\phantom{a}$
	\vspace{-5mm}
		\begin{enumerate}
	\item	Every Seifert fibred manifold with non-empty boundary admits a unique Seifert fibration up to isomorphism, except in the following cases:
	\begin{itemize}
		\item $D^2 \times S^1$, which admits the fibrations $S=S(0,1 ; (p,q))$ for any pair of coprime integers $(p,q)$.
		\item $K \widetilde{ \times} I$, the twisted, orientable interval bundle over the Klein bottle, which admits two non-isomorphic Seifert fibrations:\\
		$S_1=S(-1,1; -), S_2=S(1,1;(2,1), (2,1))$.
	\end{itemize}
\item	Every closed Seifert fibred manifold $S$ which is not covered by $S^3$ or $S^2 \times S^1$ admits a single Seifert fibration up to isomorphism, except for $K \widetilde{ \times}S^1$, which admits the non-isomorphic fibrations:
$S_3= S(0,0; (2,1), (2,1), (2,-1),(2,-1))$,
$S_4=S(-2,0; -)$.

\end{enumerate}
\end{prop}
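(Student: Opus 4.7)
The plan is to exploit the short exact sequence
\begin{equation*}
1 \to \langle f \rangle \to \pi_1(S) \to \piorb(\Sigma) \to 1
\end{equation*}
associated to any Seifert fibration of $S$, where $f$ is the regular fibre and $\Sigma$ is the base $2$-orbifold. If the conjugacy class of the subgroup $\langle f \rangle$ is algebraically determined by $\pi_1(S)$ alone, then the base orbifold $\Sigma$ is recovered as the quotient, and the Seifert invariants $(p_i,q_i)$ are read off from the presentations (\ref{equation_presentation_orientable})--(\ref{equation_presentation_not_orientable}): the orders $p_i$ are the orders of the torsion conjugacy classes in $\piorb(\Sigma)$, and the rotation numbers $q_i \bmod p_i$ come from the relations $c_i^{p_i}=f^{q_i}$. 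Hence the uniqueness statement reduces to an algebraic rigidity property of $\langle f \rangle$ inside $\pi_1(S)$.

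For $S$ with non-empty boundary I would first treat the generic situation in which $\Sigma$ has negative orbifold Euler characteristic. Then $\piorb(\Sigma)$ is a free product of cyclic groups with trivial centre and no non-trivial normal cyclic subgroup, which forces $\langle f \rangle$ to be the unique maximal cyclic normal subgroup of $\pi_1(S)$ whose quotient is the fundamental group of a $2$-orbifold with non-empty boundary; equivalently, $\langle f \rangle$ can be characterised (up to sign) as the intersection of the peripheral subgroups coming from the boundary tori. When $\chi(\Sigma) \ge 0$, a finite enumeration of the allowed bases singles out exactly two coincidences: $D^2 \times S^1$ (base a disk, or a disk with one cone point of any order $p$, producing the whole family $S(0,1;(p,q))$) and $K\widetilde{\times} I$ (base either a Möbius band or a disk with two cone points of order $2$), which are to be verified by means of Proposition \ref{prop_dehn_filling}.

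For closed $S$ not covered by $S^3$ or $S^2\times S^1$, the group $\pi_1(S)$ is infinite and not virtually cyclic, and $\langle f \rangle$ is infinite cyclic; when the base orbifold is orientable, $\langle f \rangle$ lies in the centre of $\pi_1(S)$, and when it is non-orientable it is centralised by the index-two orientable subgroup, in agreement with the commutation relations of (\ref{equation_presentation_orientable})--(\ref{equation_presentation_not_orientable}). A case analysis according to whether the centre of $\pi_1(S)$ has rank $0$, $1$ or (exceptionally) higher shows that $\langle f \rangle$ is intrinsically determined by $S$, with the only ambiguity being the exchange of two different infinite cyclic normal subgroups. A direct inspection rules out this ambiguity except for $K\widetilde{\times} S^1$, which produces the two fibrations $S_3$ and $S_4$ of the statement.

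The main obstacle is the explicit verification of the exceptional coincidences. For $D^2\times S^1$ the various fibrations arise directly from the construction of fibred solid tori and the freedom in the choice of meridian--longitude basis. For $K\widetilde{\times} I$ and $K\widetilde{\times} S^1$ the argument requires passing to the orientable double cover, identifying it explicitly with $T^2\times I$ (respectively $T^2\times S^1$) endowed with its standard circle foliations, and comparing the two distinct $\mathbb{Z}/2$-actions whose quotients produce the given manifold; equivalently, one has to construct by hand a fibre-preserving diffeomorphism $S_1 \cong S_2$ (respectively $S_3\cong S_4$) and then show, via the algebraic rigidity of $\langle f\rangle$ described above, that no analogous coincidence can occur outside this short list of cases --- this last step is where the substantive content of the classification lies.
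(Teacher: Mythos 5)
The paper does not prove this proposition; it is quoted from \cite{jaco1980lectures} (Theorems VI.17--VI.18) and \cite{martelligeometric} (Propositions 10.4.16--10.4.17), so there is no in-text argument to compare against. Your plan --- isolate the fibre subgroup $\langle f\rangle$ algebraically inside $\pi_1(S)$ via the exact sequence, recover the base orbifold and Seifert invariants, and then invoke the classification-by-invariants proposition stated just above --- is a legitimate, essentially group-theoretic alternative to the more overtly topological treatments (isotopy of essential vertical annuli and tori in Haken manifolds) behind the cited references, and it is close in spirit to Orlik's approach. Both routes meet at the same bridge: once the invariants of two fibrations of $S$ are seen to coincide, the earlier classification proposition produces a fibre-preserving self-homeomorphism of $S$.

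There is, however, a genuine gap in the sketch. You claim that outside the listed exceptions $\langle f\rangle$ is ``intrinsically determined by $S$'', and you propose to characterise it either as the unique maximal cyclic normal subgroup or as the intersection of the peripheral subgroups. Both characterisations fail on Euclidean cases that the proposition does not list as exceptions. For $T^2\times I$ (cylinder base) and for $T^3\simeq S(1,0;-)$ the group $\pi_1(S)$ is free abelian and every primitive element generates a normal $\mathbb{Z}$ whose quotient is $\mathbb{Z}^2$, so there is no preferred fibre subgroup at all; and whenever $\partial S$ is a single torus, the intersection of the peripheral subgroups is the whole rank-two peripheral group rather than $\langle f\rangle$, so that description cannot be an ``equivalent'' one even in the hyperbolic-base case. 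In these situations the fibration is unique up to isomorphism only because all candidate fibre subgroups lie in a single orbit of automorphisms of $\pi_1(S)$ that are \emph{realised by homeomorphisms} of $S$; the correct invariant is this orbit under the image of the mapping class group in $\mathrm{Out}(\pi_1(S))$, not the subgroup $\langle f\rangle$ itself. Your case split by rank of the centre gestures in that direction, but as written it conflates uniqueness of the subgroup with uniqueness of the fibration, and the conflation hides exactly the topological realisation input (a Waldhausen-type rigidity statement for the Haken pieces, plus explicit verification for $T^2\times I$, $T^3$ and the remaining flat manifolds) on which the conclusion rests. Until that step is reinserted, the proposal does not actually prove uniqueness for those manifolds.
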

\subsection{Seifert manifolds as $S^1$-bundles over $2$-orbifolds}
We can think about a Seifert fibred  manifold as a kind of surface bundle, whose fibres are the circles of the foliation. In this view, define $\mathcal{O}_S$ as the quotient space of $S$ obtained  by identifying every circle of the foliation to a point, and call $p: S \rightarrow \mathcal{O}_S$ the projection to the quotient. Let us focus on the local structure of this projection.
If $S$ is a trivial fibred solid torus, then $\mathcal{O}_S$ is clearly a $2$-disc and the projection $p: S \rightarrow \mathcal{O}_S$ is a bundle projection. If $S$ is a fibred solid torus $T(p,q)$, then $S$ is $p$-covered by a trivial fibred solid torus. The corresponding action of $\mathbb{Z}_p$ on $T=S^1 \times D^2$ is generated by a homeomorphism which is the product of a rotation through $2 \pi/p$ on the $S^1$-factor with a rotation through $2 \pi q/p$ on the $D^2$ factor. This action of $\mathbb{Z}_p$ on $S^1 \times D^2$ induces an action of $\mathbb{Z}_p$ on the base space $D^2$, generated by a rotation through $2 \pi q/p$.
Hence in this particular case the space $\mathcal{O}_S$ obtained by contracting each fibre to a point can be identified with the quotient of the disc $D^2$ by an action of $\mathbb{Z}_p$ by rotations, \textit{i.e.}, an orbifold having $D^2$ as underlying topological space and a conical singularity of angle $2 \pi/p$. We shall think about the projection map $p: S \rightarrow \mathcal{O}_S$ as a fibre bundle whit orbifold base, in a generalized sense. Actually, this is a bundle in the orbifold category, see  \cite{thurston1997three} and \cite{boileau2003three}, \S 2.4.
In the general case, the quotient space $\mathcal{O}_S$ of a Seifert fibred space $S$ obtained identifying each fibre to a point is (topologically) a surface, and is naturally equipped with an orbifold structure, in which cone points correspond to the projections of the critical fibres. Furthermore, if $S$ is a manifold with boundary then $\mathcal{O}_S$ is an orbifold with boundary, and $\partial S$ is the preimage of $\partial \mathcal{O}_S$ under the projection $p: S \rightarrow \mathcal{O}_S$.

Thus, Seifert fibred manifolds can be viewed as true $S^1$-bundles over $2$-dimensional orbifolds. In particular, orientable Seifert manifolds can be viewed as $S^1$-bundles over $2$-orbifolds with only conical singularities. We remark that in hte case where $\mathcal{O}_S$ is a surface then $S$ is a honest $S^1$-bundle on $\mathcal{O}_S$, and the Euler number of the Seifert fibration coincides with the Euler number of this $S^1$-bundle. Despite the fact that the definition via Dehn fillings provides a deeper topological insight, this approach through orbifolds is more informative about their geometry. Indeed, this perspective originally due to Thurston has been used, for example, in \cite{scott1983geometries}, \cite{bonahon2002geometric} and \cite{ohshika1987teichmuller} to describe the complete, locally homogeneous metrics admitted by  Seifert manifolds. 
We shall explain how to switch from the description by Dehn fillings to the description as fibration over an orbifold, which will be useful for what follows.
Let $S=S(g,h; (p_1,q_1), \dots, (p_k,q_k ) )$ be a Seifert fibered manifold. We can associate to $S$ its \textit{base orbifold} $\mathcal{O}_S$, which has $|\mathcal{O}_S|=\Sigma_{g,h}$ as underlying topological surface, and has $k$ conical points of order $p_1,\dots, p_k$.  In the case where the base orbifold is compact, it is possible to define its \textit{orbifold Euler characteristic}, generalizing the idea of Euler characteristic for surfaces.
$$ \chi^{orb}(\mathcal{O}_S)= \chi(|\mathcal{O}_S|)- \left(1- \sum_{i=1}^{k}\frac{1}{p_i}\right):$$
where $\chi(|\mathcal{O}_S|)$ denotes the usual Euler characteristic of the surface $|\mathcal{O}_S|$.
The orbifold characteristic encodes many information, both topological and geometric. First of all, since the Euler characteristic is multiplicative with respect to finite coverings, it allows to distinguish between orbifolds admitting a covering which is a manifold (\textit{good orbifolds}) and those that do not admit any manifold covering which is a manifold (\textit{bad orbifolds})\footnote{Here we are considering coverings in the orbifold sense, see \cite{thurston1997three}}. Bad orbifolds are completely characterized: see \cite{scott1983geometries} for a complete list. It turns out that Seifert manifolds fibering on bad orbifolds are \textit{lens spaces}, a well-understood class of quotients of the $3$-sphere. According to Thurston \cite{thurston1997three}, Theorem 13.3.6, good orbifolds carry geometric structures (\textit{i.e.}, they can be realized as quotients of $\mathbb{E}^2, \mathbb{S}^2$ or $\mathbb{H}^2$ by a discrete group of isometries acting possibly non freely), and the type of the supported structure can be detected by computing the orbifold characteristic. Namely, good orbifolds are distinguished in \textit{hyperbolic, Euclidean} and \textit{elliptic} depending whether $\chi^{orb}<0$,$\chi^{orb}=0$ or $\chi^{orb}>0$ (equivalently, if and only if they can be given a structure modelled on $\mathbb{H}^2$, $\mathbb{E}^2$ or $\mathbb{S}^2$ respectively).

In this paper we will be mainly concerned with orbifolds appearing as bases of orientable Seifert manifolds; moreover, we are mainly interested in Seifert fibred manifolds with boundary, which obviously fiber over $2$-orbifolds with boundary.
Two-dimensional orbifolds with boundary are either disks with a unique singular point (and in this case the Seifert fibration is a solid torus $D^2 \times S^1$), or are Euclidean or hyperbolic orbifolds.
Moreover, the only Euclidean orbifolds with boundary are either the M\"{o}bius band $Mb$ without singular points, or $D^2(2,2)$, the disk with two conical points of the same order $2$, or the cylinder.
This can be summarized in the following:

\begin{prop}
	The $2$-dimensional orbifolds with boundary appearing as bases of orientable Seifert manifolds are:
	\begin{enumerate}
		\item The disk $D^2(p)$ with a unique singular point of order $p$;
		\item The disk $D^2(2,2)$ with two conical points of order $2$, the M\"{o}bius band $Mb(-)$ without singular points and the cylinder, of Euclidean type;
		\item All orbifolds $\Sigma(g,h; p_1, \dots, p_k)$ of hyperbolic type.
	\end{enumerate}
Moreover, the only Seifert manifold which fibers over $D^2(p)$ is the solid torus, the only one fibering over $D^2(2,2)$ and $Mb(-)$ is $K \widetilde{ \times} I$ and over the cylinder is $T^2 \times I$.
Hence, all the Seifert manifolds with boundary except $K \widetilde {\times}I$, $T^2 \times I$ and $D^2 \times S^1$ fiber over hyperbolic orbifolds.
\end{prop}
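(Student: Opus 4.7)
The plan is a case analysis based on the orbifold Euler characteristic. Let $\mathcal{O}_S$ be the base of an orientable Seifert fibered manifold $S$ with nonempty boundary; since $S$ is orientable, the only singularities of $\mathcal{O}_S$ are cone points. I write $|\mathcal{O}_S|=\Sigma_{g,h}$ for the underlying surface (with $h\geq 1$), denote the cone orders by $p_1,\ldots,p_k$, and use
\[
\chi^{orb}(\mathcal{O}_S)=\chi(\Sigma_{g,h})-\sum_{i=1}^{k}\left(1-\frac{1}{p_i}\right).
\]
The presence of at least one boundary component forces $\chi(\Sigma_{g,h})\leq 1$, with equality exactly for the disk.

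First, in the \emph{elliptic} case $\chi^{orb}>0$ one needs $\Sigma_{g,h}=D^2$ and $\sum(1-1/p_i)<1$. Since each term is at least $1/2$, this forces $k\leq 1$, so $\mathcal{O}_S=D^2$ or $D^2(p)$. In both situations the total space is a solid torus: the trivial $S^1$-bundle over $D^2$ is $D^2\times S^1$, and a single exceptional fiber of type $(p,q)$ can be absorbed via the Dehn filling construction, so that $S(0,1;(p,q))$ remains homeomorphic to $D^2\times S^1$ with a non-trivial Seifert structure.

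Second, in the \emph{Euclidean} case $\chi^{orb}=0$ one has $\chi(\Sigma_{g,h})=\sum(1-1/p_i)$. Since each summand lies in $[1/2,1)$ and $\chi(\Sigma_{g,h})$ is an integer at most $1$, the only possibilities are $\chi=1$ with exactly two cone points of order $2$, giving $D^2(2,2)$; or $\chi=0$ with no cone points, forcing $\Sigma_{g,h}$ to be either the cylinder $S^1\times I$ or the M\"obius band $Mb$. Every other 2-orbifold with boundary and only cone singularities then satisfies $\chi^{orb}<0$ and is good, so admits a hyperbolic structure by the orbifold geometrization recalled above; this yields case (3).

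For the \emph{moreover} part I would identify the corresponding 3-manifolds one by one: the trivial $S^1$-bundle over $S^1\times I$ is $T^2\times I$, while the (unique) orientable twisted $S^1$-bundle over $Mb$ is $K\widetilde{\times}I$. That $K\widetilde{\times}I$ also fibers over $D^2(2,2)$ is precisely the second fibration $S_2$ of $K\widetilde{\times}I$ listed in the classification of Seifert fibrations up to isomorphism recalled in the previous subsection, and uniqueness of the 3-manifold fibering over each of the special bases follows from the same classification, since the only Seifert manifolds with boundary admitting more than one Seifert fibration are $D^2\times S^1$ and $K\widetilde{\times}I$. The most delicate step, in my view, is to check directly that $S(0,1;(2,1),(2,1))$ and $S(-1,1;-)$ really represent the same 3-manifold $K\widetilde{\times}I$; this is a careful but explicit calculation using the Dehn filling description of Seifert fibrations together with the presentations~(\ref{equation_presentation_orientable})--(\ref{equation_presentation_not_orientable}) of the fundamental group.
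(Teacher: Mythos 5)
Your proof is correct and takes essentially the same route as the paper: the paper's preceding paragraph sketches precisely this Euler-characteristic case analysis (a $2$-orbifold with boundary and only cone singularities is a disc with at most one cone point if $\chi^{orb}>0$, and the Euclidean ones are $D^2(2,2)$, $Mb$, and the cylinder), stating the proposition as a summary of that discussion. Your write-up fleshes out the elementary estimates $\sum(1-1/p_i)\in\{0,1\}$ and identifies the total spaces over each special base by the unique orientable $S^1$-bundle plus the classification of Seifert fibrations recalled earlier. One remark: your $S(0,1;(2,1),(2,1))$ is the correct Seifert datum for the fibration of $K\widetilde\times I$ over $D^2(2,2)$; the paper's Classification proposition contains a typo reading $S(1,1;(2,1),(2,1))$, so you have in fact silently corrected it.
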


%We remark that there are only two Seifert manifolds fibering on them: $K \widetilde{\times}I$ ---the twisted interval bundle on the Klein bottle--- fibering on both $Mb$ and $D^2(2,2)$, and $T^2 \times I$, fibering on the cylinder.
%Hence, all the Seifert manifolds with boundary except $K \widetilde {\times}I, T^2 \times I$ and $D^2 \times S^1$ fiber over hyperbolic orbifolds.

\subsection{The short exact sequence} \label{section_short_exact_sequence}
The structure of Seifert manifolds as $S^1$-bundles over $2$-orbifolds gives the following exact sequence of groups (see \cite{scott1983geometries})
$$ 1 \rightarrow \langle f \rangle \overset{\psi}{\hookrightarrow}\pi_1(S) \overset{p_{\ast}}{\twoheadrightarrow} \piorb(\mathcal{O}_S) \rightarrow 1$$
where $\langle f \rangle$ is the subgroup generated by the regular fiber, the injective morphism $\psi$ is induced by the immersion, $\piorb(\mathcal{O}_S)$ is the \textit{orbifold fundamental group}, (\textit{i.e.}, the group of deck transformations of the orbifold universal cover, see Appendix \ref{Appendix_orbifolds}), and the surjective morphism $p_{\ast}$ is induced on orbifolds groups by the projection $p: S \rightarrow \mathcal{O}_S$.

We recall the presentation of the orbifold fundamental group of a $2$-dimensional orbifold, where the loops $\bar{a}_1, \bar{b}_1, \dots, \bar{a}_g, \bar{b}_g, \bar{d}_1, \dots, \bar{d}_k$   are the standard generators for the fundamental group of the underlying surface $|\mathcal{O}_S|$:

\footnotesize
$$\piorb(\mathcal{O})=\left\langle \bar{a}_1,\bar{b}_1,..,\bar{a}_g, \bar{b}_g, \bar{c}_1,..,\bar{c}_k, \bar{d}_1,..,\bar{d}_h\left |\!\begin{array}{l}
\prod_{i=1}^{g}[\bar{a}_i,\bar{b}_i]\cdot\prod_{j=1}^{k} \bar{c}_j\cdot\prod_{l=1}^{h} \bar{d}_l=1,
\bar{c}_i^{p_i}=1\\ 

\end{array}\!\!\!\!
\right.\right\rangle $$
\normalsize
if $g\ge0$,
\footnotesize
$$\piorb(\mathcal{O})=\left\langle \bar{a}_1,..,\bar{a}_{|g|}, \bar{c}_1,..,\bar{c}_k, \bar{d}_1,..,\bar{d}_h\left |\,\begin{array}{l}
\prod_{i=1}^{g}\bar{a}_i^2\cdot\prod_{j=1}^{k} \bar{c}_j\cdot\prod_{l=1}^{h} \bar{d}_l=1,
\bar{c}_i^{p_i}=1\\ 

\end{array}
\right.\right\rangle $$
\normalsize

\noindent if $g<0$.\\

Let us consider $p: S \rightarrow \mathcal{O}_S$, and let $i: \left(\Sigma_{g,h} \setminus \cup_{i=1}^k D_i \right) \times S^1 \hookrightarrow S$ be the inclusion. Then the elements $a_i= i_{\ast}(\bar{a}_i)$, $b_i= i_{\ast}(\bar{b}_i)$, $c_i= i_{\ast}(\bar{c}_i)$ and $d_i= i_{\ast}(\bar{d}_i)$ satisfy $p_{\ast}(a_i)=\bar{a}_i$, $p_{\ast}(b_i)=\bar{b}_i$, $p_{\ast}(c_i)=\bar{c}_i$ and $p_{\ast}(d_i)=\bar{d}_i$.
Hence, the short exact sequence gives us a more significant information concerning the relation between $\pi_1(S)$ and $\piorb(\mathcal O_S)$. Indeed it provides a correspondence between the canonical presentation of $\pi_1(S)$ and the canonical  presentation of $\piorb(\mathcal O_S)$.

\section{The geometry of irreducible $3$-manifolds} \label{section_JSJ_decomposition}

\subsection{The JSJ decomposition and the Hyperbolization Theorem}Every compact, orientable irreducible $3$-manifold admits a decomposition along suitable embedded tori. This decomposition is called \lq\lq \textit{JSJ decomposition}'' after the names of Jaco--Shalen and Johansson, who obtained it independently in the late 1970s (see \cite{jaco1978new}, \cite{johannson1979homotopy}).\\
\medskip

 Before stating the main results about this decomposition, we introduce some notation; for full details, we refer the reader to \cite{martelligeometric}.
Let $M$ be a compact, orientable $3$-manifold, and $S \subset M$ a \emph{properly embedded orientable surface}, \textit{i.e.} an orientable, embedded surface such that $\partial S \subset \partial M$. A disk $D \subset M$ is called a \textit{compressing disc for $S$} if $\partial D= D \cap S$ and $\partial D$ does not bound a disc in $S$. A properly embedded, connected, orientable surface $S \subset M$ with $\chi(S) \le 0$ is \textit{compressible} if it has a compressing disc, \textit{incompressible} otherwise, see Figure \ref{fig:compressiondisc}.
\begin{figure}[h!]
	\centering
	\includegraphics[width=0.7\linewidth]{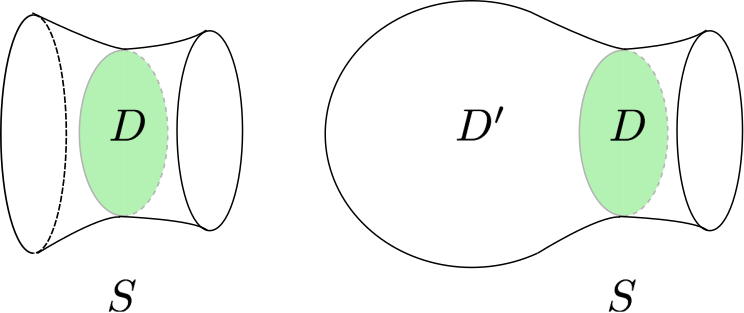}
	\caption[Incompressible surface]{If the surface $S \subset M$ is incompressible, then for every disc $D \subset M$ such that $\partial D \subset S$ (on the left), there exists a disc $D' \subset S$ such that $\partial D'= \partial D$ (on the right). }
	\label{fig:compressiondisc}
\end{figure}

\noindent A properly embedded surface $S \subset M$ is \textit{ $\partial$-parallel} if the inclusion $ i :S \hookrightarrow M$ is isotopic to a map into the boundary $i_0: S' \hookrightarrow \partial M$. 
A surface is \textit{essential} if it is incompressible and not $\partial$-parallel.\\

 In \cite{thurston82} Thurston introduced the following definition.
\begin{defn}
 An irreducible $3$-manifold is \textit{homotopically atoroidal} if every $\pi_1$-injective map from the torus to the irreducible manifold $X$ is homotopic to a map into the boundary.
 \end{defn}
\noindent It turns out that homotopically atoroidal manifolds are, together with Seifert manifolds, the \lq \lq \textit{building blocks}''  of the closed, orientable, irreducible $3$-manifolds. This is made precise in the following statement.

\begin{namedthm}{JSJ Decomposition Theorem}
	Let $Y$ be a closed, orientable, irreducible $3$-manifold. There exist a collection $\mathcal{C}=\{T_1, \dots , T_r\}$ of disjoint, essential tori in the interior of $Y$ enjoing the following properties:
	\begin{enumerate}[label=\roman{*}., ref=(\roman{*})]
		\item every connected component of $Y \setminus \mathcal{C}$ is either a Seifert manifold, or \textit{homotopically atoroidal};
		\item the collection $\mathcal{C}$ is minimal.
	\end{enumerate}
	Furthermore, the collection $\mathcal{C}$ is unique up to isotopy.
\end{namedthm}

Notice that statement \textit{i.} does not provide a dichotomy between Seifert and homotopically atoroidal components of the decomposition. However, if we focus on orientable Seifert fibred manifolds with boundary there are only three manifolds which are both homotopically atoroidal and Seifert:  $K \widetilde{ \times}I, T^2 \times I, D^2 \times S^1$. Moreover, the only one that can possibly appear as a $JSJ$ component of a closed, orientable, irreducible $3$-manifolds is $K \widetilde \times I$.\footnote{Notice that neither $T^2 \times I$ nor $D^2 \times S^1$ can appear as $JSJ$ component, (the first one by minimality of the collection, and the second one because any torus abutting it would not be incompressible).}\\

In \cite{thurston82}, Thurston stated his celebrated Geometrization Conjecture and announced some papers (see \cite{thurstonhyperbolic1}, \cite{thurstonhyperbolic2}, \cite{thurstonhyperbolic3}) proving his conjecture in restriction to Haken manifolds\footnote{A \textit{Haken manifold} is an irreducible $3$-manifold containing a properly embedded, incompressible, $2$-sided surface.}. One of the major steps in the proof is the so called Hyperbolization Theorem, which claims the existence of a complete Riemannian metric, locally isometric to $\mathbb H^3$ in the interior of non-Seifert fibered irreducible  $3$-manifolds.
We refer to \cite{otalhyperbolization} and \cite{otalhyperbolization1} for a complete proof of this theorem. Later on, thanks to the work of Perelman (see \cite{perelman2002entropy}, \cite{perelman2003finite} and \cite{perelman2003ricci}), it was possible to replace the hypothesis \lq\lq Haken manifold'' with \lq\lq with infinite fundamental group'' in the statement of the Hyperbolization Theorem. We state here the more recent version of the Thoerem:

\begin{namedthm}{Hyperbolization Theorem} 
	The interior of a compact, orientable, irreducible, $3$-manifold $X$ with infinite fundamental group admits a complete, hyperbolic metric if and only if the manifold is homotopically atoroidal and not homeomorphic to $K \widetilde{ \times}I$. Moreover, the hyperbolic metric has finite volume if and only if $\partial X$ is a union of tori and $X\not\simeq T^2\times I, D^2 \times S^1$.
\end{namedthm}

\begin{comment}
We remark that, since any compact, irreducible $3$-manifold with toroidal boundary has infinite fundamental group, the Hyperbolization Theorem implies that such a manifold admits a complete, hyperbolic metric if and only if is homotopically atoroidal and not homeomorphic to $K \widetilde{ \times}I$. Moreover, the complete hyperbolic metric has finite volume provided that the manifold has toroidal boundary and is not homeomorphic to $T^2 \times I$. \end{comment} 
In the light of the Hyperbolization Theorem, in the following we will speak about a \textit{JSJ component of hyperbolic type} meaning a $JSJ$ component of a given closed, orientable irreducible $3$-manifold which is homotopically atoroidal and other than $K \widetilde{ \times} I$, and therefore admitting a complete, hyperbolic metric of finite volume.
Hence, given a closed, orientable, irreducible $3$-manifold $Y$, every $JSJ$ component $X$ of $Y$ satisfies one of these three, mutually exclusive conditions:
\begin{enumerate}
	\item $X$ is of hyperbolic type;
	\item $X$ is a Seifert manifold with hyperbolic base orbifold;
	\item $X$ is homeomorphic to $K \widetilde{\times}I$, and has Euclidean base orbifold.
\end{enumerate}
In the next section we shall describe geometric metrics on Seifert fibred JSJ components.

\subsection{Geometric metrics on the interior of  Seifert fibered manifolds}\label{section_geometric_metrics_on_orientable_Seifert_manifolds_with_boundary}$\phantom{aaa}$

\noindent Let $S=S(g,h; (p_1,q_1), \dots (p_k,q_k))$ be an orientable Seifert manifold such that $\chi^{orb}(\mathcal{O}_S) <0$ and having non-empty boundary.
It is possible to endow $int(S)$ with a Riemannian metric locally isometric to $\mathbb{H}^2 \times \mathbb{E}^1$ as follows.\\\\
\noindent Since $\chi^{orb}(\mathcal{O}_S) <0$, we know by Thurston's work that we can consider a discrete, faithful representation $$\bar \rho: \piorb(\mathcal{O}_S) \rightarrow PSL_2(\mathbb{R})$$ Moreover, we can assume (see \cite{liebeck2004fuchsian}) that $\bar \rho(\bar {d_i})$ is a parabolic isometry of $\mathbb{H}^2$. The choice of such a representation is equivalent to endow the orbifold $\mathcal{O}_S$ with a Riemannian metric of finite volume and modelled on $\mathbb{H}^2$ (see Appendix \ref{Appendix_orbifolds} for the definition).  In order to obtain a complete metric of finite volume locally isometric to $\mathbb{H}^2 \times \mathbb{E}^1$ of $\pi_1(S)$ into $\Isom^+ (\mathbb{H}^2 \times \mathbb{E}^1)$ it is sufficient to define a homomorphism $$\rho_v: \pi_1(S) \rightarrow \Isom(\mathbb{E}^1)$$ satisfying the following relations:

\noindent $$[g \ge 0]  \left \lbrace \begin{array} {c}
p_j \cdot \rho_v(c_j)= q_j \cdot\rho_v(f), \mbox{ for all } 1\le j \le k; \\ \sum_{j=1}^{k} \rho_v(c_j)+ \sum_{l=1}^{h} \rho_v(d_l)=0\\ \end{array} \right.$$
\noindent $$[g <0]  \left \lbrace \begin{array}{c}
p_j \cdot \rho_v(c_j)= q_j \cdot\rho_v(f), \mbox{ for all } 1 \le j \le k; \\ 
\sum_{i=1}^{|g|} \rho_v(a_i^2)+ \sum_{j=1}^{k} \rho_v(c_j)+ \sum_{l=1}^{h} \rho_v(d_l)=0.\\
\rho_v(a_i) \cdot \rho_v(f) \cdot \rho_v(a_i^{-1})= - \rho_v(f)  \mbox{, for all } 1 \le i \le |g|\end{array} \right.  $$

\noindent and put $$\rho= (\bar \rho \circ \varphi, \rho_v): \pi_1(S) \rightarrow \Isom^+(\mathbb{H}^2 \times \mathbb{E}^1)$$ Notice that the first set of conditions in the case $g \ge 0$ is satisfied when we put $\rho_v(c_j)= \frac{q_j}{p_j} \rho_v(f)$. The second relation implies that we can choose $\rho_v(d_l)$ for all the boundary loops but one, say $d_{h}$.
In the case where $g <0$, once again the first set of conditions is verified once we put $\rho_v(c_j)= \frac{q_j}{p_j} \rho_v(f)$. 
On the other hand, since $\rho_v(a_i)$ is a reflection with respect to a given point of $\mathbb{E}^1$ (because of the third set of relations), it follows that $\rho_v(a_i^2)= \id$. Hence, the second relation provides the same restrictions on $\rho_v(d_i)$ as in the case $g \ge0$.
We shall now verify injectivity of $\rho$, and the discreteness and freedom of the action of $\rho(\pi_1(S))$ on $\mathbb{H}^2 \times \mathbb{E}^1$.\\\\

\noindent \textit{Injectivity}:
Let us assume that $\gamma_1, \gamma_2 \in \pi_1(S)$ are such that $\rho(\gamma_1)= \rho(\gamma_2)$. Then by the short exact sequence, and by the fact that $\bar{ \rho}$ is faithful, $\gamma_1 \in \gamma_2  \langle f \rangle$, \textit{i.e.}, $\gamma_1= \gamma_2 f^k$. Furthermore, since $\rho_v(\gamma_1)= \rho_v(\gamma_2)$, we have that $k \rho_v(f)=0$, but $\rho_v(f) \not =0 $, thus we conclude that $k=0$, and $\gamma_1= \gamma_2$.\\\\
\noindent \textit{Discreteness}: Let us assume that $\rho(\gamma_n) .o$ converges to a point, say $p$. By discreteness of $\bar{\rho}$, we may assume $\rho(\gamma_n)$ for $n>>0$ only differ for the action on the second factor. This means that $\gamma_n= \gamma_1 \cdot f^{k_n}$, for every $n \in \mathbb{N}$, and the discreteness follows from the discreteness of the action of $\langle \rho_v( f ) \rangle$ --- which acts as a translation. \\\\
\noindent \textit{Freedom}: Assume $\rho(\gamma). o=o$; then, $(\bar{\rho}\circ \varphi (\gamma). \bar{o}, \rho_v(\gamma)(0))= (\bar{o},0)$. The condition on the first factor implies that $\bar{\rho}\circ \varphi (\gamma)$ is conjugate for a suitable $k$ to one the $c_i^k$ (which are sent in elliptic elements of $PSL_2(\R)$ ), while the condition on the second factor implies that $\rho_v(\gamma)= \Id$, which is a contradiction because $\rho_v(c_i^k)$ is a translation by $k\frac{p_i}{q_i}\rho_v(f)$, thus $k=0$.\\

\noindent Let now $S$ be the twisted interval bundle on the Klein bottle.
This is the only case where the base orbifold has not negative Euler orbifold characteristic, namely $\chi^{orb}(\mathcal{O}_S)=0$.
In this case, its interior can be endowed with a complete, Euclidean metric. Indeed, recalling that $K \widetilde{ \times} I$ can be fibred as $Mb\widetilde{\times} S^1$, we can take a representation $\bar{ \rho}:\pi_1(Mb) \simeq \mathbb{Z}\rightarrow \Isom(\mathbb{E}^2)$, by choosing $\bar{\rho}(\bar a)$ to be a glide-reflection. By the presentation recalled in \ref{section_seifert_fibered}, where $ \bar a$ is the generator of $\pi_1(Mb)$, we define the representation of $\pi_1(Mb\widetilde{ \times} S^1)$ as follows:

$$ \begin{array}{cccc}
\rho: & \pi_1(Mb \widetilde{ \times }S^1) & \longrightarrow & \Isom(\mathbb{E}^2) \times \Isom(\mathbb{E}) \\
& a & \longmapsto & (\bar{\rho}( \bar a), \sigma(a))\\
& f& \longmapsto & (\mbox{ Id }\;, \rho_v(f))
\end{array}$$

\noindent where we have defined $\sigma(a)$ as the reflection of $\mathbb{E}^1 \simeq \langle z \rangle $ with respect to the origin ---in particular, an orientation-reversing isometry--- so that the pair $(\bar{ \rho}( \bar a), \sigma(a)) \in \Isom^+(\R^3)$ and $\rho_v(f)$ is a translation.
Faithfulness, discreteness and freedom of the representation are readily seen.

\subsection{Non-positively curved metrics with totally geodesic boundary on Seifert fibred manifolds}\label{section_Eberlein}

Let $S$ be a  Seifert fibred manifold with  hyperbolic base orbifold $\mathcal O_S$. Let $\bar{g}_0$ be a non-positively curved metric with totally geodesic boundary on $\mathcal O_S$. Such a metric provides an injective homomorphism  $ \rho_{o}:\piorb(\mathcal O_S) \rightarrow \mathrm{Isom}(\widetilde{\mathcal O_S},\tilde{\bar g}_0)$, whose image acts discretely and cocompactly on $(\widetilde{\mathcal O_S},\tilde{\bar g}_0)$.
 As explained in Section \ref{section_geometric_metrics_on_orientable_Seifert_manifolds_with_boundary}, such a representation can be extended to an injective homomorphism  $\rho:\pi_1(S)\f \mathrm{Isom}^+((\widetilde{\mathcal O}_S, \tilde{\bar g}_0 )\times \mathbb E^1)$, whose image acts freely, discretely and cocompactly on $(\widetilde{\mathcal O_S},\tilde{\bar g}_0)\times\mathbb E^1$. 
 This can be done similarly to what we did to define the geometric metric on $int(S)$ in \S\ref{section_geometric_metrics_on_orientable_Seifert_manifolds_with_boundary}. Namely, we consider a homomorphism $\rho_v$ satisfying the conditions given in Section \ref{section_geometric_metrics_on_orientable_Seifert_manifolds_with_boundary} and check that the proof of the properties of the resulting $\rho=(\rho_o, \rho_v)$ work as well in this case. Moreover, notice that the non-positively curved Riemannian metric on $S$ induced by $\rho$ has totally geodesic boundary: indeed, by construction $\partial ((\widetilde{\mathcal{O}_S}, \tilde{\bar g}_0) \times \mathbb{E}^1)= \partial(\widetilde{\mathcal{O}_S}, \tilde{\bar g}_0) \times \mathbb{E}^1$.
 On the other hand, since $(\mathcal{O}_S, g_o)$ has totally geodesic boundary we know that $(\partial \widetilde{O}_S, \tilde{\bar g}_0) \times \mathbb{E}^1 $ is a totally geodesic submanifold, and $\rho_o(\pi_1(S))$ globally preserves $\partial (\widetilde{\mathcal{O}_S}, \tilde{\bar g}_0)$. Hence, $\rho (\pi_1(S))=(\rho_o, \rho_v)(\pi_1(S))$ globally preserves $(\partial(\widetilde{O}_S, \bar g_0 )) \times \mathbb{E}^1$; so  the $\partial(S, g_0)$ is a totally geodesic submanifold of $(S, g_0)$.
   The resulting non-positively curved metric with totally geodesic boundary $g_0$ on $S$ is such that the map $p:S\f \mathcal O_S$ obtained by collapsing the fibres is a Riemannian  orbifold submersion from $(S,g_0)$ to $(\mathcal O_S, \bar g_0)$. 
     It is worth to notice that, in the previous procedure is not necessary to ask that $(\mathcal O_S,  g_o)$ is non-positively curved.\\

Conversely let us consider any non-positively curved metric $g_0$ with totally geodesic boundary on a compact, orientable, Seifert fibred manifold S with hyperbolic base orbifold, {\it i.e.} an injective homomorphism $\rho:\pi_1(S)\rightarrow \mathrm{Isom}^+(\widetilde S, \widetilde{g_0})$, whose image acts cocompactly, discretely and freely.  
Let us consider the action $\rho(\pi_1(S))\curvearrowright(\widetilde S, \widetilde{g_0})$. By the explicit presentation of $\pi_1(S)$ provided in \S\ref{section_seifert_fibered}, it is readily seen that the infinite cyclic group $\langle f\rangle$ generated by the regular fibre is the unique maximal normal abelian subgroup of maximal rank. Hence, by the work of Eberlein (\cite{eberlein1983euclidean}) we know that $\langle f\rangle$ is the subgroup of Clifford translations in $\rho(\pi_1(S))$. Hence, the Riemannian universal covering $(\widetilde{S}, \widetilde{g_0})$ splits as a Riemannian product $(\tilde X, \tilde g_X)\times\mathbb E^1$, where the dimension of the Euclidean factor equals the maximal rank of the normal abelian subgroup $\langle g \rangle$, and $(\tilde X, \tilde g_X)$ is a simply connected, non-positively curved surface with totally geodesic boundary. Moreover being of Clifford translations, the elements  of $\langle f \rangle$ are precisely the elements of $\pi_1(S)$ acting on 
$(\tilde X, \tilde g_X)\times\mathbb E^1$ as the identity on the first factor and as translations on the second factor. 
%Notice that the elements in $\langle f\rangle$ are the unique elements acting on $(\tilde X, \tilde g_X)\times\mathbb E^1$ as the identity on the first factor. Indeed, if this was not the case let $w\in \pi_1(S)\smallsetminus\langle f\rangle$ be such an element. Since $\rho$ is an isomorphism on its image, it would follow from the structure of the isometries of $(\tilde X, \tilde g_X)\times\mathbb E^1$ that  $\langle w\rangle$ is a normal abelian subgroup of $\pi_1(S)$, which is not possible because $\langle f\rangle$ is the unique maximal normal abelian subgroup. 
Because of the splitting $(\widetilde{S},\widetilde{g_0})\cong(\tilde X, \tilde g_X)\times\mathbb E^1$  any isometry of $(\widetilde{S},\widetilde{g_0})$ can be written as a pair of isometries, see \cite{bridson2013metric}, Theorem 7.1. In particular, the homomorphism $\rho$ can be written as $(\rho_h, \rho_v)$ where $\rho_h:\pi_1(S)\rightarrow\mathrm{Isom}(\widetilde X, \tilde{g}_X)$ and $\rho_v:\pi_1(S)\rightarrow\mathrm{Isom}(\mathbb E^1)$.  We shall now verify that there exists an injective homomorphism $\bar\rho_h:\piorb(\mathcal O_S)\rightarrow\mathrm{Isom}(\widetilde X, \tilde g_X)$, whose image acts on $(\widetilde X,\tilde g_X)$ discretely and cocompactly with quotient $(X,g_X)$, and a map $q:(S,g_0)\f (X,g_X)$, which makes the following diagram commute\\

\begin{center}
\begin{tikzcd}
	\rho(\pi_1(S))\curvearrowright (\widetilde S, \tilde g_0) \arrow[r, "\tilde q"] \arrow[d]
	& (\widetilde{X}, \tilde{g}_X)\curvearrowleft {\bar\rho}_h(\piorb(\mathcal O))\arrow[d] \\
	(S,g_0) \arrow[r, "q", dashed]
	&( X, g_X) \end{tikzcd}
\end{center}

\noindent where $\tilde q$ is the projection on the first direct factor of $(\widetilde S, \tilde g_0)$.\\
First of all we remark that $\rho_h$ is constant on the fibres of the surjective homomorphism $\varphi: \pi_1(S) \rightarrow \piorb(\mathcal O_S)$ provided by the short exact sequence (see \S \ref{section_short_exact_sequence}), because $f$ and its powers act as the identity on the first factor. Thus, there exists a map $\bar \rho _h$ such that the following diagram commutes:
\begin{center}
	\begin{tikzcd}

	\pi_1(S) \arrow[r, "\rho_h"] \arrow[d, "\varphi"]
		& \mathrm{Isom}(\widetilde X, \tilde g_X)\\
		\piorb(\mathcal{O}_S )\arrow[ru, "\bar \rho_h", dashed] &
	\end{tikzcd}
\end{center}
Notice that $\bar \rho_h$ is a homomorphism, because both $\rho_h$ and $\varphi$ are.
In order to check that the action of $\bar \rho_h(\piorb(\mathcal{O}_S))$
 is discrete, we argue by contradiction. If this was not the case, there would exists a sequence of elements $\{\bar a_n\}_{n \in \mathbb N} \subset \piorb(\mathcal{O}_S)$ such that $\bar\rho_h(\bar a_n)(\bar x_0) \rightarrow \bar x_{\infty}$. For every $n$, let $a_n$ be an element such that $\varphi(a_n)=\bar a_n$. Let $x_0\in \tilde q^{-1}(\bar x_0)$; for every $n \in \mathbb N$, up to compose with a suitable power $k_n$ of $\rho(f)$ we may assume that the projection on the second factor of $\rho(a_n \cdot f^{k_n})(x_0)$ is contained in the interval $[- \rho_v(f), \rho_v(f)]$. Since $\tilde q(\rho(a_nf^{k_n}). x_0)=\bar\rho_h(\bar a_n).\bar x_0\f \bar x_\infty$, the sequence $\rho(a_n \cdot f^{k_n})(x_0)$ would contain a convergent sequence and this is a contradiction because the action of $\rho(\pi_1(S))$ on $(\widetilde{S}, \tilde g_0)$ is discrete. \newline
 Now we remark that by construction the map $\tilde q: (\widetilde S, \tilde g_0)\rightarrow (\widetilde X, \tilde g_X)$ is equivariant with respect to the surjective homomorphism $\bar \rho_h\circ\varphi\circ\rho^{-1}:\rho(\pi_1(S))\rightarrow \bar\rho_h(\piorb(\mathcal O_S))$. Hence there exists the required map $q:(S,g_0)\rightarrow(X,g_X)$.\newline
 Since $q$ is surjective and $(S,g_0)$ is compact it is readily seen that the action  of $\bar\rho_h(\mathcal O_S)$ on $(\widetilde X,\tilde g_X)$ is cocompact.\\\\
 
Hence, the action $\bar\rho_h(\piorb(\mathcal O_S))\curvearrowright (\widetilde X, \widetilde{g}_X)$ defines a non-positively curved metric with totally geodesic boundary $\bar g_0$ on the base orbifold $\mathcal O_S$. It is also clear that $\rho=(\bar\rho_h\circ\varphi, \rho_v)$. In particular the map which collapses the fibres of $S$ to points is a Riemannian orbifold submersion (see Appendix \ref{Appendix_orbifolds} for the definition) between $(S,g_0)$ and $(\mathcal O_S, \bar g_0)$.

%Let $S=S(g,h; (p_1,q_1),\dots(p_k,q_k))$ be a compact, orientable, Seifert manifold with non-empty boundary which  either has hyperbolic base orbifold or is homeomorphic to $K\widetilde\times I$. By \cite{leeb19953} we know that such manifolds support non-positively curved metrics. In the previous section we have shown how to put a complete, finite volume metric, locally isometric to $\mathbb H^2\times\mathbb E^1$ on the interior of compact, orientable, Seifert fibred manifolds with boundary having hyperbolic base orbifolds. Similarly (see for example \cite{ohshika1987teichmuller}) it is possible to endow the Seifert fibred manifolds themselves of  metrics locally isometric to $\mathbb H^2\times \mathbb E^1$, with totally geodesic boundary. 

\chapter[Bounding MinEnt from above: irreducible manifolds]{Bounding MinEnt from above:\\ irreducible $3$-manifolds} \label{section_a_conjectural_minimizing_sequence_irreducible}
In this chapter we shall prove the following theorem.

\begin{thm} \label{thm_construction_sequence_on_irreducible_Y}
	Let $Y$ be a connected, closed, orientable, irreducible $3$-manifold, denote by $X_1, \dots, X_k$ , $k \ge 1$, its hyperbolic $JSJ$ components and with $hyp_i$ the complete, finite-volume hyperbolic metric on $int(X_i)$ ---unique up to isometry. Then, there exists a $\bar\delta>0$ and a family of $C^2$ Riemannian metrics $\{g_{\delta}\}_{\delta\in(0,\bar\delta]}$ such that:
	\begin{enumerate}[label=\roman{*}., ref=(\roman{*})]
		\item $-1-\delta \le \sigma_{\delta} \le 0,$ where we denoted by $\sigma_\delta$ the sectional curvatures of the metrics $g_{\delta}$;
		\item $\Vol(Y, g_\delta)\overset{\delta\rightarrow0}\longrightarrow \sum_{i=1}^k\Vol(int(X_i), hyp_i)$.
	\end{enumerate}	
	As a consequence,
	\small
	$$ \minent(Y)\le 2  \left( \sum_{i=1}^{k} \Vol(int(X_i),hyp_i)\right)^{1/3}.$$
	\normalsize
\end{thm}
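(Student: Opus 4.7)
The plan is to build $g_\delta$ piece by piece on the JSJ decomposition of $Y$: modify the complete hyperbolic metric $hyp_i$ on each hyperbolic component $X_i$ near its cusps, and use a suitably volume-collapsed non-positively curved metric on each Seifert component, gluing everything along the JSJ tori with totally geodesic flat boundary on both sides. Once such a family $\{g_\delta\}$ is built, the final inequality is immediate: in dimension $n=3$, a closed Riemannian manifold with $\sigma_\delta\ge-(1+\delta)$ satisfies the volume-comparison estimate $\ent(Y,g_\delta)\le 2\sqrt{1+\delta}$, so that
\[
\minent(Y)^3\le \ent(Y,g_\delta)^3\Vol(Y,g_\delta)\le 8(1+\delta)^{3/2}\Vol(Y,g_\delta),
\]
and property (ii) together with $\delta\to 0$ yields the stated bound.

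For Step 1 (hyperbolic pieces), recall that each cusp end of $(int(X_i),hyp_i)$ is isometric to a warped product $\mathbb{T}^2_i\times[0,\infty)$ with metric $e^{-2t}g_{\mathrm{flat},i}+dt^2$. I would fix a large truncation parameter $t_0=t_0(\delta)$ and replace the warped factor $e^{-t}$ on $[t_0,t_0+\ell(\delta)]$ with a $C^2$ function $\varphi_\delta(t)$ that agrees with $e^{-t}$ to second order at $t_0$, is convex and monotone, and becomes constant equal to $\varphi_\delta(t_0+\ell(\delta))=:\eta_i(\delta)$ on a terminal subinterval. For a warped product $\varphi^2 g_{\mathrm{flat}}+dt^2$ the sectional curvatures are $-\varphi''/\varphi$ in the $(t,\mathrm{horizontal})$-planes and $-(\varphi'/\varphi)^2$ in horizontal planes, so by choosing $\varphi_\delta$ so that $\varphi''/\varphi\in[0,1+\delta]$ and $(\varphi'/\varphi)^2\in[0,1+\delta]$ I preserve the curvature bounds while obtaining a flat, totally geodesic boundary torus $\mathbb{T}^2_i(\delta)$ of area proportional to $\eta_i(\delta)^2$. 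The cost in volume of this modification is $O\!\left(\ell(\delta)\cdot e^{-2t_0(\delta)}\right)$, which is sent to $0$ by suitable choice of $t_0(\delta),\ell(\delta)$.

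For Step 2 (Seifert pieces), on each Seifert component $X_j$ (including the exceptional $K\widetilde\times I$) the discussion of Section \ref{section_Eberlein} provides non-positively curved metrics with totally geodesic boundary whose universal cover splits as $(\widetilde X_j,\tilde g_X)\times \mathbb{E}^1$, with the $\mathbb{E}^1$ factor corresponding to the regular fibre. Tuning the length of the fibre and, independently, the lengths of the boundary circles of $\mathcal O_{X_j}$ (which are free parameters in the hyperbolic or Euclidean metric on the base orbifold), I can arrange that each boundary flat torus of $X_j$ matches isometrically (after an isotopy realizing the JSJ gluing map) the corresponding torus produced by Step 1 or by an adjacent Seifert piece. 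Sectional curvatures remain in $[-1,0]$ throughout (products of non-positively curved factors with a line), so gluing at totally geodesic flat tori on both sides produces a $C^2$ metric $g_\delta$ on $Y$ with $\sigma_\delta\in[-1-\delta,0]$. Finally, by letting the $\mathbb{E}^1$-direction collapse as $\delta\to0$ the volume of each Seifert piece tends to $0$, so that $\Vol(Y,g_\delta)\to\sum_{i=1}^k\Vol(int(X_i),hyp_i)$, establishing (ii).

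The main obstacle is Step 1 combined with the boundary matching: one must construct $\varphi_\delta$ explicitly so that the joint constraints on $\varphi,\varphi',\varphi''$ (two-sided curvature pinching, $C^2$ matching at $t_0$, constancy at $t_0+\ell(\delta)$) can all be met simultaneously, and then coordinate the boundary parameters $\eta_i(\delta)$ with the Seifert-side fibre length and orbifold-circle length so that every JSJ torus of $Y$ admits a single compatible flat metric on both sides. Only after these compatibility conditions have been solved globally on $Y$ can we conclude that the family $\{g_\delta\}$ has the required curvature and volume asymptotics; the rest of the argument is essentially the comparison estimate recalled above.
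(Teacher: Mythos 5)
Your overall strategy coincides with the paper's: modify the complete hyperbolic metric on each hyperbolic JSJ component near its cusps, put volume-collapsing non-positively curved metrics on the Seifert components (fibre direction shrinking via the splitting $\widetilde{X}_j\times\mathbb{E}^1$), glue along flat totally geodesic boundary tori, and finish with Bishop--G\"unther. However, Step 1 as written has a genuine gap.

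You modify the hyperbolic cusp metric $e^{-2t}g_{\mathrm{flat},i}+dt^2$ by replacing $e^{-t}$ with a single scalar function $\varphi_\delta(t)$. This \emph{uniformly} rescales the cross-sectional tori, so the boundary flat torus you obtain has the same conformal type (modulus) as the horospherical cross-section $g_{\mathrm{flat},i}$, which is rigidly determined by Mostow. But the flat metric $h_i$ that must live on that JSJ torus is forced by the gluing to an adjacent component and, after applying Leeb's Proposition 2.3/2.5, will in general have a \emph{different} modulus than $g_{\mathrm{flat},i}$. Tuning the terminal value $\eta_i(\delta)$ (a single scalar) and the Seifert-side fibre and boundary-circle lengths cannot bridge this discrepancy: rescaling a flat torus never changes its shape. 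So the boundary matching you defer to at the end of Step 2 is in fact unachievable with your $\varphi_\delta$-construction. The paper's Lemma \ref{lemma_change_of_conformal_type} fixes this by performing an \emph{anisotropic} interpolation with two different warping functions $\eta_{a_i}^\delta(t)$ and $\eta_{b_i}^\delta(t)$ (one for each eigendirection of $h_i$ relative to $g_{\mathrm{flat},i}$), gradually deforming the cross-section to the desired modulus while keeping $\sigma\in[-1-\delta,-1]$; only after that deformation is completed does the paper apply the single-function interpolation (Proposition \ref{prop_interpolation}) to flatten the cusp, which is the part your $\varphi_\delta$ corresponds to. Without the anisotropic preliminary step, the hyperbolic pieces cannot be made to meet the JSJ-compatible flat metrics required for the gluing, so the construction does not close up to a metric on all of $Y$.

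A secondary remark: even granting the matching, you would still need to produce $\varphi_\delta$ explicitly subject to the simultaneous constraints ($C^2$, monotone, convex, constant terminal, $\varphi''/\varphi$ and $(\varphi'/\varphi)^2$ bounded by $1+\delta$, terminal value a prescribed small number). This is the content of Proposition \ref{prop_interpolation}; it is nontrivial (it is proved in the appendix via two rounds of $C^2$-smoothing), and your proposal only asserts its plausibility, which is acceptable for a proof outline but should be flagged as a lemma requiring a proof.
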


\begin{rmk}[About graph manifolds]
	Notice that we shall deal only with (closed orientable) irreducible $3$-manifolds having at least one $JSJ$ component of hyperbolic type. Indeed, the case of Seifert and graph manifolds has already been considered by Anderson and Paternain, building on the work by Cheeger and Gromov (see \cite{cheeger1985collapsing}).
	Actually, for graph manifolds the existence of a volume-collapsing sequence of metric with uniformly bounded sectional curvatures is sufficient to conclude that the Minimal Entropy of these manifolds is zero. 
\end{rmk}

Our construction relies on a celebrated work by Leeb, see \cite{leeb19953}. In that paper, the author addresses the question: which $3$-manifolds admit a non-positively curved metric? To answer this question Leeb first investigates when, on each fixed component $X$ of the $JSJ$ decomposition of $Y$, any prescribed flat metric on the boundary tori of $X$ can be extended to a global non-positively curved metric on $X$ with totally geodesic boundary.
The answer depends on the kind of $JSJ$ component we are dealing with: namely, the author proves that, while the hyperbolic components are \textit{flexible} ---\textit{i.e.}, any flat metric on the boundary can be extended to a global non-positively curved metric on $X$--- this is not true for the Seifert components (cf. Lemmata 2.3, 2.4, 2.5 in \cite{leeb19953}). However, it turns out that every \emph{graph manifold} \footnote{A \emph{graph manifold} is  a $3$-manifold such that every component in its $JSJ$ decomposition is a Seifert manifold.} with boundary admits a non-positively curved metric (cf. Theorem 3.2 in \cite{leeb19953}), and such a metric can be obtained by suitably gluing chosen metrics on each Seifert component.\\

In this chapter, we shall suitably modify Leeb's construction. In order to control the sectional curvatures and the convergence of the volumes, (two properties that Leeb does not take care of), particular attention will be paid in performing the gluings. We shall obtain this simultaneous control on the sectional curvatures and volumes at the price of loosing smoothness of the metrics $g_\delta$. Notice that, in order to compute the invariant $\minent(Y)$ we are allowed to consider more general Riemannian metrics ({\it pl-metrics} see \cite{babenko1993asymptotic}, Lemma 2.3); in particular, a $C^2$-family of Riemannian metrics will be fine for our purposes.
 It also worth noticing that the diameters of the metrics $g_\delta$ diverge as $\delta\rightarrow 0$.
 
 \section[The Extension Problem]{The Extension Problem} \label{section_the_extension_problem}
 
 In this section we shall briefly present the results proved by Leeb in his paper (\cite{leeb19953}), to get some acquaintance with the subject. In his work \cite{leeb19953} Leeb raises the following question:\\ \\
 \noindent \textit{Given an irreducible 3-manifold, can we endow it with a non-positively curved metric?}\\\\
Leeb's paper provides an affirmative answer to the question when the irreducible $3$-manifold considered either admits at least one hyperbolic $JSJ$ component or  is a graph-manifold with at least one boundary component. \footnote{
The problem of finding a complete criterion to detect those closed graph manifolds admitting a non-positively curved metric has been successively solved by Buyalo and Svetlov in \cite{buyalo2002homological}.}
In fact, he rather works on an equivalent problem, the so-called \lq \lq \textit{Extension Problem}''. Namely:\\
 
 \noindent \textit{Extension Problem: Given an irreducible $3$-manifold whose boundary components are tori, and a collection of flat metrics on the boundary tori, can we extend it to a non-positively curved metric on the whole manifold?}\\
 
 Following Leeb, we shall say that a collection of flat metrics $\{h_1,...,h_l\}$ defined on the connected components $T_1,...,T_l\subset \partial X$ of a given irreducible $3$-manifold $X$ with non-empty toroidal boundary is  {\it compatible} if the collection can be extended to a non-positively curved metric on $X$ with totally geodesic boundary. Hence, solving the \lq\lq Extension Problem" relative to a given collection of flat metrics on the boundary of an irreducible $3$-manifold is actually equivalent to understanding whether this collection  is compatible or not.
 In order to solve the {\it Extension Problem} he first investigates the problem in restriction to those manifolds which can appear as $JSJ$ components of an irreducible $3$-manifold with empty or toroidal boundary. He found out a rather different behaviour of the hyperbolic $JSJ$ components and of the Seifert fibred components. Namely, whereas the hyperbolic $JSJ$ components exhibit a strong {\it flexibility}, Seifert fibred $JSJ$ components show a certain {\it rigidity}, partly due to the structure of circle bundle. In detail, he proved the following result for hyperbolic $JSJ$ components:
 
 %Let $Y $ be a closed, orientable, non geometric $3$-manifold having at least one hyperbolic $JSJ$ component.
 %It turns out that the Extension Problem is solvable for every atoroidal component of $Y$ (hence in particular for every hyperbolic piece arising from the $JSJ$ decomposition), and for every maximal graph component ---with boundary. More precisely, for atoroidal manifolds Leeb modifies the hyperbolic structure in such a way to solve the extension problem, obtaining the following statement:
 \begin{prop} [Proposition 2.3 \cite{leeb19953}]
 	Let $X$ be a hyperbolic $JSJ$ component of an irreducible $3$-manifold $Y$. Then any collection of flat metrics $\{h_1,.., h_l\}$ on the connected components of $\partial X$ is compatible.
\end{prop}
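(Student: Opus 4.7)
The plan is to start from the complete, finite-volume hyperbolic metric $hyp$ on $\mathrm{int}(X)$---which exists by the Hyperbolization Theorem since $X$ is homotopically atoroidal and not homeomorphic to $K\widetilde{\times}I$---truncate each cusp at a chosen height, and attach to each resulting boundary torus a collar equipped with a carefully designed non-positively curved metric realizing the prescribed flat metric $h_i$ with totally geodesic boundary. Each cusp of $(\mathrm{int}(X), hyp)$ has a standard neighbourhood isometric to $(T_i\times[0,\infty),\,e^{-2t}\eta_i+dt^2)$ for some flat metric $\eta_i$ on $T_i$ determined by $hyp$. The hyperbolic metric in the interior of $X$ will be kept (possibly after a global rescaling, see below), and the modification will take place only in a collar neighbourhood of each boundary torus.

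Next, I would fix a basis of the covering plane $\mathbb R^2$ that simultaneously diagonalises the two positive-definite symmetric forms $\eta_i$ and $h_i$, reducing the ansatz for the collar metric to the doubly-warped product
\begin{equation*}
g_{\mathrm{col}}=f_1(t)^2\,dx^2+f_2(t)^2\,dy^2+dt^2,\qquad t\in[t_1,t_1+L],
\end{equation*}
for two positive profile functions $f_1,f_2\in C^2([t_1,t_1+L])$. The $C^2$-matching with the hyperbolic cusp at $t=t_1$ fixes $f_j(t_1),f_j'(t_1),f_j''(t_1)$ from the cusp shape operator $-\mathrm{Id}$, while the outer boundary data $f_j(t_1+L)=\sqrt{\mu_j}$ (from the diagonalised eigenvalues of $h_i$) and $f_j'(t_1+L)=0$ encode the prescribed flat boundary metric and total geodesicity. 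A direct Koszul computation for this ansatz shows that $\sigma\le 0$ is equivalent to the convexity of each $f_j$ together with the requirement that $f_1',f_2'$ share sign; both conditions are automatic if the $f_j$ are chosen convex and non-increasing, since the initial slopes are negative (inherited from the cusp) and the terminal slopes vanish.

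The hard part is that a convex non-increasing profile can decrease by at most $|f_j'(t_1)|\cdot L$ on $[t_1,t_1+L]$, so arbitrary prescribed endpoint values $\sqrt{\mu_j}$ need not be admissible without further freedom. To handle this I would introduce two degrees of freedom. First, globally rescale the interior hyperbolic metric, $hyp\rightsquigarrow \mu^2\,hyp$ for large $\mu>0$: this merely multiplies sectional curvatures by $\mu^{-2}$ (preserving non-positivity) and inflates the inner values $f_j(t_1)$. Second, take the collar length $L$ large enough to dissipate the initial slope down to zero. For any $h_i$ one can then choose $(\mu,t_1,L)$ making the endpoint conditions compatible with convexity, and explicit $C^2$ splines---a convex piece smoothly joined to a constant-slope tail---realise the profiles. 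Gluing the collar to the rescaled hyperbolic interior via the $C^2$-matching at $t=t_1$ yields the desired non-positively curved $C^2$ metric on $X$ with totally geodesic boundary inducing exactly the prescribed $h_i$ on each $T_i$, proving that the collection $\{h_1,\dots,h_l\}$ is compatible.
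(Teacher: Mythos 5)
Your construction is essentially the same warped-product strategy the paper itself uses for its related results on hyperbolic pieces (Lemma \ref{lemma_change_of_conformal_type} and Theorem \ref{thm_metrics_hyperbolic_pieces}): simultaneously diagonalise the cusp cross-section metric and $h_i$, reduce to a doubly-warped ansatz $f_1(t)^2dx^2+f_2(t)^2dy^2+dt^2$, and exploit that $\sigma\le 0$ is equivalent to convexity of the $f_j$ plus $f_1'f_2'\ge 0$. Where you diverge is in how you create room for the interpolation. You inflate $hyp$ to $\mu^2\,hyp$ so that the truncated cusp cross-section dominates $h_i$; the paper instead shrinks the target, replacing $h_i$ by $\zeta^2 h_i$ with $\zeta^2 a_i,\zeta^2 b_i<1$, and then invokes the fact that compatibility is invariant under rescaling (the set of compatible collections is a cone) to recover the result for $h_i$ itself. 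Both normalisations are legitimate and dual to each other. Two points deserve care in your version. First, after the rescaling the cusp shape operator is $-\mu^{-1}\mathrm{Id}$, not $-\mathrm{Id}$: the $C^2$-matching data at $t_1$ become $f_j'(t_1)=-\mu^{-1}f_j(t_1)$ and $f_j''(t_1)=\mu^{-2}f_j(t_1)$, which actually makes the convex continuation easier but should be stated correctly. Second, the spline step you sketch ("a convex piece smoothly joined to a constant-slope tail") is the only genuinely technical point and should be exhibited explicitly; for the proposition as stated it is elementary since you only need $\sigma\le 0$, whereas the paper's version of this interpolation (Proposition \ref{prop_interpolation} in Appendix \ref{appendix_proposition}) is markedly more delicate precisely because it additionally enforces the pinching $-(1+2\delta)^2\le\sigma\le 0$ and the volume convergence needed for the $\mathrm{MinEnt}$ estimates. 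So your argument correctly recovers Leeb's Proposition 2.3, but it intentionally discards the quantitative curvature and volume control that the paper's refined construction is designed to carry.
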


We shall state now Leeb's result concerning the {\it Extension Problem}  for Seifert fibred manifolds (\cite{leeb19953}, Lemma 2.5) using the notation introduced in Chapter \ref{chapter_basics}.
\begin{prop}
	Let $S=S(g,l;(p_1,q_1),...,(p_k,q_k))$ be a Seifert fibred manifold with non-empty boundary $\partial S= T_1 \cup \dots, T_l$. Let $\pi_1(T_i)=\langle d_i, f\rangle$. A collection of metrics $h_1$,.., $h_l$ on the boundary components of $S$ define scalar products $\sigma_i$ in restiction to $\pi_1(T_i)$. The collection $\{h_1,..,h_l\}$  is compatible if and only if the following conditions hold:
	\begin{equation}
	 \sigma_1(f,f)=\cdots=\sigma_l(f,f)=\| f\|^2
	\end{equation}
	\vspace{-5mm}
	\begin{equation}
	\sum_{i=1}^l \sigma_i(d_i, f)=-\left(\sum_{j=1}^k\frac{q_j}{p_j}\right)\cdot\| f\|^2
	\end{equation}
\end{prop}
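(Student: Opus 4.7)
The plan is to translate the problem into the language developed in Section \ref{section_Eberlein} and then read the two conditions off the abelianisation of the long relation in the Seifert presentation (\ref{equation_presentation_orientable})--(\ref{equation_presentation_not_orientable}), applied to the \emph{vertical} part $\rho_v$ of the holonomy. We treat the generic case where the base orbifold $\mathcal O_S$ is hyperbolic; the exceptional cases $D^2\times S^1$, $K\widetilde\times I$, $T^2\times I$ are either trivial or handled analogously with Euclidean base.

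\textbf{Necessity.} Assume the collection $\{h_1,\dots,h_l\}$ is compatible and let $g_0$ be an extension to a non-positively curved metric on $S$ with totally geodesic boundary, with holonomy $\rho:\pi_1(S)\to\mathrm{Isom}^+(\widetilde S,\tilde g_0)$. By the Eberlein-type analysis recalled in Section \ref{section_Eberlein}, the universal cover splits isometrically as $(\widetilde X,\tilde g_X)\times\mathbb E^1$, the subgroup $\langle f\rangle$ generated by the regular fibre consists precisely of the Clifford translations, and $\rho=(\bar\rho_h\circ\varphi,\rho_v)$ with $\rho_v:\pi_1(S)\to\mathrm{Isom}(\mathbb E^1)$. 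Each boundary torus $T_i$ is the quotient of $\gamma_i\times\mathbb E^1$ (where $\gamma_i\subset\partial\widetilde X$ is a boundary geodesic preserved by $\bar\rho_h(\bar d_i)$) by the lattice generated by $\rho(d_i)$ and $\rho(f)$. A direct inspection of the induced flat metric on $T_i$ gives
\begin{equation*}
\|f\|^2_{\sigma_i}=|\rho_v(f)|^2,\qquad \sigma_i(d_i,f)=\rho_v(d_i)\cdot\rho_v(f),
\end{equation*}
the first identity being independent of $i$, which is condition (1). Applying the homomorphism $\rho_v$ (whose image is abelian) to the long relation $\prod_{i=1}^{g}[a_i,b_i]\prod_{j=1}^{k}c_j\prod_{l=1}^{h}d_l=1$ kills the commutator term, and using $\rho_v(c_j)=\frac{q_j}{p_j}\rho_v(f)$ (which follows from $c_j^{p_j}=f^{q_j}$) yields $\sum_l\rho_v(d_l)=-\bigl(\sum_j q_j/p_j\bigr)\rho_v(f)$. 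Multiplying by $\rho_v(f)$ gives condition (2). The non-orientable case $g<0$ is analogous, since each relation $\rho_v(a_i)\rho_v(f)\rho_v(a_i^{-1})=-\rho_v(f)$ forces $\rho_v(a_i^2)=0$.

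\textbf{Sufficiency.} Assume (1)--(2) hold. The plan is to reverse-engineer the vertical representation and then realise the horizontal part via Teichmüller theory. Set $v:=\|f\|>0$, define $\rho_v(f)$ as translation by $v$, $\rho_v(c_j)$ as translation by $\frac{q_j}{p_j}v$, and $\rho_v(d_i)$ as translation by $\sigma_i(d_i,f)/v$; condition (2) guarantees compatibility with the long relation, while the defining relation $c_j^{p_j}=f^{q_j}$ is satisfied by construction. The horizontal lengths $\ell_i$ of the boundary geodesics must then satisfy $\ell_i^2=\|d_i\|^2_{\sigma_i}-\sigma_i(d_i,f)^2/v^2$, which is strictly positive by the Cauchy--Schwarz inequality applied to the positive definite form $\sigma_i$ on the linearly independent vectors $d_i,f$. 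It remains to produce a non-positively curved orbifold metric $\bar g_0$ on $\mathcal O_S$ with totally geodesic boundary realising the prescribed boundary lengths $\ell_1,\dots,\ell_l$; this is exactly a point in the (non-empty) Teichmüller space of hyperbolic structures with geodesic boundary of prescribed length on $\mathcal O_S$. The product construction of Section \ref{section_geometric_metrics_on_orientable_Seifert_manifolds_with_boundary}, applied to $(\bar\rho_h\circ\varphi,\rho_v)$, then produces the sought non-positively curved metric on $S$ with totally geodesic boundary, inducing exactly $h_i$ on each $T_i$.

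\textbf{Main obstacle.} The delicate point is the realisation step: ensuring that the positive tuple $(\ell_1,\dots,\ell_l)$ determined from the boundary data by the constraints (1)--(2) can actually be realised as boundary lengths of a non-positively curved (indeed hyperbolic) metric on the base orbifold. In the hyperbolic case this is the standard fact that the lengths of the boundary components form a free parameter in Teichmüller space, but some care is needed for $2$-orbifolds with cone points (use the orbifold version of Teichmüller theory) and for the Euclidean exceptional cases, where one has only a one-parameter family of compatible collections, matching the fact that for $K\widetilde\times I$ and $T^2\times I$ the conditions (1)--(2) become correspondingly restrictive.
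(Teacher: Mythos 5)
The thesis does not actually prove this proposition: it is stated as a restatement of Leeb's Lemma 2.5 (\cite{leeb19953}), with the proof deferred to Leeb's paper, so there is no internal proof to compare against. Your argument is nonetheless correct and is the expected one given the machinery the thesis develops around it. For necessity, the Eberlein-type splitting of \S\ref{section_Eberlein} yields $\rho=(\bar\rho_h\circ\varphi,\rho_v)$; the vertical translation $\rho_v(f)$ is the same on every boundary torus (giving condition (1)), and pushing the long relation through $\rho_v$ --- with the non-orientable case handled by $\rho_v(a_i^2)=\id$ --- gives condition (2) after pairing with $\rho_v(f)$. For sufficiency, specifying $\rho_v$ from (1)--(2), deducing the forced horizontal boundary lengths via $\ell_i^2=\sigma_i(d_i,d_i)-\sigma_i(d_i,f)^2/\|f\|^2>0$ (strict Cauchy--Schwarz on the independent vectors $d_i,f$), realising these by a point in Teichmüller space of hyperbolic $2$-orbifolds with geodesic boundary of prescribed length, and then applying the product construction of \S\ref{section_Eberlein} is the right route and recovers exactly $h_i$ on each $T_i$ since all three Gram entries match.

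One caveat: you list $D^2\times S^1$ among exceptional cases that are \emph{``either trivial or handled analogously with Euclidean base.''} Its base orbifold $D^2(p)$ is in fact elliptic, not Euclidean, and the solid torus admits no non-positively curved metric with totally geodesic boundary at all (the boundary of such a metric is $\pi_1$-injective, e.g.\ by doubling, whereas $\partial(D^2\times S^1)$ is compressible). So for the solid torus the proposition \emph{as written in the thesis} is actually false: any flat metric on the single boundary torus with $\sigma_1(d_1,f)=-\tfrac{q}{p}\|f\|^2$ satisfies (1)--(2), yet no compatible extension exists. This is a lacuna in the thesis's restatement of Leeb's lemma (the correct scope is Seifert manifolds with hyperbolic or Euclidean base orbifold, i.e.\ excluding $D^2\times S^1$), not a defect of your argument, but your parenthetical treatment of that case is imprecise.
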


%Before stating the result concerning Seifert fibred $JSJ$ components, we need to introduce some notation. Let $S$ be a Seifert fibred manifold (with boundary) and let us denote by $\partial S$ its boundary. Fix $h$ a flat metric on $\partial S$, {\it i.e.} a collection $h_1,.., h_l$ of flat metrics on $T_1,..., T_l$, the connected components of $\partial S$. The collection is said to be {\it compatible} if it can be extended to a non-positively curved metric $g_0$ with totally geodesic boundary, globally defined on $S$. In particular $\{h_1,..., h_l\}$ is a compatible collection of flat metrics on $\partial S$ if and only if there exists an injective homomorphism $\rho=(\rho_h,\rho_v):\pi_1(S)\rightarrow\mathrm{Isom}^+((\widetilde\mathcal O_S, \tilde{\bar g}_0)\times\mathbb E^1)$ whose image acts freely and discretely on $(\widetilde\mathcal O_S, \tilde{\bar g}_0)\times\mathbb E^1$. Using the notation on 

%The flat metrics $h_i$ induce scalar products $\langle\cdot,\cdot\rangle_i$ on the abelian groups $\pi_1(T_i)$.
Once established these results, Leeb proves the existence of non-positively curved metrics on irreducible $3$-manifolds with at least one hyperbolic $JSJ$ component, as well as the existence of non-positively curved metrics with totally geodesic boundary for graph-manifolds with non-empty boundary. Moreover, he gives examples of graph-manifolds which do not admit metrics of non-positive curvature. Here there are its affirmative answers to the existence of non-positively curved metrics:

\begin{thm}[\cite{leeb19953}, Theorem 3.2]\label{thm_leeb_graph}
	Let $Y$ be a graph-manifold with non-empty boundary. Then there exists a Riemannian metric with non-positive curvature and totally geodesic boundary on $Y$.
\end{thm}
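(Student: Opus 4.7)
The strategy is to reduce the theorem to the \emph{Extension Problem} of Section \ref{section_the_extension_problem}: it is enough to assign, to every toroidal boundary component of every Seifert JSJ component of $Y$, a flat metric such that (a) Leeb's compatibility conditions (\cite{leeb19953}, Lemma 2.5) hold on each Seifert piece, and (b) on every internal JSJ torus the flat metrics induced by the two adjacent pieces coincide. Once such a consistent assignment has been produced, the construction of Section \ref{section_Eberlein} extends each Seifert piece to a non-positively curved metric with totally geodesic boundary inducing the prescribed flat metric on each component of $\partial S_j$, and these piecewise metrics glue across the JSJ tori into a globally non-positively curved metric on $Y$ whose external boundary $\partial Y$ is totally geodesic.

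First I would label the Seifert pieces $S_1,\dots,S_n$ and distinguish \emph{internal} JSJ tori (shared by two, possibly coincident, pieces) from \emph{external} tori (components of $\partial Y$). On an internal torus $T_a$ joining $S_j$ and $S_{j'}$, the two regular fibres $f_j, f_{j'}\in H_1(T_a;\Z)$ are linearly independent over $\R$: otherwise the two Seifert fibrations would amalgamate across $T_a$, contradicting minimality of the \JSJ\ decomposition. Thus $(f_j,f_{j'})$ is a basis of $H_1(T_a;\R)$, and a flat metric on $T_a$ is encoded by the three real parameters $\|f_j\|^2$, $\|f_{j'}\|^2$, $\sigma_a(f_j,f_{j'})$. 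Leeb's first compatibility condition requires that $\|f_j\|^2$ be constant along $\partial S_j$, equal to some $L_j^2>0$; this reduces the problem to choosing $n$ positive numbers $L_1,\dots,L_n$ matched across each internal torus, which can be done freely (for instance, taking all $L_j$ equal). The second condition reads $\sum_{i\in\partial S_j}\sigma_i(d_i,f_j)=-e_j L_j^2$, with $e_j$ the rational Euler number of $S_j$: this is one linear equation per Seifert piece in the remaining inner-product parameters.

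The main obstacle is to show that these $n$ linear equations can be satisfied simultaneously, given the matching of parameters imposed by the gluings. My plan is to leverage the hypothesis $\partial Y\neq\emptyset$: on each external torus $T_b\subset\partial S_j$ the parameter $\sigma_b(d_b,f_j)$ is \emph{free}, being constrained by no other piece. Proceeding inductively on the dual graph of the \JSJ\ decomposition, starting from pieces that meet $\partial Y$ and moving inward, at each step the current piece still has at least one unfixed boundary parameter that can be tuned so as to absorb the residual right-hand side $-e_j L_j^2$. Two delicate points will require separate treatment: pieces glued to themselves along an internal torus, where both sides contribute to the same Euler equation and the basis $(f_j,f_{j'})$ has to be replaced by the two images of $f_j$ under the self-gluing; and the exceptional Seifert component $K\widetilde\times I$, whose base orbifold is Euclidean and for which the non-positively curved extension is furnished by the explicit flat construction of Section \ref{section_geometric_metrics_on_orientable_Seifert_manifolds_with_boundary} rather than the $\mathbb H^2\times\mathbb E^1$ one. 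The degenerate case of a graph-manifold consisting of a single Seifert piece is immediate from the same construction.
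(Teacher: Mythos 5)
The paper itself does not prove this statement: it simply records Leeb's Theorem 3.2 as a citation, and Section \ref{section_the_extension_problem} is a survey of Leeb's results, not a reconstruction of his arguments. So there is no internal proof to compare against; what follows is an assessment of your sketch on its own merits.

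Your overall strategy — reduce to the Extension Problem, parametrize the flat metrics on JSJ tori in the fibre bases, treat Leeb's second compatibility condition as one linear equation per Seifert piece, and exploit $\partial Y\neq\varnothing$ for freedom — is the right skeleton, and the observation that $f_j,f_{j'}$ form a basis of $H_1(T_a;\R)$ by JSJ-minimality is correct. However, the inductive step is both underjustified and, as phrased, in the wrong direction. The parameter $\sigma_a(f_j,f_{j'})$ on an internal torus appears in the linear equations of \emph{both} adjacent pieces, so tuning it to solve one equation perturbs the other. If, as you propose, you start from the pieces meeting $\partial Y$ and move inward, then by the time you reach an innermost piece (one whose tori are all shared with pieces already processed) every relevant off-diagonal parameter has been committed, and there is nothing left to tune. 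The correct order is the reverse: fix a spanning tree $T$ of the dual graph rooted at a piece $S_1$ that meets $\partial Y$, set the parameters on non-tree edges arbitrarily, process the vertices in postorder (leaves first), and for each non-root $S_j$ use the parameter on its unique tree edge to its parent to solve $S_j$'s equation; only when reaching the root $S_1$ does one spend an external-torus parameter. One must also check that the relevant coefficient of $\sigma_{e_j}(f_j,f_{\mathrm{parent}})$ in $S_j$'s equation is nonzero, which holds because the section class $d_i$ is not a rational multiple of $f_j$, so its $f_{\mathrm{parent}}$-component is nonzero. As written, the statement ``at each step the current piece still has at least one unfixed boundary parameter'' is asserted, not proved, and the ordering you chose makes it false.

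The two ``delicate points'' you flag (self-gluings and $K\widetilde{\times}I$) are genuine and should not be left as remarks. For $K\widetilde{\times}I$ the relevant equation is exactly the orthogonality constraint $\sigma(a^2,f)=0$ (its Euler number is $0$), which fits into the same tree-tuning scheme as long as $K\widetilde{\times}I$ is not the root; for a self-glued piece the single edge becomes a loop and contributes twice to the same equation, so the corresponding coefficient must be re-examined rather than read off from the two-piece case.
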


\begin{thm}[\cite{leeb19953}, Theorem 3.3]\label{thm_leeb_ator}
	Let $Y$ be an irreducible $3$-manifold with (possibly empty) toroidal boundary. Suppose that $Y$ has at least one hyperbolic JSJ component, then $Y$ admits a Riemannian metric of non-positive curvature and totally geodesic boundary.
\end{thm}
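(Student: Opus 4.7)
The plan is to reduce the theorem to the \emph{Extension Problem} introduced in Section~\ref{section_the_extension_problem}: produce a single collection of flat metrics on every $JSJ$ torus of $Y$ (both the internal ones and those lying in $\partial Y$) that is compatible with each $JSJ$ component simultaneously, extend it piecewise to a non-positively curved Riemannian metric with totally geodesic boundary on every component, and finally glue. By Proposition~2.3 of \cite{leeb19953}, every hyperbolic $JSJ$ piece is \emph{flexible}: any prescribed flat metric on its boundary extends. By Lemma~2.5 of the same paper, a Seifert piece $X_j$ with regular fibre class $f$ and exceptional invariants $(p_s,q_s)$ admits such an extension if and only if two linear constraints hold: (i) the squared length $\|f\|_j^2$ of the fibre is the same on every boundary torus of $X_j$; (ii) the holonomy sum $\sum_i \sigma_i(d_i,f)$ on the boundary tori of $X_j$ equals $-\bigl(\sum_s q_s/p_s\bigr)\|f\|_j^2$.

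The main step would be to exhibit a collection of flat metrics satisfying these constraints for every Seifert piece simultaneously. I would view the $JSJ$ decomposition as a finite graph whose vertices are the components and whose edges are the $JSJ$ tori, and focus on the \emph{Seifert subgraph} obtained by retaining only the Seifert vertices. On each connected component of this subgraph, condition (i) propagates a single value of $\|f\|^2$ along the internal edges, while condition (ii) becomes a system of linear equations, one per Seifert vertex, in the twist parameters $\sigma_i(d_i,f)$ attached to its boundary tori. The hypothesis that $Y$ contains at least one hyperbolic $JSJ$ component, combined with the connectedness of $Y$, forces every connected component of the Seifert subgraph to share at least one $JSJ$ torus with a hyperbolic piece or with $\partial Y$; on such a \emph{free} torus, hyperbolic flexibility imposes no restriction, so the associated twist parameter is available as a free variable. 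A recursive argument on the Seifert subgraph, starting from the free tori and propagating inward, solves each linear system (ii); this is precisely the scheme that underlies Leeb's Theorem~3.2 for graph manifolds with boundary, transposed here with the hyperbolic neighbours playing the role of the outer boundary.

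Given such a compatible collection, Proposition~2.3 and Lemma~2.5 of \cite{leeb19953} extend it to a non-positively curved Riemannian metric with totally geodesic boundary on every $JSJ$ component. Gluing along the $JSJ$ tori yields a $C^{1,1}$ metric on $Y$, and since both sides of each gluing torus are totally geodesic and the induced flat metrics match, Reshetnyak's gluing theorem implies that the universal cover is $CAT(0)$, which is the operative definition of non-positive curvature in this context; the tori of $\partial Y$, if any, remain totally geodesic by construction. The delicate point, which I expect to be the main obstacle, is the combinatorial/linear step in the second paragraph: one has to verify that the \lq\lq grounding" provided by the hyperbolic neighbours is enough to resolve the twist equations on every connected component of the Seifert subgraph, which is exactly the mechanism that is generally absent in the closed graph-manifold setting (where the Buyalo--Svetlov obstructions may prevent any non-positively curved metric from existing).
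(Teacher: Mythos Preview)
The thesis does not supply its own proof of this statement: Theorem~\ref{thm_leeb_ator} is quoted verbatim from \cite{leeb19953} in the expository Section~\ref{section_the_extension_problem}, and only a remark is added (that the metric can be taken flat on collars of the $JSJ$ and boundary tori). So there is nothing in the paper to compare your argument against beyond the surrounding discussion of Leeb's Extension Problem. Your sketch is a faithful outline of Leeb's original strategy: use the flexibility of hyperbolic pieces (his Proposition~2.3) to absorb the linear compatibility constraints imposed by the Seifert pieces (his Lemma~2.5), exploiting that every maximal Seifert region abuts a hyperbolic piece or $\partial Y$ exactly as in his Theorem~3.2 for graph manifolds with boundary.

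One small point to tighten: your phrase that condition~(i) ``propagates a single value of $\|f\|^2$ along the internal edges'' of the Seifert subgraph is misleading. When two Seifert pieces $X_j$, $X_{j'}$ are glued along a torus $T$, their regular fibres $f_j$, $f_{j'}$ are in general distinct primitive classes on $T$, so no single fibre length is being passed from piece to piece. What is actually chosen on each $JSJ$ torus is a full flat metric; condition~(i) then says that, for each individual Seifert piece, its own fibre has the same length on all of its boundary tori. The recursive solution of the resulting system is carried out in Leeb's paper along these lines, and your identification of this combinatorial step as the heart of the matter is correct.
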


\begin{rmk}
	Notice that what Leeb actually proves is that you can choose the non-positively curved metric on $Y$ to be flat on suitable collars of the $JSJ$ tori and of the boundary tori.
\end{rmk}

Let $Y$ be a closed, irreducible $3$-manifold with at least one hyperbolic $JSJ$ component, and let $X_1,...,X_n$ be its $JSJ$ components. Let $$I=\left\{\begin{array}{c}
      \phantom{a}\\
	(i,j,k,l)\\
	\phantom{a}
	\end{array}
	\right.\left|\begin{array}{c}
	1\le i\le j\le n,\\
	k\in\{1,..,l_i\}, l\in\{1,...,l_j \}\\
	\partial_k X_i, \partial_l X_j\mbox{ are identified in }Y
	\end{array}
	\right\}$$
 Let $F_{i,j}^{k,l}:\partial_k X_i\rightarrow\partial_lX_j$, $(i,j,k,l)\in I$ be the homeomorphisms between the boundary tori of the $X_i$'s which produce $Y$ as a result of the gluings. We shall say that a collection of flat metrics $\{\{h_{i,j}\}_{j=1}^{l_i}\}_{i=1}^n$ on $\{\partial X_i \}_{i=1}^n$ is \textit{$JSJ$ compatible} if the following hold:
\begin{itemize}
	\item[i.] for any $i=1,.., n$ the family $\{ h_{i,j}\}_{j=1}^{l_i}$ is a collection of compatible metrics on $X_i$;
	\item[ii.] whenever $\partial_k X_i$ and $\partial_l X_j$ are identified via $F_{i,j}^{k,l}$, then $F_{i,j}^{k,l}$ is isotopic to an isometry from $(\partial_k X_i, h_{i,k})$ to $(\partial_l X_j, h_{j,l})$.
\end{itemize}
Using this terminology, a restatement of Theorem \ref{thm_leeb_ator} is the following:

\begin{cor}
	Let $Y$ be an irreducible $3$-manifold with at least one hyperbolic $JSJ$ component. Then there exists a collection $\{\{ h_{i,j}\}_{j=1}^{l_i}\}_{i=1}^n$ of $JSJ$ compatible metrics on $\{\partial X_i\}_{i=1}^n$.
\end{cor}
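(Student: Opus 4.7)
The plan is to deduce this corollary directly from Theorem \ref{thm_leeb_ator} together with the refined statement emphasised in the remark following it, essentially by reading off on each $JSJ$ component the flat metrics induced by a globally defined non-positively curved metric on $Y$.

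First I would apply Theorem \ref{thm_leeb_ator}: since $Y$ is irreducible and admits at least one hyperbolic $JSJ$ component, there exists a non-positively curved Riemannian metric $g$ on $Y$ with totally geodesic boundary, which, by the refined version pointed out in the remark, can be chosen to be flat on collar neighbourhoods of both the $JSJ$ tori and of $\partial Y$. Each $JSJ$ torus $T\subset Y$ then sits in the middle of a flat product collar $T\times(-\varepsilon,\varepsilon)$ and is, in particular, totally geodesic in $(Y,g)$. Consequently, the restrictions $g_i:=g|_{X_i}$ are non-positively curved Riemannian metrics on the $X_i$'s with totally geodesic boundary.

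Next, for each boundary component $\partial_j X_i$ of $X_i$ I define $h_{i,j}$ to be the flat metric induced by $g_i$. By construction, the metrics $g_i$ certify that, for every $i$, the collection $\{h_{i,j}\}_{j=1}^{l_i}$ extends to a non-positively curved metric on $X_i$ with totally geodesic boundary, so condition (i) of $JSJ$-compatibility holds. For condition (ii), whenever $\partial_k X_i$ and $\partial_l X_j$ are identified in $Y$ via the gluing homeomorphism $F_{i,j}^{k,l}$, both copies correspond to the two sides of a single $JSJ$ torus $T$ lying in a flat collar of $(Y,g)$; hence $h_{i,k}$ and $h_{j,l}$ are each isometric, via the inclusions $\partial_k X_i\hookrightarrow T$ and $\partial_l X_j\hookrightarrow T$, to the flat metric on $T$ induced by $g$, and $F_{i,j}^{k,l}$ is isotopic to the corresponding tautological isometry.

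Since the whole argument amounts to restricting a globally defined non-positively curved metric and bookkeeping its boundary traces, there is no serious obstacle. The only step that deserves care is invoking the stronger form of Theorem \ref{thm_leeb_ator}, namely the flat-collar property recorded in the remark, because without it the $JSJ$ tori need not be totally geodesic and the cut pieces would not automatically inherit totally geodesic boundary; this is the mild but essential ingredient that turns Leeb's existence theorem into a statement about $JSJ$-compatible collections of flat metrics.
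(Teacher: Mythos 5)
Your proof is correct and follows the same route the paper intends: the corollary is presented there as a direct restatement of Theorem \ref{thm_leeb_ator} together with the remark about flatness near the $JSJ$ tori, and you have simply spelled out the bookkeeping — restrict the global non-positively curved metric to each $X_i$ to certify condition (i), and use the flat product collar around each $JSJ$ torus to certify condition (ii). You also correctly identified the flat-collar refinement as the essential ingredient, since without it the cut pieces need not inherit totally geodesic boundary.
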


\begin{rmk}
	Clearly, given $Y$ a closed, irreducible $3$-manifold with at least one hyperbolic $JSJ$ component, the set of $JSJ$ compatibe metrics on $\{\partial X_i\}_{i=1}^n$ is a cone: any uniform rescaling by a positive real number of a $JSJ$ compatible collection is again a $JSJ$ compatible collection.
 \end{rmk}

\section[Non-positively curved $ C^2$ metrics on Seifert manifolds]{Non-positively curved $C^2$ metrics on Seifert manifolds }

We observed in Chapter \ref{chapter_basics} that Seifert components of irreducible, orientable, closed $3$-manifolds having at leat a hyperbolic piece in their $JSJ$ decomposition are either of hyperbolic type or homeomorphic to  $K \tilde{\times }I$, as $D^2 \times S^1$ and $T^2 \times I$ cannot appear as $JSJ$ components of closed, irreducible $3$-manifolds. In \S\ref{section_Eberlein} we explained how a non-positively curved metric with totally geodesic boundary $g_0$ on a Seifert fibred manifold $S$ with  base orbifold $\mathcal O_S$ provides us with a non-positively curved metric with totally geodesic boundary $\bar g_0$ on the base orbifold. Moreover, the quotient map collapsing the fibres to points is a Riemannian orbifold submersion $p: (S, g_0) \rightarrow (\mathcal{O}_S, \bar{g_0})$.
This section is devoted to the proof of the following proposition:
\begin{prop}
		 \label{proposition_metrics:on:Seifert:components}
		 For every orientable Seifert manifold with boundary and hyperbolic base orbifold $S=S(g,l; (p_1, q_1), \dots, (p_k,q_k) )$, given any compatible collection of metrics $h_1, \dots, h_l$ on the boundary tori $T_1, \dots, T_l$ of $S$, there exist $\bar\delta_S , \bar\zeta_S(\delta)>0$ with $\bar\zeta_S(\delta)\rightarrow 0$ as $\delta\rightarrow 0$ and a $2$-parameter family of $C^2$ Riemannian metrics  $\left \lbrace k_{\delta, \zeta} \right \rbrace_{\delta \in(0, \bar \delta_S], \zeta\in(0, \bar \zeta_S(\delta)]}$, such that:
		 
		 \begin{itemize}
		 	\item[(i)]     $-1-\delta \le \sigma_{\delta,\zeta} \le 0$, where $\sigma_{\delta, \zeta}$ is the sectional curvature of the metric $k_{\delta, \zeta}$;
		 	\item[(ii)] $k_{\delta,\zeta}|_{T_i}= \zeta^2 h_i$;
		 	\item[(iii)] the metrics are flat in a collar of the boundary;
		 	\item[(iv)] $\Vol \left(S, k_{\delta,\zeta}\right) \overset{\delta \rightarrow 0}{\longrightarrow} 0$.
		 \end{itemize}
	 Any compatible metric $h_1$ on $\partial(K\widetilde\times I)$ is the restriction to the boundary of a flat metric $g$. Then the collection $\zeta^2g$ for $\zeta\in(0,1]$ is a continuous family of homothetic flat metrics verifying in particular $(i)$---$(iv)$.
\end{prop}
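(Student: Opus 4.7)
My plan is to construct $k_{\delta,\zeta}$ as a Seifert submersion-type metric. Using the orbifold submersion structure of \S\ref{section_Eberlein}, I choose a connection $1$-form $\eta$ on the Seifert bundle $p\colon S\to\mathcal O_S$ and set
\[
k_{\delta,\zeta}\;=\;p^{\ast}\bar g_{\delta,\zeta}\,+\,\zeta^{2}\eta^{2},
\]
where $\bar g_{\delta,\zeta}$ is a non-positively curved $C^{2}$ metric on $\mathcal O_S$ with totally geodesic boundary, to be built. The form $\eta$ is prescribed near each boundary torus $T_i$ so that its holonomy around $\bar d_i\subset\partial\mathcal O_S$ matches the off-diagonal coefficient $h_i(f,d_i)$ in the trivialization induced by $g_0$; the compatibility condition on $\{h_i\}$ recalled in \S\ref{section_the_extension_problem} is precisely the statement that the total twist is consistent with the Seifert invariants, which is what guarantees the global existence of such an $\eta$.

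For $\bar g_{\delta,\zeta}$ I take a flat metric $dr^{2}+(\zeta L_i)^{2}d\theta^{2}$ on a collar of each $\bar d_i$, with $L_i=\bigl(h_i(d_i,d_i)-h_i(f,d_i)^{2}\bigr)^{1/2}$ chosen so that the boundary restriction of $k_{\delta,\zeta}$ on $T_i$ equals $\zeta^{2}h_i$; a hyperbolic metric of curvature $-1$ in the core of $\mathcal O_S$; and a rotationally symmetric warped metric $dr^{2}+f(r)^{2}d\theta^{2}$ in between. Writing $f=\zeta L_i\,e^{g(r)}$, the Gaussian curvature equals $-\bigl((g')^{2}+g''\bigr)$, and I construct $g$ with $g'(r)=\tanh(r-r_{0})\cdot\chi(r)$ where $\chi$ is a smooth non-decreasing cutoff from $0$ to $1$ satisfying $\|\chi'\|_{\infty}\le\delta/2$. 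This forces $(g')^{2}+g''\in[0,1+\delta/2]$ pointwise, hence Gaussian curvature in $[-1-\delta/2,0]$, at the price of a transition width of order $1/\delta$.

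Properties (i)--(iv) then follow from standard Riemannian submersion computations. For (i), the O'Neill formulas give horizontal sectional curvatures equal to $K_{\bar g_{\delta,\zeta}}$ corrected by a non-positive term of order $\zeta^{2}|d\eta|^{2}$, and non-positive mixed sectional curvatures of the same order; choosing $\bar\zeta_S(\delta)$ small enough absorbs these corrections into the remaining $\delta/2$-slack, yielding $-1-\delta\le\sigma_{\delta,\zeta}\le 0$. Property (ii) holds by construction of $\eta$ and of the boundary collar of $\bar g_{\delta,\zeta}$; (iii) holds because the product of the flat base collar with the rescaled fibre is itself flat; (iv) follows from $\Vol(S,k_{\delta,\zeta})=\zeta\cdot\mathrm{length}(\eta)\cdot\Vol(\mathcal O_S,\bar g_{\delta,\zeta})$ together with the Gauss--Bonnet bound on the hyperbolic area of $\mathcal O_S$ (which is independent of $\zeta$), so the total volume is $O(\zeta)\to 0$ as $\zeta\to 0$.

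The main obstacle is the construction of $\bar g_{\delta,\zeta}$: no $C^{2}$ rotationally symmetric warped profile $f(r)$ with $f''\ge 0$ can interpolate between $f''\equiv 0$ (flat boundary collar) and $f''/f\equiv 1$ (hyperbolic core) while keeping $f'$ vanishing at both ends of the transition, so a non-zero slope must be introduced somewhere. The slack $\delta$ in the lower-curvature bound is precisely what controls how fast this slope can grow, and forces the transition length to be $\sim 1/\delta$; this is also why $\bar\zeta_S$ has to depend on $\delta$. The $K\widetilde{\times}I$ case is immediate, since any compatible $h_1$ is the restriction of a flat metric $g$ on $K\widetilde{\times}I$ and the homothetic family $\zeta^{2}g$ is flat and trivially realizes (i)--(iv).
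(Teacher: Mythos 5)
Your Kaluza--Klein ansatz $k_{\delta,\zeta}=p^{\ast}\bar g_{\delta,\zeta}+\zeta^{2}\eta^{2}$ has a sign error in the curvature estimate that small $\zeta$ cannot repair. For a metric of this form the fibres are totally geodesic, so the O'Neill $T$-tensor vanishes and the mixed sectional curvature of a horizontal $X$ and vertical $V$ equals $\lvert A_{X}V\rvert^{2}/(\lvert X\rvert^{2}\lvert V\rvert^{2})$, which is \emph{non-negative}, not non-positive as you claim. If $d\eta\neq 0$ this is strictly positive on some plane, so $\sigma_{\delta,\zeta}\le 0$ fails no matter how small $\zeta$ is: shrinking $\zeta$ rescales the offending curvature but never changes its sign. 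The only way for the construction to work is $d\eta\equiv 0$, i.e.\ the connection is flat. This is precisely what the compatibility condition of Leeb (recalled in \S\ref{section_the_extension_problem}) guarantees: compatibility says the sum of boundary holonomies of the vertical translations equals $-e(S)\lVert f\rVert^{2}$, which is exactly the vanishing of the obstruction to a flat connection with the prescribed boundary behaviour. Once $\eta$ is flat, the universal cover splits as an isometric product $(\widetilde{\mathcal O}_{S},\tilde{\bar g}_{\delta,\zeta})\times\mathbb E^{1}$ with $\pi_{1}(S)$ acting by a split representation $(\bar\rho_{h}\circ p_{\ast},\rho_{v}^{\zeta})$; this is what the paper does, and it makes the $A$-tensor zero and the curvature of $k_{\delta,\zeta}$ literally equal to the curvature of the base (with zero mixed and vertical curvatures). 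You should replace the ``absorbing O'Neill corrections by choosing $\bar\zeta_{S}(\delta)$ small'' step with ``compatibility gives a flat connection, hence a local product.''

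Two smaller points. First, the warped profile $g'(r)=\tanh(r-r_{0})\,\chi(r)$ has the term $\tanh(r-r_{0})\,\chi'(r)$ in $g''$, which is negative if the cutoff $\chi'$ is supported where $\tanh(r-r_{0})<0$; without specifying that the transition region lies entirely in $\{r\ge r_{0}\}$ you cannot conclude $(g')^{2}+g''\ge 0$, so non-positivity of the Gaussian curvature needs more care (the paper handles exactly this bookkeeping in Proposition \ref{prop_interpolation}). Second, the base orbifold has conical points, so $\bar g_{\delta,\zeta}$ must be an orbifold metric on $\mathcal O_{S}$ and the interpolation must take place inside a horoball neighbourhood of the cusps that avoids the singular locus; the paper notes this explicitly, and it is needed for $k_{\delta,\zeta}$ to be a genuine $C^{2}$ Riemannian metric upstairs rather than merely an orbifold metric.
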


\begin{rmk}
	Few words about this result:
	\begin{itemize}
		\item [i.] with these requirements we are not able to extend any compatible collection of flat metrics on $\partial S$, but any compatible collection has a rescaling which can be extended to a $C^2$ Riemannian metric as above; 
		\item[ii.] in the case where $S$ is a Seifert fibred manifold with hyperbolic base orbifold the bounds on $\sigma_{\delta,\zeta}$ are \lq\lq $\delta$-almost optimal" in the sense that the upper bound coincides with the upper bound of a geometric metric of finite  volume on $int(S)$ and the lower bound is smaller than the lower bound of such a metric by an amount of $\delta$;
		\item[iii.] the almost optimal lower bound on the sectional curvature, turns out to be a strong constraint: indeed, it is possible to put smooth, non-positively curved Riemannian metric satisfying conditions  $(ii)$---$(iv)$ up to relaxing the condition $(i)$ to $\sigma_{\delta, \zeta}\ge -K^2$ for a suitable $K$, but we are not able to find a smooth family of metrics satisfying the bounds in $(i)$;
		\item[iv.] if we ask a $\delta$-almost optimal upper bound on the sectional curvature, instead of non-positivity, that is $-1-\delta\le\sigma_{\delta,\zeta}\le \delta$, we can improve $C^2$ to $C^\infty$, using a density argument.
	\end{itemize}
\end{rmk}

%\begin{prop}
%	 \label{metrics:on:Seifert:components}
%	 For every orientable Seifert manifold with boundary  $S=S(g,h; (p_1, q_1), \dots, (p_k,q_k)$ fix a collection of compatible metrics $h_1, \dots, h_l$ on the boundary tori $T_1, \dots, T_l$ of $S$.
%	There exists a family of metrics $\left\{ k_{\varepsilon}\right\}_{\varepsilon \in (0, \varepsilon_S]}$ on $S$, where $\varepsilon_S$ depends on the topology of $S$, enjoying the following properties:
%	\begin{itemize}
%		\item [(i)]$-1-\delta_S(\varepsilon) \le \sigma_{\varepsilon} \le 0$, where $\delta_S(\varepsilon) \overset{\varepsilon \rightarrow 0}{\longrightarrow}0$ 
%		\item [(ii)]$\Vol(S, k_{\varepsilon}) \overset{\varepsilon \rightarrow 0}{\longrightarrow}0$
%		\item [(iii)] $k_{\varepsilon}|_{T_i}= \varepsilon^2 \, h_i$, for $1 \le i \le l$
%		\item [(iv)] Moreover, these metrics are flat in a collar of the boundary
%	\end{itemize}
%\end{prop}

%In order to prove Proposition \ref{metrics:on:Seifert:components}, we will need to explain in details how to put a complete, Riemannian metric locally isometric to either $\mathbb{E}^3$ or $\mathbb{H}^2 \times \mathbb{E}$ on the open Seifert fibered manifold $\mathop S\limits^ \circ$ depending whether $S$ has hyperbolic base orbifold or is homeomorphic to $K \widetilde{ \times} I$.

%\subsubsection{Non-positively curved metrics on Seifert manifolds with boundary}
%In this section we shall prove Proposition \ref{metrics:on:Seifert:components}, by suitably modifying the geometric metrics described in Section \ref{section_geometric_metrics_on_orientable_Seifert_manifolds_with_boundary}.
\begin{proof} First of all, we shall treat separately the case where $\mathcal O_S$ is a hyperbolic orbifold and  the case of $K\widetilde\times I$.\\

	 \noindent{\it Case $\chi_{orb}(\mathcal O_S)<0$.} Let $\{h_1,...,h_l\}$ be a collection of compatible metrics on $S$. By definition, there exists on $S$ a metric $g_0$ of non-positive curvature such that $(T_i, g_0|_{T_i})$ is isometric to $(T_i, h_i)$. As we have seen in Section \S\ref{section_Eberlein} this gives us a non-positively curved metric $\bar g_0$ on the base orbifold $\mathcal{O}_S$ and a Riemannian orbifold submersion $p:(S,g_0)\rightarrow(\mathcal O_S,\bar{g}_0)$. Equivalently, we have a pair of morphisms $\rho_h:\pi_1(S)\rightarrow\mathrm{Isom}(\widetilde{\mathcal O}_S,\tilde{\bar g}_0)$ and $\rho_v:\pi_1(S)\rightarrow\mathrm{Isom}(\mathbb E^1)$ such that the pair $(\rho_h,\rho_v)$ is an injective morphism of $\pi_1(S)$ into $\mathrm{Isom}^+((\widetilde{\mathcal O}_S,\tilde{\bar g}_0)\times \mathbb E^1)$, whose action on $(\widetilde{\mathcal O}_S,\tilde{\bar g}_0)\times\mathbb E^1$ is free and discrete. We recall that, by construction, we have :\\ 
	 
	 --- if $g\ge 0$, then $\rho_v(a_i)$, $\rho_v(b_i)$, $\rho_v(c_j)$, $\rho_v(d_r)$ are orientation preserving isometries of $\mathbb E^1$ ---thus translations of $\mathbb E^1$---, then Leeb's compatibility condition can be expressed in additive notation (with a slight abuse of notation, since we are identifying the translation thought as an isometry with the amount of the translation itself):
	 \small
	 \begin{align*}
	 &\sum_{r=1}^l\rho_v(d_r)+\sum_{j=1}^k\rho_v(c_j)=0,\\
	 &\frac{p_j}{q_j}\rho_v(c_j)=\rho_v(f)\mbox{ for }j=1,...,k;
	 \end{align*}
	 \normalsize
	 
	 --- if $g<0$ we know that the generators $a_1,...,a_{|g|}$ of $\pi_1(S)$ act on each of the two factors of the Riemannian product $(\widetilde{\mathcal O}_S,\tilde{\bar g}_0)\times\mathbb E^1$ as orientation reversing isometries, this means that $\rho_v(a_i)$, $i=1,..,|g|$ are inversions of $\mathbb E^1$. Consequently $\rho_v(a_i^2)=\rho_v(a_i)^2=\id_{\mathbb E^1}$ for any $i=1,..., |g|$. As  the $d_r$'s and the $c_j$'s act as orientation preserving isometries on both factors we conclude that, also in this case, we can use the additive notation to express the compatibility conditions:
	 \small
	 \begin{align*}
	 &\sum_{r=1}^l\rho_v(d_r)+\sum_{j=1}^k\rho_v(c_j)=0,\\
	 &\frac{p_j}{q_j}\rho_v(c_j)=\rho_v(f)\mbox{ for }j=1,...,k;
	 \end{align*}
	 \normalsize
	 Observe that, if $(\rho_h,\rho_v):\pi_1(S)\rightarrow\mathrm{Isom}^+((\widetilde{\mathcal O}_S,\tilde{\bar g}_0)\times\mathbb E^1)$ is an injective morphism whose image acts freely and discretely on $(\widetilde{\mathcal O}_S,\tilde{\bar g}_0)\times\mathbb E^1$, the same holds for $(\rho_h,\rho_v^{\varepsilon}):\pi_1(S)\rightarrow\mathrm{Isom}^+((\widetilde{\mathcal O}_S,\tilde{\bar g}_0)\times\mathbb E^1)$, where we denoted $\rho_v^\varepsilon$ the morphism which coincides with $\rho_v$ on the generators $a_i$'s, $b_i$'s if $g\ge0$ and $a_i$'s if $g<0$ and which is a rescaling by a factor $\varepsilon$ of the translations $\rho_v(d_r)$'s, $\rho_v(c_j)$'s and $\rho_v(f)$.\\
	 For any $r=1,...,l$ we shall denote by $\ell_r$ the translation length of $\rho_h(d_r)$, that is, the length of the corresponding boundary component of $(\mathcal O_S,\bar g_0)$. Notice that the translation length of $(\rho_h(d_r), \rho_v(d_r))$ is equal to $\sqrt{\ell_r^2+(\rho_v(d_r))^2}$ and it is attained for all the points of one boundary component of $(\widetilde S,\tilde g_0)$ which is one of the lifts of the boundary torus $T_r$.\\

Now we shall modify a geometric metric $g$ on $int(S)$ locally isometric to $\mathbb H^2\times\mathbb E^1$ in order to obtain the desired collection of $\mathcal{C}^2$ metrics $\lbrace k_{\delta, \zeta} \rbrace$ whose curvatures are bounded by $-1-\delta \le \sigma \le 0$, with $\delta \longrightarrow 0$, flat near the boundary, such that the restriction $k_{\delta, \zeta}|_{T_i}= \zeta^2 h_i$, where $\zeta<\bar\zeta_S(\delta)$ with $\bar\zeta_S(\delta)\rightarrow0$ for $\delta\rightarrow 0$, and whose volumes collapse as $\delta\rightarrow 0$. Once chosen $g$ a complete geometric metric of finite volume on $int(S)$, locally isometric to $\mathbb H^2\times\mathbb E^1$, call $\bar g$ the corresponding complete hyperbolic metric on $int(\mathcal O_S)$, and $\rho_g:\pi_1(S)\rightarrow\mathrm{Isom}^+(\mathbb H^2\times\mathbb E^1)$ the injective morphism such that $\mathbb H^2\times\mathbb E^1/\rho_g(\pi_1(S))$ is isometric to $(int(S),g)$. \newline
                              %In order to construct the sequence of metrics $\{k_{\delta}\}$ we shall distinguish two cases, depending on the Euler orbifold characteristic $\chi^{orb}(\mathcal{O}_S)$ being negative or zero.\\		
	%\noindent\textbf{Hyperbolic base orbifold.} Let us assume that $\chi^{orb}(\mathcal{O}_S)<0$, and endow $int(S)$ with a finite volume metric $g_0$ locally isometric to $\mathbb H^2\times \mathbb E^1$. This provides us with a hyperbolic metric of finite volume $\bar{g}_0$ on $int(\mathcal O_S)$.
	Let us denote by $\mathcal{C}_1, \dots, \mathcal{C}_{l}$ the connected components of a  horoball neighbourhood of the cusps in $\mathcal{O}_S$; observe that by definition it does not contain any conical point (see Appendix \ref{Appendix_orbifolds}). Fix the parametrization in horospherical coordinates; then $\mathcal{C}_r \simeq S^1\times [0, + \infty) $.  Let $m_r $ be the length of the section $\{0\} \times S^1$ with respect to the hyperbolic metric.
	Then the hyperbolic metric on each cusp $\mathcal{C}_r$ can be locally written as a warped product:
	$$\bar{ g}= e^{-2t} m_r^2 \,dx^2 \oplus dt^2,$$
	where $dx^2$ is an Euclidean metric on the horosphere.\\
%\textcolor{Melon}	{Now choose $\varepsilon_0=\varepsilon_0(S)$ such that $\min_i(m_i) \ge 2 \varepsilon_0 \max_i \ell_i$.

Now, let $T_{\delta}=\frac{1}{\delta}$. We shall modify the hyperbolic metric on the cusps starting from time $T_\delta$, thus producing the required sequence. Before that, let us state the following proposition whose proof is given in Appendix \ref{appendix_proposition}.

\begin{prop}\label{prop_interpolation}
	For every fixed $\ell, \delta >0$ there exist $\varepsilon, \varepsilon' >0$ arbitrarily small and $\varphi_{\varepsilon,\varepsilon'} \in C^2(\mathbb{R})$ such that:
	\begin{enumerate}
		\item $\varphi_{\varepsilon, \varepsilon'}$ is not increasing and convex, such that
		
		$$ \left \lbrace \begin{array}{l}
		\varphi_{\varepsilon, \varepsilon'}(t)= \varphi_0(t)= \ell e^{-t}\mbox{ for }t\le -\varepsilon,\\
		\varphi_{\varepsilon, \varepsilon'}(t)= \ell'\mbox{ for }t\ge t_\delta+\varepsilon',
		\end{array} \right.$$
		where
		\small
		$$ \ell \frac{\sqrt{\delta (1 + \delta)}}{1+ 2 \delta} \le \ell'(\delta, \varepsilon, \varepsilon') \le 4\ell \frac{\sqrt{\delta(1+ \delta)}}{1+2\delta}$$
		\normalsize
		\item $\left(\dfrac{\varphi_{\varepsilon, \varepsilon'}'}{\varphi_{\varepsilon, \varepsilon'}}\right)^2 < (1+ 2 \delta)^2\left(\dfrac{\varphi_{0 \phantom{,'}}'}{\varphi_{0 \phantom{,'}}}\right)^2$ for every $t \in \mathbb{R}$;
		\item $\dfrac{\varphi_{\varepsilon, \varepsilon'}''}{\varphi_{\varepsilon, \varepsilon'}} \le (1+2\delta)^2$ for every $t \in (-\varepsilon, t_{\delta}+ \varepsilon')$.
	\end{enumerate}
	where $t_{\delta}= \frac{1}{2(1+2 \delta)} \ln \left(1+ \frac{1}{\delta}\right)$.
\end{prop}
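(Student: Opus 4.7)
The plan is to construct the function in two stages: first, a piecewise $C^1$ model $\varphi_{\ast}$ that saturates condition~(3) on an interior arc, and then a $C^2$ smoothing in arbitrarily small windows around its two kinks. Note that $\varphi_0(t)=\ell e^{-t}$ satisfies $\varphi_0'/\varphi_0\equiv -1$ and $\varphi_0''/\varphi_0\equiv 1$, so conditions~(2) and~(3) amount to $|\varphi'/\varphi|<1+2\delta$ and $\varphi''/\varphi\le (1+2\delta)^{2}$. The natural model on the interior is therefore of the form $A\cosh\bigl((1+2\delta)(t-t_\delta)\bigr)$, which saturates the upper bound in~(3) identically and, because its derivative vanishes at $t_\delta$, matches a constant in class $C^1$ at $t_\delta$.

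Concretely, I would set $\ell'=2\ell\sqrt{\delta(1+\delta)}/(1+2\delta)$ and define
\[
\varphi_{\ast}(t)=\begin{cases}\ell\, e^{-t}&t\le 0,\\[2pt] \ell'\cosh\!\bigl((1+2\delta)(t-t_\delta)\bigr)&0\le t\le t_\delta,\\[2pt] \ell'&t\ge t_\delta.\end{cases}
\]
The $C^1$ matching at $t=0$ is equivalent to $\tanh\bigl((1+2\delta)t_\delta\bigr)=1/(1+2\delta)$. Using the definition $(1+2\delta)t_\delta=\tfrac12\ln\bigl((1+\delta)/\delta\bigr)$ and the identity $\tanh(\tfrac12\ln x)=(x-1)/(x+1)$, one immediately obtains $\tanh\bigl((1+2\delta)t_\delta\bigr)=1/(1+2\delta)$ and correspondingly $\cosh\bigl((1+2\delta)t_\delta\bigr)=(1+2\delta)/\bigl(2\sqrt{\delta(1+\delta)}\bigr)=\ell/\ell'$, confirming both the matching of values at $t=0$ and the choice of~$\ell'$. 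On the cosh arc $(\varphi_{\ast}'/\varphi_{\ast})^{2}=(1+2\delta)^{2}\tanh^{2}\bigl((1+2\delta)(t-t_\delta)\bigr)\le 1$, so~(2) holds strictly, (3) holds with equality, and both bounds are trivial on the other two pieces. The value $\ell'$ sits strictly inside the target range of~(1). Thus $\varphi_{\ast}$ is non-increasing, convex, and satisfies (1)--(3), but $\varphi_{\ast}''$ jumps upward by a factor $(1+2\delta)^{2}$ at $t=0$ and drops from $\ell'(1+2\delta)^{2}$ to $0$ at $t=t_\delta$, so $\varphi_{\ast}\notin C^{2}$.

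To upgrade to $C^{2}$, given $\varepsilon,\varepsilon'>0$ small, I mollify $\varphi_{\ast}''$ in windows of widths $\le\varepsilon$ around $0$ and $\le\varepsilon'$ around $t_\delta$, then integrate twice, choosing integration constants so that the resulting $\varphi_{\varepsilon,\varepsilon'}$ coincides with $\ell e^{-t}$ on $(-\infty,-\varepsilon]$ and with a slightly perturbed constant on $[t_\delta+\varepsilon',\infty)$. Since $0\le \varphi_{\ast}''\le \ell(1+2\delta)^{2}$ everywhere, the mollified second derivative inherits these pointwise bounds, so $\varphi_{\varepsilon,\varepsilon'}$ stays non-increasing and convex. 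The main obstacle will be verifying (3) in the \emph{left} window, where $\varphi_{\ast}''$ jumps upward and could in principle push $\varphi''/\varphi$ above $(1+2\delta)^{2}$; however, for $\varepsilon$ small $\varphi_{\varepsilon,\varepsilon'}$ remains uniformly close to $\ell=\varphi_{\ast}(0)$ throughout the window while the mollified $\varphi_{\varepsilon,\varepsilon'}''$ is bounded by $\ell(1+2\delta)^{2}$, so the ratio remains $\le (1+2\delta)^{2}$, and $|\varphi_{\varepsilon,\varepsilon'}'/\varphi_{\varepsilon,\varepsilon'}|$ stays strictly below $1+2\delta$. The right window is easier, since the jump there is \emph{downward}, and mollification only lowers $\varphi''/\varphi$. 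The terminal value $\ell'=\varphi_{\varepsilon,\varepsilon'}(t_\delta+\varepsilon')$ is an $O(\varepsilon')$ perturbation of $2\ell\sqrt{\delta(1+\delta)}/(1+2\delta)$, and hence stays inside $[\ell\sqrt{\delta(1+\delta)}/(1+2\delta),\,4\ell\sqrt{\delta(1+\delta)}/(1+2\delta)]$ for $\varepsilon'$ small enough, so (1) is preserved.
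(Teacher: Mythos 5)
Your piecewise $C^1$ model $\varphi_{*}$ is exactly the paper's: the paper writes the middle arc as $\varphi_\delta(t)=\ell\tfrac{1+\delta}{1+2\delta}e^{-(1+2\delta)t}+\ell\tfrac{\delta}{1+2\delta}e^{(1+2\delta)t}$, which is precisely $\ell'\cosh\bigl((1+2\delta)(t-t_\delta)\bigr)$ with your $\ell'$, and the three-piece gluing and the choice of $t_\delta$ are identical. So the construction of the $C^1$ model and the verification of (1)--(3) for it are correct and match the paper. The regularization step (small windows around the two kinks) is also the paper's plan, which the paper carries out via two explicit interpolation lemmas (B.1 and B.2) rather than mollification.

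The gap is in your verification of condition (3) in the windows. You argue that since $\varphi_{\varepsilon,\varepsilon'}$ is ``uniformly close to $\ell$'' and the mollified second derivative is bounded by $\ell(1+2\delta)^2$, the ratio stays $\le(1+2\delta)^2$. This does not follow: ``close to $\ell$'' does not give $\ge\ell$. In fact $\varphi_{*}(t)=\ell'\cosh\bigl((1+2\delta)(t-t_\delta)\bigr)<\ell$ for every $t>0$, and $\varphi_{\varepsilon,\varepsilon'}$ tracks $\varphi_{*}$ there; meanwhile the mollified $\varphi_{\varepsilon,\varepsilon'}''$ can still be close to the supremum $\ell(1+2\delta)^2$ for $t$ just to the right of $0$. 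So the quotient can exceed $(1+2\delta)^2$ by a positive amount for every fixed window width. The same issue appears in the right window, where $\varphi_{*}\approx\ell'$ but $\varphi_{\varepsilon,\varepsilon'}$ may dip slightly below $\ell'$ while $\psi\le\ell'(1+2\delta)^2\cosh\bigl((1+2\delta)\varepsilon'\bigr)$, again pushing the ratio above $(1+2\delta)^2$. The paper itself handles this by a quantitative argument (choosing $\varepsilon$ so that the additive error $3M\varepsilon$ is of order $\delta^2\varphi(\varepsilon)$ and absorbing it into the spare room between the attained constant and $(1+2\delta)^2$); its proof of Proposition \ref{prop_interpolation} in fact lands on $\bigl(\tfrac{1+2\delta}{1-\delta}\bigr)^2$, slightly weaker than what the statement asserts, which is harmless for the application since only $\sigma_\delta\to 0$ is used downstream.

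If you want to stay with mollification and actually hit the stated bound, there is a cleaner route than mollifying $\varphi_{*}''$ and integrating: note that $\varphi_{*}$ satisfies the pointwise differential inequality $\varphi_{*}''(t)\le(1+2\delta)^2\varphi_{*}(t)$ wherever $\varphi_{*}''$ is defined (with equality on the cosh arc, and with $\varphi_{*}''/\varphi_{*}\in\{0,1\}$ on the other two pieces), as well as $|\varphi_{*}'(t)|\le\varphi_{*}(t)<(1+2\delta)\varphi_{*}(t)$. Both inequalities are preserved by convolution with a nonnegative mollifier $\rho$: $(\varphi_{*}*\rho)''\le(1+2\delta)^2(\varphi_{*}*\rho)$ and $|(\varphi_{*}*\rho)'|<(1+2\delta)(\varphi_{*}*\rho)$. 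Since $\varphi_{*}*\rho$ equals a constant multiple of $\ell e^{-t}$ for $t\ll 0$, a single rescaling (which does not affect the ratios) recovers the required boundary behavior without any $O(\eta)$ loss. Either this or the paper's quantitative bookkeeping will close the gap; the ``close to $\ell$'' heuristic as written does not.
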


Now, let $\varphi_{\delta, r}$ be the $C^2$ function on $[0,T_\delta+t_\delta+2\varepsilon']$ which coincides with $e^{-t}m_r$ for $t\in[0, T_\delta-\varepsilon]$ and with $\varphi_{\varepsilon, \varepsilon'}$ given by Proposition \ref{prop_interpolation} for $\ell=e^{-T_\delta}m_r$ for $t\in[T_\delta-\varepsilon, T_\delta+t_\delta+2\varepsilon']$. Then, substitute the warping function $m_re^{-t}$ on $S^1\times[0, T_\delta+t_\delta+2\varepsilon']$ with the new function $\varphi_{\delta, r}$, thus obtaining the metric
$$ \bar g_{\delta}= \varphi_{\delta, r}^2 dx^2 \oplus dt^2.$$ For any fixed $\delta$ this gives us a $C^2$ Riemannian metric $\bar g_{\delta}$ satisfying the required bounds on the sectional curvature, which is hyperbolic on the complement of the horoball neighbourhoods $\mathcal C_r(T_\delta)\simeq S^1\times[T_\delta,+\infty)$ and flat near the boundary. Nevertheless, condition $ii$ is not  verified, hence, we shall adjust the metric $g_\delta$ along the cusps, and we shall give an explicit  value of $\varepsilon(\delta)$ of point $ii$. Let us remark that $\ell'$ of Proposition \ref{prop_interpolation}  continuously depends on $(\delta, \varepsilon, \varepsilon')$, and goes to $0$ as $(\delta, \varepsilon, \varepsilon')\rightarrow(0,0,0)$. In particular, for every $$\zeta\le\bar{\zeta}(\delta, S, \{h_i\})=\frac{1}{2}\cdot\min_{i=1,..,l}\left\{\frac{\ell_r'(\delta,\varepsilon,\varepsilon')}{\ell_r} \right\}$$ we can find a suitable $3$-tuple $(\delta_r(\zeta), \varepsilon_r(\zeta), \varepsilon_r'(\zeta))$ so that for every cusp we have $\ell_r'=\zeta\cdot\ell_r$. Let us call $\bar k_{\delta, \zeta}$ the previous metric and let us denote by $\bar \rho_{\delta,\zeta}:\pi_1(\mathcal O_S)\rightarrow \mathrm{Isom}(\widetilde O_S,\tilde{\bar k}_{\delta, \zeta})$ the corresponding injective homomorphism. It is worth to notice that, integrating along the cusps, we get
\small
$$\Vol(\mathcal O_S, \bar k_{\delta, \zeta})\le \Vol(int(\mathcal O_S), \bar g)+\sum_{r=1}^l\int_{T_{\delta_r}-\varepsilon_r}^{T_{\delta_r}+t_{\delta_r}+2\varepsilon_r'}\varphi_{\delta_r, r}dt\le$$
$$\le\Vol(int(\mathcal O_S), \bar g)+\sum_{r=1}^l e^{-(\frac{1}{\delta_r}-1)}\left(\frac{\ln(1+\frac{1}{\delta_r})}{2(1+2\delta_r)}+1\right)\longrightarrow \Vol(int(\mathcal O_S), \bar g)$$
\normalsize
when $\delta_1,\dots,\delta_l\rightarrow 0$.\\
 To conclude, consider $(\bar{ \rho}_{\delta,\zeta}\circ p_*, \rho_v^\zeta):\pi_1(S)\rightarrow\mathrm{Isom}^+((\widetilde O_S,\tilde{\bar k}_{\delta, \zeta})\times\mathbb E^1)$ and observe that the induced Riemannian metric $k_{\delta, \zeta}$ clearly verifies conditions $i$---$iii$, while condition $iv$ follows from:
 \small
 $$\Vol(S,k_{\delta,\zeta})=\Vol(\mathcal O_S,\bar k_{\delta,\zeta})\cdot \zeta\cdot|\rho_v(f)|\overset{\delta\rightarrow0}{\longrightarrow} 0$$
 \normalsize
\vspace{2mm}

	\noindent {\it Case $S=K\widetilde\times I$.} Now suppose that $\chi_{orb}(\mathcal{O}_S)=0$. Then the Seifert manifold $S$ is $K \widetilde{ \times}I$, and can be viewed as $Mb \widetilde\times S^1$, having the M\"{o}bius band as base orbifold (surface) and no singular points. We write down presentations for the (orbifold) fundamental groups:
	
	$$\piorb(\mathcal{O}_S)= \pi_1(Mb)= \left \langle \bar a,\bar d \left \lvert  \right.\bar a^2=\bar d \right \rangle \simeq \mathbb{Z}$$
	$$ \pi_1(K \widetilde{ \times}I)= \left \langle a, d,f \left \lvert \begin{array}{l}
	{a}^2={d}\\
	{a}f{a}^{-1}= f^{-1}
	\end{array} \right. \right\rangle $$
	
\noindent Observe that a flat metric on the boundary torus of $K\widetilde{ \times}I$ is  compatible if and only if  is the restriction to the boundary of a flat metric $g$  on $K\widetilde\times I$. Indeed, a non-flat, non positively curved metric $g$ on $K\widetilde\times I$ flat on a collar of the boundary would give a non-positively curved metric on $(D(K\widetilde{ \times }I), g')$, the Riemannian manifold obtained by doubling $(K\widetilde{ \times } I, g)$ along the boundary torus. But $D(K\widetilde{ \times }I)$ is finitely covered by a $3$-dimensional torus $T^3$, and this would give a contradiction since $\hat g'$, the lift of the metric $g'$ would be a non-positively curved, non-flat metric on $T^3$. The flat metric $g$ on $K\widetilde{ \times }I$ is obtained as the quotient of $([-a,a]\times\mathbb E^1)\times\mathbb E^1$ (for a suitable choice of $a$) by the isometry subgroup given by the image of the following injective homomorphism:

%As we have just done in the case $\chi(\mathcal{O}_S) <0$, we embed:
	
%	$$\begin{array}{cccc} 
%\bar	\rho: & \pi_1(Mb) & \longrightarrow & \Isom ([-1,1]\times\mathbb E^1)\\
%	&\bar a & \longmapsto & \bar \rho(\bar a)
%	\end{array}
%	$$
%	thus defining a flat metric on the M\"{o}bius band, and then we lift $\rho$ to

	$$ \begin{array}{cccc}
	\rho=(\rho_h,\rho_v): & \pi_1(Mb \times S^1) & \longrightarrow & \Isom^+(([-a,a]\times\mathbb E^1)\times\mathbb E^1) \\
	& {a} & \longmapsto & (\rho_h(a), \rho_v({a}))\\
	& f& \longmapsto & (\mbox{ Id }\;, \rho_v(f))
	\end{array}$$
	where $\rho_h( a)$ is the glide-reflection $(x, y)\mapsto(-x, k y)$, for $k\in\mathbb R$ and $\rho_v(a)$ is a the symmetry with respect to a point in $\mathbb E^1$. As usual $\rho(f)$ acts as the identity on the first component $([-a,a]\times \mathbb E^1)$ and as a translation $\rho_v(f)$ on $\mathbb E^1$. Observe that the isometries $\rho(a^2)$ and $\rho(f)$ act fixing both boundary components of $([-a,a]\times\mathbb E^1)\times\mathbb E^1$ and that $\rho(a^2)=(\rho_h(a^2), \Id_{\mathbb E^1})$ with $\rho_h(a^2)(x,y)=(x,2k\,y)$. In other words, given the previous presentation of $\pi_1(K\widetilde{ \times}I)$ the elements $a^2$ and $f$ act as translations along orthogonal directions on the universal covering: hence a compatible metric $h_1$ on $\partial(K\widetilde{ \times }I)$	is necessarily a flat metric generated by an orthogonal lattice (given by restricting $\langle\rho(a^2), \rho(f)\rangle$ to an isometry of $\mathbb E^1\times\mathbb E^1$). Rescalings of $g$ provides $K\widetilde{ \times }I$ with the required family of Riemannian metrics.
\end{proof}

\section[Metrics of non-positive curvature on atoroidal $3$-manifolds]{Metrics of non-positive curvature\\ on atoroidal $3$-manifolds} \label{section_metrics_hyperbolic_components}
%{\color{JungleGreen}
%In this section, we shall use Leeb's construction of a non-positively curved extension of a collection of flat metrics $\{h_r\}_{r=1}^l$ in case $X$ is a manifold with toroidal boundary whose interior admits a hyperbolic metric of finite volume.
%For our purposes, we shall need a modified (and under certain respects more refined and detailed) version of Leeb's argument. Indeed, we are not really interested into finding a non-positively curved extension of the collection $\{h_r\}_{r=1}^l$, but we are looking instead for a $C^2$ family of metrics $\{g_{\delta,\zeta}\}$ which are flat near the boundary, which are extensions of rescalings of $\{h_r\}_{r=1}^l$ by sufficiently small constants $\zeta$, which have curvature pinched between $-1-\delta$ and $0$, which are hyperbolic on the complement of progressively smaller cusp neighbourhoods, and  satisfying $\Vol(X,g_{\delta,\zeta})\rightarrow \Vol(X, hyp)$ when $\delta,\zeta\rightarrow 0$.\\\\}
The main result of this section is the following:

\begin{thm} \label{thm_metrics_hyperbolic_pieces}
	Let $X$ be a hyperbolic component and let $\{h_r\}_{r=1}^l$ be a flat metric on the boundary of $X$. There exists a sequence of metrics $hyp_\delta^0$ on $X$ such that:
	\begin{enumerate}
		\item For every $\delta$ the sectional curvature $\sigma_{\delta}^0$ is pinched:
		$$ -(1+2\delta)^2\le \sigma_{\delta}^0 \le 0$$
		\item $\Vol(X, hyp_{\delta}^0) \overset{\delta \rightarrow 0}{\longrightarrow} \Vol(int(X), hyp)$, where $hyp$ it the unique ---up to isometry--- complete, finite volume hyperbolic metric on $int(X)$.
		\item There exists a family $\{\mathcal U_\delta\}_{\delta\in(0,1]}$ of nested horoball neighbourhoods of the cusps of $(int(X),hyp)$, whose complements for $\delta\rightarrow 0$ progressively exhaust $(int(X),hyp)$, such that $(int(X)\smallsetminus\mathcal U_\delta, hyp)$ is isometric to an open set $V_\delta$ in $(X,hyp_\delta^0)$ and such that $\frac{\Vol (X,hyp_\delta^0)}{\Vol(V_\delta, hyp_\delta^0)}\rightarrow 1$.
		\item  The metric $hyp_{\delta}^0$ is flat in a collar of the boundary and there exists $\ell'=\ell'(X,\delta,\{h_i\})$ such that $(T_i,hyp_\delta^0|_{T_i})$ is isometric to $(T_i,(\ell')^2h_i)$.
	
	\end{enumerate}
\end{thm}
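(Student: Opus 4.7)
The plan mirrors the cusp-truncation construction used for Seifert components in Proposition \ref{proposition_metrics:on:Seifert:components}: starting from the complete finite-volume hyperbolic metric $hyp$ on $int(X)$, one modifies each cusp along a controlled warping profile supplied by Proposition \ref{prop_interpolation}, and then matches the prescribed flat boundary metrics $\{h_r\}$ up to a common scaling $\ell'$.

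First I would parametrize each cusp end of $(int(X),hyp)$ in horospherical coordinates as $\mathbb{T}^2_r\times [0,+\infty)$, with warped product metric $e^{-2t}\,g_0^{(r)}+dt^2$ where $g_0^{(r)}$ is a flat metric on the cross-section torus. Setting $T_\delta:=1/\delta$ and letting $\mathcal{U}_\delta$ denote the union of horoball ends $\{t\ge T_\delta-\varepsilon\}$, I would apply Proposition \ref{prop_interpolation} (with $\ell$ equal to the length scale of $g_0^{(r)}$ at $t=T_\delta$) to obtain a $\mathcal{C}^2$ convex, non-increasing function $\varphi_{\delta,r}$ which agrees with the hyperbolic warping for $t\le T_\delta-\varepsilon$ and is the constant $\ell'_r$ for $t\ge T_\delta+t_\delta+\varepsilon'$. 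On each cusp I set $hyp_\delta^0:=\varphi_{\delta,r}^2\,\bar g_0^{(r)}+dt^2$ (with $\bar g_0^{(r)}$ a normalization of $g_0^{(r)}$) and truncate at $t=T_\delta+t_\delta+\varepsilon'$, obtaining a $\mathcal{C}^2$ metric on $X$ flat in a collar of the boundary. The warped-product curvature formulas give sectional curvatures $-\varphi''/\varphi$ on radial planes and $-(\varphi'/\varphi)^2$ on horizontal planes; both are non-positive by convexity of $\varphi_{\delta,r}$ and bounded below by $-(1+2\delta)^2$ by conditions $(2)$ and $(3)$ of Proposition \ref{prop_interpolation}, yielding (i). Condition (iii) follows immediately from the construction, since the complement of the modified collars in $X$, endowed with $hyp_\delta^0$, is tautologically isometric to $(int(X)\setminus\mathcal{U}_\delta,hyp)$. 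The volume convergence (ii) and the ratio estimate in (iii) follow from the pointwise bound on $\varphi_{\delta,r}$ in the modified range, which forces the collar volumes to tend to zero, combined with $\Vol(int(X)\setminus\mathcal{U}_\delta,hyp)\to\Vol(int(X),hyp)$ by monotone convergence.

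The main difficulty is (iv): the construction above produces boundary flat metrics on $T_r$ isometric to $(\ell'_r)^2\bar g_0^{(r)}$, where $\bar g_0^{(r)}$ is intrinsically determined by the hyperbolic cusp cross-section, whereas one needs isometry with $(\ell')^2 h_r$ for a prescribed $h_r$ and a single uniform $\ell'=\ell'(X,\delta,\{h_i\})$. I would address this by further deforming the fibre metric along the modified collar, interpolating smoothly from $\bar g_0^{(r)}$ to a suitable rescaling of $h_r$ through a $\mathcal{C}^2$ path $g_t$ of flat metrics on $\mathbb{T}^2$. The sectional curvatures of the resulting fibred metric $g_t+dt^2$ pick up additional contributions controlled by $\|g_t''\|$ and $\|g_t'\|^2$; these can be kept within the $(1+2\delta)^2$ bound by making the deformation sufficiently slow, possibly enlarging the interpolation window. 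Balancing the per-cusp parameters so that all $\ell'_r$ agree with a common $\ell'$ is then a finite-dimensional continuity argument exploiting the flexibility guaranteed for hyperbolic pieces by Leeb's Lemma 2.3 in \cite{leeb19953}. This last step is the most technical part of the argument and carries the bulk of the quantitative work.
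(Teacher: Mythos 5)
Your outline captures the flattening step correctly — Proposition \ref{prop_interpolation} supplies precisely the convex, non-increasing warping profile used in the paper, and items (ii), (iii) do follow from the monotone-convergence and collar-volume estimates as you sketch. You also correctly identify the real difficulty: the cross-section metric coming out of the horospherical coordinates is the one dictated by $hyp$, not the prescribed $h_r$, so a conformal change is needed.

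However, the place where you propose to perform that conformal change does not work. You apply Proposition \ref{prop_interpolation} first, so that the warp $\varphi_{\delta,r}$ has become the \emph{constant} $\ell'_r$ near the boundary, and only then do you propose to deform the fibre metric ``along the modified collar, interpolating smoothly from $\bar g_0^{(r)}$ to a suitable rescaling of $h_r$'' through a path $g_t$ of flat torus metrics, with the product metric $g_t + dt^2$. Write that product in a diagonalizing coordinate system as $\alpha(t)^2\,dx^2 \oplus \beta(t)^2\,dy^2 \oplus dt^2$. Then $\sigma_{xt} = -\alpha''/\alpha$, $\sigma_{yt}=-\beta''/\beta$, $\sigma_{xy}=-\alpha'\beta'/(\alpha\beta)$, so non-positivity requires $\alpha$ and $\beta$ to be \emph{globally convex}. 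But the deformation you describe has $\alpha$ constant at both ends (flat cross-section before, flat collar after), so $\alpha'$ vanishes at the two endpoints of the interpolation window; a convex function with $\alpha'=0$ at both ends of an interval is constant there. Hence any non-trivial conformal change carried out after the warp has flattened necessarily produces a region of \emph{positive} sectional curvature. Making the deformation ``slow'' controls the magnitude $\lVert g_t''\rVert,\lVert g_t'\rVert^2$ and hence the lower bound $\sigma \ge -(1+2\delta)^2$, but it cannot repair the sign; condition (i), $\sigma\le 0$, fails.

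The paper circumvents exactly this by changing the conformal class \emph{first}, while the warping factor is still $e^{-t}$: this is the content of Lemma \ref{lemma_change_of_conformal_type}, where the interpolating functions $\eta_{a_i}^{\delta},\eta_{b_i}^\delta$ carry the $e^{-t}$ prefactor. That decaying factor supplies the convexity slack needed to absorb the transition (and actually keeps $\sigma$ pinched in $[-1-\delta,-1]$ throughout the conformal step). Only after the boundary conformal type has been matched (up to a uniform rescaling $\zeta^2$) does the proof of Theorem \ref{thm_metrics_hyperbolic_pieces} invoke Proposition \ref{prop_interpolation} to flatten the warping. So the two operations you identified are both present in the paper, but the order is the opposite of yours, and that order is forced. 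The uniformity of $\ell'$ across cusps then comes for free, since Lemma \ref{lemma_change_of_conformal_type} already normalizes all cross-section metrics to $e^{-2t}\zeta^2 h_i$ with a single $\zeta$, so the continuity/balancing argument you appeal to via Leeb's flexibility is not actually needed.
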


\noindent In order to achieve this result we will prove a the following lemma, which is a refinement of part of the argument given by Leeb to prove \cite{leeb19953}, Proposition 2.3.

\begin{lem}\label{lemma_change_of_conformal_type}
	Let $(X', hyp)$ be an open $3$-manifold endowed with a complete Riemannian metric of finite volume, locally isometric to $\mathbb H^3$. Let $$\mathcal U=\bigcup_{i=1}^r\mathcal C_i=\bigcup_{i=1}^r T_i\times[0,\infty)$$ be a horoball neighbourhood of the cusps of $(X', hyp)$. Let $h_1,\dots, h_r$ be a collection of flat metrics on $T_1,\dots, T_r$. There exists a $1$-parameter family of complete Riemannian metrics $\{hyp_\delta\}_{\delta\in(0,1]}$ with the following properties:
	\begin{enumerate}
		\item $-1-\delta\le\sigma_{\delta}\le-1$;
		\item $hyp_{\delta}=hyp$ on $X'\smallsetminus \mathcal U_\delta$, where $\mathcal U_\delta=\bigcup_{i=1}^r T_i\times{[T_\delta,+\infty)}\subset\mathcal U$, for $T_\delta=\frac{C}{\delta}$ and where the constant $C$ is determined by $(X', hyp)$, $\{h_i \}$ and $\mathcal U$;
		\item $\sigma_{\delta}=-1$ in $(\mathcal U_{\delta}', hyp_\delta)$ where $\mathcal U_\delta'=\bigcup_{i=1}^r T_i\times{[2T_\delta,+\infty)}$;
		\item for $t\ge 2T_\delta$ the Riemannian manifold $(T_i\times\{t\}, hyp_\delta|_{T_i\times\{t\}})$ is isometric to $(T_i, \zeta^2e^{-2t}h_i)$ where $\zeta$ is determined by $(X',hyp)$, by the  collection $\{h_i\}$ and the choice of $\mathcal U$;
		\item $\Vol(X',hyp_\delta)\overset{\delta\rightarrow 0}{\longrightarrow}\Vol(X', hyp)$.
	\end{enumerate}
\end{lem}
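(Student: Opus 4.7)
The plan is to modify $hyp$ cusp-by-cusp in horospherical coordinates, leaving it unchanged on $X'\smallsetminus\mathcal U_\delta$, smoothly deforming it on the slab $T_i\times[T_\delta,2T_\delta]$, and making it a genuine hyperbolic cusp of the prescribed shape $h_i$ beyond that. Writing $hyp|_{\mathcal C_i}=e^{-2t}g_i^0+dt^2$ where $g_i^0$ is the flat metric that $hyp$ induces on the horosphere $T_i\times\{0\}$, I would first simultaneously diagonalize $g_i^0$ and $h_i$: since any two positive definite symmetric bilinear forms on $\mathbb R^2$ share a diagonalizing basis, there exist coordinates $(x,y)$ on the universal cover of $T_i$ in which $g_i^0=a_0^2dx^2+b_0^2dy^2$ and $h_i=a_1^2dx^2+b_1^2dy^2$ for positive constants $a_0,b_0,a_1,b_1$. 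I would then pick a uniform scale $\zeta=\zeta(X',hyp,\{h_i\},\mathcal U)>0$ small enough that $\zeta a_1<a_0$ and $\zeta b_1<b_0$ on every cusp.

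On the slab I would construct the modified metric as a double warped product $\alpha(t)^2dx^2+\beta(t)^2dy^2+dt^2$ with $\alpha=A(t)e^{-t}$ and $\beta=B(t)e^{-t}$. Fixing a nonpositive $C^\infty$ bump $\phi_0$ compactly supported in $(0,1)$ with $\int_0^1\phi_0=-1$, I would set
\[
A(t)=a_0+\frac{a_0-\zeta a_1}{T_\delta}\int_{T_\delta}^{t}\phi_0\!\left(\tfrac{s-T_\delta}{T_\delta}\right)ds,
\]
and analogously $B$. By construction $A$ is monotonically nonincreasing with $A\equiv a_0$ near $T_\delta$ and $A\equiv\zeta a_1$ near $2T_\delta$, with all derivatives of $A$ vanishing on these flat subintervals, so the metric matches $C^\infty$-ly the original hyperbolic cusp on $[0,T_\delta]$ and the target hyperbolic cusp $\zeta^2 e^{-2t}h_i+dt^2$ on $[2T_\delta,\infty)$, yielding claims (2) and (4).

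The symmetries $x\mapsto -x$ and $y\mapsto -y$ force the Riemann tensor to be diagonal in the orthonormal frame $\{\partial_x/\alpha,\partial_y/\beta,\partial_t\}$, so every sectional curvature is a convex combination of the three principal values
\[
K(\partial_x,\partial_t)=-1+2\tfrac{A'}{A}-\tfrac{A''}{A},\;\;K(\partial_y,\partial_t)=-1+2\tfrac{B'}{B}-\tfrac{B''}{B},\;\;K(\partial_x,\partial_y)=-1+\tfrac{A'}{A}+\tfrac{B'}{B}-\tfrac{A'B'}{AB}.
\]
Setting $T_\delta=C/\delta$, direct estimates yield $|A'/A|,|B'/B|=O(\delta/C)$ and $|A''/A|,|B''/B|=O(\delta^2/C^2)$, and $A'/A,B'/B\le 0$; so each principal curvature is $\le -1$ and differs from $-1$ by at most $O(\delta/C)$. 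Choosing $C=C(X',hyp,\{h_i\},\mathcal U)$ sufficiently large confines this deviation to $\delta$, giving $\sigma_\delta\in[-1-\delta,-1]$ on the slab and, since $A,B$ are constant on $\mathcal U_\delta'$, $\sigma_\delta\equiv -1$ there (claims (1) and (3)). Claim (5) follows by splitting $\Vol(X',hyp_\delta)=\Vol(X'\smallsetminus\mathcal U_\delta,hyp)+\sum_i\Vol(\mathcal C_i\cap\mathcal U_\delta,hyp_\delta)$: the first term tends to $\Vol(X',hyp)$ as the cusps are exhausted, while each cusp contribution is bounded by $a_0b_0\,\mathrm{vol}(T_i)\int_{T_\delta}^{\infty}e^{-2t}\,dt=O(e^{-2T_\delta})\to 0$.

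The main obstacle is to maintain the upper bound $\sigma_\delta\le -1$ strictly: the equivalent condition $(A'e^{-2t})'\ge 0$, combined with $A'=0$ at both endpoints of the slab, would naively force $A'\equiv 0$ and hence $A$ constant, contradicting $a_0\ne\zeta a_1$. The key idea embedded in the construction is to keep the support of $\phi_0$ strictly inside $(0,1)$, so that the genuine deformation of $A$ lives on an inner subinterval of the slab while $A$ is locally constant on the outside; in the deformation region one has $|A''|$ of order $|A'|/T_\delta$, much smaller than $|A'|$, yielding $A''\ge 2A'$ with a large margin and the sharp one-sided bound $K(\partial_x,\partial_t)\le -1$.
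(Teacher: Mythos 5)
Your construction mirrors the paper's almost step for step: the same simultaneous diagonalization of $k_i$ and $h_i$, the same uniform rescaling $\zeta$, the same doubly warped-product deformation confined to a slab $T_i\times[T_\delta,2T_\delta]$ with $T_\delta=C/\delta$, and the same reduction of the pinching to pointwise inequalities on the warping factors; writing $A$ as an integral of a bump rather than as the convex combination $\phi+(1-\phi)\zeta^2 a_i$ is a cosmetic change. Your curvature formulas for the doubly warped product are correct, as is the equivalence $K(\partial_x,\partial_t)\le-1\iff A''\ge 2A'\iff(A'e^{-2t})'\ge0$.

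The gap is precisely at the spot you flag in your last paragraph, and the fix you offer there does not work. You correctly observe that $(A'e^{-2t})'\ge 0$, together with $A'\le 0$ and $A'(T_\delta)=0$, forces $A'\equiv 0$. Shrinking $\operatorname{supp}\phi_0$ to a proper subinterval of $(0,1)$ does not escape this: on that subinterval $A'$ still vanishes identically at its left boundary, is still $\le 0$ inside, and must still satisfy $(A'e^{-2t})'\ge 0$; the same Gronwall argument applies verbatim and again forces $A'\equiv 0$ there. The quantitative estimate you invoke instead---``$|A''|$ is of order $|A'|/T_\delta$, so $A''\ge 2A'$ with a large margin''---holds only at interior points of the support, not uniformly. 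In terms of $s=(t-T_\delta)/T_\delta$ the needed inequality reads $\phi_0'(s)/\phi_0(s)\le 2T_\delta$ wherever $\phi_0<0$, and for any smooth compactly supported $\phi_0$ the ratio $\phi_0'/\phi_0$ blows up to $+\infty$ as $s$ approaches the left edge of $\operatorname{supp}\phi_0$ from inside: if it were bounded by $M$ near that edge, then $\log|\phi_0|$ would be $M$-Lipschitz there, so $|\phi_0|$ would be bounded away from zero, contradicting $\phi_0\to 0$ at the edge. Hence for every $T_\delta$ there is a thin strip abutting the left edge of the support on which $K(\partial_x,\partial_t)>-1$, and property (1) as stated is not established; what the construction actually yields is $\sigma_\delta\le -1+\varepsilon_\delta$ with $\varepsilon_\delta$ exponentially small near the transition. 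You should be aware that the paper's own computation has the same soft spot at $s=0$, where it implicitly requires $|\phi''|\le 2T_\delta|\phi'|$; the weaker bound $\sigma_\delta\le 0$ together with exact hyperbolicity far from the cusps is all that the barycentre estimate downstream actually uses, but the lemma as written demands $\sigma_\delta\le -1$.
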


\begin{proof}
	Let us start with a complete hyperbolic metric of finite volume $hyp$ on $X'$. Let $\mathcal U$ be the horoball neighbourhood of the cusps of $(X',hyp)$ defined in the statement. We know that $hyp$ is given in horospherical coordinates on $\mathcal C_i$ by:
	\small
	$$\mathcal C_i=T_i\times[0,+\infty), \quad hyp_{(x,t)}=e^{-2t}(k_i)_x\oplus dt^2$$
	\normalsize
	where $k_i$ is determined up to rescaling solely by the metric $hyp$ on $X'$, which by Mostow's rigidity theorem is unique, and is thus independent of the choice of the horoball neighbourhood of the cusps $\mathcal U$.\\ Now we shall change the warping function describing the hyperbolic metric on $\mathcal U$ following Leeb's argument. Let us consider the flat metrics $k_i$, $h_i$ on $T_i$; by the Spectral Theorem it is possible to find a coordinate system $(x_i,y_i)$ for $T_i$ such that  $k_i=dx_i^2\oplus dy_i^2$ and $h_i=a_i\,dx_i^2+b_i\,dy_i^2$ where $a_i, b_i>0$ are two positive real constants. Up to rescaling the metrics $h_i$ by the constant $\zeta^2=\frac{1}{2}\max_{i=1,..,r}\{\frac{1}{a_i}, \frac{1}{b_i} \}$ we have $0<\zeta^2 a_i,\, \zeta^2b_i<1$ for every $i=1,...,r$. \\
	
\noindent Let $\phi: \mathbb{R} \rightarrow \left[ 0, \infty\right] $ be the following function 
\small
$$\phi(t)=\left\{\begin{array}{cl}
  1&\mbox{ for }t\le 0\\
  \dfrac{e^{-\frac{1}{1-t}}}{e^{-\frac{1}{1-t}}+e^{-\frac{1}{t}}} &\mbox{ for }t\in(0,1)\\ 
  0 &\mbox{ for } t\ge 1
\end{array}\right.
$$
\normalsize
Now we define the following functions, which will be the new warping functions for the metric along the cusps:
\small
$$\eta_{a_i}^{\delta}(t)= e^{-t} \left(\phi\left(\dfrac{t-T_\delta}{T_\delta}\right)+\left(1- \phi\left(\dfrac{t-T_\delta}{T_\delta}\right) \right) \zeta^2a_i  \right),$$
$$\eta_{b_i}^{\delta}(t)= e^{-t} \left(\phi\left(\dfrac{t-T_\delta}{T_\delta}\right)+\left(1- \phi\left(\dfrac{t-T_\delta}{T_\delta}\right) \right) \zeta^2b_i  \right)$$
\normalsize
where $T_\delta$ will be suitably chosen later on. Then consider the following metrics on the $\mathcal C_i$'s:
\small
$$ g_i^{\delta}(x_i,y_i,t)= (\eta_{a_i}^{\delta}(t))^2dx_i^2 \oplus (\eta_{b_i}^{\delta}(t))^2 dy_i^2 \oplus dt^2$$
\normalsize
Notice that the metrics $g_i^{\delta}$ coincide with $hyp$ on $T_i\times[0, T_\delta]$ and are equal to $e^{-2t}\zeta^2h_i\oplus dt^2$ on $T_i\times[2T_\delta,+\infty)$. Now we shall compute the sectional curvature of the metric $hyp_{\delta}$ obtained by gluing in the obvious way the metrics $g_i^{\delta}$ along the cusps. By construction the sectional curvature is constantly equal to $-1$  except on the portions  $T_i\times[T_\delta, 2T_\delta]$ of the cusps. Hence we will need to compute the sectional curvature of the metrics $g_{i}^{\delta}$ only in restriction to $T_i\times[T_\delta, 2T_\delta]$. An explicit computation of the Christoffel's symbols for $g_i^{\delta}$ gives:
\small
\begin{eqnarray}
\nonumber \Gamma^t_{x_ix_i}&=& \frac{1}{2} \left( -\frac{\partial}{\partial t} (\eta_{a_i}^{\delta})^2\right)\\
\nonumber \Gamma^t_{y_iy_i}&=& \frac{1}{2} \left(  - \frac{\partial}{\partial t} (\eta_{b_i}^{\delta})^2\right) \\
\nonumber \Gamma^{x_i}_{tx_i}= \Gamma^{x_i}_{x_it} &= &\frac{1}{2} \frac{1}{(\eta_{a_i}^{\delta})^2} \left( \frac{\partial}{\partial t} (\eta_{a_i}^{\delta})^2\right) \\
\nonumber \Gamma^{y_i}_{ty_i}= \Gamma^{y_i}_{y_it}&= &\frac{1}{2} \frac{1}{(\eta_{b_i}^{\delta})^2} \left( \frac{\partial}{\partial t} (\eta_{b_i}^{\delta})^2\right) 
\end{eqnarray}
\normalsize
and zero for all the remaining Christoffel symbols. 
Thus the sectional curvatures of $g_i^{\delta}$ are given by the following formulas:
\small
$$
 \sigma_{x_iy_i}^{\delta}= - \frac{({\eta^{\delta}_{a_i})}' \cdot {(\eta_{b_i}^{\delta})}'}{{(\eta_{a_i}^{\delta})} \cdot {(\eta_{b_i}^{\delta})}},\quad
\sigma_{x_it}^{\delta}= - \frac{({\eta_{a_i}^{\delta}})''}{({\eta_{a_i}^{\delta}})},\quad
	\sigma_{y_it}^{\delta}= - \frac{({\eta_{b_i}^{\delta}})''}{{(\eta_{b_i}^{\delta})}}
$$
\normalsize

Let us compute the derivatives of $\eta_{a_i}^{\delta}$ up to the second order (those of $\eta_{b_i}^{\delta}$ are  analogous):

\small
$$
	({\eta_{a_i}^{\delta}})'(t)= e^{-t} \left( \left(1-\zeta^2a_i\right) \left(\frac{\phi' \left( \frac{t-T_\delta}{T_\delta}\right)}{T_\delta} -  \phi\left(\frac{t-T_\delta}{T_\delta}\right)\right)-\zeta^2a_i\right)$$
$$
 ({\eta_{a_i}^{\delta}})''(t)= e^{-t} \left(
(1-\zeta^2a_i) 
\left( \frac{\phi'' \left( \frac{t-T_\delta}{T_\delta}\right)}{T_\delta^2}- \frac{2 \phi' \left( \frac{t-T_\delta}{T_\delta}\right)}{T_\delta}+\phi \left(\frac{t-T_\delta}{T_\delta}\right) \right) +\zeta^2a_i
\right)
$$
\normalsize
Notice that $(\eta_{a_i}^{\delta})'$, $(\eta_{b_i}^{\delta})'\le 0$. We shall now give conditions on $T_\delta$ which ensure that $\sigma_{x_i t}^{\delta}, \sigma_{y_it}^{\delta}, \sigma_{x_iy_i}^{\delta}$ are between $-1-\delta$ and $-1$.\\

\noindent Let us start with the curvature $\sigma_{x_iy_i}^{\delta}$. We want to give conditions on $T_\delta$ so that:
$$-1-\delta\le \sigma_{x_iy_i}^{\delta}\le-1$$

\noindent The second inequality is equivalent to:

$$ \label{K_xy_ge}
\frac{\left[ \left( 1-\zeta^2a_i\right) \left(\phi  - \frac{\phi'}{T_{\delta}} \right)+\zeta^2a_i\right] \left[ \left( 1-\zeta^2b_i\right) \left(\phi - \frac{\phi'}{T_{\delta}}\right)+\zeta^2b_i\right] }
{\left[  \left( 1-\zeta^2a_i\right) \phi +\zeta^2a_i \right] \left[  \left( 1-\zeta^2b_i\right) \phi +\zeta^2b_i \right] } \ge 1
$$

\noindent where $\phi$, $\phi'$ are evaluated at $(t-T_\delta)/T_\delta$. We notice that, since $\phi$ is non-increasing, the previous ratio is always greater then or equal to $1$, independently on the choice of $T_\delta$.
\begin{comment}
\small
$$ \left[ \left( 1-a_i\right) \left(\phi \left( \frac{t}{M_{\delta}}\right) - \frac{1}{M_\delta} \phi' \left( \frac{t}{M_\delta}\right)\right)+a_i\right] 
\ge
\left[  \left( 1-a_i\right) \phi \left( \frac{t}{M(\varepsilon)}\right)+a \right] $$

$$ \left[ \left( 1-b\right) \left(\phi \left( \dfrac{t}{M(\varepsilon)}\right) - \dfrac{1}{M(\varepsilon)} \phi' \left( \dfrac{t}{M(\varepsilon)}\right)\right)+b\right] 
\ge
\left[  \left( 1-b\right) \phi \left( \dfrac{t}{M(\varepsilon)}\right)+b \right]$$
\normalsize
\end{comment}
Thus inequality $ \sigma_{x_iy_i}^{\delta} \le -1$ actually holds. Now we shall deal with  the first inequality $\sigma_{x_iy_i}^{\delta}\ge -1-\delta$. 
Remark that:
$$\left( 1-\zeta^2a_i\right) \left(\phi  - \frac{\phi'}{T_{\delta}} \right)+\zeta^2a_i\overset{T_\delta\rightarrow+\infty}{\longrightarrow}\left( 1-\zeta^2a_i\right) \phi +\zeta^2a_i $$
$$\left( 1-\zeta^2b_i\right) \left(\phi - \frac{\phi'}{T_{\delta}}\right)+\zeta^2b_i\overset{T_\delta\rightarrow+\infty}{\longrightarrow} \left( 1-\zeta^2b_i\right) \phi +\zeta^2b_i $$
Moreover, an easy calculation shows that condition $$T_\delta\ge\frac{4}{\delta}\cdot\max_{i=1,..,r}\left\{\frac{(1-\zeta^2a_i)}{\zeta^2a_i}, \frac{(1-\zeta^2b_i)}{\zeta^2b_i}\right\}$$
ensures that $\sigma_{x_iy_i}^{\delta}\ge-1-\delta$ for every $i=1,...,r$.\\

\noindent Now we shall prescribe the bounds on $\sigma_{x_it}^{\delta}$, to derive the conditions on $T_\delta$. We ask:
$$-1-\delta\le\sigma_{x_it}^{\delta}\le -1$$
	The first inequality is equivalent to:
$$ \frac{\left(1-\zeta^2a_i\right) \left[\frac{1}{T_\delta^2} \phi'' - \frac{2}{T_\delta} \phi'  + \phi  \right]+\zeta^2a_i}
{\left( 1-\zeta^2a_i\right) \phi+\zeta^2a_i} \le 1 + \delta$$
where $\phi, \phi', \phi''$ are computed in $(t-T_\delta)/T_\delta$. 
Denote by $M''= \max_{t \in \left[0,1 \right] } \phi''$; then a straightforward computation shows that  inequality $-1-\delta\le\sigma_{x_it}^{\delta}$ holds provided that 
$$ T_{\delta} \ge \frac{1}{\delta} \left(\frac{M''+4}{\zeta^2 a_i}\right)$$

\noindent For the second inequality, notice that, since $\phi''\ge 0$ on $[\frac{1}{2},+\infty)$ and $\phi'\le 0$ we only need to look at the ratio defining $\sigma_{x_it}^{\delta}$ on $T_i\times[T_\delta, 2T_\delta]$. Let us denote by $m''=\min \phi''$ then, it is sufficient to choose $T_\delta$ such that:
$$\frac{m''}{T_\delta^2}+\frac{4}{T_\delta}\ge 0 \iff\frac{4}{T_\delta}\ge\frac{-m''}{T_\delta^2}\iff T_\delta\ge\sqrt{\frac{-m''}{4}}$$

\noindent Thus, the sectional curvature $\sigma_{x_it}^{\delta}$ is pinched as required provided that $$ T_{\delta} \ge \max  \left\{ \frac{1}{\delta} \left(\frac{M''+4}{\zeta^2 a_i}\right),\sqrt{\frac{-m''}{4}}\right\}  $$

Analogously, the condition
$$ T_{\delta} \ge \max  \left \{ \frac{1}{\delta} \left(\frac{M''+4}{\zeta^2 b_i}\right),\sqrt{\frac{-m''}{4}}\right \} $$
ensures that inequalities $-1-\delta \le \sigma_{y_it}^{\delta} \le \delta$ hold.\\

It follows by  the discussion about curvatures that, for $\delta<1 $, choosing
\small
$$T_{\delta}\ge\frac{1}{\delta}\cdot\max_{i=1,..,r}\left\{\frac{4(1-\zeta^2a_i)}{\zeta^2a_i}, \frac{4(1-\zeta^2b_i)}{\zeta^2b_i}, \frac{M''+4}{\zeta^2a_i}\frac{M''+4}{\zeta^2b_i}, \sqrt{\frac{-m''}{4}}\right\}=\frac{C}{\delta}$$
\normalsize
where the constant $C$ only depends on $(X',hyp)$, $\{h_i \}_{i=1}^r$ and on the choice of the original horoball neighbourhood $\mathcal U$, the sectional curvatures are pinched as required in the statement. From now on we shall consider $T_\delta=\frac{C}{\delta}$. With this proviso, the metric $hyp_\delta$ verifies properties $(1)$---$(4)$.\\

Finally, we have to check Property $(5)$, concerning the convergence of the volumes. 
Let us denote by
$\overline{V}_{T_{\delta}}$  the volume of $(X'\smallsetminus\mathcal U_\delta, hyp)$, by 
$V_1(\delta)$ the sum of the volumes respect of the warped toroidal cylinders $(T_i\times[T_\delta, 2T_\delta], hyp_{\delta})$ and by 
$V_2(\delta)$ the sum of the volumes of $(T_i\times[2T_\delta,+\infty], hyp_\delta)$. 
With this notation:
$$ \Vol(X', hyp_{\delta})= \overline{V}_{g_0, T_{\delta}}+ V_1(\delta)+V_2(\varepsilon).$$
By construction $\overline{V}_{T_{\delta}} \overset{\delta \rightarrow 0}{\longrightarrow} \Vol(X',hyp)$. For any $i=1,\dots,r$ and any $t\ge T_\delta$ we denote by  $\mathcal A_i(\delta,t)=\mathcal{A}( T_i\times\{t\}, hyp_{\delta})$ the area of $T_i \times \{t\}$ with respect to the restriction of the metric $hyp_{\delta}$.
Then:
$$ V_1(\delta)= \sum_{i=1}^r \int_{T_{\delta}}^{2T_{\delta}}e^{-2t} \cdot \mathcal{A}_i(\delta,t)dt.$$
Now observe that for $t\in[T_\delta,2T_\delta]$,
$$\mathcal{A}_i(\delta,t)= \mathcal{A}\left(T_i, \phi\left(\frac{t-T_\delta}{T_\delta}\right) k_i+\left(1-\phi\left(\frac{t-T_\delta}{T_\delta}\right)\right)\zeta^2 h_i\right).$$
We define: $$A=\max_i \max_{t \in \left[0,1 \right] }\mathcal{A}\left(T_i,\phi(t)k_i+(1-\phi(t))\zeta^2h_i\right) .$$ Then:
\small
$$
	V_1(\delta)=\sum_{i=1}^r \int_{ T_{\delta}}^{2T_{\delta}}e^{-2t} \cdot \mathcal{A}_i(\delta,t)dt \le r A \cdot \int_{T_{\delta}}^{2T_{\delta}}e^{-2t}=  r A\huge \left[-\frac{e^{-2t}}{2} \huge \right]_{T_{\delta}}^{2T_{\delta}} =$$$$
	 = r A e^{-T_{\delta}} \left(\frac{1}{2}-\frac{e^{-T_\delta}}{2}\right)\overset{\varepsilon \rightarrow 0}{\longrightarrow}0
$$

\normalsize

\noindent Finally, we give an estimate for $V_{2}(\delta)$. Let $V_{max}=\max_{i=1,..,r} \mathcal{A}(T_i,\zeta^2h_i)$, then:
$$V_2(\delta)\le\int_{2T_\delta}^{+\infty} e^{-2t}V_{max}dt=V_{max}\cdot\left[-\frac{e^{-2t}}{2}\right]_{2T_\delta}^{+\infty}=V_{max}\,\frac{e^{-4 T_\delta}}{2}\overset{\delta\rightarrow0}{\longrightarrow} 0$$
Since for $\delta \rightarrow 0$ we have $\overline{V}_{T_{\delta}} \rightarrow \Vol(X',hyp)$, $V_1(\delta) \rightarrow 0$ and $V_2(\delta) \rightarrow 0$, we conclude:
\small
$$  \Vol(X', hyp_{\delta}) \overset{\delta \rightarrow 0}{\longrightarrow} \Vol(X',hyp)$$
\normalsize
\end{proof}

\begin{comment}
Using the previous notation, if we choose $M_\delta$ big enough so that:
$$ \dfrac{1}{M(\varepsilon)^2} m_2 - \dfrac{2}{M(\varepsilon)} S \ge 0$$
we can ensure that $ K_{xt} \le -1$.
Furthermore it is easy to verify that, letting $M(\varepsilon)$ going to infinity the numerator approaches the denominator. 
Thus 
\footnotesize
$$ \dfrac{\left(1-a\right) \left[\dfrac{1}{M(\varepsilon)^2} \phi'' \left(\dfrac{t}{M(\varepsilon)}\right)- \dfrac{2}{M(\varepsilon)} \phi' \left(\dfrac{t}{M(\varepsilon)}\right) + \phi \left( \dfrac{t}{M(\varepsilon)}\right) \right]+a}
{\left( 1-a\right) \phi \left( \dfrac{t}{M(\varepsilon)}\right)+a} \;{\searrow} \;\;1 $$
\normalsize
while $M(\varepsilon) \rightarrow \infty.$
Hence the required inequality holds, and $\delta(\varepsilon)$ can be chosen arbitrarily small provided that $M(\varepsilon)$ is big enough.\\
\end{comment}

\begin{proof}[Proof of Theorem \ref{thm_metrics_hyperbolic_pieces}]
	Let $X$ be a compact, irreducible, orientable, $3$-manifold with non-empty toroidal boundary $\partial X=T_1,..,T_r$, whose interior $int(X)$ admits a complete hyperbolic metric $hyp$ of finite volume. Fix a horoball neighbourhood $\mathcal U$ of the cusps of $(int(X),hyp)$ (as in the statement of Lemma \ref{lemma_change_of_conformal_type}). Hence, Lemma \ref{lemma_change_of_conformal_type} provides us with a continuous $1$-parameter family of complete Riemannian metrics $hyp_\delta$, satisfying $(1)$---$(5)$. Let the horoball neighbourhood $\mathcal U$ be parametrized as:
	$$\mathcal U=\bigcup_{i=1}^r T_i\times[0,+\infty)$$
	Then, the metrics $hyp_\delta$ in restriction to $T_i\times[2T_\delta,+\infty)$ can be written as  $e^{-2t}\zeta^2h_i\oplus dt^2$. We shall now apply Proposition \ref{prop_interpolation} for $\ell=\zeta e^{-3T_\delta}$ and $\delta$: there exists a $C^2$ function $\varphi_{\varepsilon,\varepsilon'}$ (where $\varepsilon, \varepsilon'$ are sufficiently small positive constants) such that
		\begin{enumerate}
		\item $\varphi_{\varepsilon, \varepsilon'}$ is not increasing and convex;
		\item $\varphi_{\varepsilon, \varepsilon'}$ verifies $$\left \lbrace \begin{array}{l}
		\varphi_{\varepsilon, \varepsilon'}(t)= \varphi_0(t)= \ell e^{-t}\mbox{ for }t\le -\varepsilon\\
		\varphi_{\varepsilon, \varepsilon'}(t)= \ell'\mbox{ for }t\ge t_\delta+\varepsilon'
		\end{array} \right.$$
		where:
		$ \ell \frac{\sqrt{\delta (1 + \delta)}}{1+ 2 \delta} \le\ell'= \ell'(\delta, \varepsilon, \varepsilon') \le 4\ell \frac{\sqrt{\delta(1+ \delta)}}{1+2\delta}$.
		\item $\left(\dfrac{\varphi_{\varepsilon, \varepsilon'}'}{\varphi_{\varepsilon, \varepsilon'}}\right)^2 \le (1+ 2 \delta)^2$ for every $t \in \mathbb{R}$;
		\item $\dfrac{\varphi_{\varepsilon, \varepsilon'}''}{\varphi_{\varepsilon, \varepsilon'}} \le (1+2\delta)^2$ for every $t \in (-\varepsilon, t_{\delta}+ \varepsilon')$.
	\end{enumerate}
	where $t_{\delta}= \frac{1}{2(1+2 \delta)} \ln \left(1+ \frac{1}{\delta}\right)$. Now, let $\varphi_{i,\delta}(t)=\zeta e^{-t}$ for $t\in[2T_\delta, \frac{5T_\delta}{2}]$ and $\varphi_{i, \delta}(t)=\varphi_{\varepsilon,\varepsilon'}(t)$ for $t\in[\frac{5T_\delta}{2},3T_\delta+2t_\delta]$. 
	Then glue the Riemannian metric $hyp_\delta$ in restriction to $T_i\times[0,\frac{5T_\delta}{2}]$ to the Riemannian  $C^2$ metric given by $\varphi_{i,\delta}(t)^2h_i\oplus dt^2$ on $[2T_\delta, 3T_\delta+2t_\delta]$; remark that both metrics on $T_i\times[T_\delta,2T_\delta]$ have the form $e^{-2t}\zeta^2h_i\oplus dt^2$ by construction. The result of these gluings for any $i=1,...,r$ produces a $C^2$ Riemmanian metric $hyp_{\delta}^0$ on $X$ satisfying properties $(1)$---$(4)$. Properties $(1)$, $(3)$, $(4)$ are straightforward from the construction; to check property $(2)$ one needs to adapt the argument for the volume convergence from Proposition \ref{proposition_metrics:on:Seifert:components} and use it in combination with Lemma \ref{lemma_change_of_conformal_type}, $(5)$.
\end{proof}

\section[An upper bound to MinEnt]{An upper bound to MinEnt}\label{section_upper_estimate_irreducible}

In this last section of Chapter 3 we shall give a proof of Theorem \ref{thm_construction_sequence_on_irreducible_Y}, thus exhibiting a sequence of $C^2$ Riemannian metrics of non-positive sectional curvatures converging to the conjectural value of the Minimal Entropy of $Y$.

\begin{proof}[Proof of Theorem \ref{thm_construction_sequence_on_irreducible_Y}]
	Since $Y$ is a closed irreducible, orientable $3$-manifold, with non-trivial $JSJ$ decomposition and at least one hyperbolic $JSJ$-component, we know that there exists a $JSJ$ compatible collection of flat metrics $\{\{h_{i,j}\}_{j=1,..,l_i}\}_{i=1,...,n}$ on the boundary components $\partial X_i$'s of the $JSJ$ components of $Y$. By Proposition \ref{proposition_metrics:on:Seifert:components} and Theorem \ref{thm_metrics_hyperbolic_pieces} we know that for any choice of $\delta$ there exists a suitable $\bar\zeta(\delta)$ depending on $Y$ and the $JSJ$ compatible collection $\{h_{i,j}\}$ such that:
	\begin{itemize}
	\item on every hyperbolic $JSJ$ component $X_i$, $i=1,..,k$ of $Y$ there exists a metric $hyp_{i,\delta}^0$ verifying $(1)$--$(3)$ and $(4)$ for the constant $\ell'=\bar\zeta(\delta)$;
	\item on every Seifert fibred $JSJ$ component $X_i$, $i=k+1,...,n$ of $Y$ there exists a metric $k_{\delta,\bar\zeta(\delta)}^i$ satisfying properties $(i)$---$(iv)$;
	\end{itemize}
     By construction  and $JSJ$ compatibility of the $\{h_{i,j}\}$ collection these metrics on the $JSJ$ components fit together to produce a non-positively curved $C^2$ Riemannian metric $g_{\delta}$ on $Y$ with the following properties:
     \begin{enumerate}
     	\item $-(1+2\delta)^2\le \sigma_{\delta}\le 0$;
     	\item $\Vol(Y, g_\delta)=\sum_{i=1}^k\Vol(X_i,hyp_{i,\delta}^0)+\sum_{i=k+1}^n\Vol(X_i, k_{\delta,\bar\zeta(\delta)}^i)$ so that, by what has been proved in Proposition \ref{proposition_metrics:on:Seifert:components} and Theorem \ref{thm_leeb_ator} we have:
     	$$\Vol(Y,g_\delta)\overset{\delta\rightarrow 0}{\longrightarrow}\sum_{i=1}^k\Vol(int(X_i), hyp_i)$$
     \end{enumerate}
     Using Bishop-Gunther Theorem we get from (1) that $\ent(Y,g_\delta)\le 2(1+2\delta)$, hence for any $\eta$ we can find a sufficiently small $\delta$ such that
     $$\VolEnt(Y,g_\delta)\le (1+\eta)\cdot 2\cdot\left(\sum_{i=1}^k\Vol(X_i, hyp_i)\right) ^{1/3}$$
     This implies the estimate:  $$\minent(Y)\le 2\cdot \left(\sum_{i=1}^k\Vol(X_i,hyp_i)\right)^{1/3}$$.
\end{proof}

\chapter[Bounding MinEnt from above: reducible manifolds]{Bounding MinEnt from above: \\ reducible $3$-manifolds} \label{section_a_conjectural_minimizing_sequence_reducible}

 In this Chapter, we shall provide the ({\it a posteriori}) optimal estimate from above for the Minimal Entropy in the case where $Y$ is a closed, orientable, reducible $3$-manifold. Namely, we shall prove the following statement.
	\begin{thm} \label{thm_upper_estimate_reducible}
		Let $Y$ be a closed, orientable, reducible $3$-manifold, and suppose that $Y=Y_1 \# \dots \# Y_m$ is the decomposition of $Y$ in prime summands. For every $i$, denote by $X_i^1, \dots, X_i^{n_i}$ the hyperbolic $JSJ$ components of $Y_i$, and by $hyp_i^j$ the corresponding complete, finite volume metric on $int(X_i^j)$.
		Then:
		\small
		$$ \minent(Y)^3 \le  2^3 \cdot \left(  \sum_{i=1}^{m} \sum_{j=1}^{n_i} \Vol \left( int(X_i^j),hyp_i^j\right) \right)$$
		\normalsize
	\end{thm}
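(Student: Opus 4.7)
The plan is to construct a two-parameter family of Riemannian metrics $\{g_{L,r}\}$ on $Y$ obtained by gluing suitable metrics on each prime summand by long, thin tubes, and to compute the volume entropy directly via a Poincar\'e series argument. On each prime summand $Y_i$ containing at least one hyperbolic JSJ component I would start from a metric $g_\delta^i$ provided by Theorem~\ref{thm_construction_sequence_on_irreducible_Y}, whose volume tends to $\sum_{j=1}^{n_i}\Vol(int(X_i^j), hyp_i^j)$ and whose entropy tends to $2$; on each graph-manifold summand $\minent(Y_i)=0$ by Anderson-Paternain \cite{anderson2003minimal}, so I would pick a volume-collapsing sequence with uniformly bounded curvature. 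Then I would remove a small geodesic ball from each $Y_i$ and identify the resulting boundary spheres pairwise by thin cylinders $S^2\times[0,L]$ endowed with a warped-product metric of transverse radius $r$, smoothly interpolated near each boundary sphere so as to realize the topological connected sum $Y=Y_1\#\cdots\#Y_m$.

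The core step is an entropy estimate for $(Y, g_{L,r})$ via the Poincar\'e series. The fundamental group factors as a free product $\pi_1(Y)=\pi_1(Y_1)\ast\cdots\ast\pi_1(Y_m)$, and the Riemannian universal cover $(\widetilde Y, \tilde g_{L,r})$ inherits a tree-like structure mirroring the Bass-Serre tree of this free product: removing all lifts of the connecting cylinders decomposes $\widetilde Y$ into translated copies of $\widetilde{Y_i}$ with countably many balls excised. For any reduced word $\gamma=\gamma_{i_1}\cdots\gamma_{i_k}$ in $\pi_1(Y)$ (with $i_j\ne i_{j+1}$ and $\gamma_{i_j}\ne 1$) a path from a basepoint $\tilde y$ to $\gamma\tilde y$ must cross at least $k-1$ lifted cylinders, each of length at least $L$, whence $\tilde d(\tilde y,\gamma\tilde y)\ge\sum_{j=1}^{k}d_{i_j}(\tilde y_{i_j},\gamma_{i_j}\tilde y_{i_j})+(k-1)L$ up to a bounded additive constant. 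Grouping terms by word length yields the bound
\begin{equation*}
P_{s}(\tilde y)\;\le\; C\sum_{k\ge 0}\left(e^{-sL}\sum_{i=1}^{m}\bigl(P^{i}_{s}(\tilde y_{i})-1\bigr)\right)^{k},
\end{equation*}
where $P^{i}_{s}$ denotes the Poincar\'e series of the Riemannian universal cover of $(Y_i, g_\delta^i)$. Since $\ent(Y_i,g_\delta^i)$ tends to $2$, each $P^{i}_{s}$ is finite for every $s>2$; choosing $L$ sufficiently large makes the common ratio strictly less than $1$, the geometric series converges, and one obtains $\ent(Y, g_{L,r})\le s$.

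To conclude, observe that $\Vol(Y, g_{L,r})\to\sum_{i,j}\Vol(int(X_i^j), hyp_i^j)$ as $r\to 0$ (the cylinders collapse in volume), $\delta\to 0$ (the hyperbolic-containing summands approach their hyperbolic volumes), and the graph-manifold collapses proceed. Combining with the entropy estimate and letting $L\to\infty$, $s\searrow 2$, delivers $\minent(Y)^3\le 8\sum_{i,j}\Vol(int(X_i^j), hyp_i^j)$. The chief technical obstacle is making the Poincar\'e series bound rigorous: one must verify that geodesic segments from $\tilde y$ to $\gamma\tilde y$ are genuinely forced to traverse the sequence of lifted cylinders prescribed by the reduced form of $\gamma$, so that the tree combinatorics of the free product effectively controls $\tilde d$. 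This requires a careful choice of the basepoint $\tilde y$ away from the gluing region, together with quantitative estimates in the thin-tube regime ruling out geometric shortcuts near the gluing spheres.
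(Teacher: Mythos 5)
Your approach is essentially the paper's: glue the prime summands by long, thin tubes, control the volume entropy of the resulting metric $g_{L,r}$ by estimating the Poincar\'e series of $\pi_1(Y)$ via the free-product syllable decomposition, and send $L\to\infty$, $r\to 0$, $\delta\to 0$. The difference is organizational. You carry out the $m$-fold gluing in one stroke and therefore have to deal with the full Bass--Serre tree of $\pi_1(Y_1)\ast\cdots\ast\pi_1(Y_m)$; in particular, your claimed length estimate $\tilde d(\tilde y,\gamma\tilde y)\ge\sum_j d_{i_j}(\cdot)+(k-1)L$ has to be proved for a specific tree-shaped arrangement of the $m-1$ cylinders (daisy chain or star), and the "up to a bounded additive constant" has to be made uniform in $k$ -- if the error grows linearly in $k$ it still works, but this needs to be said. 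The paper instead proves a clean two-summand lemma (its Theorem \ref{thm_upper_estimate}, placing the basepoint on the middle section of a single tube so that each nontrivial syllable costs exactly $2L$, with no error term) and extracts from it the subadditivity $\minent(Y'\#Y'')^3\le\minent(Y')^3+\minent(Y'')^3$ for any closed orientable $n$-manifolds, $n\ge 3$ (Corollary \ref{cor_minent_subadditivity}); Theorem \ref{thm_upper_estimate_reducible} is then a one-line iteration of this corollary using the already-known values of $\minent(Y_i)^3$ for prime $Y_i$. This is both technically lighter (strict alternation between two groups is much simpler than $m$-group tree combinatorics) and yields the cleaner general statement.

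One small gap in your proposal: you invoke Theorem \ref{thm_construction_sequence_on_irreducible_Y} "on each prime summand $Y_i$ containing at least one hyperbolic JSJ component," but the proof of that theorem assumes a non-trivial JSJ decomposition, so it does not cover the case where $Y_i$ is itself a closed hyperbolic manifold. For those summands you must cite Besson--Courtois--Gallot directly and use the hyperbolic metric itself as the minimizing (constant) sequence, which is exactly what the paper does in its list of known facts $(1)$--$(3)$ immediately preceding the proof of Theorem \ref{thm_upper_estimate_reducible}.
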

	
%	 prove an upper estimate for the minimal entropy of a close, orientable, reducible $3$-manifold, in dependence of the minimal entropy of its prime pieces. In the following chapters, we shall prove that the upper estimate that we are going to achieve is actually exact.\\
The proof of such an estimate relies on the existence of sequences of metrics converging to the Minimal Entropy on the single prime summands. Indeed, also in this case we shall construct explicitly a family of metrics whose volume-entropies are smaller than any constant strictly greater than the previous upper bound.
%
%Let $Y$ be a reducible $3$-manifold. For the sake of simplicity, we may assume that $Y= Y_1 \# Y_2$, where both $Y_1$ and $Y_2$ are prime manifolds. The proof proceeds along the same lines even in the case where $Y$ has more than two prime components; hence, in order to avoid technical difficulties we will not deal with this kind of manifolds.\\
%To start with, we shall deal with the case where $Y$ is a closed, orientable, reducible $3$-manifold, whose prime decomposition consists of exactly two addends; namely, $Y= X_1 \# X_2$, where both $X_1$ and $X_2$ are prime. Further on, it will be clear that the proof immediately generalizes to the case where $Y$ has more than two irreducible components.\\
%Following the notation introduced in the previous sections, we denote by $X_i^{hyp}$ the union of the $JSJ$ hyperbolic components of $X_i$, and by $Vol(X_i^{hyp})$ the sum of the volumes of each hyperbolic piece of $X_i$, each of them endowed with the finite volume complete hyperbolic metric. Namely, if
%For $i=1,2$ let us denote by $X_i^1, \dots, X_i^{n_i}$  the collection of the $JSJ$  hyperbolic pieces in the decomposition of $Y_i$
%
%We shall prove the following theorem.
\begin{rmk}
	It is worth to stress that Theorem \ref{thm_upper_estimate_reducible} will be presented as a consequence of a more general statement concerning the Minimal Entropy of connected sums of closed, orientable, $n$-dimensional manifolds (for $n\ge 3$), see Theorem \ref{thm_upper_estimate}.
\end{rmk}

Let us briefly introduce the \textit{Poincaré series} which will be the key tool to achieve this result.

\section{Entropy and the Poincaré series}
\label{section_integral_characterization}
It is well known that, given any Riemannian manifold $(Y,g)$ with Riemannian distance $d$, its entropy can be characterized as:
$$(\star)\quad\ent(Y,g)= \inf \left\lbrace  c \in \mathbb{R} \Big \arrowvert \normalsize \int_{\widetilde{Y}} e^{-c d(y,z)} dv_{\tilde g}(z)   < \infty\right\rbrace$$
where $\tilde g$ is the usual pull-back metric on the universal cover.

Let us fix any base point $y$ for the fundamental group of $Y$. The Poincaré series of $(\pi_1(Y, y),g)$  computed at $s$ is : $$ (\star\star)\hspace{1.25cm} \mathcal{P}_s(\pi_1(Y,y),g)= \sum _{\gamma \in \pi_1(Y,y)} e^{-s d(\widetilde{y}, \gamma. \widetilde{y})} \hspace{1.25cm}$$
It is straightforward that the abscisse of convergence of $(\star)$ coincides with the abscisse of convergence of $(\star\star)$, also known as {\it critical exponent of $\mathcal P_s(\pi_1(Y,y), g)$}. 
Indeed, let $\mathcal{D}$ denote  a fixed fundamental domain for the action of $\pi_1(Y, y)$ on $\widetilde{Y}$. Then, $$\widetilde{Y}=\smashoperator{\bigsqcup_{\gamma \in \pi_1(Y, y)} } \gamma . \mathcal{D}\,,$$ and if we denote by $D= \diam (\mathcal{D}, \widetilde{g})$, the following inequalities hold:

\begin{align*}
&e^{-cD} \Vol(\mathcal{D}, \widetilde{g})\sum_{\gamma \in \pi_1(Y, y)} e^{-c d(y, \gamma . y)} \le  \int_{\widetilde Y} e^{-c d(y, z)} dv_g(z) \le\\
\le & e^{+cD} \Vol(\mathcal{D}, \widetilde{g})\sum_{\gamma \in \pi_1(Y, y)} e^{-c d(y, \gamma . y)}
\end{align*}

\noindent This shows that the integral converges at $s$ if and only if $\mathcal{P}_s(\pi_1(Y,y),g) < \infty$.\\

\section{A Riemannian metric on the connected sum}\label{section_metric_connected_sum}

Let $(Y', g')$, $(Y'', g'')$ be two closed, oriented, connected Riemannian $n$-manifolds for $n\ge 3$. We shall now construct a family of metrics on $Y=Y'\#Y''$. Let us assume to have fixed $y'\in Y'$, $y''\in Y''$ and two positive real numbers $L, r>0$ with $r<\frac{1}{2}\cdot\min\{\inj(Y',g'), \inj(Y'',g'')\}$. Excide the geodesic balls of radius $r$ around $y'$, $y''$ and glue the Riemannian tube $(S^{n-1}\times[-L-r, L+r], k_{L,r})$ to $(Y'\smallsetminus B_{g'}(y',r), g')\cup(Y'\smallsetminus B_{g''}(y'',r), g'')$ where $k_{L,r}$ is defined as follows. Let $\varphi:[-L-r, L+r]\rightarrow [0,1]$ be a smooth function which is identically equal to $1$ on an $\frac{r}{3}$-neighbourhood of $\{-L-r, L+r\}$, identically equal to zero on $(-L-\frac{r}{3}, L+\frac{r}{3})$, which is non-increasing on $[-L-r,0]$ and non-decreasing on $[0, L+r]$.
 Then, let us denote by $g_{S_r'}$ and $g_{S_r''}$ the restrictions of $g'$ and $g''$ respectively to $S_{r}(y')$ and $S_r(y'')$ (the spheres of radius $r$ centered at $y'$, $y''$ respectively). We shall define $k_{L,r}$ as follows:
<
\noindent where we denoted by $\chi_I$ the characteristic function of the interval $I$ (see Figure \ref{fig_long_tube}). In the sequel we shall assume to have chosen $r$ such that $\pi r^2/2\le r/3$ or equivalently that $r\le 2/(3\pi)$. We shall denote by $(Y, g_{L,r})$ the Riemannian manifold which is the result of the gluing, see Figure \ref{fig:connectedsumnew}.

\begin{figure}
	\centering
	\includegraphics[width=0.9\linewidth]{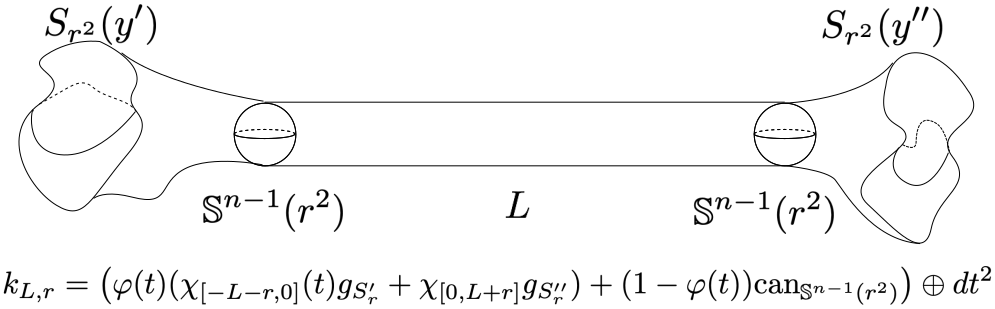}
	\caption[Metric on the gluing tube]{ The metric $k_{L,r}$ on the \lq \lq tube '' of length $R$ connecting the two geodesic balls of radius $r$ $S_r(y')$ and $S_r(y'')$. }
	\label{fig_long_tube}
\end{figure}

The main result of the next two sections is the following theorem:
\begin{thm} \label{thm_upper_estimate}
	Let $(Y', g')$ and $(Y'', g')$ be two Riemannian manifolds. Let $L, r>0$ be two positive real constants and $(Y, g_{L,r})$ be the Riemannian manifold constructed as above. There exists a constant $\bar L>0$ depending on $(Y', g')$, $(Y'', g'')$ such that for any $L>\bar L$ the following holds:
	\begin{itemize}
		\item[(1)] the critical exponent  of the series \small $\mathcal{P}_s(\pi_1(Y,y), g_{L,r})$ \normalsize  is smaller than or equal to  \small  $\max \{ \ent(Y', (g')^{\delta}), \ent(Y'',(g'')^{\delta})\}$. \normalsize
	\end{itemize}
	\noindent	Moreover, chosen $L>\bar L$, for every $\varepsilon>0$ there exists an $\bar r(L,\varepsilon)$ such that for every $r<\bar r(L,\varepsilon)$:
	\begin{itemize}
		\item[(2)] \small$|\Vol(Y,g_{L,r}) - \Vol(Y', g')- \Vol(Y'', g'')|< \varepsilon$. \normalsize
	\end{itemize}
\end{thm}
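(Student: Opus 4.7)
My plan splits into the volume estimate $(2)$ and the Poincar\'e-series estimate $(1)$.

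For $(2)$, I decompose
$$\Vol(Y,g_{L,r})=\Vol(Y'\smallsetminus B_{g'}(y',r),g')+\Vol(Y''\smallsetminus B_{g''}(y'',r),g'')+\Vol(S^{n-1}\times[-L-r,L+r],k_{L,r}).$$
Each excised ball has volume $O(r^n)$, so the first two summands converge to $\Vol(Y',g')$ and $\Vol(Y'',g'')$ as $r\to 0$. The warping function of $k_{L,r}$ on the $S^{n-1}$ factor is pointwise bounded by $r$, whence the tube volume is $O(r^{n-1}(L+r))$, which tends to zero for $n\ge 3$ and $L$ fixed; this settles $(2)$.

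For $(1)$, by Van Kampen $\pi_1(Y,y)\cong\pi_1(Y')*\pi_1(Y'')$ and I choose $\tilde y\in\widetilde Y$ over the midpoint of the connecting tube. Each non-trivial $\gamma$ has a reduced normal form $\gamma=s_1\cdots s_k$ with alternating syllables in $\pi_1(Y')\setminus\{e\}$ and $\pi_1(Y'')\setminus\{e\}$. The universal cover decomposes in Bass--Serre fashion into copies of the universal covers of $Y'\smallsetminus B'$ and $Y''\smallsetminus B''$ glued along lifts of the tube, and any length-minimizing path from $\tilde y$ to $\gamma\tilde y$ must traverse exactly $k$ tubes and $k$ intermediate pieces. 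Since each tube has length $2(L+r)$, the total tube contribution is at least $2kL$ (after absorbing an $O(kr)$ correction coming from the spherical boundaries of the pieces), while the traversal of the $i$-th piece is bounded below by the universal-cover distance $\tilde d_i(y_i, s_i y_i)$ in $(Y',g')$ or $(Y'',g'')$. Hence
$$\tilde d(\tilde y,\gamma\tilde y)\ge 2kL+\sum_{i=1}^k \tilde d_i(y_i, s_i y_i).$$

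Plugging this into $\mathcal P_s(\pi_1(Y),g_{L,r})$, introducing $P'(s):=\sum_{a\in\pi_1(Y')\setminus\{e\}}e^{-s\tilde d'(y',ay')}$ and $P''(s)$ analogously, and setting $Q'(s):=e^{-2sL}P'(s)$, $Q''(s):=e^{-2sL}P''(s)$, then grouping reduced words by their alternating syllable pattern gives the geometric majorization
$$\mathcal P_s(\pi_1(Y),g_{L,r})\le 1+\frac{Q'(s)+Q''(s)+2Q'(s)Q''(s)}{1-Q'(s)Q''(s)},$$
finite whenever $Q'(s)Q''(s)<1$. Given any $s_0$ strictly greater than $\max\{\ent(Y',g'),\ent(Y'',g'')\}$, both $P'(s_0)$ and $P''(s_0)$ are finite; choosing $\bar L$ so that $e^{-4s_0\bar L}P'(s_0)P''(s_0)<1$ forces $Q'(s_0)Q''(s_0)<1$ for every $L>\bar L$, so the critical exponent of $\mathcal P_\cdot(\pi_1(Y),g_{L,r})$ is $\le s_0$. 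Letting $s_0$ shrink to $\max\{\ent(Y',g'),\ent(Y'',g'')\}$, with the residual gap absorbed by the $\delta$-perturbation of the statement, finishes $(1)$. The main technical obstacle is justifying the lower bound for $\tilde d(\tilde y,\gamma\tilde y)$: one must exclude length-minimizing paths that short-cut the Bass--Serre path, and verify that the $O(r)$ boundary corrections combine additively without breaking the convergence. This rests on the observation that every lift of the tube separates $\widetilde Y$ and acts as a narrow bottleneck of sphere diameter comparable to $r$, so that every geodesic between non-adjacent pieces is forced to traverse the prescribed chain of tubes.
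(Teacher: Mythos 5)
Your argument is essentially the paper's: decompose $\pi_1(Y)\cong\pi_1(Y')\ast\pi_1(Y'')$, bound the displacement $\tilde d(\tilde y,\gamma\tilde y)$ from below by the sum of the syllable lengths plus $2L$ per tube traversal, and dominate $\mathcal P_s$ by a geometric series in $e^{-2sL}P'(s)\cdot e^{-2sL}P''(s)$. The ``technical obstacle'' you flag at the end --- ruling out geodesics that shortcut the Bass--Serre chain, and showing that the $O(r)$ boundary corrections do not spoil the estimate --- is precisely the content of the paper's Lemma~\ref{lem_length_estimate}. There the claims that a minimizing representative meets each separating sphere $S^{n-1}\times\{t\}$ in exactly two points (Claims (0)--(1)), together with the projection $p:(Y,g_{L,r})\to(\widehat Y,\hat d_L)$ and the replacement of the arcs inside the interpolation collars by $g'$-geodesics of length $r$, give the exact inequality $\ell_{g_{L,r}}(\gamma_i')\ge|h_i'|_{y'}+2L$, with no leftover error term; so you do not in fact need to ``absorb'' an $O(kr)$ correction. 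One minor difference in your favour: when you group reduced words by syllable pattern you obtain the clean majorization $1+\bigl(Q'+Q''+2Q'Q''\bigr)/\bigl(1-Q'Q''\bigr)$ with $Q'=e^{-2sL}P'$, $Q''=e^{-2sL}P''$, which keeps track of the parity of the word; the paper instead uses a blunter uniform factor of $4$ and an $(e^{-4sL}P_1P_2)^n$ bound for all four types, which slightly misstates the exponent for the odd-length patterns (harmless, since the conclusion about the critical exponent is unchanged). Your volume estimate for part (2) matches the paper's.
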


\begin{figure} [h]
	\centering
	\includegraphics[width=0.9\linewidth]{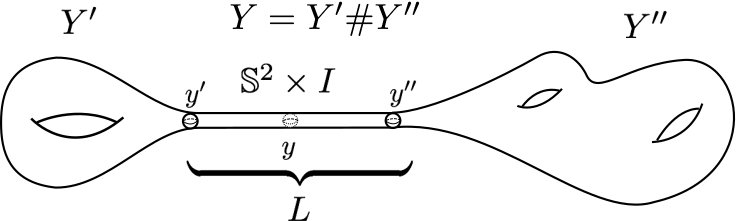}
	\caption[Metrics on reducible $3$-manifolds]{The Riemannian manifold $(Y,g_{L,r})$ resulting from the gluing.}
	\label{fig:connectedsumnew}
\end{figure}

As a consequence, we have the following estimate which is valid for any dimension $n\ge 3$:

\begin{cor}\label{cor_minent_subadditivity}
	Let $Y'$, $Y''$ be two closed, orientable, differentiable $n$-manifolds for $n\ge 3$. Then:
	$$\minent(Y'\# Y'')^3\le \minent(Y')^3+\minent(Y'')^3,$$
	where $Y'\# Y''$ denotes the connected sum of $Y'$, $Y''$.
 \end{cor}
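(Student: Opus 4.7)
The plan is to feed Theorem~\ref{thm_upper_estimate} with almost optimal metrics on each summand, after equalizing their entropies by homothety. Fix $\varepsilon>0$; we may assume both $\minent(Y')$ and $\minent(Y'')$ are finite (otherwise the inequality is trivial). Choose Riemannian metrics $g'$ on $Y'$ and $g''$ on $Y''$ with
$$\VolEnt(Y',g')^n<\minent(Y')^n+\varepsilon,\qquad \VolEnt(Y'',g'')^n<\minent(Y'')^n+\varepsilon,$$
where $n=\dim Y'=\dim Y''$ (the stated cube corresponds to $n=3$, which is the case of interest in this paper).

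The next step is to rescale so that the entropies agree. Recall that under $g\mapsto\lambda^2 g$ one has $\ent(\lambda^2 g)=\lambda^{-1}\ent(g)$ and $\Vol(\lambda^2 g)=\lambda^n\Vol(g)$, so $\VolEnt$ is invariant. I would therefore choose $\lambda',\lambda''>0$ with
$$\lambda'^{-1}\ent(Y',g')=\lambda''^{-1}\ent(Y'',g'')=:e,$$
and replace $g'$, $g''$ by their homotheties $\lambda'^2g'$, $\lambda''^2g''$. This equalization is the key bookkeeping step: without it, the maximum appearing in Theorem~\ref{thm_upper_estimate}(1) would be wasteful.

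Now apply Theorem~\ref{thm_upper_estimate}. Fix $L>\bar L(g',g'')$ and, for any $\delta>0$, a sufficiently small $r<\bar r(L,\delta)$, and consider the metric $g_{L,r}$ on $Y:=Y'\#Y''$ constructed in Section~\ref{section_metric_connected_sum}. Using the equivalence between the critical exponent of the Poincaré series and the volume entropy recalled in Section~\ref{section_integral_characterization}, part~(1) of the theorem gives $\ent(Y,g_{L,r})\le e$, while part~(2) gives $\Vol(Y,g_{L,r})\le\Vol(Y',g')+\Vol(Y'',g'')+\delta$. Combining these,
$$\minent(Y'\#Y'')^n\le\ent(Y,g_{L,r})^n\cdot\Vol(Y,g_{L,r})\le e^n\bigl(\Vol(Y',g')+\Vol(Y'',g'')\bigr)+e^n\delta,$$
and the first term on the right equals $\VolEnt(Y',g')^n+\VolEnt(Y'',g'')^n<\minent(Y')^n+\minent(Y'')^n+2\varepsilon$ by the choice of $g'$, $g''$. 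Letting $\delta\to 0$ and then $\varepsilon\to 0$ yields the claim.

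The main point to be careful about is the order of quantifiers: the threshold $\bar L$ in Theorem~\ref{thm_upper_estimate} depends on the metrics $g',g''$, but these are fixed before $L$ is chosen, so there is no circularity; similarly $\bar r$ depends on $L$ and $\delta$ but not on subsequent parameters. I do not expect any serious obstacle beyond invoking the stated theorem correctly; all the real work — the construction of $g_{L,r}$, the Poincaré-series control at the neck, and the volume estimate — is already packaged in Theorem~\ref{thm_upper_estimate}.
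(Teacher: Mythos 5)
Your proposal is correct and takes essentially the same approach as the paper's own (very terse) proof: pick near-minimizing metrics on each summand, rescale by homothety so their entropies coincide, then invoke Theorem~\ref{thm_upper_estimate} and pass to the limit. You have merely spelled out the bookkeeping (scale-invariance of $\VolEnt$, the order of quantifiers for $\bar L$ and $\bar r$) that the paper leaves implicit.
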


\begin{proof}
	Let $g_k'$ and $g_k''$ two sequences of metrics converging to the Minimal Entropy of $Y'$ and $Y''$ respectively. Up to normalization, we can assume that $\mbox{Ent}(Y',g_k')= \mbox{Ent}(Y'',g_k'')$; this does not affect the minimality of the sequence. 
	The proof is a straightforward application of Theorem \ref{thm_upper_estimate} to these two sequences of metrics  on $Y'$, $Y''$ minimizing the invariant $\EntVol$.
\end{proof}

\section{The proof of Theorem \ref{thm_upper_estimate}}

Before proving Theorem \ref{thm_upper_estimate}, we shall fix some notation.
%We denote by $r$ the radius of the small irreducible ball that we shall remove from each irreducible piece ---endowed with the Riemannian metric $g_i^{\varepsilon}$--- in order to obtain the Riemannian manifold $Y$; we call $L$ the length of the \lq \lq \textit{tube }'' $S^2 \times I$ realizing the gluings. Furthermore, se shall call $g_{L,r}^{\varepsilon}$ the Riemannian metric on $Y$ obtained by removing a ball of radius $r$ from each piece $Y_i, g_i^{\varepsilon},$  gluing the pieces along a tube $S^2 \times I$ of length $L$ and interpolating the metric in a small neighbourhood of each of the two spheres removed.
Denote by $d_{L,r}$ the Riemannian distance associated to $g_{L,r}$.
%Let us choose a point $p \in \mathbb{S}^2$, and denote by $y_0= \{p\} \times \{L/2\}$, $y_1= \{p\} \times \{0\} $ and $y_2= \{p\} \times \{L\}$ the fixed base points for the fundamental groups of $Y$, $Y_1$ and $Y_2$ respectively. For the sake of clarity, from now on we shall use latin letters to indicate the classes of loops in the fundamental group, and greek letters to denote the representatives of the classes. For instance, we shall write $\alpha \in a$ meaning that the loop $\alpha$ is a representative for  the element $a \in \pi_1(Y, y_0)$. 
Recall that for any $h \in \pi_1(Y,y)$ the value $d_{L,r}(\widetilde{y}, h. \widetilde{y})$ does not depend on the lift chosen for $y$. From now on we shall denote by $|h|_{y}$ the length, with respect to the metric $g_{L,r}$ of the shortest geodesic loop representing $h$. Namely, $|h|_{y}= d_{L,r}(\widetilde{y}, h . \widetilde{y})$, where $\widetilde{y}$ is any lift of $y$.
Similarly, for any choice of $y'\in Y'$, $y''\in Y''$ we define $|\cdot|_{y'}: \pi_1(Y', y') \longrightarrow \mathbb{R}$ and $|\cdot|_{y''}: \pi_1(Y'', y'') \longrightarrow \mathbb{R}$ the function mapping any class in $\pi_1(Y', y')$, $\pi_1(Y'',y'')$ into the length of the shortest geodesic representative with respect to $g'$, $g''$.\\
%\begin{thm} \label{thm_upper_estimate}
%	For any $\delta >0$, there exist $\varepsilon >0$, $r>0$ and $L=L:_{\varepsilon, x_1, x_2}(r, \delta)$ such that:
%\begin{enumerate}
%	\item  \small $\mathcal{P}(s, g_{L,r}^{\varepsilon})=\smashoperator{\sum_{h \in \pi_1(Y,y_0)}} e^{-s d_{L,r}^{\varepsilon}(\widetilde{y_0}, h.\widetilde{y_0})}$ \normalsize converges for \small $s= Ent(hyp)+ \delta$ \normalsize
%		\item \small$$ \mbox{Vol}(Y,g_{L,r}^{\varepsilon}) \le \mbox{Vol}(X_1, g_1^{\varepsilon})+ \mbox{Vol}(X_2, g_2^\varepsilon) + \delta.$$ \normalsize
%\end{enumerate}
%\end{thm}

	Let $y\in Y$ be a point contained in the set $S^{n-1} \times \{0\}$ and $y', y''$ be the centers of the balls removed from $Y', Y''$ respectively when constructing the metric $g_{L,r}$. We shall denote by $$\Phi: \pi_1 (Y, y) \rightarrow \pi_1(Y', y') \ast \pi_1(Y'', y'')$$ the isomorphism obtained in two steps:
	\begin{enumerate}
		\item Let $(\widehat Y, \hat d_{L})$ be the metric space obtained as the junction of $(Y', g')$ and $(Y'', g'')$ via an interval $[-L,L]$ (with the standard metric) joining $y'$ to $y''$. Let $p:(Y,g_{L,r})\rightarrow (\widehat{Y}, \hat d_{L})$ be the map obtained by collapsing the tube $S^{n-1}\times[-L,L]$ to the interval $[-L,L]$, sending $(S^{n-1}\times[-L-r,-L], g_{L,r})$ onto $B_{g'}(y',r)$, via the map $S^{n-1}\times\{t\}\mapsto \partial B_{g'}(y',r)$ and $(S^{n-1}\times[L,L+r], g_{L,r})$ onto $B_{g''}(y'',r)$ via a similar map. Finally $(Y'\smallsetminus B_{g'}(y',r), g_{L,r})$ is sent identically onto itself (and similarly $(Y''\smallsetminus B_{g''}(r), g_{L,r})$).
		\item Call $\hat y\in\hat Y$ the point $p(S^{n-1}\times\{0\})=p(y)$. By construction we know that $p_*:\pi_1(Y,y)\rightarrow \pi_1(\widehat Y, \hat y)$  is an isomorphism. On the other hand it is obviously true that  $\pi_1(Y', y')*\pi_1(Y'',y'')$ is isomorphic to $\pi_1(\widehat Y, \hat y)$, where the isomorphism is given by the following map: fix $\gamma_{\hat y,y'}$ and $\gamma_{\hat y,y''}$ the unique unit speed geodesics in $(\widehat Y, \hat d_{L})$ joining $\hat y$ to $y'$ and $y''$ respectively. Then, the isomorphism is given by:
		\small
		\begin{equation*}
		g=[\gamma_1']\cdot[\gamma_1'']\cdots [\gamma_n']\cdot[\gamma_n'']\overset{\Psi}{\mapsto} [\gamma_{\hat y,y'}^{-1}*\gamma_1'*\gamma_{\hat y,y'}]\cdots[\gamma_{\hat y,y''}^{-1}*\gamma_n''*\gamma_{\hat y,y''}]
		\end{equation*}
		\normalsize
		for any $g\in\pi_1(Y', y')*\pi_1(Y'', y'')$.
	\end{enumerate}
Hence we define $\Phi=(p_*)^{-1}\circ\Psi$, and $\Phi': \pi_1(Y', y') \hookrightarrow \pi_1(Y, y)$ and $\Phi'': \pi_1(Y'', y'') \hookrightarrow \pi_1(Y, y)$ the monomorphisms obtained restricting $\Phi$, which embed $\pi_1(Y',y')$ and $\pi_1(Y'',y'')$ into $\pi_1(Y,y)$.
	Moreover, we denote by $G_1= \Phi' \left( \pi_1(Y', y')\right)$ and $G_2= \Phi'' \left( \pi_1(Y'', y'')\right)$.\\
	By definition of $\pi_1(Y', y') \ast \pi_1(Y'', y'')$ every $g \in \pi_1(Y,y)$ can be (uniquely) written as: $g= h_1'h_1'' \cdots h_n' h_n''$,
	where $h_1'$ and $h_n''$ are possibly equal to the identity, $h_i' \in \pi_1(Y', y')\smallsetminus\{\id\}$ for $i=2, \dots, n$ and $h''_i \in \pi_1 (Y'', y'')\smallsetminus\{\id\}$ for $i=1, \dots, n-1$. Notice that we are making a little abuse of notation since  we are omitting the identification of the elements of $\pi_1(Y',y')$ and $\pi_1(Y'',y'')$ with the elements of $\pi_1(Y,y)$ which correspond via the isomorphisms $\Phi'$, $\Phi''$.
	\begin{lem}\label{lem_length_estimate}
	With the above notation, assume $L \ge \frac{\pi r}{2}$. For any element  $g \in \pi_1(Y, y)$ let $g=h_1'h_1'' \dots h_n'h_n''$ be a reduced syllabic form for $g$.
	Then:
		$$ \lvert g\rvert_{y} \ge\sum_{i=1}^n(|h_i'|_{y'}+2L) +\sum_{i=1}^n(|h_i''|_{y''}+2L)$$
	\end{lem}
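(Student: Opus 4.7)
The plan is to work in the Riemannian universal cover $(\widetilde{Y},\widetilde{d}_{L,r})$ of $(Y,g_{L,r})$ and read off the lower bound from the tree-of-spaces structure forced by $\pi_1(Y)\cong\pi_1(Y',y')\ast\pi_1(Y'',y'')$. First I observe that the central cross-section $S:=S^{n-1}\times\{0\}$ of the connecting tube has trivial image in $\pi_1(Y)$ (since $\pi_1(S^{n-1})=0$ for $n\ge 3$), so its preimage in $\widetilde{Y}$ is a disjoint family $\{\sigma S\}_{\sigma\in\pi_1(Y)}$ of embedded separating $(n-1)$-spheres, the one through $\widetilde{y}$ being indexed by the identity. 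Cutting $\widetilde{Y}$ along this family produces pieces of two types: translates $\sigma V'$ of the universal cover of $\widehat{Y}':=(Y'\smallsetminus B_{g'}(y',r))\cup(S^{n-1}\times[-L-r,0])$, stabilised by conjugates of $G_1$, and analogous $V''$-pieces. Adjacent pieces are of opposite type and share exactly one sphere, so the dual graph is the Bass--Serre tree of $G_1\ast G_2$ on which $\pi_1(Y)$ acts.

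For the reduced syllabic form $g=h_1'h_1''\cdots h_n'h_n''$ I will identify the forced sequence of pieces traversed by any continuous path from $\widetilde{y}$ to $g\widetilde{y}$. Setting $g_0=e,\ g_1=h_1',\ g_2=h_1'h_1'',\ldots,g_{2n}=g$, the tree-geodesic from the edge $S$ to the edge $gS$ meets exactly the spheres $g_0S,g_1S,\ldots,g_{2n}S$ in this order. Since each $g_kS$ separates $\widetilde{Y}$, any path from $\widetilde{y}$ to $g\widetilde{y}$ must cross each of them, and hence traverses in turn the $2n$ successive pieces (alternately of type $V'$ and $V''$, each corresponding to one of the syllables $h_i'$ or $h_i''$).

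I then bound the length of the portion inside each individual piece. A $V'$-piece is obtained from $\widetilde{Y'}$ by excising the equivariant family $\{B(\gamma\widetilde{y}',r)\}_{\gamma\in\pi_1(Y')}$ and gluing, at each $\partial B(\gamma\widetilde{y}',r)$, a copy of the tube half $S^{n-1}\times[-L-r,0]$ whose outer end is $\gamma S$. For a path traversing such a piece from $\gamma_1 S$ to $\gamma_2 S$, each of the two tube halves contributes at least $L+r$ by the $dt^2$-summand of the warped metric (the hypothesis $L\ge\pi r/2$ excluding diagonal shortcuts through the spherical factor of diameter $\pi r$), and the middle segment in $\widetilde{Y'}\smallsetminus\bigcup B$ contributes at least $|\gamma_1^{-1}\gamma_2|_{y'}-2r$ by the triangle inequality in $(\widetilde{Y'},\widetilde{g'})$ together with $r<\tfrac{1}{2}\inj(Y',g')$. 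The total contribution of the $V'$-piece corresponding to the syllable $h_i'$ is thus $2(L+r)+|h_i'|_{y'}-2r=2L+|h_i'|_{y'}$, and analogously $2L+|h_i''|_{y''}$ for each $V''$-piece.

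Summing over the $2n$ forced piece-traversals yields the claimed inequality. The main technical obstacle is excluding geometric shortcuts violating this decomposition: a path might try to fold back through the spherical factor of a tube without advancing in the $t$-direction, or to escape a piece through an attaching sphere other than the next one prescribed by the tree-geodesic. The first is controlled by the warped-product estimate together with $L\ge\pi r/2$, which guarantees $L+r$ as the shortest distance between the two ends of a tube half. The second is excluded by the separation property of the spheres $g_kS$: any extra crossing of a wrong sphere must be compensated by a return crossing of that sphere, which only adds to the total length, so the tree-geodesic route is length-minimising.
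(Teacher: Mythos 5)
Your proof is correct, and it takes a genuinely different route than the paper's. You pass to the universal cover, where $S^{n-1}\times\{0\}$ lifts to an equivariant family of disjoint separating spheres, and you read off from the Bass--Serre tree of $G_1\ast G_2$ which spheres $g_0S,\dots,g_{2n}S$ (with $g_k$ the $k$-th partial product of the reduced form) every path from $\widetilde{y}$ to $g\widetilde{y}$ is forced to cross, in order; separation alone then gives $|g|_y=\widetilde{d}(\widetilde{y},g\widetilde{y})\ge\sum_k d(g_{k-1}S,g_kS)$, and each summand is bounded below by $2L+|h_i'|_{y'}$ (resp.\ the $h_i''$ analogue) using the $dt^2$ summand of the tube metric and the triangle inequality in $\widetilde{Y'}$. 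The paper, by contrast, never leaves $Y$: it takes a shortest geodesic representative $\gamma$, cuts it along the single sphere $S^{n-1}\times\{0\}$, and uses minimality of $\gamma$ together with the uniqueness of the reduced normal form to show the resulting subsegments $\gamma_i',\gamma_j''$ are in bijection with the nontrivial syllables and (Claim 1) cross each intermediate sphere exactly twice --- that last claim is where $L\ge\pi r/2$ is actually doing work, guaranteeing that geodesics between points of a cross-section stay in the cross-section; the per-segment bound then comes from projecting under $p:Y\to\widehat{Y}$ and replacing the sub-arcs inside $B_{g'}(y',r)$ by radial geodesics. Your route buys that you bound \emph{every} path, so neither minimality of $\gamma$ nor Claim 1 is needed, and $L\ge\pi r/2$ plays no essential role in your tube estimate (only the $dt^2$ summand does). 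The paper's route buys self-containment without Bass--Serre vocabulary and reuses the collapsing map $p$ already built to construct the isomorphism $\Phi$, making the identification of which syllable a given subsegment $\gamma_i'$ represents completely explicit.

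Two small cautions, the first of which you share with the lemma's stated formula. If $h_1'=\mathrm{id}$ or $h_n''=\mathrm{id}$ (the degenerate cases the paper's normal form allows), then $g_0=g_1$ (resp.\ $g_{2n-1}=g_{2n}$) and that forced crossing disappears, so the correct count is $2L$ per \emph{nontrivial} syllable rather than $4nL$ outright; the paper's own proof respects this through the count $|\{\gamma_i'\}|=|\{h_i':h_i'\neq 1\}|$, which is also the form needed in the subsequent Poincar\'e-series estimate. Second, your closing remark about ``compensating return crossings'' is best sharpened to: the subpath of $c$ between the first crossings of $g_{k-1}S$ and $g_kS$ is trapped in the closed region bounded by those two spheres, hence has length at least $d(g_{k-1}S,g_kS)$; excursions into side branches of the tree only lengthen that subpath, so no explicit compensation argument is required.
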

\begin{proof} First of all we observe that by construction of the metric the sets $S^{n-1}\times \{t\}$ where $t \in [-L- \frac{r}{3}, L + \frac{r}{3}]$ are totally geodesic hypersurfaces (because the metric is a Riemannian product). Moreover the set $S^{n-1}\times \{t\}$ for $t\in[-L,L]$ is such that every minimizing geodesic $\alpha_{PQ}$ joining pair of points $P$, $Q \in S^{n-1}\times\{t\}$ is entirely contained in $ \subset S^{n-1}\times \{t\}$. Indeed, any curve $\beta_{PQ}$ joining the two points and contained in the tube $(S^{n-1}\times[-L-\frac{r}{3},L+\frac{r}{3}], k_{L,r})$ has length greater than or equal to the length of its projection on $S^{n-1} \times \{t\}$ with equality holding only for those curves contained into $S^{n-1}\times\{t\}$, because the metric in the tube is a Riemannian product. Furthermore,  since $r\le 2/(3\pi)$, every curve joining $P$ and $Q$ and escaping the tube $(S^{n-1}\times[-L-\frac{r}{3}, L+\frac{r}{3}], k_{L,r})$ has length greater than $\pi r$, which is equal to the diameter of $(S^{n-1} \times \{t\}, k_{L,r}|_{S^{n-1}\times\{t\}})$.
	Let $\gamma$ be a shortest representative in the homotopy class of $g$. Denote by $\gamma_i', \gamma_i''$ the connected subsegments of $\gamma$ entirely contained in one of the two sides separated by $S^{n-1} \times \{0\}$. Observe that: 
	\begin{equation}\label{cardinality}
	|\{\gamma_i'\}|=|\{h_i'\,:\,h_i'\neq 1\}|,\quad |\{\gamma_i''\}|=|\{h_i''\,:\,h_i'' \neq 1\}|
	\end{equation}
	Indeed, for any $i=1,...,m$ and any $j=1,...,k$ consider two minimizing geodesics $\xi_i'$, $\xi_j''$ contained in $S^{n-1} \times \{0\}$ and connecting $y$ respectively  to $\gamma_i'(0)$ and  to $\gamma_j''(0)$ and  call $\tilde{\gamma}_i'$ ($\tilde{\gamma}_j''$ respectively) the path obtained by joining $\gamma_i'$ to the $g_{L,r}$-geodesic connecting $\gamma_i'(1)$ to $\gamma_i'(0)$. By construction the loop $\gamma$ is homotopic to the join of the loops $[(\xi_{i}')^{-1}*\tilde{\gamma}_i'*\xi_i']$'s and $[(\xi_j'')^{-1}*\tilde{\gamma}_j''*\xi_j'']$'s following the order in which the $\gamma_i'$'s, $\gamma_j''$'s appear along $\gamma$. Notice that by construction the $\gamma_i'$'s and the $\gamma_j''$'s appear alternately. Let $p:Y\rightarrow \widehat Y$ be the map defined before and let 
	 $$\Phi^{-1}=\Psi^{-1}\circ p_*:\pi_1(Y,y)\rightarrow\pi_1(Y',y')*\pi_1(Y'',y'')$$ be the inverse of the isomorphism constructed before the Lemma. Then,
	  $$\Phi^{-1}([(\xi_{i}')^{-1}*\tilde{\gamma}_i'*\xi_i'])\in\pi_1(Y',y')\smallsetminus\{1\}
	  \mbox{ and } \Phi^{-1}([(\xi_{j}'')^{-1}*\tilde{\gamma}_j''*\xi_j''])\in\pi_1(Y'',y'')\smallsetminus\{1\}.$$ 
	  Otherwise, if $\Phi^{-1}([(\xi_{i}')^{-1}*\tilde{\gamma}_i'*\xi_i'])\in\pi_1(Y',y')\smallsetminus\{1\}$ were not trivial,  we could replace $\gamma_i'$ with  a minimizing geodesic $\delta_{i}'$ (contained in $S^{n-1}\times\{0\}$) connecting the two endpoints of $\gamma_i'$, thus obtaining a shortest representative of $\gamma$, thus contradicting the minimality of $\gamma$.	\\
	 As a consequence  $|\{\gamma_i'\}|=|\{h_i'\,:\, h_i'\neq 1\}|$ and $|\{\gamma_j''\}|=|\{h_j''\,:\,h_j''\neq 1\}|$. Moreover, by the previous discussion: $$g=\Phi^{-1}[\xi_1'^{-1}*\tilde{\gamma}_1'*\xi_1']\cdot \Phi^{-1}[\xi_1''^{-1}*\tilde{\gamma}_1''*\xi_1'']\cdots  \Phi^{-1}[\xi_n'^{-1}*\tilde{\gamma}_n'*\xi_n']\cdot \Phi^{-1}[\xi_n''^{-1}*\tilde{\gamma}_n''*\xi_n'']$$
	By the unicity of the reduced form of an element in a free product we deduce that: $\Phi^{-1}([(\xi_i')^{-1}*\tilde{\gamma}_i'*\xi_i])=h_i'$, $\Phi^{-1}([(\xi_j'')^{-1}*\tilde{\gamma}_j''*\xi_j''])=h_j''$ for every $i,j=1, \dots n$.\newline
	Recall that our goal is to provide a sufficiently accurate estimate from below to the length of the shortest representative $\gamma$ of $g$ with respect to the Riemannian metric $g_{L,r}$ in terms of $L, r$ and of the lengths of the shortest representatives (with respect to the metrics $g'$, $g''$) of the elements $h_i'$'s, $h_j''$'s of $\pi_1(Y',y')$ and $\pi_1(Y'',y'')$ for every $i,j=1 \dots, n$.\\
	
	\noindent{\bf Claim (0).} Any $\gamma_i'$ (respectively $\gamma_j''$) is such that $|\gamma_i'^{-1}(S^{n-1}\times\{t\})|<+\infty$ for any $t\in(-L,L)$ (respectively, $|\gamma_j''^{-1}(S^{n-1}\times\{t\})|<+\infty$ for any $t\in(-L,L)$).\\
	
	\noindent{\it Proof of Claim (0).} Straightforward from the definition of the metric. \qed\\
	
	\noindent{\bf Claim (1).} Any $\gamma_i'$ (respectively $\gamma_j''$) is such that $|\gamma_i'^{-1}(S^{n-1}\times\{t\})|=2$ for any $t\in(-L,0]$
	(respectively, $|\gamma_j''^{-1}(S^{n-1}\times\{t\})|=2$ for any $t\in[0, L)$).\\
	
	\noindent{\it Proof of Claim (1).} Let us observe first that $|\gamma_i'^{-1}(S^{n-1}\times\{t\})|\ge 2$  for every $t\in[-L, 0]$. Otherwise the loop $\xi_i'^{-1}*\tilde{\gamma}_i'*\xi_i'$ would be trivial, which is not. On the other hand assume that $\gamma_i'$ crosses a sphere $S^{n-1}\times\{t\}$, with $t\in(-L,0]$, for at least three times. Then, we would have that $|\gamma_i'^{-1}(S^{n-1}\times\{-L\})|\ge 4$. Now, let $s_2$ and $s_{2k-1}$ be respectively the first time in which $\gamma_i'$ crosses $S^{n-1}\times\{-L\}$ in direction $S^{n-1}\times\{0\}$ and the last time in which $\gamma_i'$ crosses $S^{2}\times\{-L\}$ in direction $Y'$. Connect $\gamma_i'(s_2)$ with $\gamma_i'(s_{2k-1})$ with a geodesic segment in $(S^{n-1}\times\{-L\}, g_{L,r}|_{S^{n-1}\times\{-L\}})$, call it $\zeta_i'$. Then the path $\gamma_i'|_{[0,s_2]}*\zeta_i'*\gamma_i'|_{[s_{2k-1}, 1]}$ is homotopic with fixed ends to $\gamma_i'$, and  is shorter. This is a contradiction because substituting $\gamma_i'$ with this new path would produce a shorter representative for $g$.\qed\\
	
	\noindent We shall now look at the path $\gamma_i'$  outside the tube $S^{n-1}\times(-L,0]$ (respectively,  $S^{n-1}\times [0,L)$). Let $\gamma_i'(1)=\gamma_i'|_{I_1}$,..., $\gamma_i'(k_i)=\gamma_{i}'|_{I_{k_i}}$  be the subpaths of $\gamma_i'$ contained in $(S^{n-1}\times[-r-L,-L], g_{L,r}|_{S^{n-1}\times[-r-L,-L]})$. Now, look at $p(\gamma_i')$ in restriction to $Y'$ and observe that it is a loop based at $y'$. Moreover, its  homotopy class $[p(\gamma_i')|_{Y'}]=h_i'$. Now, let $\eta_i'(j)=p(\gamma_i'(j))$. The following hold:
	$$\ell_{g_{L,r}}(\gamma_i'(j))\ge \ell_{quot}(\eta_i'(j))\ge r=\ell_{g'}(\zeta_i'(j)))$$
	where we denoted by $\ell_{quot}$ the quotient metric of $(S^{n-1}\times[-L-r,-L], g_{L,r})/\sim$ where $(x,t)\sim (y,t')$ if and only if $t=t'=-L$; and where we introduced the notation $\zeta_i'$ for the $g'$-geodesic segment joining $y'=\eta_i'(j)(0)$ to the other endpoint $\eta_i'(j)(1)$ (see Figure \ref{fig:reduciblemetrics}).
	\begin{figure}
		\centering
		\includegraphics[width=0.8\linewidth]{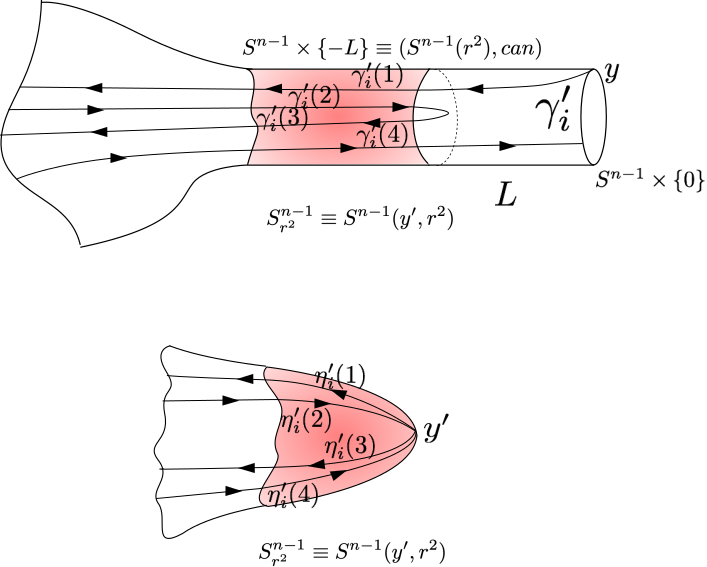}
		\caption[Geodesic loop through the projection map]{The projection $p:(Y,g_{L,r})\rightarrow (\widehat{Y}, \hat d_{L})$, and the images $\eta_i'(j)= p(\gamma_i')$.}
		\label{fig:reduciblemetrics}
	\end{figure}
			Since we are in balls of radius shorter than the injectivity radius of $(Y', g')$, $\eta_i'(j)$ is homotopic with fixed endpoints to $\zeta_i'(j)$. On the other hand, observe that $p$ in restriction to $(Y'\smallsetminus B_{g'}(y',r), g_{L,r})$ is an isometry on its image. Hence, the path obtained from $p(\gamma_i')|_{Y'}$ replacing the $\eta_i'(j)$ with the $\zeta_i'(j)$ is a representative of the same homotopy class and its length is less than the length of $p(\gamma_i')|_{Y'}$. We deduce the following inequality:
	$$\ell_{g_{L,r}}(\gamma_i')\ge |h_i'|_{y'}+2d_{g_{L,r}}(S^{n-1}\times\{-L\}, S^{n-1}\times\{0\})=|h_i'|_{y'}+2L$$
	which is the desired estimate.
		\end{proof}
	
	Now we can prove Theorem \ref{thm_upper_estimate}
	
	\begin{proof} [Proof of Theorem \ref{thm_upper_estimate}]
		Let $g=g_1'g_1'' \cdots g_n'g_n'' \in \pi_1(Y,y)$, where $g_i'= \Phi'(h_i')$, $g_i'' \in \Phi''(h_i'')$, $h_i' \in \pi_1(Y', y')$ and $h_i'' \in \pi_1(Y'',y'')$, with possibly $h_1'=1$, $h_n''=1$.
		Then, by Lemma \ref{lem_length_estimate} we have:
		$$ \lvert g \rvert_{y} \ge \sum_{i=1}^n \left( \lvert h_i' \rvert_{y'} + \lvert h_i''\rvert_{y''} \right) +4nL$$
		Thus we get:
		\small
		$$ e^{-s \lvert g \rvert_{y}} \le e^{-4nL} \cdot e^{-s \lvert h_1'\rvert_{y'}} \cdots e^{-s \lvert h_n' \rvert_{y'}} \cdot e^{-s \lvert h_1''\rvert_{y''}} \cdots e^{-s \lvert h_n'' \rvert_{y''}}$$
		So the Poincaré series of $\pi_1(Y, y_0)$ with respect to the metric $g_{L,r}^{\delta}$ can be estimated as follows:
		$$ \mathcal P\left(\pi_1(Y,y), g_{L,r}\right) =\sum_{g\in\pi_1(Y,y)} e^{-s |g|_y} \le $$ 
		$$ \le 1 +  \sum_{\mathclap{ \substack{n \ge 1 \\ h_i' \in (\pi_1(Y',y'))^{\ast} \\ h_i'' \in (\pi_1(Y'',y''))^{\ast}}}}  \left(  e^{-s|h_1'\cdots h_n' h_n''|_{y}} + e^{-s |h_1' \cdots h_{n-1}''h_n'|_{y}}+ e^{-s |h_1''h_2' \cdots h_n''|_y}+ e^{-s |h_1''h_2' \cdots h_{n-1}''h_n'|_{y}} \right)  {\le}  $$
		$$ {\le}	1+4 \cdot \sum_{ n \ge 1}  \left( \sum_{h' \in (\pi_1(Y',y'))^{\ast}} e^{-s |h'|_{y}} \right)^n  \left(
		\sum_{h''\in (\pi_1(Y'',y''))^{\ast}} e^{-s |h''|_{y}} \right)^n {\le} $$
		$$ {\le} 1+ 4\sum_{n \ge 1}  \left( e^{-s 2L} \sum_{\mathclap{h' \in (\pi_1(Y',y'))^{\ast}}}e^{-s|h'|_{y'}}\right)^n \left( e^{-s 2L}  \sum_{\mathclap{h'' \in (\pi_1(Y'',y''))^{\ast}}} e^{-s |h''|_{y''}} \right)^n{\le}$$
		%\vspace{3mm}
		$$ {\le} 1+4  \sum_{n \ge 1} \left( e^{-4sL} \cdot \mathcal{P}^{\ast}_s(\pi_1(Y',y'), g') \cdot \mathcal{P}^{\ast}_s(\pi_1(Y'',y''),g'')\right)^n $$

		\normalsize
		
		\noindent where the second inequality is consequence of a rearrangement of the summands, the third follows from Lemma \ref{lem_length_estimate} and the last follows from the definition of the Poincaré series, with  $\mathcal{P}^{\ast}$ denoting the Poincaré series without the summand corresponding to the identity.\\
		Now for any arbitrarily chosen $$s>s_{Y', Y''}=\max\{\ent(Y',g'), \ent(Y'', g'')\}$$ the series $\mathcal P_s^*(\pi_1(Y',y'), g')$ and $\mathcal P_s^*(\pi_1(Y'',y''), g'')$ converge to $P_1$, $P_2$.
		Thus the problem of the convergence of $\mathcal{P}_s(\pi_1(Y, y), g_{L, r})$  is reduced to the convergence of the geometric series:
		
		$$ \sum_{\mathclap{n \ge 1}}\left( e^{-s 4L}  \cdot P_1 \cdot P_2\right) ^n$$
		This series converges if the length $L$ of the interpolation cylinder is large enough: $$L>\frac{\log(P_1\cdot P_2)}{4s_{Y', Y''}}$$
		As $s>s_{Y', Y''}=\max\{\ent(Y',g'), \ent(Y'', g'')\}$ this shows that the critical exponent of $\mathcal P(\pi_1(Y,y), g_{L,r})$ is less or equal to $s_{Y',Y''}$ for $L$ sufficiently large. This concludes the proof of Theorem \ref{thm_upper_estimate} (1).\\
		
		\noindent Let us now focus on assertion (2), {\it i.e.} the convergence of the volume. By construction of the metric $g_{L,r}$ we have:
		\small
		$$\Vol(Y'\smallsetminus B_{g'}(y',r), g')+\Vol(Y''\smallsetminus B_{g''}(y'',r), g'')\le \Vol(Y,g_{L,r})$$
		\normalsize
		but also:
		\small
		$$\Vol(Y, g_{L,r})=\Vol(Y'\smallsetminus B_{g'}(y',r), g')+\Vol(Y''\smallsetminus B_{g''}(y'',r), g'')+V(L,r)$$
		\normalsize
		where:
		$$V(L,r)=(S^{n-1}\times[-L-r, L+r], g_{L,r}|_{S^{n-1}\times[-L-r, L+r]})$$
		Now observe that in restriction to $S^{n-1}\times[-L-r, L+r]$ the metric $g_{L,r}$ is equal to the metric $k_{L,r}$ described in Chapter 4, \S\ref{section_metric_connected_sum}. 
		Moreover, denoting:
		$$A'=\max_{r\in[0,\frac{\inj(y')}{2}]}\{\mathcal A_{g'}(S_r'(y'))\}$$ 
		\vspace{-2mm}
		and 
		$$A''=\max_{r\in[0,\frac{\inj(y'')}{2}]}\{\mathcal A_{g''}(S_r''(y''))\}$$
		we observe that these are finite quantities. Call $A=\max\{A', A''\}$; then
		$$V(L,r)\le A\cdot2r+(\omega_{n-1} r^{n-1})\cdot (2L+2r),$$
		where $\omega_{n-1}=\mathcal A(\mathbb S^{n-1})$.
		Hence, for every fixed $L>\bar L$ and every $\varepsilon>0$ there exists $\bar r(L,\varepsilon)>0$ such that for every $r<\bar r(L,\varepsilon)$ we have:
		$$|\Vol(Y,g_{L,r})-\Vol(Y', g')-\Vol(Y'', g'')|<\varepsilon$$
		which proves part (2).
		\end{proof}
	
	\section{Proof of Theorem \ref{thm_upper_estimate_reducible}}
	
	Before going into the proof of Theorem \ref{thm_upper_estimate_reducible} we recall some facts:
	\begin{enumerate}
		\item Let $X$ be a closed, orientable Seifert fibred manifold or a graph-manifold, then we have $\minent(X)=0$ by \cite{anderson2003minimal}.
		\item Let $X$ be a closed, orientable manifold which admits a hyperbolic metric. Then by \cite{besson1995entropies} we know that the hyperbolic metric realizes the Minimal Entropy, that is $\minent(X)=2\cdot \left(\Vol(
		X, hyp)\right)^{1/3}$.
		\item Finally from Theorem \ref{thm_construction_sequence_on_irreducible_Y} we know that on every irreducible, closed, orientable $3$-manifold with non-trivial $JSJ$ decomposition and at least one hyperbolic $JSJ$ component we have $$\minent(X)= 2 \left(\sum_{i=1}^k \Vol(int(X_i),hyp_i)\right)^{1/3}$$ {\it i.e.}, $\minent(Y)^3$ is additive with respect to the connected sum.
	\end{enumerate}

\begin{proof}[Proof of Theorem \ref{thm_upper_estimate_reducible}]
	The theorem follows by iteratively applying Corollary \ref{cor_minent_subadditivity} to the prime summands of the manifold $Y$, keeping into account the estimates (1)---(3) existing for the Minimal Entropy of the prime summands.
\end{proof}

	\newpage

\chapter [The lower bound: irreducible $3$-manifolds]{The lower bound via the barycentre method:\\ irreducible $3$-manifolds} \label{section_a_lower_bound_irreducible}

In this Chapter we shall find the optimal lower bound for the Minimal Entropy of a closed, orientable, irreducible $3$-manifold $Y$.
 We shall handle the reducible case in Chapter 6.\\

 Let $Y$ be a closed, orientable, irreducible $3$-manifold, consider its  $JSJ$ decomposition and denote by $X_1, \dots, X_n$ the hyperbolic $JSJ$ components.  For every $i=1, \dots, n$ the manifold $int(X_i)$ can be endowed with a complete hyperbolic, finite volume metric, unique up to isometries by Mostow Rigidity Theorem; we shall denote it by $hyp_i$.\newline

The main result of this section is the following.

\begin{thm}  \label{thm_lower_estimate_irreducible}
	Let $Y$ be a closed, orientable, irreducible $3$-manifold, and let $X_1, \dots X_k$ be its hyperbolic $JSJ$ components. Then, the following inequality holds:
	$$\minent(Y) \ge 2 \cdot \left( \sum_{i=1} ^k \Vol(int(X_i),hyp_i) \right)^{1/3}.$$
\end{thm}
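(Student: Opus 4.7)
The plan is to apply the barycentre method in the variant using the squared distance (following \cite{sambusetti1999minimal}), with the family $\{(Y, g_\delta)\}$ from Theorem \ref{thm_construction_sequence_on_irreducible_Y} playing the role of moving targets. Fix any Riemannian metric $g$ on $Y$; for every small $\delta>0$, the metric $g_\delta$ is $C^2$ and non-positively curved with sectional curvature bounded below by $-1-\delta$, so the universal cover $(\widetilde Y,\tilde g_\delta)$ is a Hadamard manifold and $\tilde d_\delta(z,\cdot)^2$ is uniformly convex on it. For every $c>\ent(Y,g)$ the measures $d\mu_{c,y}(y')=e^{-c\tilde d(y,y')}dv_{\tilde g}(y')$ are finite on $\widetilde Y$, and the barycentre $\tilde F_{c,\delta}(y)$ is the unique minimizer of
$$\Psi_{c,\delta,y}(z)=\int_{\widetilde Y}\tilde d_\delta\bigl(z,\tilde f(y')\bigr)^2\,d\mu_{c,y}(y'),$$
where $\tilde f$ is an equivariant lift of $f=\id:(Y,g)\to(Y,g_\delta)$; this descends to a smooth map $F_{c,\delta}:(Y,g)\to(Y,g_\delta)$ homotopic to the identity, hence of degree one.

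The second step is the pointwise Jacobian estimate. Differentiating the defining equation $\nabla_z\Psi_{c,\delta,y}\bigl(\tilde F_{c,\delta}(y)\bigr)=0$ expresses $d\tilde F_{c,\delta}$ as the composition of the inverse of an averaged Hessian of $\tilde d_\delta^2/2$ with an averaged form built from $\nabla_y\tilde d(y,\cdot)$. Inserting the Rauch comparison bound $\mathrm{Id}\le\mathrm{Hess}\bigl(\tilde d_\delta(z,\cdot)^2/2\bigr)\le\sqrt{1+\delta}\,\tilde d_\delta\coth\bigl(\sqrt{1+\delta}\,\tilde d_\delta\bigr)\,\mathrm{Id}$, applying Cauchy--Schwarz with respect to $\mu_{c,y}$, and optimising the arithmetic-geometric mean inequality in dimension $n=3$ (as in \cite{besson1995entropies,sambusetti1999minimal}) yields the key estimate
$$|\jac F_{c,\delta}|(y)\,\le\,\left(\frac{c}{n-1}\right)^{n}\bigl(1+\varepsilon(\delta)\bigr)\;=\;\left(\frac{c}{2}\right)^{3}\bigl(1+\varepsilon(\delta)\bigr),$$
with $\varepsilon(\delta)\to 0$ as $\delta\to 0$.

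Integrating this estimate and using that $F_{c,\delta}$ has degree one gives
$$\Vol(Y,g_\delta)=\Bigl|\int_Y\jac F_{c,\delta}\,dv_g\Bigr|\le\left(\frac{c}{2}\right)^{3}\bigl(1+\varepsilon(\delta)\bigr)\Vol(Y,g);$$
letting first $c\searrow\ent(Y,g)$ and then $\delta\searrow 0$, and invoking the convergence $\Vol(Y,g_\delta)\to\sum_{i=1}^k\Vol(int(X_i),hyp_i)$ from Theorem \ref{thm_construction_sequence_on_irreducible_Y}(ii), one obtains
$$8\sum_{i=1}^{k}\Vol(int(X_i),hyp_i)\le\ent(Y,g)^{3}\Vol(Y,g)=\VolEnt(Y,g)^{3},$$
from which Theorem \ref{thm_lower_estimate_irreducible} follows by taking the infimum over $g$. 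The main obstacle I expect is controlling the Jacobian estimate uniformly in $\delta$: since $g_\delta$ has variable curvature and is only $C^2$, and since the Seifert pieces and the thin parts of the hyperbolic cusps collapse as $\delta\to 0$, one must verify that the pushforward $\tilde f_*\mu_{c,y}$ does not put asymptotic mass in the regions where the curvature departs from $-1$, so that the Rauch bound above is effective and the error $\varepsilon(\delta)$ indeed vanishes; the precise curvature pinching and the exhaustion property built into Theorem \ref{thm_construction_sequence_on_irreducible_Y} are exactly what is needed to close this loop.
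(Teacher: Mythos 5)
Your overall architecture is right (move the target through the family $(Y,g_\delta)$ of Theorem~\ref{thm_construction_sequence_on_irreducible_Y}, run the squared-distance barycentre, bound the Jacobian, pass to limits), but there is a concrete gap in the Jacobian estimate, and it is precisely the point the paper singles out as the real work.

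The bound you invoke, $\mathrm{Id}\le\mathrm{Hess}(\tilde d_\delta(z,\cdot)^2/2)\le\sqrt{1+\delta}\,\tilde d_\delta\coth(\sqrt{1+\delta}\,\tilde d_\delta)\,\mathrm{Id}$, is not what feeds into Lemma~\ref{lemma_algebraic}. To reach $|\jac F_{c,\delta}|\le\frac{1}{(1-\varepsilon)^n}(\frac{c}{n-1})^n$ via inequality~(\ref{eq_estimate_jacobian_k_h}) you need a \emph{lower} bound of the form $K^X_{c,y}\ge(1-\varepsilon)(\mathrm{Id}-H^X_{c,y})$, which at the level of the Hessian of the (unsquared) distance means $(Dd\tilde\rho_{\delta,z})_x\ge(1-\varepsilon)(\tilde g_\delta-d\tilde\rho_{\delta,z}\otimes d\tilde\rho_{\delta,z})$. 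In a space with only $\sigma\le 0$, Rauch gives $Dd\tilde\rho\ge 0$ (or the Euclidean $\frac{1}{\tilde\rho}(g-d\tilde\rho\otimes d\tilde\rho)$), which is far too weak: there is no constant $(1-\varepsilon)$ lower bound for all $z$, and the resulting $\varphi(H)$ estimate from Lemma~\ref{lemma_algebraic} never becomes sharp. The paper states this explicitly at the start of the "Barycentre method and large hyperbolic balls" section and replaces Rauch with the new Jacobi-field estimate Proposition~\ref{proposition_jacobi_fields}/Lemma~\ref{lem_estimate_Hessian_distance}: if the geodesic from $z$ to $x$ terminates in a hyperbolic ball of radius $R_\varepsilon=\ln\sqrt{2/\varepsilon}$ around $x$, then $(Dd\tilde\rho_{\delta,z})_x\ge(1-\varepsilon)(\tilde g_\delta-d\tilde\rho_{\delta,z}\otimes d\tilde\rho_{\delta,z})$ regardless of what the curvature did earlier along the geodesic. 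That is the missing lemma.

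This also changes the downstream structure of the argument. The Hessian estimate is only available when the barycentre $x=\tilde F_{c,\delta}(y)$ sits in a hyperbolic ball of radius $R_\varepsilon$, so there is no global pointwise estimate $|\jac F_{c,\delta}|(y)\le(\frac{c}{2})^3(1+\varepsilon(\delta))$ on all of $Y$ with $\varepsilon(\delta)\to0$, and the degree identity $\Vol(Y,g_\delta)=|\int_Y\jac F_{c,\delta}\,dv_g|$ cannot be combined with $\sup|\jac F_{c,\delta}|\cdot\Vol(Y,g)$. The paper instead defines the sets $V^i_{\delta,\varepsilon}$ of points in the hyperbolic components that admit $R_\varepsilon$-hyperbolic neighbourhoods, applies the sharp Jacobian bound only on $F_{c}^{-1}(V^i_{\delta,\varepsilon})$, bounds $\Vol(V^i_{\delta,\varepsilon},g_\delta)\le\int_{F_{c}^{-1}(V^i_{\delta,\varepsilon})}|\jac F_c|\,dv_g$ via the coarea formula and surjectivity (degree one), sums over $i$ using disjointness of the preimages, and only then sends $\delta\to0$, $\varepsilon\to0$, $c\to\ent(g)$ and uses that $\Vol(V^i_{\delta,\varepsilon},g_\delta)\to\Vol(int(X_i),hyp_i)$. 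Your closing remark about "where the pushforward $\tilde f_*\mu_{c,y}$ puts its mass" is a misdiagnosis: the relevant geometric constraint is on the \emph{barycentre}, not on the support of the measure. With the correct lemma and the coarea/restriction argument in place, the rest of your outline is essentially the paper's proof.
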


The last statement, together with Theorem \ref{thm_construction_sequence_on_irreducible_Y}  gives the exact computation of the minimal entropy in the irreducible case. In order to prove this inequality, we shall use the \textit{barycenter method} applied to a $1$-parameter family of $C^2$ Riemannian metrics $\{g_\delta\}$ on $Y$, which are non-positively curved and locally isometric to $\mathbb H^3$ on larger and larger open subsets (as $\delta\rightarrow0$), constructed in Chapter \ref{section_a_conjectural_minimizing_sequence_irreducible}.

\section{The barycentre method}
In this section we shall give a brief introduction to the \textit{barycenter method}. 
Roughly speaking, given  a map $f: (Y,g)\longrightarrow (X,g_0)$ which lifts to  a $\lambda$-equivariant map $\widetilde{f}: (\widetilde{Y},\tilde g) \longrightarrow (\widetilde{X}, \tilde g_0)$ between the universal covers ---here  $\lambda=f_*:\pi_1(Y)\rightarrow\pi_1(X)$ is the homomorphism induced by $f$ between the first homotopy groups---,  the method  consists in:
\begin{itemize}

 \item Immersing $(\widetilde{Y}, \tilde g)$ in $\mathcal{M}(Y,g)$ ---the space of positive and finite Borel measures on $(\widetilde{Y}, \tilde g)$--- via a family of $\pi_1(Y)$-equivariant maps $y \mapsto \mu_{c, y}$.
 \item Taking $\widetilde{f}_{\ast}(\mu_{c, y}) \in \mathcal{M}(X,g_0)$, the push-forward measure with respect to $\widetilde{f}$.
 \item Projecting from $\mathcal{M}(X,g_0)$ on $(\widetilde{X},\tilde g_0)$ by composing with the map which associates to any measure in $\mathcal M(X,g_0)$ its barycenter, that we are going to describe in detail in few lines.

\end{itemize}
This way, we shall obtain, via the composition of the three maps described above, a family of maps $\widetilde{F}_{c}: \widetilde{Y} \longrightarrow \widetilde{X}$ ---namely, $\widetilde{F}_c(y)= \bary [\widetilde{f}_{\ast} \mu_{c, y}] $:

\begin{center}
	\begin{tikzpicture}
	\matrix (m) [matrix of math nodes,row sep=3em,column sep=4em,minimum width=2em]
	{
		\mathcal{M}(\widetilde{Y})& \mathcal{M}(\widetilde{X}) \\
		\widetilde{Y}& \widetilde{X} \\};
	\path[-stealth]
	(m-2-1) edge node [left] {$\mu_{c, y}$} (m-1-1)
	(m-1-1) edge  node [above] {$\widetilde{f}_{\ast}$} (m-1-2)
	(m-1-2) edge node [right] {$\bary$} (m-2-2)
	(m-2-1) edge  node [above] {$\widetilde{F}_c$} (m-2-2);
	\end{tikzpicture}
\end{center}

 By construction, the maps $\widetilde{F}_{c}$ have the following properties:
\begin{itemize}
	\item [(i)] $\widetilde F_c$ is $\lambda$-equivariant, {\it i.e.} $\widetilde F_c(h.\tilde y)=\lambda(h).\widetilde F_c(\tilde y)$; hence, every $\widetilde F_c$ induces a quotient map $F_c:Y\rightarrow X$ such that $(F_c)_*=\lambda$;
	\item[(ii)] the induced maps $F_c$ are homotopic to the map $f$ for every $c$;
\end{itemize}

In our context, we shall consider the following family of immersions in the space $\mathcal M(Y,g)$: 
$$y\mapsto d\mu_{c, y}(y'):= e^{-c \tilde d(y,y')}dv_{\tilde g}(y').$$
where $dv_{\tilde g}$ is the Riemannian measure of the metric $\tilde g$ on $\widetilde Y$ and $c>\ent(g)$.
All these measures are finite by the integral characterization of the entropy in the compact case, see Section \ref{section_integral_characterization}.\\

Now we shall give the formal definition of the barycentre of a measure $\mu$ on a complete, simply connected Riemannian manifold having non-positive curvature. For the proof of the good definition of the barycenter, and of the useful properties provided in Lemmata \ref{lemma_equivariance_f_epsilon}, \ref{lemma_implicit_equations_f_epsilon}, \ref{lemma_derivatives_of_the_implicit_function} we refer to \cite{sambusetti1999minimal}.
%We emphasize here that for our purpose we shall put $Y=X$ and take the map $F= \id: Y \longrightarrow X$. In this setting, the simply connected Riemannian manifold with non-positive curvature will be $(\widetilde{X}, g_{\varepsilon})$, where $\{g_{\varepsilon}\}$ is the sequence of Riemannian metrics exploited for the upper estimate.

\begin{defn}
	Let $(\widetilde{X},\tilde g_0)$ be a complete simply connected Riemannian manifold with non-positive curvature, and $\mu \in \mathcal{M}(\widetilde X,\tilde g_0)$ a positive, finite Borel measure on it. If there exists at least a point $x_0 \in \widetilde{X}$ such that the integral $\int_{\widetilde{X}}  \tilde d_0(x_0,x')^2 d \mu(x')$ converges, then the so-called \textit{Leibniz function}: $$B_{\mu} (x)= \int_{\widetilde{X}}  \tilde d_0(x',x)^2 d\mu(x')$$
	is well defined for any $x \in \widetilde{X}$ ---namely, the integral converges for any $x \in \widetilde{X}$---, is positive and is $C^{\infty}$.
\end{defn}

As a consequence of the strict convexity of the Riemannian distance function $d_0$ on $\widetilde X$, it follows that the Leibniz function associated to any $\mu\in\mathcal M(\widetilde{X},\tilde g_0)$ is a strictly convex function. Moreover, for any fixed $x_0$ we have that $|B_\mu(x)-B_\mu(x_0)|\rightarrow\infty$ as $\tilde d_0(x,x_0)\rightarrow\infty$. We refer to \cite{sambusetti1999minimal} for these facts. As a consequence, it follows that $B_\mu$ admits a unique critical point. It makes thus sense to give the following definition.

\begin{defn}
	Let $(\widetilde X,\tilde g_0)$ and $\mu$ as above.
	We define the {\it barycentre of $\mu$} as the unique critical point of the associated Leibniz function. In particular, in this setting the critical point is a minimum. It will be denoted $\bary [\mu]$. The barycenter is \textit{equivariant}, \textit{i.e.},
for every $\sigma \in \Isom(\widetilde{X},\tilde g_0)$ we have $\bary[\sigma_{\ast} \mu]= \sigma . \bary[\mu]$.
\end{defn}

We shall now list some properties verified by the maps $\widetilde{F}_c$, $F_{c}$. These properties are classical (see for example \cite{sambusetti1999minimal}), nevertheless we shall provide proofs for the sake of completeness.

\begin{lem}[Equivariance of $\widetilde{F}_{c}$]  \label{lemma_equivariance_f_epsilon}
	Using the notation introduced above, the maps $\widetilde{F}_{c}: \widetilde{Y} \longrightarrow \widetilde{X}$ are $\lambda$-equivariant, and thus induce maps between the quotients $F_c:Y\longrightarrow X$.
\end{lem}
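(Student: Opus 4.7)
The plan is straightforward: I would assemble three equivariance properties (one for the measure assignment $y\mapsto \mu_{c,y}$, one for the lift $\tilde f$, and one for the barycentre operator) and chain them. The only thing to verify carefully is the first, since the second is a hypothesis and the third is stated as a property in the definition; the descent to the quotient is then a formal consequence.

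First I would show that for every $h\in\pi_1(Y)$, the family $\mu_{c,\cdot}$ is equivariant in the sense that $h_{\ast}\mu_{c,y}=\mu_{c,h.y}$. This is immediate from the definition $d\mu_{c,y}(y')=e^{-c\tilde d(y,y')}dv_{\tilde g}(y')$: the deck transformation $h$ acts on $(\widetilde Y,\tilde g)$ by isometries, so $\tilde d(h.y,h.y')=\tilde d(y,y')$ and $h_{\ast}dv_{\tilde g}=dv_{\tilde g}$. A change of variables $y'\mapsto h.y'$ in the integral against any Borel test function yields the claim. Note that this also shows $\mu_{c,y}$ is actually a well-defined finite measure on $\widetilde Y$ (hence its pushforward to $\widetilde X$ is finite) as soon as $c>\mathrm{Ent}(Y,g)$, by the integral characterisation of the entropy recalled in \S\ref{section_integral_characterization}.

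Next I would combine this with the $\lambda$-equivariance of $\tilde f$. By naturality of pushforward,
\[
\tilde f_{\ast}\mu_{c,h.y}=\tilde f_{\ast}(h_{\ast}\mu_{c,y})=(\tilde f\circ h)_{\ast}\mu_{c,y}=(\lambda(h)\circ\tilde f)_{\ast}\mu_{c,y}=\lambda(h)_{\ast}\bigl(\tilde f_{\ast}\mu_{c,y}\bigr).
\]
Since $\lambda(h)\in\pi_1(X)$ acts on $(\widetilde X,\tilde g_0)$ by isometries, the equivariance property of the barycentre stated in the definition gives
\[
\widetilde F_c(h.y)=\bary\bigl[\tilde f_{\ast}\mu_{c,h.y}\bigr]=\bary\bigl[\lambda(h)_{\ast}\tilde f_{\ast}\mu_{c,y}\bigr]=\lambda(h).\bary\bigl[\tilde f_{\ast}\mu_{c,y}\bigr]=\lambda(h).\widetilde F_c(y),
\]
which is the desired $\lambda$-equivariance of $\widetilde F_c$.

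Finally, to get the induced map $F_c\colon Y\to X$: if $\tilde y_1,\tilde y_2\in\widetilde Y$ project to the same $y\in Y$, then $\tilde y_2=h.\tilde y_1$ for some $h\in\pi_1(Y)$, so $\widetilde F_c(\tilde y_2)=\lambda(h).\widetilde F_c(\tilde y_1)$ and these two points project to the same point in $X$. Hence $F_c:=p_X\circ\widetilde F_c\circ p_Y^{-1}$ is well defined. I do not anticipate any real obstacle here; the content is entirely formal once one checks that the Leibniz function $B_{\tilde f_{\ast}\mu_{c,y}}$ is finite and strictly convex on $(\widetilde X,\tilde g_0)$, which is a standing hypothesis for $\bary$ to make sense and which in our non-positively curved setting is provided by \cite{sambusetti1999minimal}.
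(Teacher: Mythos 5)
Your proof is correct and follows essentially the same approach as the paper's: both verify, via a change of variables exploiting that $\pi_1(Y)$ acts by isometries on $(\widetilde Y,\tilde g)$, that $\tilde f$ is $\lambda$-equivariant, and that $\lambda(h)$ acts by isometries on $(\widetilde X,\tilde g_0)$, that the pushed-forward measure at $h.y$ is the $\lambda(h)$-translate of the one at $y$. The only cosmetic difference is that you factor the argument into the equivariance of $y\mapsto\mu_{c,y}$, naturality of pushforward, and the stated equivariance property of $\bary$, whereas the paper folds all of this into a single inline computation of the identity $B_{\mu_{c,h.y}}(\lambda(h).x)=B_{\mu_{c,y}}(x)$ and then reads off the transformation of minimisers; both routes are sound and yield the same conclusion.
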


\begin{proof}
	 We recall that $\lambda=f_*: \pi_1(Y) \longrightarrow \pi_1(X)$ is the homomorphism induced by $F:Y\rightarrow X$. We need to prove that the $\widetilde{F}_{c}$ are $\lambda$-equivariant, \textit{i.e.}, $\bary[\tilde f_{\ast}\mu_{c, h . y }]= \lambda(h) . \bary [\tilde F _{\ast}\mu_{c,y}]$ for any $h \in \pi_1(Y)$. This will follow from:
	 $$B_{\mu_{c, h.y}}(\lambda(h).x)=B_{\mu_{c, y}}(x)$$
%First of all, let us notice that
%$$ \mu_{\varepsilon, h.y}= h . \mu_{\varepsilon,y} \qquad \forall h \in \pi_1(Y).$$
Indeed, the following equalities hold:
\small
$$  B_{\mu_{c, h.y}}(\lambda(h).x)=\int_{\widetilde X} \tilde d_0\left(z,\lambda(h).x\right)^2d(\tilde f_*\mu_{c, h.y})(z)=$$$$=
\int_{\widetilde{Y}} \tilde d_0\left(\tilde f(y'),\lambda(h). x\right)^2\mu_{c, h .y}(y')  =$$
$$= \int_{\widetilde{Y}} \tilde d_0\left(\tilde f(y'),\lambda(h).x\right)^2 e^{- c \tilde d(h.y, y')} dv_{\tilde g}(y')= $$
 $$=\int_{\widetilde{Y}} \tilde d_0\left(\lambda(h).\tilde f(y''),\lambda(h).x\right)^2 e^{- c \tilde d(h.y, h.y'')} dv_{\tilde g}(h.y'') =$$$$= \int_{\widetilde{Y}} \tilde d_0\left(\tilde f(y''),x\right)^2 e^{-c \tilde d(y,y'')}dv_{\tilde g}(y'')=B_{\mu_{c, y}}(x)$$
 \normalsize
  Hence, the minimum of $B_{\mu_{c,h.y}}$ is equal to the image via $\lambda(h)$ of the minimum of $B_{\mu_{c, y}}$.
 \end{proof}

\begin{lem}
	The maps $F_c:Y\longrightarrow X$ are homotopic to $f$.
\end{lem}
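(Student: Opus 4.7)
The plan is to produce an explicit homotopy between the lifts $\widetilde{f}$ and $\widetilde{F}_c$ by joining their values pointwise with geodesic segments in the target, and then to project this homotopy down to $X$. The crucial ingredient is that the target $(\widetilde{X},\tilde g_0)$ arising from the barycentre construction is (in the setting introduced in the Notation chapter) a complete, simply connected Riemannian manifold of non-positive curvature, hence a Hadamard manifold. In such a space any two points are joined by a unique geodesic segment, and this segment depends continuously (in fact smoothly) on its endpoints.

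First I would define, for each $\tilde y\in\widetilde Y$, the unique unit-speed geodesic $\gamma_{\tilde y}:[0,1]\to\widetilde X$ with $\gamma_{\tilde y}(0)=\widetilde f(\tilde y)$ and $\gamma_{\tilde y}(1)=\widetilde F_c(\tilde y)$, and set $\widetilde H(t,\tilde y):=\gamma_{\tilde y}(t)$. Continuity of $\widetilde H$ follows from continuity of the geodesic joining map on $\widetilde X\times\widetilde X$, combined with continuity of the two maps $\widetilde f$ and $\widetilde F_c$ (the latter being a composition of the continuous assignments $\tilde y\mapsto \mu_{c,\tilde y}$, $\mu\mapsto \widetilde f_*\mu$, and $\mu\mapsto \mathrm{bar}[\mu]$; smooth dependence of the barycentre on the measure is the content of Lemma \ref{lemma_derivatives_of_the_implicit_function} and the standard implicit function argument in \cite{sambusetti1999minimal}).

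Next I would check that $\widetilde H$ is $\lambda$-equivariant. For any $h\in\pi_1(Y)$ one has $\widetilde f(h.\tilde y)=\lambda(h).\widetilde f(\tilde y)$ by equivariance of the initial lift, and $\widetilde F_c(h.\tilde y)=\lambda(h).\widetilde F_c(\tilde y)$ by Lemma \ref{lemma_equivariance_f_epsilon}. Since $\lambda(h)\in\Isom(\widetilde X,\tilde g_0)$ sends geodesics to geodesics and preserves the parametrization, it maps $\gamma_{\tilde y}$ onto $\gamma_{h.\tilde y}$, whence $\widetilde H(t,h.\tilde y)=\lambda(h).\widetilde H(t,\tilde y)$. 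Consequently $\widetilde H$ descends to a continuous map $H:[0,1]\times Y\to X$ with $H(0,\cdot)=f$ and $H(1,\cdot)=F_c$, proving that $F_c$ and $f$ are homotopic.

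The argument is essentially soft and I do not expect any real obstacle: the only points that need a moment of care are (i) making sure the geodesic joining map is continuous on $\widetilde X\times\widetilde X$ (standard in Hadamard manifolds, by strict convexity of the distance), and (ii) verifying that $\widetilde F_c$ is indeed continuous in $\tilde y$, which is the reason the implicit characterization of the barycentre (Lemma \ref{lemma_implicit_equations_f_epsilon}) is recorded. Everything else is formal.
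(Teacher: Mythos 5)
Your argument is correct, but it takes a genuinely different route from the one in the paper. The paper's proof is entirely ``soft'': it observes that $(X,g_0)$ is non-positively curved, hence aspherical, so that $[Y,X]$ is in bijection with $\operatorname{Hom}(\pi_1(Y),\pi_1(X))/\operatorname{Inn}(\pi_1(X))$; since $(F_c)_*=f_*=\lambda$, the classes coincide and the maps are homotopic. Your proof instead builds the homotopy explicitly as the geodesic (``straight-line'') homotopy in the Hadamard manifold $\widetilde X$ and checks $\lambda$-equivariance so that it descends to $Y$. Both arguments are standard and work: the paper's is shorter and bypasses any regularity discussion of $\widetilde F_c$ (continuity of $F_c$ is all that is needed, and indeed all that is assumed), while yours is more concrete and has the small benefit of making the homotopy visible, which can be useful if one later wants to control it quantitatively. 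One minor phrasing issue: you call $\gamma_{\tilde y}:[0,1]\to\widetilde X$ a \emph{unit-speed} geodesic, but with domain $[0,1]$ it should be the affinely (constant-speed) reparametrized geodesic segment joining $\widetilde f(\tilde y)$ to $\widetilde F_c(\tilde y)$; the uniqueness-of-geodesics argument you use for equivariance is unaffected. Otherwise the proof is complete.
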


\begin{proof}
Since $(X,g_0)$ is non-positively curved we know that $X$ is an aspherical space. Hence, 
the set of homotopy classes of maps $[Y,X]$ from $Y$ to $X$ is in bijection with $\mathrm{Hom}(\pi_1(Y),\pi_1(X))/\mathrm{Inn}(\pi_1(X))$. As a consequence of this bijection, and of the fact that $\lambda=f_*=(F_c)_*:\pi_1(Y)\rightarrow\pi_1(X)$ we deduce that $F_c$ is homotopic to $f$.
\end{proof}

By taking derivatives of the Leibniz function it is easily seen that for every complete, simply connected, non-positively curved $(\widetilde X,\tilde g_0)$ and every positive, finite Borel measure $\mu$ on $(\widetilde{X},\tilde g_0)$ the barycenter $\bary[\mu]$ is uniquely determined by the implicit equation: $\forall v\in T_{\bary[\mu]} \widetilde{X}$,
\small
\begin{equation} \label{eq_implicit_bary}
(dB_{\mu})_{\bary[\mu]}(v)= 2 \int_{\widetilde{X}} (\tilde \rho_0)_{x'}(\bary [\mu]) \cdot (d (\tilde\rho_0)_{x'})_{\bary[\mu]}(v) d\mu(x')=0,
\end{equation}
\normalsize
where $(\tilde\rho_0)_{x'}(\cdot)=\tilde d_0(x',\cdot)$.
As a consequence, we have the following implicit equations for the maps $\widetilde F_	c$ (see \cite{sambusetti1999minimal} or \cite{sambusettiphd} for a detailed proof of the following Lemmata \ref{lemma_implicit_equations_f_epsilon}, \ref{lemma_derivatives_of_the_implicit_function} and \ref{lemma_computation_jacobian}):

\begin{lem}[Implicit equations for $\widetilde F_{c}$] \label{lemma_implicit_equations_f_epsilon}
	The functions $\widetilde F_{c}$ are univocally determined by the vectorial implicit equations $G_{c}(\widetilde F_{c}(y),y)=0$, where $G_{c}=(G_{c}^i): \widetilde{X} \times \widetilde{Y} \longrightarrow \mathbb R^n$ is defined as
	$$ G_{c}^i(x,y)= \frac{1}{2} \int_{\widetilde Y} d_x [( \tilde \rho_{0})_{\tilde f(y')}]^2(E_i)\, e ^{-c \tilde d(y,y')} dv_{\tilde g}(y'),$$
	where $\{E_i\}_{i=1}^n$ is a $\tilde g_0$-orthonormal basis for the tangent space $T_{x} \widetilde{X} $, and $( \tilde \rho_{0})_{\tilde f(y')}(\cdot)= \tilde d_{0}(\tilde f(y'), \cdot)$.
\end{lem}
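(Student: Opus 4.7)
The plan is to derive the implicit equations by specializing the general characterization of the barycentre (equation (\ref{eq_implicit_bary})) to the pushed-forward measure $\tilde f_{\ast}\mu_{c,y}$ and then unfolding the push-forward via the change-of-variables formula. The uniqueness of the solution to $G_c(\widetilde F_c(y), y) = 0$ will be an immediate consequence of the strict convexity of the Leibniz function $B_{\mu}$ on the non-positively curved, simply connected target $(\widetilde X, \tilde g_0)$, which has already been recorded prior to the statement and forces the critical point of $B_\mu$ to be unique.

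First, I would fix $y \in \widetilde Y$ and set $\mu := \tilde f_{\ast}\mu_{c,y}$. Since $c > \ent(Y,g)$, the measure $\mu_{c,y}$ is finite and so is $\mu$; the auxiliary integrability condition guaranteeing the well-posedness of $B_\mu$ (the finiteness of $\int_{\widetilde X} \tilde d_0(x_0, x')^2 d\mu(x')$ for some, and hence for every, $x_0$) follows from the standard estimate $\tilde d_0(x_0, \tilde f(y')) \le \mathrm{Lip}(\tilde f) \cdot \tilde d(y, y') + \tilde d_0(x_0, \tilde f(y))$ together with the exponential weight in $d\mu_{c,y}$. Hence $\bary[\mu]$ is well defined and, by (\ref{eq_implicit_bary}), it is the unique point $x \in \widetilde X$ at which
\[
(dB_\mu)_x(v) \;=\; 2\int_{\widetilde X} (\tilde\rho_0)_{x'}(x)\,(d(\tilde\rho_0)_{x'})_x(v)\,d\mu(x') \;=\; 0 \qquad \forall v \in T_x \widetilde X.
\]

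Next I would apply the change-of-variables formula for the push-forward measure to turn the integral over $\widetilde X$ into an integral over $\widetilde Y$. Using $d\mu(x') = d(\tilde f_{\ast}\mu_{c,y})(x') = e^{-c\tilde d(y,y')}\,dv_{\tilde g}(y')$ under $x' = \tilde f(y')$, the condition $(dB_\mu)_x \equiv 0$ becomes, for every $v \in T_x \widetilde X$,
\[
\int_{\widetilde Y} (\tilde\rho_0)_{\tilde f(y')}(x)\,\bigl(d(\tilde\rho_0)_{\tilde f(y')}\bigr)_x(v)\, e^{-c\tilde d(y,y')}\,dv_{\tilde g}(y') \;=\; 0.
\]
Recognising the integrand as $\tfrac{1}{2}\, d_x\bigl[(\tilde\rho_0)_{\tilde f(y')}^{\,2}\bigr](v)$ and testing against the orthonormal basis vectors $v = E_i$ produces exactly the $n$ scalar conditions $G_c^i(x, y) = 0$ that appear in the statement. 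Therefore $x = \widetilde F_c(y) = \bary[\tilde f_{\ast}\mu_{c,y}]$ solves $G_c(x, y) = 0$.

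Finally, for uniqueness, I would invoke the strict convexity of $B_{\tilde f_{\ast}\mu_{c,y}}$ on $(\widetilde X, \tilde g_0)$: its critical point is unique, which translates exactly into the uniqueness of the solution of the vectorial equation $G_c(\,\cdot\,, y) = 0$. The main (and essentially the only) obstacle is bookkeeping the differentiation under the integral sign and the push-forward: one must justify that $B_\mu$ is $C^\infty$ so that its differential can be computed pointwise, but this has already been recorded among the properties of the Leibniz function listed just before the definition of the barycentre. Beyond that, the result is a direct translation of the variational characterisation of the barycentre into the language of the initial data $(\tilde f, \mu_{c,y})$.
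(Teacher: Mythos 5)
Your proposal is correct and follows essentially the same route as the paper: specialize the variational characterization of the barycentre in equation~(\ref{eq_implicit_bary}) to the pushed-forward measure $\tilde f_{\ast}\mu_{c,y}$, unfold the push-forward by change of variables, and recognise the integrand as $\tfrac12 d_x[(\tilde\rho_0)_{\tilde f(y')}]^2(E_i)$. The only organisational difference is that you appeal to the previously recorded $C^\infty$-regularity of the Leibniz function (so that its differential can be taken under the integral), whereas the paper re-derives this passage explicitly via a Lebesgue dominated-convergence estimate bounding the difference quotient by $(2+\delta)\,\tilde d_0(x(t),\tilde f(z))\,e^{-c\tilde d(y,z)}$, which is $L^1$ for $c>\ent(g)$; your Lipschitz bound $\tilde d_0(x_0,\tilde f(y'))\le\mathrm{Lip}(\tilde f)\,\tilde d(y,y')+\tilde d_0(x_0,\tilde f(y))$ is the same quantitative input stated a bit more abstractly, and is valid here since $\tilde f$ is a lift of a smooth map between compact manifolds.
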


\begin{proof}
	Since $(\widetilde X, \tilde g_0) $ is non-positively curved, the Riemannian universal cover $\widetilde X$ is diffeomorphic to $\mathbb R^n$; in particular, it has trivial tangent bundle.
	By the definition of the barycentre, $\widetilde F_c(y)= \bary [ f_{\ast}\mu_{c,y}]$ is the unique point of $\widetilde X$ satisfying, for every $i=1, \dots,n$:
	\small
	$$ 0= \left( dB_{\mu_{c, y}}\right)_{\widetilde F_c(y)}(E_i)= \left(  d \int_{\widetilde X} (\tilde \rho_{0})_{x'}(x)^2 \widetilde f_{\ast} \mu_{c, y}(x')  \right)_{\widetilde F_c(y)}(E_i)=$$
	$$= \int_{\widetilde Y} \left(  d [(\tilde \rho_0)_{\widetilde f(z)}]^2  \right)_{\widetilde F_c(y)} (E_i) e^{- \left( c \tilde d(y,z)\right)} dv_{\tilde g}(z)= 2 G_c^i(\widetilde F_c(y),y)$$
	\normalsize
	where we were allowed to pass the derivative through the integral sign thanks to Lebesgue Dominated Convergence Theorem. Indeed, if $x(t) \rightarrow x$, then the following estimates hold:
	\small
	$$ \left \lvert  \frac{\tilde d_0(x(t), \widetilde f(z))^2- \tilde d_0(x, \widetilde f(z))^2}{	t} \right \rvert  e^{-c \tilde d(y,z)} \le$$
	$$ 
\left \lvert \frac{ \tilde d_0(x(t), \widetilde f (z))- \tilde d_0(x, \widetilde f(z))}{t} \right \rvert 
\left( \tilde d_0(x(t), \widetilde f(z) )+ \tilde d _0(x, \tilde f(z)) \right) e^{-c \tilde d(y,z)} \le 
$$
	$$ \le (2+ \delta) \tilde d_0(x(t), \widetilde f(z)) e^{-c\tilde d(y,z)}$$
	\normalsize
	and the last function is integrable.
	\end{proof}
	
	A straightforward computation gives us the following:

\begin{lem} [Derivatives of the implicit function $G_{c}$] \label{lemma_derivatives_of_the_implicit_function}
	Using the notation above, the map  $G_{c}: \widetilde{X} \times \widetilde{Y} \rightarrow \mathbb{R}^n$ is $C^1$ and for $x= \widetilde F_{c}(y)$, the differential $(dG_{c})_{(x,y)}|_{T_x\widetilde{X}}$ is non-singular. Hence, for $x=\widetilde F_c(y)$ the map $\widetilde F_{c}: \widetilde{Y} \rightarrow \widetilde{X}$ is $C^1$ and satisfies:
	\begin{equation} \label{eq_derivative_implicit_function_1}
		 d_y\widetilde F_{c}= -\left( dG_{c}\right)_{(x,y)} |^{-1}_{T_x \widetilde{X} }\circ \left(dG_{c}\right)_{(x,y)}|_{T_y \widetilde{Y}}
		 \end{equation}
	Furthermore, if $u \in T_y \widetilde{Y}$ and $v \in T_x \widetilde{X}$ we have:
	\small
	\begin{equation} \label{eq_derivative_implicit_function_2}
	(dG_{c}^i)_{(x,y)}(u)= - c \int_{\widetilde{Y}}( \tilde \rho_{0})_{\tilde f(y')}(x) \cdot d_x ( \tilde \rho_{0})_{\tilde f(y')}(E_i) \cdot d_y\tilde  \rho_{y'}(u) e^{-c\, \tilde d(y,y')}dv_{\tilde g}(y')
\end{equation}
	\begin{equation} \label{eq_derivative_implicit_function_3}
 (dG_{c}^i)_{(x,y)}(v)= \frac{1}{2} \int_{\widetilde{Y}} Dd_x[(\tilde \rho_{0})_{\tilde f(y')}]^2(v,E_i) e^{-c\, \tilde d(y,y')}dv_{\tilde g}(y')
 \end{equation}
\normalsize	
	\end{lem}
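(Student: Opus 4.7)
The plan is to derive all three assertions in turn, the main ingredient being differentiation under the integral sign (justified by dominated convergence, exactly as in Lemma~\ref{lemma_implicit_equations_f_epsilon}) together with the strict convexity of the squared distance on a Hadamard manifold.

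First I would establish the explicit formulas \eqref{eq_derivative_implicit_function_2} and \eqref{eq_derivative_implicit_function_3}. The measure $d\mu_{c,y}(y') = e^{-c\tilde d(y,y')}\,dv_{\tilde g}(y')$ depends on $y$ only through the exponential weight, so differentiating with respect to $y\in\widetilde Y$ in direction $u$ pulls out a factor $-c\,d_y\tilde\rho_{y'}(u)$, while the integrand $\tfrac12 d_x[(\tilde\rho_0)_{\tilde f(y')}]^2(E_i) = (\tilde\rho_0)_{\tilde f(y')}(x)\cdot d_x(\tilde\rho_0)_{\tilde f(y')}(E_i)$ is unchanged. This gives \eqref{eq_derivative_implicit_function_2}. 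For \eqref{eq_derivative_implicit_function_3}, the weight does not depend on $x$, so differentiating in direction $v\in T_x\widetilde X$ passes inside the integral and acts on $\tfrac12 d_x[(\tilde\rho_0)_{\tilde f(y')}]^2(E_i)$, which gives the Hessian term $\tfrac12 Dd_x[(\tilde\rho_0)_{\tilde f(y')}]^2(v,E_i)$. The exchange of derivative and integral is justified by the same Lebesgue dominated convergence argument used to derive the implicit equation in Lemma~\ref{lemma_implicit_equations_f_epsilon}: on a $\tilde g_0$-ball one bounds $|d_x(\tilde\rho_0)_{\tilde f(y')}|\le 1$, the Hessian is controlled by the curvature comparison (non-positive curvature yields a $(1+\tilde\rho_0)$-type bound), and the integrability of $(1+\tilde d(y,y'))^2 e^{-c\tilde d(y,y')}$ follows from $c>\mathrm{Ent}(g)$.

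Next I would verify non-singularity of $(dG_c)_{(x,y)}|_{T_x\widetilde X}$ when $x=\widetilde F_c(y)$. By \eqref{eq_derivative_implicit_function_3}, for every $v\in T_x\widetilde X$ and every $i$,
\[
(dG_c^i)_{(x,y)}(v)=\tfrac12\int_{\widetilde Y} Dd_x[(\tilde\rho_0)_{\tilde f(y')}]^2(v,E_i)\,e^{-c\tilde d(y,y')}\,dv_{\tilde g}(y'),
\]
which is precisely one half the Hessian of the Leibniz function $B_{\tilde f_*\mu_{c,y}}$ at $x$, evaluated on $(v,E_i)$. On a complete, simply connected, non-positively curved manifold the function $x\mapsto \tilde d_0(x',x)^2$ is $C^\infty$ and strictly convex with Hessian bounded below by $2\,g_0$ (Hessian comparison); integrating against the positive measure $\tilde f_*\mu_{c,y}$ yields a symmetric bilinear form on $T_x\widetilde X$ that is still positive definite, hence non-degenerate. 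This is the step I expect to require the most care, since it is the only place where the non-positive curvature hypothesis on $(\widetilde X,\tilde g_0)$ enters essentially.

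Finally, the global $C^1$ regularity of $G_c$ on $\widetilde X\times\widetilde Y$ follows from \eqref{eq_derivative_implicit_function_2}--\eqref{eq_derivative_implicit_function_3} (continuity of the integrands in $(x,y)$ plus uniform integrable domination on compacts), so the implicit function theorem applies at each point $(x,y)=(\widetilde F_c(y),y)$: we obtain that $y\mapsto\widetilde F_c(y)$ is $C^1$ in a neighbourhood, and differentiating the identity $G_c(\widetilde F_c(y),y)\equiv 0$ yields
\[
(dG_c)_{(x,y)}|_{T_x\widetilde X}\circ d_y\widetilde F_c + (dG_c)_{(x,y)}|_{T_y\widetilde Y}=0,
\]
which is \eqref{eq_derivative_implicit_function_1} after inverting the non-singular first factor. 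Since this argument works at every $y\in\widetilde Y$, the map $\widetilde F_c$ is globally $C^1$, completing the proof.
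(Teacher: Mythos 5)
Your overall strategy mirrors the paper's: differentiate under the integral sign via dominated convergence, use convexity of $\tilde\rho_0^2$ on a Hadamard manifold for non-singularity, and apply the implicit function theorem. But there is a genuine gap in your derivation of \eqref{eq_derivative_implicit_function_3}, and it is precisely where the barycenter hypothesis $x=\widetilde F_c(y)$ must be used. The $\{E_i\}$ in the definition of $G_c^i$ (Lemma \ref{lemma_implicit_equations_f_epsilon}) form a \emph{global} frame on $\widetilde X$ --- that is how $G_c$ can be a map on all of $\widetilde X\times\widetilde Y$. When you differentiate $G_c^i$ in a direction $v\in T_x\widetilde X$, the frame moves with $x$, so along a geodesic $x(t)$ with $x'(0)=v$ one has
$$\frac{d}{dt}\Big|_{t=0} d_{x(t)}[(\tilde\rho_0)_{\tilde f(y')}]^2\bigl(E_i(x(t))\bigr)= Dd_x[(\tilde\rho_0)_{\tilde f(y')}]^2(v,E_i)+d_x[(\tilde\rho_0)_{\tilde f(y')}]^2(D_vE_i),$$
and differentiating under the integral therefore produces, besides the Hessian term you wrote, the extra integral $\tfrac12\int_{\widetilde Y} d_x[(\tilde\rho_0)_{\tilde f(y')}]^2(D_vE_i)\,d\mu_{c,y}(y')$. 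This term does not vanish for arbitrary $x$; it vanishes exactly when $x=\widetilde F_c(y)$, because then the implicit equation \eqref{eq_implicit_bary} says the one-form $\int_{\widetilde Y} d_x[(\tilde\rho_0)_{\tilde f(y')}]^2(\cdot)\,d\mu_{c,y}(y')$ is zero at $x$, hence annihilates $D_vE_i$. Your proof asserts that "differentiating in direction $v$ ... gives the Hessian term" as though this were true at every $(x,y)$, and it never uses the restriction $x=\widetilde F_c(y)$ that the statement imposes --- so the derivation of \eqref{eq_derivative_implicit_function_3} is incomplete. The remaining pieces (the $y$-derivative computation for \eqref{eq_derivative_implicit_function_2}, non-singularity via $\mathrm{Hess}(\tilde\rho_0^2)\ge 2\tilde g_0$ on a Hadamard manifold, and the implicit function theorem giving \eqref{eq_derivative_implicit_function_1}) are fine and match the paper. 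One minor slip: the $(1+\tilde\rho_0)$-type upper bound on $\|\mathrm{Hess}[(\tilde\rho_0)_{\tilde f(y')}]^2\|$ needed for the domination argument comes from a \emph{lower} curvature bound $\sigma\ge -k^2$ (Rauch comparison), which the paper extracts from compactness of $X$; non-positivity alone only bounds this Hessian from below.
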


\begin{proof}
We compute the derivative of the function $G_c$ with respect to the variable $x$. Fix $x$ and $y$, and take $v \in T_{x}\widetilde X$ such that $\lVert v \rVert=1$.
Let $x(t)$ be the geodesic such that $x(0)=x$ and $x'(0)=v$. Then:
\small
$$\left(dG_{c}^i \right)_{(x,y)}(v)= \lim_{t \to 0} \int _{\widetilde Y} \left(   \frac{\left(d[(\tilde \rho_0)_{\tilde f (y')}]^2\right)_{x(t)}(E_i)-  \left(  d[(\tilde \rho_0)_{\tilde f (y')}]^2\right)_x (E_i)  }{2t}  \right)  e^{-c \tilde d (y,y')} dv_{\tilde g}(y')$$
\normalsize
Since $X$ is compact, the curvature of $(\widetilde X, \tilde g_0)$ is bounded from below by a constant $k^2$, and Rauch Comparison Theorem implies that the Hessian of the function $(\tilde \rho_{0})_z(x)= \tilde d_0(x,z) $ on $\widetilde X$ is bounded; namely:
\small
$$ \lVert Dd(\tilde \rho_{0})_z \rVert \le \frac{k}{tgh(k \tilde \rho_0)} \le k.$$
\normalsize
Since:
\small$$ \frac{1}{2} Dd \tilde \rho_0^2= \tilde \rho_0 Dd \tilde \rho_0+ d \tilde \rho_0 \otimes d \tilde \rho_0,$$
\normalsize
 it follows:
 \small
$$ \frac{1}{2}\left  \lVert  \left(  Dd[(\tilde \rho_{0})_{\tilde f(y')}]^2\right)_x \right \rVert \le k \tilde d_0(\widetilde f(y'),x)+1.$$
\normalsize
Furthermore, we observe that:
\small
\begin{equation} \label{eq_auxiliary_derivative_G_c}
\left(  \frac{\partial}{\partial v} \frac{\partial}{\partial E_i} [(\tilde\rho_{0})_{\tilde f(y')}]^2  \right)(x)= \left(  Dd [(\tilde\rho_{0})_{\tilde f(y')}]^2 \right)_x (v, E_i)+ \left(d [(\tilde\rho_{0})_{\tilde f(y')}]^2\right)_x (D_v E_i).
\end{equation}
\normalsize
Observe that Lagrange Theorem implies that, for $t$ sufficiently small:
\small
$$\frac{1}{2t} \left \lvert \left( d \tilde \rho_{0, \widetilde f(y')}^2 \right) _{x(t)}(E_i)- \left(   d \tilde \rho_{0, \widetilde f(y')}^2 \right)_x(E_i)  \right \rvert \le$$
$$\le   \frac{\tilde d_0(x(t),x)}{2t} \sup_t  \left 	\lvert   \left(  D d[(\tilde\rho_{0})_{\tilde f(y')}]^2\right)  _x (v, E_i) + \left(  d [(\tilde\rho_{0})_{\tilde f(y')}]^2 \right)_x(D_vE_i) \right \rvert \le$$
$$ \le 1+ \left( k + \lVert D_v E_i \rVert + \delta \right) \tilde d_0(\widetilde f(y'),x)$$
\normalsize
Since the function $1+ \left( k + \lVert D_v E_i \rVert + \delta \right) \tilde d_0(\widetilde f(y'),x)$ is in $L^1(\widetilde Y, dv_{\tilde g})$, then by Lebesgue Dominated Convergence Theorem we can pass the limit through the integral sign.
Moreover, since at $x= \widetilde F_c(y)= \bary [\widetilde f_{\ast} \mu_{c, y}]$ Equality \ref{eq_implicit_bary} implies:
\small
$$ \int_{\widetilde Y} \left(  d [(\tilde\rho_{0})_{\tilde f(y')}]^2\right)_x \left(D_v E_i \right) \mu_{c,y}(y')= \int_{\widetilde X} \left( d[(\tilde\rho_{0})_{\tilde x'}]^2\right)_x \left(D_v E_i\right) \widetilde f_{\ast} \mu_{c, y}(x')=0 \,\,\,\ \forall \, i=1, \dots, n $$
\normalsize
This implies that the integral of the second summand in \ref{eq_auxiliary_derivative_G_c} vanishes. Therefore Equation \ref{eq_derivative_implicit_function_3} holds, and $\partial_v G_c$ is continuous.

Now we compute the derivative of $G_c$ with respect to the variable $y$.
Let $x \in \widetilde X$, $y \in \widetilde Y$ and $u \in T_y \widetilde Y$ whit $\lVert U \rVert  \le 1$, and let $y(t)$ be the geodesic such that $y(0)=y$ and $y'(0)= u$.
Then, the following estimate holds:
\small
$$ \frac{e^{-c\tilde d(y,y(t))}-1}{t} \le \frac{e^{c \tilde d (y,y')-c \tilde d (y(t),y')}-1}{t} \le \frac{e^{c \tilde d (y, y(t))}-1}{t}$$
\normalsize
Therefore, for $t$ sufficiently small:
\small
$$ \left \lvert   \left(  \frac{e^{-c\tilde d (y(t),y')}-e^{-c \tilde d (y,y')}}{t}      \right)  \left(  d [(\tilde\rho_{0})_{\tilde f(y')}]^2  \right) _x (E_i) \right \rvert \le 4 (c+ \delta )e^{-c \tilde d (y,y')} \cdot \tilde d_0( \widetilde f(y'),x)$$
\normalsize
which for for $c > \ent(g)$ is a function in $L^1(\widetilde Y, dv_{\tilde g})$ .
Hence, Lebesgue Dominated Convergence Theorem implies:
\small 
$$ \left(dG_c^i \right)_{x,y}(u)= - c \int _{\widetilde Y} \left(d \tilde \rho_{y'}\right)_y(u) \cdot \left(d (\tilde\rho_{0})_{\tilde f(y')}\right)_x(E_i) \cdot \tilde d_0(\tilde f(y'), x) \mu_{c, y}(y')$$
\normalsize
Since $(\tilde X, \tilde g_0)$ is non-positively curved, the function $(\tilde \rho_{0})_{\tilde f(y')}$ is strictly convex. Therefore, $Dd [(\tilde \rho_{0})_{\tilde f(y')}]^2(v,v) >0$ for every $v \in T_x \widetilde X$, and $(\left(dG_c\right)_{x,y})|_{T_{x} \widetilde X}$
is non-singular. Thus, the Implicit Function Theorem implies that $\widetilde F_c$ is $C^1$, and Equation \ref{eq_derivative_implicit_function_1} holds.
\end{proof}

\begin{lem} [Computation of the Jacobian of $F_{\varepsilon}$ ] \label{lemma_computation_jacobian}
	Let us fix $y \in \widetilde{Y}$,  put $x= \widetilde F_c(y)$ and denote by $\nu_{c,y}(y')= d_0(\tilde f(y),\tilde f(y'))\cdot \mu_{c,y}(y') $. Furthermore, let us introduce the positive quadratic forms $k_{c,y}^X$, $h_{c,y}^X$ and $h_{c,y}^Y$ defined on $T_x \widetilde{X}$, $T_x \widetilde{X}$ and $T_y \widetilde{Y}$ respectively. \\
	
	\noindent $k_{c, y}^X (v,v)= \frac{1}{\nu_{c,y}(\widetilde{Y})} \int_{\widetilde{Y}} \frac{1}{2} Dd_x [(\tilde \rho_0)_{\tilde f(y')}]^2(v,v) d\mu_{c,y}(y'), \;\;\qquad \forall v \in T_x \widetilde{X}$\\
	
	\noindent $h_{c,y}^X(v,v)= \frac{1}{\nu_{c,y}(\widetilde{Y})} \int_{\widetilde{Y}}d_x [(\tilde \rho_{0})_{ \tilde f(y')}]^2(v,v) d\nu_{c,y}(y'),\qquad \qquad\forall v \in T_x \widetilde{X}$\\
	
	\noindent $  h_{c, y}^Y(u,u)=  \frac{1}{\nu_{c,y}(\widetilde{Y})} \int_{\widetilde{Y}} d_y [(\tilde\rho)_{y'}]^2(u,u) d\nu_{c,y}(y'), \;\;\;\;\;\qquad \qquad \forall u \in T_y \widetilde{Y}$\\		
	and let us denote by $K_{c,y}^X$, $H_{c,y}^X$ and $H_{c,y}^Y$ the corresponding endomorphisms of the tangent spaces $T_x \widetilde{X}$, $T_x{\widetilde{X}}$ and $T_y \widetilde{Y}$ respectively, and associated to the scalar products $(\tilde g_0)_x$ on $  T_x \widetilde{X}$ and $(\tilde g)_y$ on $T_y \widetilde{Y}$.
	Then, 
	\begin{equation}
		 \tr_{\tilde g_{0}}H^X_{c, y}=1, \qquad \qquad  \tr_{\tilde g}(H^Y_{c,y})=1
	 	\end{equation}
	and the eigenvalues of $H^X_{c, y}$ and $H^Y_{c,y}$ are pinched between $0$ and $1$.\\
	Moreover, for any $u \in T_y \widetilde{Y}$ and for any $v \in T_x \widetilde{X}$ the following inequality between the quadratic forms holds:
	\begin{equation} \label{eq_jacobian_1}
	k_{c,y}^X(d_y\widetilde F_{c}(u),v) \le c\cdot h^X_{c,y}(v,v) \cdot h^{Y}_{c,y}(u,u),
	\end{equation}
	and 
	\begin{equation} \label{eq_estimate_jacobian_k_h}
	|\jac_y (\widetilde F_{c})| \le \frac{c^n}{n^{\frac{n}{2}}} \cdot \frac{(\det H^X_{c,y})^{\frac{1}{2}}}{\det K^X_{c,y}}
	\end{equation}
\end{lem}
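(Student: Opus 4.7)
The plan is to address the four assertions in order. \emph{Traces and eigenvalue bounds.} First I would unpack the notation: $d_x[(\tilde\rho_0)_{\tilde f(y')}]^2(v,v)$ stands for the rank-one quadratic form $\bigl(d_x(\tilde\rho_0)_{\tilde f(y')}(v)\bigr)^2 = \langle v,\nabla_x(\tilde\rho_0)_{\tilde f(y')}\rangle_{\tilde g_0}^2$. Since the Riemannian distance function has unit gradient outside a set of Riemannian measure zero, tracing the integrand pointwise with respect to $\tilde g_0$ gives $\|\nabla(\tilde\rho_0)_{\tilde f(y')}\|_{\tilde g_0}^2=1$, hence $\tr_{\tilde g_0} H^X_{c,y} = \nu_{c,y}(\widetilde Y)^{-1}\int_{\widetilde Y} 1\,d\nu_{c,y} = 1$. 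The same argument with $\tilde\rho$ in place of $\tilde\rho_0$ gives $\tr_{\tilde g} H^Y_{c,y}=1$. Both $h^X_{c,y}$ and $h^Y_{c,y}$ are integrals of positive semidefinite rank-one bilinear forms, hence positive semidefinite; together with trace one this confines each eigenvalue to $[0,1]$.

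\emph{The Cauchy--Schwarz bound for $k^X_{c,y}(d_y\widetilde F_c(u),v)$.} I would interpret (5.5) as the squared inequality $\bigl|k^X_{c,y}(d_y\widetilde F_c(u),v)\bigr|^2\le c^2\, h^X_{c,y}(v,v)\cdot h^Y_{c,y}(u,u)$, and derive it by combining the implicit relation $(dG_c)_{(x,y)}\bigl(d_y\widetilde F_c(u)\bigr)=-(dG_c)_{(x,y)}(u)$ of Lemma \ref{lemma_derivatives_of_the_implicit_function} with the expressions (5.3) and (5.4). Pairing both sides with $v=\sum_i v^i E_i\in T_x\widetilde X$, the left-hand side rewrites as $\nu_{c,y}(\widetilde Y)\,k^X_{c,y}\bigl(d_y\widetilde F_c(u),v\bigr)$ by (5.3), while the right-hand side rewrites as $c\int_{\widetilde Y} d_x(\tilde\rho_0)_{\tilde f(y')}(v)\cdot d_y\tilde\rho_{y'}(u)\,d\nu_{c,y}(y')$ by (5.4). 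Applying Cauchy--Schwarz to this integral against the measure $\nu_{c,y}$ and dividing by $\nu_{c,y}(\widetilde Y)^2$ yields the claimed estimate.

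\emph{Jacobian estimate.} Setting $J:=d_y\widetilde F_c$ and using the endomorphisms $K^X_{c,y},H^X_{c,y},H^Y_{c,y}$ associated to $k^X_{c,y},h^X_{c,y},h^Y_{c,y}$ via the background metrics, the previous inequality translates into the positive semidefinite matrix inequality
$$ (K^X_{c,y}\, J)^{*}\,(H^X_{c,y})^{-1}\,(K^X_{c,y}\, J)\le c^2\, H^Y_{c,y}$$
on $T_y\widetilde Y$, where $*$ denotes the transpose with respect to the relevant orthonormal bases. Taking determinants, whose monotonicity on the PSD cone applies, one obtains
$$(\det J)^2\le c^{2n}\,\frac{\det H^X_{c,y}\cdot\det H^Y_{c,y}}{(\det K^X_{c,y})^2}.$$
The decisive step is then the AM--GM inequality: the non-negative eigenvalues of $H^Y_{c,y}$ sum to $\tr_{\tilde g} H^Y_{c,y}=1$ by the first step, so $\det H^Y_{c,y}\le n^{-n}$, which produces the factor $n^{-n/2}$ in the final bound. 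The hard part is precisely this AM--GM step used in conjunction with the Cauchy--Schwarz inequality: a naive operator-norm argument would only give $\|(H^X)^{-1/2}K^X J (H^Y)^{-1/2}\|_{\mathrm{op}}\le c$ and a loose bound $c^n$, whereas obtaining the sharp constant $n^{-n/2}$---which is ultimately responsible for the sharpness of the entropy estimate in Theorem~\ref{thm_lower_estimate_irreducible}---requires both inputs together.
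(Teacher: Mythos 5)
Your proof is correct, and in the trace/eigenvalue part and the Cauchy--Schwarz step it matches the paper's argument verbatim. The determinant step is where you diverge: the paper works with a concrete pair of bases. It chooses a $\tilde g_0$-orthonormal basis $\{v_i\}$ of $T_x\widetilde X$ that diagonalizes $H^X_{c,y}$, and then $\tilde g$-orthonormalizes $\{(K^X_{c,y}\circ d_y\widetilde F_c)^{-1}(v_i)\}$ to a basis $\{u_i\}$ of $T_y\widetilde Y$; by the Gram--Schmidt property the matrix of $K^X_{c,y}\circ d_y\widetilde F_c$ in these bases is upper triangular, so $|\det K^X_{c,y}|\cdot|\jac_y\widetilde F_c|=\prod_i|k^X_{c,y}(d_y\widetilde F_c(u_i),v_i)|$, to which it applies the pointwise Cauchy--Schwarz estimate term by term and then AM--GM to $\prod_i h^Y_{c,y}(u_i,u_i)$ (using that $\sum_i h^Y_{c,y}(u_i,u_i)=\tr H^Y_{c,y}=1$), while $\prod_i h^X_{c,y}(v_i,v_i)=\det H^X_{c,y}$ exactly because the basis diagonalizes $H^X_{c,y}$. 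You instead promote the Cauchy--Schwarz estimate to the operator inequality $(K^X J)^*(H^X)^{-1}(K^X J)\le c^2 H^Y$ and use monotonicity of the determinant on the PSD cone, then AM--GM on the eigenvalues of $H^Y$. Both are legitimate; your version is more compact but needs the extra observation (which you should record, as the paper records its analogue for $K^X$) that $H^X_{c,y}$ is invertible whenever $\jac_y\widetilde F_c\ne 0$: a null vector $v_0$ of $H^X_{c,y}$ would be $\tilde g_0$-orthogonal to $K^X_{c,y}\circ d_y\widetilde F_c(T_y\widetilde Y)$ by the pointwise Cauchy--Schwarz estimate, forcing $d_y\widetilde F_c$ to be degenerate since $K^X_{c,y}$ is positive definite; in the degenerate case the target inequality is trivial.
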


\begin{proof}
	Since $\lVert \nabla (\tilde \rho_{0})_{x'} \rVert=1$, we have $\sum_{i=1}^n d (\tilde \rho_{0})_{\tilde f(y')}(E_i)^2=1$, hence $\tr_{\tilde g_0} H^X_{c,y}=1$. Analogously, we obtain that $\tr_{\tilde g} H^Y_{c,y}=1$.
	Since the two quadratic forms $h^X_{c, y}$ and $h^Y_{c,y}$ are positive, it follows that all the eigenvalues of $H^X_{y,c}$ and $H^Y_{y,c}$ are pinched between $0$ and $1$. Then, by Lemma \ref{lemma_derivatives_of_the_implicit_function} we have that, for every $v \in T_x \widetilde X$:
	\small 
	$$k^X_{c, y} (v, E_i)= \frac{\left(dG_c^i\right)_{(x,y)}(v) }{ \nu_{c,y}(\widetilde Y)}$$
	\normalsize
	Therefore, from Equations \ref{eq_derivative_implicit_function_1} and \ref{eq_derivative_implicit_function_2} we obtain:
	\small
	$$k^X_{c, y} \left( \left(d \widetilde F_c \right)_y(u),v \right) = \frac{c}{\nu_{c,y}(\widetilde Y)} \int _{\widetilde Y} \left(d \tilde \rho _{y'}\right)_y(u) \left( d (\tilde \rho_{0})_{\tilde f(y')}\right)_x (v) \nu_{c, y}(y').$$
	\normalsize
	for every $u \in T_y \widetilde Y$, $v \in T_x \widetilde X$. Then, by the Cauchy-Schwartz Formula we get Inequality \ref{eq_jacobian_1}. In order to prove Inequality \ref{eq_estimate_jacobian_k_h}, we start observing that it is trivially verified if $\jac_y \widetilde F_c=0$, and that $K^X_{c, y}$ is invertible.
	Hence, without loss of generality, we can assume that $y$ is such that $(d \tilde F_c)_y$ is invertible.
	Now we shall derive Inequality \ref{eq_estimate_jacobian_k_h} from Inequality \ref{eq_jacobian_1}, as follows.
	Let $F: (U,g) \rightarrow (V, g_0)$ be a linear isomorphism between Euclidean spaces of the same dimension $n$, and consider positive bilinear forms $h^Y$ on $U$ and $h^X, k^X$ on $V$, represented by the endomorphisms $H^Y$, $H^X$, $K^X$ with respect to $g$ and $g_0$, respectively, and such that:
	\small
	$$ \left \lvert k^X (F(u),v) \right \rvert \le C \cdot h^Y(u,u)^{1/2} \cdot h^X(v,v)^{1/2}$$
	\normalsize
	Since $K^X$ is an isomorphism, once chosen a $g_0$-orthonormal basis $\{v_i\}$ of $V$ diagonalizing $h^X$, we can consider the basis $\{u_i\}$ obtained by orthonormalizing with respect to $g$ the basis $\{  \left(K^X \circ F\right)^{-1}(v_i)\}$. Hence, $K^X \circ F(u_i)$ can be written as a linear combination of kind $\sum_{j \le i} a_{ij}v_j$. Therefore, the matrix representing $K^X \circ F$ with respect to the bases $\{u_i\}$ and $\{v_i\}$ is upper triangular. Thus:
	\small 
	$$ \left \vert  \det K^X\right \rvert \cdot \left \lvert \det F\right \rvert= \prod_{i=1}^n \left \lvert g_0 \left(K^X \circ F(u_i),v_i \right) \right \rvert= \prod_{i=1}^n \left \lvert k^X(F(u_i), v_i)\right \rvert \le$$
	$$ \le C^n \left( \prod_{i=1}^n h^Y(u_i, u_i)^{1/2} \right) \cdot \left(  \prod_{i=1}^n h^X(v_i,v_i)\right)^{1/2} \le C^n \left(  \frac{1}{n} \tr H^Y\right)^{n/2} \cdot \left(\det H^X\right)^{1/2}$$
	\normalsize
\end{proof}

\section{Barycenter method and large hyperbolic balls}

%Let $Y$ be an orientable, irreducible $3$-manifold with non-trivial $JSJ$ decomposition and at least one hyperbolic $JSJ$ component. For the application we have in mind, we shall fix the Riemannian manifold $(Y,g)$ and we shall consider for every sufficiently small $\delta$ as target space $(X,g_0)$ the $C^2$ Riemannian manifold $(Y,g_\delta)$, which we found in Chapter \ref{section_a_conjectural_minimizing_sequence_irreducible}. The starting map $f$ will be the identity map. Notations will be modified coherently with these choices. Any possible confusion arising from the fact that we are considering Riemannian metrics on the same manifold should be avoided by keeping in mind the general discussion of the previous section.\\
In the classical application of the barycenter method, when the target space $(X,g_0)$ is a hyperbolic manifold or satisfy $\sigma_0\le -1$, a sharp estimate of the Jacobian is obtained by (\ref{eq_estimate_jacobian_k_h}) applying Rauch's Theorem in order to estimate the quadratic form $K_{c,y}^X$ in terms of $\mbox{Id}-H^X_{c,y}$, and then using the following algebraic lemma (see \cite{besson1995entropies}, Appendix D for a detailed proof).

\begin{lem}[Algebraic] \label{lemma_algebraic}
	Let $V=\mathbb{E}^n$ with $n \ge 3$, and denote by $\mathcal{K}$ the  set of all the symmetric endomorphisms $H$ of $V$ whose eigenvalues $h_i$ satisfy $0 \le h_i \le 1$, and $\tr(H)= \sum_{i=1}^{n}h_i=1$.
	Then, for any $ H \in \mathcal{K}$ the following inequality holds:
	$$ \varphi(H)=\frac{(\det H)^{1/2}}{\det(Id-H)} \le \frac{n^{n/2}}{(n-2)^n}.$$
	Furthermore, the value $\frac{n^{n/2}}{(n-2)^n}$ is realized at the unique point of absolute maximum $H =\frac{1}{n}\cdot\id$ for $\varphi$.\\
	
\end{lem}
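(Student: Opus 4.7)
The function $\varphi(H)$ depends only on the eigenvalues of $H$, so my first move is to reduce the problem to a constrained optimization on the standard simplex: maximize
$$f(h_1,\dots,h_n)=\prod_{i=1}^n\frac{h_i^{1/2}}{1-h_i}$$
over $\Sigma=\{(h_1,\dots,h_n)\in[0,1]^n:\sum h_i=1\}$. The boundary of $\Sigma$ is easy to dispose of: if some $h_i=0$ the numerator vanishes and $f=0$, while if some $h_i=1$ the constraint forces all other $h_j=0$ so by continuity (or by taking limits from the interior) $f$ tends to $0$. Hence the maximum is attained at an interior critical point, and the analysis reduces to the open simplex.

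Next, I pass to $g=\log f=\tfrac12\sum\log h_i-\sum\log(1-h_i)$ and apply Lagrange multipliers. A critical point satisfies
$$\frac{1}{2h_i}+\frac{1}{1-h_i}=\lambda\quad\text{for every }i=1,\dots,n,$$
equivalently $2\lambda h_i^2+(1-2\lambda)h_i+1=0$. This is a quadratic in a single variable, so it admits at most two distinct solutions in $(0,1)$. Consequently, at any critical point the $h_i$ take at most two distinct values, say $\alpha$ appearing $k$ times and $\beta$ appearing $n-k$ times with $k\alpha+(n-k)\beta=1$.

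The core of the argument is then to rule out all non-symmetric critical points by direct comparison. Setting $\alpha=\beta=1/n$ gives the symmetric critical point with value $f=n^{n/2}/(n-1)^n$. For a genuine two-value critical point with parameter $k\in\{1,\dots,n-1\}$, I would eliminate one of $\alpha,\beta$ using the linear constraint, express $f$ as a function $\Phi_k$ of a single variable, and show that $\Phi_k$ is strictly dominated by the symmetric value. A convenient way to phrase this: fix any pair of indices $i,j$ and consider the one-parameter family obtained by varying $(h_i,h_j)\mapsto(h_i+t,h_j-t)$ while keeping the other $h_\ell$'s fixed; a direct computation of the second variation of $g$ combined with the Lagrange condition forces $h_i=h_j$ at a local maximum, which collapses every two-value critical point to the symmetric one. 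This is the step I expect to require the most care, because $\psi(t)=\tfrac12\log t-\log(1-t)$ is concave only on $(0,\sqrt{2}-1)$ and convex beyond, so global concavity is false and the comparison has to use the constraint $\sum h_i=1$ in a nontrivial way.

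Finally, I would check that the symmetric critical point is a strict local maximum (the Hessian of $g$ restricted to the tangent space of the simplex at $h_i=1/n$ is negative definite, since $1/n\le 1/3<\sqrt 2-1$ makes $\psi''<0$ at each coordinate and the quadratic form $\sum\psi''(1/n)v_i^2$ is negative definite on $\{\sum v_i=0\}$), and combine this with the critical-point classification to conclude that it is the unique global maximum. Evaluating $f$ there yields $\varphi(\tfrac1n\,\mathrm{Id})=n^{n/2}/(n-1)^n$, completing the proof (modulo the typo in the stated bound).
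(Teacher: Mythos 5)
The paper does not actually prove this lemma: it is stated with a citation to \cite{besson1995entropies}, Appendix D, so there is no internal proof to compare yours against. Two remarks about the statement itself: you are right that the exponent in the denominator is a typo — the value $\varphi\bigl(\tfrac1n\Id\bigr)$ is
\[
\frac{n^{-n/2}}{\bigl(\tfrac{n-1}{n}\bigr)^{n}}=\frac{n^{n/2}}{(n-1)^{n}},
\]
and indeed the paper uses $(n-1)^{n}$ rather than $(n-2)^{n}$ when it invokes the lemma in the proof of Proposition~\ref{proposition_gestimate_jacobian_irreducible}, so the bound should read $n^{n/2}/(n-1)^{n}$.

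Your reduction to the open simplex, the passage to $g=\log f$, the Lagrange equations $\tfrac{1}{2h_i}+\tfrac{1}{1-h_i}=\lambda$, the observation that a quadratic has at most two roots so a critical configuration has at most two distinct eigenvalues, and the check that the Hessian of $g$ at $\tfrac1n\Id$ is negative definite on $\{\sum v_i=0\}$ (since $1/n\le 1/3<\sqrt2-1$) are all correct. The gap is exactly where you flag it, but it is slightly worse than you suggest. For a two-value critical point with $\alpha$ appearing $k$ times and $\beta$ appearing $n-k$ times, the pairwise second-variation condition $\psi''(h_i)+\psi''(h_j)\le 0$ does rule out the case $2\le k\le n-2$: applied within each block it forces both $\alpha$ and $\beta$ into $(0,\sqrt2-1]$, where $\psi'$ is strictly decreasing, and then $\psi'(\alpha)=\psi'(\beta)$ forces $\alpha=\beta$. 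But for $k=1$ the single exceptional eigenvalue $\alpha$ has no self-pair, so the only pairwise constraints are $\psi''(\beta)\le 0$ and $\psi''(\alpha)+\psi''(\beta)\le 0$; since $\psi''(\beta)\to-\infty$ as $\beta\to 0^{+}$, this allows $\alpha$ to sit on the convex branch $\alpha>\sqrt2-1$, and the pairwise second variation does not force $\alpha=\beta$. In fact for $k=1$ the relevant second-variation test vector is $v=(n-1,-1,\dots,-1)$, giving the stronger necessary condition $(n-1)\psi''(\alpha)+\psi''(\beta)\le0$, and even this does not settle the case by sign alone. What actually closes the $k=1$ case is a direct analysis: substituting $\alpha=1-(n-1)\beta$ into $\psi'(\alpha)=\psi'(\beta)$ yields a quadratic in $\beta$ whose roots are $\beta=1/n$ and $\beta=-\tfrac{n-3}{n-1}\le 0$, so the only interior solution is the symmetric one. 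Something of this kind (or a one-variable comparison of values along each Lagrange branch) is needed; the pairwise-second-variation argument you sketch would not, by itself, eliminate the $k=1$ branch.
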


However, unlike the standard setting where the Hessian of the distance function of the target space can be explicitly computed (the curvature being constantly equal to $-1$), when $\sigma\le 0$ the classical estimate of the Hessian provided by Rauch's Theorem does not provide a useful upper bound for $\varphi(H)$. Nevertheless, in our case the $C^2$ Riemannian metrics $g_\delta$ are hyperbolic on large portions of our Riemannian manifolds, so we shall recover an almost optimal estimate for the Hessian of the distance function,which in combination with (\ref{eq_estimate_jacobian_k_h}) will be sufficient for our purposes.
%Our next step is to find, in our setting, a suitable estimate of the quadratic form $k^X_{c,y,\delta}$ (we shall stress the dependence on the metric $g_\delta$ by adding the extra index $\delta$). 

\subsection{Jacobi fields along hyperbolic directions} In this paragraph and in the next one we shall explain how to obtain almost optimal estimates from below for the Hessian of the distance function on a non-positively curved Riemannian manifold at points which possess sufficiently large hyperbolic neighbourhoods. In particular, this {\it sufficiently large} radius will be made explicit in terms of the closeness to the optimal estimate. We shall first prove the following general:
\begin{prop} \label{proposition_jacobi_fields}
	Let $(M,g)$ be a Riemannian manifold, and assume that $c:[0, \ell] \longrightarrow M$ is the unique minimizing geodesic from $z:=c(0)$ to $c(\ell)$, and that the sectional curvatures $\sigma $ of $M$ along $c$ satisfies:
	\begin{equation*}\label{hypotheses_0}
	\left\lbrace \begin{array}{l} \sigma \le 0 \ \ \ \ \text{ at points }\, c(t) \,\text{ such that }\ \  0 \le t < \ell - R \ ,\\
	\sigma = -1 \ \ \text{ at points }\, c(t) \,\text{ such that }\ \  \ell - R \le t \le \ell  \  . \end{array}\right. 
	\end{equation*}
	Then, for every $t\ge \ell-R$ and every $w \in T_{c(t)} M$  such that $g(w,\dot{c}(t))=0$ we have
	$$ (Dd \rho_{z})_{c(t)}(w,w) \ge (1-2 e^{-2 (t+R-\ell)}) ||w||^2$$
\end{prop}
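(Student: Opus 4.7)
The plan is to reduce the Hessian estimate to a one-dimensional ODE comparison for the norm of a single Jacobi field. Let $J$ be the unique perpendicular Jacobi field along $c$ with $J(0)=0$ and $J(t)=w$; it is well-defined because the hypothesis $\sigma\le 0$ along $c$ rules out conjugate points. The standard identification of the Hessian of the distance function with the second fundamental form of the geodesic sphere gives $(Dd\rho_z)_{c(t)}(w,w)=\langle J'(t),J(t)\rangle$, so setting $f(s):=\|J(s)\|$ one has $f(t)=\|w\|$ and $(Dd\rho_z)_{c(t)}(w,w)=f(t)f'(t)$; hence the statement is equivalent to the scalar bound $f'(t)/f(t)\ge 1-2e^{-2(t+R-\ell)}$.

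Next I would extract a differential inequality for $f$ from the Jacobi equation. Pairing $J''+R(J,\dot c)\dot c=0$ with $J$ yields $\langle J'',J\rangle=-Kf^2$, where $K$ is the sectional curvature of the plane spanned by $J,\dot c$; the Cauchy--Schwarz identity $(ff')^2=\langle J,J'\rangle^2\le f^2\|J'\|^2$ gives $\|J'\|^2\ge(f')^2$, and combining this with $ff''+(f')^2=\langle J'',J\rangle+\|J'\|^2$ produces the standard inequality $f''\ge -Kf$. Plugging in the hypotheses on $\sigma$, I obtain $f''\ge 0$ on $[0,\ell-R]$ (so $f$ is convex there) and $f''\ge f$ on $[\ell-R,\ell]$. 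Since $f\ge 0$ and $f(0)=0$, the convexity on the first arc forces $f'(\alpha)\ge 0$ for $\alpha:=\ell-R$; moreover $a:=f(\alpha)>0$, because $\sigma\le 0$ excludes conjugate points on $(0,\ell]$.

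On the hyperbolic arc $[\alpha,\ell]$ I would then compare $f$ with the explicit solution $h(s):=a\cosh(s-\alpha)+b\sinh(s-\alpha)$ of $h''=h$ having the same Cauchy data at $\alpha$, where $b:=f'(\alpha)\ge 0$. The Wronskian $W(s):=f(s)h'(s)-f'(s)h(s)$ satisfies $W(\alpha)=0$ and $W'(s)=h(s)\bigl(f(s)-f''(s)\bigr)\le 0$, hence $W\le 0$ on $[\alpha,\ell]$. Since $f,h>0$ this yields $f'(t)/f(t)\ge h'(t)/h(t)$. Writing $\tau:=t-\alpha=t+R-\ell\in[0,R]$ and setting $u:=(a+b)e^\tau>0$, $v:=(a-b)e^{-\tau}$, a direct computation gives $h'(t)/h(t)=(u-v)/(u+v)=1-2v/(u+v)$; using $a,b\ge 0$ and $a+b>0$ one checks at once that $v/(u+v)\le\max(0,v/u)\le\frac{a-b}{a+b}e^{-2\tau}\le e^{-2\tau}$, which is the desired bound $f'(t)/f(t)\ge 1-2e^{-2(t+R-\ell)}$.

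The only delicate point is securing $f'(\alpha)\ge 0$, which is where the sign hypothesis $\sigma\le 0$ on the complementary arc $[0,\ell-R]$ is genuinely used: without the convexity of $f$ there one would have no control on the Cauchy datum $b$ at $\alpha$, and the Sturm comparison above would fail to produce an estimate uniform in the (a priori unknown) behavior of $J$ before it enters the hyperbolic region. Once this sign is in place, every other step is a routine ODE comparison.
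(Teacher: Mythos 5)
Your proof is correct, and it takes a genuinely different, somewhat more elementary, route from the paper's. The paper solves the Jacobi equation explicitly on the constant-curvature arc $[\ell-R,\ell]$ using parallel transport, and then bounds the ratio $\langle J'(t),J(t)\rangle/\|J(t)\|^2$ by expanding the inner products in the three scalars $\|J(\ell-R)\|$, $\|J'(\ell-R)\|$ and $\langle J(\ell-R),J'(\ell-R)\rangle$. You instead pass at the outset to the scalar norm $f=\|J\|$, derive the Rauch-type inequality $f''\ge -Kf$ from the Jacobi equation together with Cauchy--Schwarz, and run a Wronskian comparison against the explicit solution of $h''=h$ with the same Cauchy data at $\ell-R$. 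Both proofs pivot on the same key fact: convexity of $f$ on $[0,\ell-R]$ together with $f(0)=0$ and $f\ge 0$ forces $f'(\ell-R)\ge 0$, which is precisely the paper's condition $\langle J'(\ell-R),J(\ell-R)\rangle\ge 0$. Your route trades the parallel-transport bookkeeping for a one-dimensional Sturm comparison; and since you use only the differential inequality $f''\ge f$ rather than the exact hyperbolic Jacobi solution, the same argument carries over verbatim to the weaker hypothesis $\sigma\le -1$ on $[\ell-R,\ell]$. One cosmetic slip: in the chain $v/(u+v)\le\max(0,v/u)\le\tfrac{a-b}{a+b}\,e^{-2\tau}\le e^{-2\tau}$, the middle inequality is false when $b>a$ (the middle expression is then negative while the left-hand side is $0$); but in that regime $v\le 0$ and the target bound $v/(u+v)\le e^{-2\tau}$ is immediate, so the conclusion is unaffected.
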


\begin{proof}
	The unicity of $c$ and the assumption on the sectional curvatures along $c$ tell us that $c$ does not cross the cut-locus of $z=c(0)$.
	Let us denote by $z'=c(\ell-R)$. For every $v\in T_zM$ orthogonal to $\dot c(0)$ define $J_v$ as the Jacobi field along $c$ satisfying the initial conditions $J_v(0)=0$, $J_v'(0)=v$. For every $s\in[0,R]$ let
	$Y_{\ell-R}^0(s)$ and $Y_{\ell-R}^1(s)$ be the parallel transports of $J_v(\ell-R)$, $J_v'(\ell-R)$ respectively from the point $c(\ell-R)$ to the point $c(\ell-R+s)$. Since $\sigma\equiv -1$ along $c$ for any $t\ge \ell-R$ we see that for $s\in[0,R]$ 
	$$ J_v(\ell-R+s)=\cosh(s)\cdot Y_{\ell-R}^0(s)+\sinh(s)\cdot Y_{\ell-R}^1(s)$$
	$$J_v'(\ell-R+s)=\sinh(s)\cdot Y_{\ell-R}^0(s)+\cosh(s)\cdot Y_{\ell-R}^1(s)$$
	Let $\mathrm{II}_t$ denote the second fundamental form at $c(t)$ of the geodesic sphere of radius $t\ge \ell-R$ centered at $z$. Recall that:
	\small
	$$(Dd\rho_{z})_{c(t)}\left(\frac{J_v(t)}{\| J_v(t)\|}, \frac{J_v(t)}{\|J_v(t)\|}\right)=\mathrm{II}_t\left(\frac{J_v(t)}{\| J_v(t)\|}, \frac{J_v(t)}{\|J_v(t)\|}\right)=\frac{g(J_v'(t), J_v(t))}{g(J_v(t), J_v(t))}$$
	\normalsize
	Now we shall prove that the last term is greater or equal to $1-\alpha(t)$ where $\alpha(t)=2\cdot e^{-2(t+R-\ell)}$. Indeed, using the abbreviations:
	\small
	 $$N=\| J_v(\ell-R)\|_{g},\; N'=\| J_v'(\ell-R)\|_{g},\;  L=g(J_v(\ell-R), J_v'(\ell-R))$$ 
	 \normalsize
	 we get:
	\small
	$$\frac{g(J_v' (t ) \,, J_v (t ))}{g(J_v (t ) \,, J_v (t ))} = 
	\frac{\cosh t\cdot \sinh t \left( N^2 + (N')^2\right) + \left( \cosh^2 t + 
		\sinh^2 t\right) L}{ \cosh^2 t \cdot N^2 +
		\sinh^2 t \cdot (N')^2 + 2\, \cosh t\cdot \sinh t \cdot L}=1-A(t)$$
	\normalsize
	where 
	\small
	$$A(t)=\frac{e^{-t}\cosh t\,N^2-e^{-t}\sinh t\, (N')^2-e^{-2t}\,L}{\cosh^2t\, N^2+\sinh^2t\, (N')^2+\sinh 2t\, L}\le$$
	$$\le \frac{e^{-t}\cosh t\, N^2-e^{-t}\sinh t\, (N')^2- e^{-2t} L}{\cosh^2 t\, N^2+\sinh^2t\,(N')^2+\sinh 2t\, L}\le$$$$\le e^{-2t}\cdot\frac{e^{t}\cosh t\, N^2-e^{t}\sinh t\, (N')^2}{\cosh^2 t\, N^2+\sinh^2t\,(N')^2}\le 2\cdot e^{-2t}$$
	\normalsize
	which proves the desired estimate.
\end{proof}

\begin{rmk}[Euristhic interpretation of Proposition \ref{proposition_jacobi_fields}]
	The euristhic idea behind the Proposition is the following: let $c:[0,+\infty)\rightarrow \mathbb H^n$ be a geodesic ray and let $J_{v,w}$ be the Jacobi field along $c$ with initial conditions $J_{v,w}(0)=v$,  $J_{v,w}'(0)=w$. Then $$J_{v,w}(t)=\cosh(t) \,P_{c(t)}(v)+\sinh(t) P_{c(t)}(w)$$ and $$J_{v,w}'(t)=\sinh(t) \,P_{c(t)}(v)+\cosh(t) P_{c(t)}(w),$$ where we denoted by $P_{c(t)}$ the parallel transport along the geodesic $c$ from $T_{c(0)} \mathbb H^n$ to $T_{c(t)}\mathbb H^n$. Taking the difference between $J_{v,w}(t)$ and $J_{v,w}'(t)$ it is easily seen that:
	$$J_{v,w}(t)-J_{v,w}'(t)=e^{-t}\cdot P_{c(t)}(v-w)$$
	Hence, travelling in the hyperbolic space straighten up the Jacobi fields, no matter which initial condition you put. In other words, the Jacobi field $J_{v,w}$ tends to become closer and closer to its first covariant derivative $J_{v,w}'$, thus providing a lower bound for $\frac{g(J_{v,w}(t), J_{v,w}(t))}{g(J_{v,w}(t), J_{v,w}(t))}$ which becomes asymptotically optimal.
	Hence, Proposition \ref{proposition_jacobi_fields} is saying that  given a geodesic which travels in a non-positive direction, we do not care about whatever the Jacobi field $J_v$ does before entering in a hyperbolic direction, but, as far as you travel inside a hyperbolic direction for a sufficiently large amount of time, the Jacobi field $J_v$ will become closer and  closer to its first covariant derivative.
\end{rmk}

\subsection{Large hyperbolic balls and estimate of $\mathrm{Hess}(\rho_z)$
%	An approximate version of Rauch Comparison Theorem
	  }
  
Using Proposition \ref{proposition_jacobi_fields} it is easy to show the following result:

\begin{lem} \label{lem_estimate_Hessian_distance}
	Let $(\widetilde{X}, \tilde g_0)$ be a complete, non-positively curved, simply connected Riemannian manifold. Let us fix $\varepsilon >0$ and $R\ge R_{\varepsilon}=\ln\left(\sqrt{\frac{2}{\varepsilon}}\right)$. Assume that the geodesic ball centred at $x\in\widetilde X$ of radius $R$ is isometric to $B_{\mathbb H^n}(o,R)\subset \mathbb H^n$. Then for every $z\in\widetilde X \smallsetminus B_{\widetilde X}(x,R)$ and for every $v\in T_x\widetilde X$ we have that:
	\small
	\begin{equation}
	\label{eq_estimate_Hessian_distance}
	(1- \varepsilon) (\|v\|^2_{ \tilde g_0}- (d_x( \tilde \rho_0)_z(v))^2) \le (Dd ( \tilde \rho_{0})_z)_x(v,v).
	\end{equation}
	\normalsize
\end{lem}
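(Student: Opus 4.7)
The plan is to derive this lemma as a direct corollary of Proposition \ref{proposition_jacobi_fields}. Since $(\widetilde X,\tilde g_0)$ is a complete, simply connected, non-positively curved manifold, it is a Hadamard manifold; in particular, for every $z\in\widetilde X\smallsetminus B_{\widetilde X}(x,R)$ there exists a unique unit-speed minimizing geodesic $c:[0,\ell]\to\widetilde X$ with $c(0)=z$ and $c(\ell)=x$, where $\ell=\tilde d_0(z,x)\ge R$. I need to check that the curvature hypotheses of Proposition \ref{proposition_jacobi_fields} hold along $c$. Non-positivity gives $\sigma\le 0$ on all of $\widetilde X$, hence along $c$; moreover, for every $t\in[\ell-R,\ell]$ the point $c(t)$ lies in $\overline{B_{\widetilde X}(x,R)}$ (because $\tilde d_0(c(t),x)=\ell-t\le R$), and the latter set is isometric to a hyperbolic ball, so $\sigma=-1$ along the final segment $c|_{[\ell-R,\ell]}$.

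Applying Proposition \ref{proposition_jacobi_fields} at $t=\ell$, I obtain that for every $w\in T_x\widetilde X$ orthogonal to $\dot c(\ell)$,
\begin{equation}\label{eq_plan_hessian}
(Dd(\tilde\rho_0)_z)_x(w,w)\ \ge\ (1-2e^{-2R})\,\|w\|_{\tilde g_0}^2.
\end{equation}
Now, for an arbitrary $v\in T_x\widetilde X$ I decompose $v=v_\parallel+v_\perp$ along the orthogonal splitting $T_x\widetilde X=\mathbb R\cdot\nabla(\tilde\rho_0)_z(x)\oplus (\nabla(\tilde\rho_0)_z(x))^\perp$. Since $\|\nabla(\tilde\rho_0)_z\|_{\tilde g_0}\equiv 1$ on $\widetilde X\smallsetminus\{z\}$, differentiating this identity yields $(Dd(\tilde\rho_0)_z)_x(\nabla(\tilde\rho_0)_z(x),\cdot)\equiv 0$, so the Hessian vanishes on the radial component and
\begin{equation*}
(Dd(\tilde\rho_0)_z)_x(v,v)=(Dd(\tilde\rho_0)_z)_x(v_\perp,v_\perp).
\end{equation*}
Observing that $\dot c(\ell)=\nabla(\tilde\rho_0)_z(x)$, I may apply \eqref{eq_plan_hessian} to $w=v_\perp$; combining this with the Pythagorean identity
\begin{equation*}
\|v_\perp\|_{\tilde g_0}^2=\|v\|_{\tilde g_0}^2-(d_x(\tilde\rho_0)_z(v))^2,
\end{equation*}
I conclude that
\begin{equation*}
(Dd(\tilde\rho_0)_z)_x(v,v)\ \ge\ (1-2e^{-2R})\bigl(\|v\|_{\tilde g_0}^2-(d_x(\tilde\rho_0)_z(v))^2\bigr).
\end{equation*}

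The final step is purely numerical: the condition $R\ge R_\varepsilon=\ln\sqrt{2/\varepsilon}$ is equivalent to $2e^{-2R}\le\varepsilon$, hence $1-2e^{-2R}\ge 1-\varepsilon$, which upgrades the previous inequality to \eqref{eq_estimate_Hessian_distance}. There is no real obstacle here: the content of the statement is entirely absorbed by Proposition \ref{proposition_jacobi_fields}, and the only bookkeeping is the orthogonal decomposition together with the elementary calibration of $R_\varepsilon$.
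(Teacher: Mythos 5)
Your argument is correct and follows the same route as the paper, which simply invokes Proposition~\ref{proposition_jacobi_fields} and the calibration of~$R_\varepsilon$ in a single sentence. You have merely made explicit the two bookkeeping steps the paper leaves implicit: the orthogonal splitting of~$v$ along~$\nabla(\tilde\rho_0)_z(x)=\dot c(\ell)$ (using that the Hessian of a distance function annihilates the radial direction) together with the Pythagorean identity $\|v_\perp\|^2=\|v\|^2-(d_x(\tilde\rho_0)_z(v))^2$, and the numerical check $R\ge R_\varepsilon\iff 2e^{-2R}\le\varepsilon$.
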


\begin{proof}
	%As long as the point $z$ is inside the ball of radius $R$ centered at $x$ the classical computation of the Hessian of the hyperbolic distance function holds and there is nothing to prove. 
	Since, $z$ is outside $B_{\tilde g_0}(x,R)$  the (unique) geodesic from $z$ to $x$ satisfies the hypotheses of Proposition \ref{proposition_jacobi_fields}. Estimate (\ref{eq_estimate_Hessian_distance}) then follows from the choice made for $R$.
\end{proof}

From Lemma \ref{lem_estimate_Hessian_distance} we get an inequality relating $k_{c, y }^X$ and $ \tilde g_{0}- h_{c, y}^X$,  which will give the required estimate for the jacobian: the following, immediate corollary says that the (almost optimal) inequality between these quadratic forms holds, at least at the points having a sufficiently large hyperbolic neighbourhood:

\begin{cor} [Estimate of $k_{c,y}^X$] \label{cor_estimate_quadratic_forms}
	Let $c >\ent(g)$ and $x= \tilde f(y) \in \widetilde X$. For every $\varepsilon>0$ there exists $R_{\varepsilon}>0$ such that if $\tilde g_0$ is hyperbolic when restricted to the ball $B(x, R_\varepsilon)$, then  for every $z \in \widetilde{X}$ and $v \in T_x\widetilde{X}$, the following inequality holds:
	$$ k_{c, y}^X(v,v) \ge (1- \varepsilon)( \tilde g_{0}(v,v)- h_{c,y}^X(v,v) ).$$
	\end{cor}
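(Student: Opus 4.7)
The corollary follows by inserting the pointwise Hessian estimate of Lemma \ref{lem_estimate_Hessian_distance} inside the integral defining $k_{c,y}^X$ and combining it with the Leibniz-type identity $\frac12 Dd[\rho^2] = \rho\cdot Dd\rho + d\rho\otimes d\rho$.

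First I would apply this identity to $\rho = (\tilde\rho_0)_{\tilde f(y')}$ at the point $x=\widetilde F_c(y)$, rewriting the integrand in the definition of $k_{c,y}^X$ as
$$\tfrac12\, Dd_x[(\tilde\rho_0)_{\tilde f(y')}]^2(v,v)\;=\;(\tilde\rho_0)_{\tilde f(y')}(x)\cdot\bigl(Dd(\tilde\rho_0)_{\tilde f(y')}\bigr)_x(v,v)\;+\;\bigl(d_x(\tilde\rho_0)_{\tilde f(y')}(v)\bigr)^2.$$
Recognising that $d\nu_{c,y}(y')=(\tilde\rho_0)_{\tilde f(y')}(x)\cdot d\mu_{c,y}(y')$, the first term contributes $\frac{1}{\nu_{c,y}(\widetilde Y)}\int Dd(\tilde\rho_0)_{\tilde f(y')}(v,v)\,d\nu_{c,y}(y')$ to $k_{c,y}^X(v,v)$, while the second is a manifestly non-negative integral against $d\mu_{c,y}$ which can simply be discarded.

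The second step is to bound the Hessian pointwise in $y'\in\widetilde Y$. Set $z=\tilde f(y')$. If $z\notin B_{\tilde g_0}(x,R_\varepsilon)$, Lemma \ref{lem_estimate_Hessian_distance} yields directly
$$\bigl(Dd(\tilde\rho_0)_z\bigr)_x(v,v)\;\ge\;(1-\varepsilon)\Bigl(\|v\|_{\tilde g_0}^2-\bigl(d_x(\tilde\rho_0)_z(v)\bigr)^2\Bigr).$$
If instead $z\in B_{\tilde g_0}(x,R_\varepsilon)$, then because $\tilde g_0$ is hyperbolic on the whole ball the unique minimising geodesic from $z$ to $x$ stays inside the ball, so the exact hyperbolic formula $(Dd(\tilde\rho_0)_z)_x(v,v)=\coth(\tilde d_0(z,x))\bigl(\|v\|^2-(d(\tilde\rho_0)_z(v))^2\bigr)$ is valid at $x$; since $\coth(s)\ge 1\ge 1-\varepsilon$ for every $s>0$, the same lower bound holds. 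Thus the pointwise inequality is valid uniformly in $y'$, for any choice of $z$.

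Integrating this pointwise estimate against $d\nu_{c,y}$ and using that $\|v\|_{\tilde g_0}^2$ is constant in $y'$ (so it equals its own $\nu_{c,y}$-average) gives
$$\frac{1}{\nu_{c,y}(\widetilde Y)}\int_{\widetilde Y} Dd(\tilde\rho_0)_{\tilde f(y')}(v,v)\,d\nu_{c,y}(y')\;\ge\;(1-\varepsilon)\bigl(\tilde g_0(v,v)-h_{c,y}^X(v,v)\bigr).$$
Combining this with the decomposition of the first step (and discarding the non-negative remainder) yields exactly the claim. The only mild subtlety, which I expect to be the main point to verify carefully, is the handling of the \emph{inner} case $z\in B_{\tilde g_0}(x,R_\varepsilon)$, where Lemma \ref{lem_estimate_Hessian_distance} does not literally apply; however, the hypothesis that the whole ball is hyperbolic is precisely what makes the exact formula available there and produces the same constant $1-\varepsilon$.
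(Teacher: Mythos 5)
Your proof is correct and follows essentially the same route as the paper: both decompose $\frac12 Dd\tilde\rho_0^2 = \tilde\rho_0\, Dd\tilde\rho_0 + d\tilde\rho_0\otimes d\tilde\rho_0$, drop the non-negative last term, split into the cases $z\notin B(x,R_\varepsilon)$ (where Lemma~\ref{lem_estimate_Hessian_distance} applies) and $z\in B(x,R_\varepsilon)$ (where the exact hyperbolic Hessian with $\coth\geq 1$ gives the same bound), and then integrate against $\nu_{c,y}$. If anything, your write-up is slightly more careful than the paper's own terse proof, which even contains a typo writing ``$x\in\widetilde X\smallsetminus B_{\widetilde X}(x,R)$'' where $z$ is meant.
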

	\begin{proof} [Proof of  Corollary \ref{cor_estimate_quadratic_forms}]
		Recall that the probability measure $\nu_{c,y}$ involved in the integral equation defining $h_{c,y}^X$ is obtained from $\mu_{c,y}$ by multiplying for $ ( \tilde \rho_{0})_{\tilde f(y)}(\tilde f(\cdot))$ and normalizing. Furthermore, the following relations hold:
		$$ \frac{1}{2} Dd  \tilde \rho_0^2= \tilde \rho_0Dd \tilde \rho_0 + d \tilde \rho_0 \otimes d \tilde \rho_0 \ge \tilde \rho_0 Dd \rho_0.$$
We distinguish two cases. 
If $x \in \widetilde X \smallsetminus B_{\widetilde X}(x, R)$, then the statement follows by multiplying relation $(\ref{eq_estimate_Hessian_distance})$ by $ (\tilde \rho_{0})_{\tilde f(y)}(\tilde f(y'))$, and integrating with respect to the measure $\mu_{c, y}(y')=e^{-c \tilde d(y,y')}dv_{\tilde g}(y')$.
Otherwise, if $z \in B_{\widetilde X}(x, R)$ the computation of the Hessian of the distance function gives:
$$ \frac{1}{2}Dd\tilde\rho_0^2=  \tilde \rho_0 \cdot \frac{1}{\tanh(\tilde \rho_0)}\cdot \left(g_0- d\tilde \rho_0 \otimes d \tilde \rho_0 \right) \ge $$
$$ \ge  \tilde \rho_0 \cdot \left(g_0-d\tilde \rho_0 \otimes d \tilde \rho_0 \right) \ge (1- \varepsilon)  \tilde \rho_0 \cdot \left(g_0-d\tilde \rho_0 \otimes d \tilde \rho_0 \right)$$
		
	\end{proof}

\medspace

\begin{Souto}
		
	\subsection{An approximated version of Rauch's Comparison Theorem}
Now let us prove the  following  proposition, which gives the desired estimate on the Hessian of the potential. We stress here that this result will be crucial in the proof of our main theorem, as it implies an inequality between the quadratic forms defined in the previous section, which entails itself the required estimate for the jacobians of the natural maps.  

\begin{prop} \label{approximated_Rauch_irreducible}
	For any $\varepsilon >0$ there exists $R_{\varepsilon}>0$  such that, for any $ x  \in \widetilde{X}$ such that  the metric $g_\varepsilon$ is hyperbolic if restricted to the ball $B(x, R_\varepsilon)$, for any $z \in \widetilde{X}$ and $v \in T_x\widetilde{X}$, the following inequality holds:
\small
	\begin{equation} \label{inequality_Rauch_irreducible}
	\!\!\!\!\!\!\!\!\!\!\!\!\!\! \! \!\!\!\!\!\!\! \! \!(1- \varepsilon) \rho^{g_\varepsilon}_{x'}(x)\left(||v||^2_{\varepsilon}- d_x \rho^{g_\varepsilon}_{x'}(v) \otimes d_x \rho^{g_\varepsilon}_{x'}(v) \right) \le \rho^{g_{\varepsilon}}_{x'}(x) \cdot D_x d_x\rho^{g_\varepsilon}_{x'}(v,v) \le D_x d_x(\rho^{g_\varepsilon}_{x'})^2(v,v)
	\end{equation}
	\normalsize

\end{prop}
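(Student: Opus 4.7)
The proposition merges the estimate on the Hessian of the distance function (Lemma \ref{lem_estimate_Hessian_distance}) with the trivial algebraic identity relating $Dd\rho$ and $Dd\rho^2$. My plan is thus to prove the upper and lower bounds of \eqref{inequality_Rauch_irreducible} separately, and to split the argument for the lower bound into two geometric cases according to the position of the source point $x'$ with respect to the hyperbolic ball $B(x,R_\varepsilon)$.

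For the upper bound, I would start from the pointwise identity
\[
\tfrac{1}{2}\, Dd(\rho^{g_\varepsilon}_{x'})^2 \;=\; \rho^{g_\varepsilon}_{x'}\cdot Dd\rho^{g_\varepsilon}_{x'} \;+\; d\rho^{g_\varepsilon}_{x'}\otimes d\rho^{g_\varepsilon}_{x'}.
\]
Since $(\widetilde X,\tilde g_\varepsilon)$ is non-positively curved the geodesic spheres are convex, hence $Dd\rho^{g_\varepsilon}_{x'}\ge 0$ as a quadratic form, and $d\rho^{g_\varepsilon}_{x'}\otimes d\rho^{g_\varepsilon}_{x'}$ is manifestly non-negative. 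Multiplying the identity by $2$ and dropping the non-negative $d\rho\otimes d\rho$ term yields
$\rho^{g_\varepsilon}_{x'}\cdot Dd\rho^{g_\varepsilon}_{x'}(v,v)\le Dd(\rho^{g_\varepsilon}_{x'})^2(v,v)$ for every $v$, which is the right-hand inequality of \eqref{inequality_Rauch_irreducible}.

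For the lower bound I would fix $\varepsilon>0$ and set $R_\varepsilon=\ln\sqrt{2/\varepsilon}$ as in Lemma \ref{lem_estimate_Hessian_distance}. First suppose $x'\in B(x,R_\varepsilon)$: then the entire minimising geodesic from $x'$ to $x$ sits inside a region where $\tilde g_\varepsilon$ is hyperbolic, so the Hessian of the distance function is explicitly
\[
Dd\rho^{g_\varepsilon}_{x'} \;=\; \coth(\rho^{g_\varepsilon}_{x'})\,\bigl(\tilde g_\varepsilon-d\rho^{g_\varepsilon}_{x'}\otimes d\rho^{g_\varepsilon}_{x'}\bigr).
\]
Since $\coth\ge 1$ and $(1-\varepsilon)<1$, multiplying by $\rho^{g_\varepsilon}_{x'}$ gives the desired inequality. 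The case $x'\notin B(x,R_\varepsilon)$ is the one that carries the geometric content: the unique minimising geodesic $c$ from $x'$ to $x$ travels for length at least $R_\varepsilon$ inside the hyperbolic ball before reaching $x$, while outside the ball only the (very mild) assumption $\sigma\le 0$ is available. I would then apply Proposition \ref{proposition_jacobi_fields} verbatim to $c$, which for any Jacobi field $J_v$ with $J_v(0)=0$ orthogonal to $\dot c(0)$ yields, at the endpoint,
\[
\frac{\langle J_v',J_v\rangle}{\langle J_v,J_v\rangle}\;\ge\;1-2e^{-2R_\varepsilon}\;=\;1-\varepsilon.
\]
This is exactly the statement that $Dd\rho^{g_\varepsilon}_{x'}(w,w)\ge(1-\varepsilon)\|w\|^2$ on the orthogonal complement of $\nabla\rho^{g_\varepsilon}_{x'}$; decomposing $v=(d\rho^{g_\varepsilon}_{x'}(v))\nabla\rho^{g_\varepsilon}_{x'}+w$ and using that $Dd\rho^{g_\varepsilon}_{x'}(\nabla\rho,\cdot)=0$ converts this into the intrinsic bound
$Dd\rho^{g_\varepsilon}_{x'}(v,v)\ge(1-\varepsilon)\bigl(\|v\|_\varepsilon^2-d\rho^{g_\varepsilon}_{x'}(v)^2\bigr)$. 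Multiplying by $\rho^{g_\varepsilon}_{x'}(x)\ge 0$ produces the left-hand inequality of \eqref{inequality_Rauch_irreducible}.

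The only subtlety I anticipate is the verification, at the point $c(\ell-R_\varepsilon)$ where $c$ enters the hyperbolic region, that $\langle J_v'(\ell-R_\varepsilon),J_v(\ell-R_\varepsilon)\rangle\ge 0$, since this is what allows us to discard the cross term in the denominator of the quotient in the proof of Proposition \ref{proposition_jacobi_fields}. This is precisely ensured by the non-positive curvature assumption along $c|_{[0,\ell-R_\varepsilon]}$ (convex spheres in a non-positively curved simply connected manifold have $\langle J_v',J_v\rangle\ge 0$), a hypothesis which is guaranteed on the whole of $(\widetilde X,\tilde g_\varepsilon)$ by the construction of the family $\{g_\delta\}$ in Chapter \ref{section_a_conjectural_minimizing_sequence_irreducible}. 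Once this is noted the two cases combine into a single uniform estimate depending only on $\varepsilon$, not on $x$, $x'$ or $v$, which is exactly the form needed to feed into Corollary \ref{cor_estimate_quadratic_forms} and, ultimately, into Lemma \ref{lemma_algebraic} to control the Jacobian of the natural map $\widetilde F_c$.
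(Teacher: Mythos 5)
Your proof is correct, and it takes a genuinely different route from the argument attached to this proposition in the source. The paper's own proof introduces an auxiliary hypersurface $N_R$ through $\gamma(R)$ orthogonal to the geodesic $\gamma$ from $x$ to $z$, compares the sublevel sets of the distance to $N_R$ and of the distance to $z$ near $x$, translates the resulting containment into an ordering of second fundamental forms, and concludes by computing the Hessian of the distance to a totally geodesic $\mathbb H^{n-1}$ inside $\mathbb H^{n}$. You instead dichotomise on whether the source point lies inside or outside the hyperbolic ball $B(x,R_\varepsilon)$, feed the outside case directly into the Jacobi-field quotient estimate of Proposition \ref{proposition_jacobi_fields}, and handle the inside case with the explicit $\coth$-formula; the upper bound you obtain from the algebraic identity for $Dd\rho^2$ together with convexity of geodesic spheres. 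Your route is cleaner: it avoids the positional lemma comparing the two level sets, whose proof in the source is left with an embedded ``to be proved'' note, and it isolates the one genuinely delicate point, namely $\langle J_v',J_v\rangle\ge 0$ at the entrance time, which you correctly attribute to the non-positive curvature on all of $\widetilde X$. In fact your argument reproduces almost verbatim the machinery the paper actually deploys in the compiled text --- Lemma \ref{lem_estimate_Hessian_distance} covers your ``$z$ far'' case and the second case in the proof of Corollary \ref{cor_estimate_quadratic_forms} covers your ``$z$ near'' case --- since the hypersurface argument you were asked to reconstruct sits inside an excluded comment block and was evidently abandoned in favour of exactly the route you found.
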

	\begin{rmk} \label {rem_second_fundamental_form_0}
	Let $M$ be a smooth $3$-manifold, and let $f: M \longrightarrow \mathbb{R}$ be a $C^2$ real function such that $||  \nabla f||=1$, and for any $c \in \mathbb{R}$ let us denote by $H_c$ the $c$-level hypersurface, \textit{i.e.}, $H_c=\{ x \in M | f(x)=c\}$. We shall denote by $\Rmnum 2^{H_c}_x$ the quadratic form on the tangent plane given by the \textit{second fundamental form} of the hypersurface $H_c$. Then $\Rmnum{2}^{H_c}_x(.,.)= Hess(f)_x(.,.) $;  in other words, 
%	the Hessian encodes the whole information about the sectional curvature of the level hypersurfaces.
\end{rmk}

\begin{proof}
	\textcolor{red}{We underline here that we shall apply the proposition with $x= F_{\varepsilon(y)}$, which under suitable assumptions on $y$ falls \lq \lq deeply '' in a hyperbolic ball. }\\
	Let $x$ be any point satisfying the hypotheses, and let us assume that the metric $g_\varepsilon$ is hyperbolic in the ball $B(x, R+1)$, for a suitable choice of $R >0$. Since any metric $g_{\varepsilon} $ is non-positively curved, for any $z \in \widetilde{X}$ there exists a unique geodesic $\gamma$ such that $\gamma(0)=x$ and $\gamma (l)=z$. 
	
	Let us consider the hypersurface:
	
	$$ N_{R}= exp \, \{w \in T_{\gamma(R)} \widetilde{X} \, | \, \left\langle w, \gamma'(R) \right\rangle =0, \, ||w|| \le 1 \}.$$
	
%	By construction, $N_{R}$ is totally geodesic and orthogonal to $\gamma$. 
Let us consider the following sets, both containing $x$:\\
	\noindent $C_1= \{ x \, | d^{g_\varepsilon}(x, N_R) =R \}$\\
	\noindent  $C_2= \{ x \, | d^{g_\varepsilon}(x,z)= l\}$ \\

%	\noindent $ C_3= \{ x \, | d^{g_\varepsilon}(x, \gamma(R))=R\}$\\
We are interested in the mutual position of the $C_i$'s in a neighbourhood  of their common point $x$. \\
%Let us denote by $C_i^{\tiny \le}$ the (convex) set defined by substituting the relation \lq \lq$\le$'' to the equality in the description of $C_i$ for any $i=1,2,3$ . \\ 

%\begin{claim}
%	The following inclusions holds:
%	$$ C_1^{\tiny \le} \subseteq C_2^{ \tiny \le} \subseteq C_3^{ \tiny\le}.$$
%\end{claim}

\begin{lem} \label{lemma_position_C_i}
	Let $y \in C_1$ be any point. Then, $d^{g_{\varepsilon}}(y, z) \ge l$. 
	In other words, the sets $C_1 $ and $C_2$ appears, in a neighbourhood of $x$, as shown in Figure \ref{figure_Rauch_approximated_irreducible}.
\end{lem}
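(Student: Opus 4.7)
My strategy is to reduce the claim to a comparison of second fundamental forms of $C_1$ and $C_2$ at their common point $x$, and then to extend the resulting local inequality to the connected component of $C_1$ containing $x$ via CAT(0) convexity. Set $\varphi(y):=d^{g_\varepsilon}(y,z)$ and $\psi(y):=d^{g_\varepsilon}(y,N_R)$. Since the minimizing geodesic $\gamma$ from $x$ realizes simultaneously $d(x,z)=l$ and $d(x,\gamma(R))=R$, with $\gamma(R)$ lying in the interior of $N_R$ as the foot of the perpendicular from $x$ to $N_R$, we have $\nabla\varphi(x)=\nabla\psi(x)=-\dot\gamma(0)$. Hence $C_1$ and $C_2$ are tangent at $x$ with the common outward unit normal $-\dot\gamma(0)$ and the common tangent hyperplane $\dot\gamma(0)^\perp$.

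The core computation is hyperbolic. By hypothesis, $g_\varepsilon$ agrees with the hyperbolic metric of constant curvature $-1$ on the ball $B(x,R+1)$, which contains the full disk $N_R$. The standard Hessian formulae for the distance function to a point and to a totally geodesic hyperplane in $\mathbb H^3$ then give, on the tangent space $\dot\gamma(0)^\perp$ at $x$,
\begin{equation*}
D^2\varphi_{\,x} = \coth(l)\,g|_{\dot\gamma(0)^\perp},\qquad D^2\psi_{\,x}=\tanh(R)\,g|_{\dot\gamma(0)^\perp}.
\end{equation*}
Since $\coth(l)>1>\tanh(R)$ for any positive $l$ and $R$, the sphere $C_2$ bends strictly more sharply inward at $x$ than the equidistant hypersurface $C_1$, which translates into $K_2:=\{\varphi\le l\}$ being internally tangent to $K_1:=\{\psi\le R\}$ at $x$.

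To turn this infinitesimal comparison into the inequality $\varphi(y)\ge l$ for $y\in C_1$, I consider the auxiliary function $\Phi:=\varphi-\psi$. One checks $\nabla\Phi(x)=0$ and $D^2\Phi_{\,x}|_{\dot\gamma(0)^\perp}=(\coth(l)-\tanh(R))\,g>0$, so $x$ is a strict local minimum of $\Phi$ restricted to the codimension-one submanifold $C_1=\{\psi=R\}$. Since $\Phi|_{C_1}=\varphi|_{C_1}-R$, this yields a strict local minimum of $\varphi|_{C_1}$ at $x$ with value $l$, proving the inequality for every $y$ in the sheet of $C_1$ through $x$ that is close enough to $x$. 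This is precisely the local picture depicted in Figure \ref{figure_Rauch_approximated_irreducible}.

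The main obstacle, and the step I expect to be delicate, is the passage from this local comparison to the global inequality on the whole sheet of $C_1$ through $x$. The plan is to exploit that $K_1$ is convex and compact in the CAT(0) universal cover $(\widetilde X,g_\varepsilon)$ (as the closed $R$-neighbourhood of the convex disk $N_R$), so $\varphi$ attains its minimum on the compact hypersurface $C_1$ at a Lagrange-critical point characterised by $\nabla\varphi(y)=\nabla\psi(y)$. This condition forces $y$, its foot of perpendicular on $N_R$, and $z$ to be collinear along a single geodesic; a direct check along $\gamma$ shows that the only such point on the sheet through $x$ is $y=x$, with value $\varphi(x)=l$. The subtleties here are that outside $B(x,R+1)$ only the CAT(0) structure is available, so one must rely on strict convexity of distance functions rather than on an explicit curvature model, and that $C_1$ may fail to be smooth where the foot of the perpendicular reaches $\partial N_R$, which requires handling the two sheets of $C_1$ separately and restricting attention to the sheet containing $x$.
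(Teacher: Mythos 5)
The decisive gap is in the Hessian computation for $\varphi = d^{g_\varepsilon}(\cdot,z)$. You write $D^2\varphi_x = \coth(l)\,g|_{\dot\gamma(0)^\perp}$, invoking the constant-curvature formula, but the hypothesis only gives that $g_\varepsilon$ is hyperbolic on $B(x,R+1)$, and the point $z$ generically lies far outside that ball. The Hessian of the distance function to $z$ at $x$ is governed by Jacobi fields along the \emph{entire} geodesic from $z$ to $x$, so its value depends on the curvature everywhere along that segment, not just near $x$; outside $B(x,R+1)$ one only knows $\sigma\le 0$. The formula $\coth(l)$ is therefore unjustified, and the clean numerical comparison $\coth(l)>1>\tanh(R)$ on which your local-minimum argument rests is not available. (Your formula $D^2\psi_x=\tanh(R)\,g$ is fine, since the whole segment from $x$ to $N_R$ and the disk $N_R$ itself lie inside $B(x,R+1)$.) The comparison can be rescued, but it requires the Jacobi-field propagation argument of Proposition~\ref{proposition_jacobi_fields}: writing $J(\ell-R)$ and $J'(\ell-R)$ at the moment the geodesic enters the hyperbolic region and using $\sigma\le 0$ before that time to get $g(J,J')\ge 0$ and $J'\ne 0$, one finds $D^2\varphi_x \ge \tanh(R)\,g$ with strict inequality, which is exactly what you need. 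But this is a different and more delicate argument than the one you wrote; you cannot read the Hessian of $\varphi$ off a constant-curvature model.

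Beyond that, the globalization step you flag as delicate is indeed where your Hessian-first strategy diverges from the paper's. The paper proves the lemma by a direct triangle-inequality argument: it shows that for $x'\in C_2$ near $x$, the geodesic from $z$ to $x'$ must cross $N_R$, then splits that geodesic at the crossing point, uses $d(z,N_R)=l-R$ (orthogonality of $\gamma$ to $N_R$ plus convexity) to bound the first piece from below by $l-R$, and concludes $d(x',N_R)\le R$. This yields the containment of $C_2$ in $\{\psi\le R\}$ locally and needs no Hessian at all; the Hessian comparison between the two level sets is derived \emph{afterwards} from this containment, which is the reverse of your order. Your Lagrange-multiplier globalization has its own obstacles — $C_1$ is not compact, and $C_1=\{\psi=R\}$ fails to be smooth where the foot of the perpendicular hits $\partial N_R$ — which the geodesic argument sidesteps by staying local from the start. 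So the proposal as written has a wrong step (the Hessian formula) and an incomplete step (globalization), even though the underlying picture — $C_2$ curving strictly inside $C_1$ at their common tangency point — is the right one.
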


\noindent \textit{Proof of Lemma \ref{lemma_position_C_i}}
First of all, we shall need the following statement:\\
\begin{claim}
There exists a sufficiently small neighbourhood $U(x)$ of $x$ such that any geodesic radius joining $z$ with a point $x' \in U(x) \cap S(z,l)$ intersects the hypersurface $N_R$.
\end{claim}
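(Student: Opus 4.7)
The plan is to establish the claim as a straightforward continuity statement about geodesics with a varying endpoint. I will exploit the fact that $(\widetilde{X}, g_\varepsilon)$ is a simply connected, non-positively curved Riemannian manifold — i.e.\ a Cartan--Hadamard manifold — so that the exponential map $\exp_z : T_z\widetilde{X} \to \widetilde{X}$ is a global diffeomorphism. In particular, for every point $x' \in \widetilde{X}$ there exists a \emph{unique} unit-speed geodesic $\gamma_{x'} : [0, d^{g_\varepsilon}(z,x')] \to \widetilde{X}$ joining $z$ to $x'$, and this geodesic depends smoothly on $x'$. The reference geodesic is $\gamma_x = \overline{\gamma}$, the reversal of $\gamma$, whose image is exactly the geodesic radius from $z$ to $x$.

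The key observation is that at $x' = x$ the geodesic from $z$ to $x$ passes through the base point of $N_R$: indeed, $\overline{\gamma}(l - R) = \gamma(R)$, which is the image under $\exp_{\gamma(R)}$ of the vector $0$, hence an interior point of the closed disc $N_R$ (recall that $N_R$ is the image under the exponential map at $\gamma(R)$ of the closed unit ball in $\dot\gamma(R)^\perp$, so $0 \mapsto \gamma(R)$ lies in its relative interior). In particular $\overline{\gamma}([0, l])$ intersects $N_R$ transversely at $\gamma(R)$.

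To conclude, I would then consider the map $\Phi : \widetilde{X} \to \widetilde{X}$ defined by $\Phi(x') = \gamma_{x'}(l - R)$, where $\gamma_{x'}$ is the unit-speed geodesic from $z$ to $x'$ on $S(z,l)$, restricted to a small neighbourhood of $x$ in $S(z,l)$. Using that $\exp_z$ is a diffeomorphism one checks that $\Phi$ is continuous (in fact smooth) in a neighbourhood of $x$, and by construction $\Phi(x) = \gamma(R)$. Since $\gamma(R)$ lies in the relative interior of $N_R$, one can pick a neighbourhood $U(x)$ of $x$ (in $\widetilde{X}$, intersected with $S(z,l)$) so small that $\Phi(U(x) \cap S(z,l)) \subset N_R$; this exhibits a point of intersection of $\gamma_{x'}$ with $N_R$ for every $x' \in U(x) \cap S(z,l)$, proving the claim.

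The argument is essentially topological and I do not foresee real analytic obstacles: the only point requiring a little care is the choice of the neighbourhood so that the parameter $l - R$ is still meaningful, i.e.\ so that $d^{g_\varepsilon}(z, x') = l$ and $l - R > 0$ remain within the domain of $\gamma_{x'}$; this is automatic because $x' \in S(z,l)$ and $R < l$. If one prefers, the same conclusion follows by a transversality argument: $\overline{\gamma}$ meets $N_R$ transversely at $\gamma(R)$ (the hypersurface being orthogonal to $\dot\gamma$ there), and transverse intersections are preserved under $C^1$-small perturbations of the intersecting curve, which is precisely what the continuous dependence of $\gamma_{x'}$ on $x'$ provides.
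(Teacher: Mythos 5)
Your leading argument, via the map $\Phi(x') = \gamma_{x'}(l-R)$, has a real gap. The hypersurface $N_R$ has codimension one in $\widetilde X$, so ``$p\in N_R$'' is not an open condition in the ambient manifold; the fact that $\Phi(x)=\gamma(R)$ lies in the relative interior of $N_R$, together with continuity of $\Phi$, does \emph{not} yield $\Phi(U(x)\cap S(z,l))\subset N_R$. Concretely, $\Phi$ takes values in the geodesic sphere $S(z,l-R)$, which by the Gauss lemma is \emph{tangent} to $N_R$ at $\gamma(R)$ (both are orthogonal to $\dot\gamma(R)$ there) but is in general a different hypersurface: already in $\mathbb H^n$ the strictly convex distance sphere $S(z,l-R)$ meets the totally geodesic disc $N_R$ only at the tangency point $\gamma(R)$, so $\Phi(x')\notin N_R$ for every $x'\neq x$. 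The error is in freezing the parameter at $t=l-R$, which discards exactly the degree of freedom you need.

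The transversality argument you mention at the end is the correct proof and should be your primary one rather than an afterthought: $\bar\gamma$ crosses $N_R$ transversely at time $l-R$ at the interior point $\gamma(R)$, the family $\gamma_{x'}$ depends smoothly on $x'$, so by the implicit function theorem applied to $(x',t)\mapsto\gamma_{x'}(t)$ there is, for $x'$ in some $U(x)\cap S(z,l)$, a time $t(x')$ near $l-R$ with $\gamma_{x'}(t(x'))\in N_R$, and continuity keeps this intersection point close to $\gamma(R)$, hence in the relative interior. For reference, the paper's own treatment of this claim argues instead via convexity of the distance function in non-positive curvature (a ``cone of geodesics'' picture) and is in fact left unfinished in the source; your transversality route, once the $\Phi$ argument is discarded, is cleaner and self-contained.
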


\noindent \textit{Proof of the claim}
Since the curvature is non-positive, the distance function is convex. 
Let $x'$ be a point on $S(z,l)$, other than $x$, and denote by $\gamma_{x'}$ the geodesic ray from $z$. Then, going along $\gamma_{x'}$ from $z$ to $x'$ the distance from the geodesic $\gamma$ increases. Roughly speaking, we obtain a \lq \lq cone '' of geodesics starting from $z$ and spanning an arc on $S(z, l')$.
\textcolor{blue}{Dimostrare: la geodetica $\alpha$ è ortogonale a $N_R$, considerare il \lq \lq cono '' di geodetiche da $z $ e usare il fatto che la curvatura è negativa.}\\ \\
Let choose $x' \in U(x) \cap S(z,l)$, and denote by $\gamma_{x'}$ the geodesic ray joining $z$ and $x'$ and by $y'$ the intersection of $\gamma{x'}$ with $N_R$. The point $y'$ separates $\gamma_{x'}$ in two geodesic segments: we shall call $\alpha_{x'}$ the one having $x'$ as an end, and $\beta_x$ the other one. Since $\gamma $ is orthogonal to $N_R$, the point $\gamma(R)$ realizes the distance between $z$ and $N_R$. Hence $d(z, N_R)=h$, and $\ell(\beta_x) \ge h$. Thus, since $\gamma_x= \alpha_x \ast \beta_x$ and $\ell(\gamma_x)=l$, we obtain that $\ell(\alpha_x) \le R$.
In particular $d(x', N_R) \le R$,which is the required inequality.
%\noindent \textit{Proof of the claim:}
%Notice that, if we restrict to a neighbourhood $U(x)$ of $x$ which is small enough, we may assume that any geodesic ray joining $z_0$  to any point $x' \in S(z_0, l') \cap U(x)$ intersects the hypersurface $N_R$.

%The proof of the Claim easily follows from an application of the triangular inequality.  Let us choose $y \in C_1$; then we have:

%$$ d^{g_{\varepsilon}}(y, z_0) \ge \underbrace{d^{g_\varepsilon}}(z_0, \gamma(R))_{h} $$
%Let $P \in C_2^{\tiny \le}$ and $h \ge 0$ such that $l'=R+h$. 
%Then:
%$$d^{g_{\varepsilon}}(P, \gamma(R)) \ge d^{g_{\varepsilon}}(P,z_0) -  d^{g_{\varepsilon}} (z_0, \gamma(R))=r+h-h=r .$$
%Hence $ d^{g_{\varepsilon}}(p, \gamma(R)) \ge R$, and $P  \not \in C_3$.
%Similarly, let $Q \in C_3^{\tiny \le}$ ; then:

%$$ d^{g_{\varepsilon}}(Q, z_0) \ge  d^{g_{\varepsilon}}(Q, N_{R})+  d^{g_{\varepsilon}}(z_0, N_R) \ge r+h=l'.$$
%Hence $ d^{g_{\varepsilon}}(Q, z_0) \ge l'$, and the containments hold.  

%Hence, the three sets locally appear as shown in Figure \ref{figure_Rauch_approximated_irreducible}. \\ \\

Now we shall deduce from the  containments an inequality between the Hessians of the two distance functions (from the hypersurface $N_R$ and from the point $z$), defining $C_1$ and $ C_2$ respectively.
To this purpose, we recall ---without proving it--- the following proposition.
\begin{prop}
	Let $i: N \longrightarrow M$ be an isometric immersion, $F: M \longrightarrow \mathbb{R}$ a $C^{\infty}$ function and $f= F_{|N}$ its restriction to $N$. Then, for any $ x \in N$ and $u,v \in T_xN$ we have:
	
	$$ (Hess^N f)(u,v)= (Hess^M F)(u,v)+ \left\langle  \nabla^M F, \Rmnum{2} (u,v) \right\rangle ,$$
	where $\Rmnum{2}$ denotes the second fundamental form.
\end{prop}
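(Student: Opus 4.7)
The statement is the classical Gauss--type decomposition of the Hessian of a restricted function, so the plan is to reduce it to a direct computation via the Gauss formula for the second fundamental form. First I would reformulate the problem working with local extensions: given $u,v\in T_xN$, I extend them to smooth vector fields (still denoted $u,v$) tangent to $N$ in a neighbourhood of $x$, and I extend these in turn to smooth vector fields defined in a neighbourhood of $x$ in $M$. Since the two Hessians are tensorial, the final identity depends only on the values of $u,v$ at $x$, so the choice of extension is immaterial.

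Next I would recall the Gauss formula: if $u,v$ are vector fields on $M$ that are tangent to $N$ along $N$, then along $N$
\[
\nabla^M_u v \;=\; \nabla^N_u v \;+\; \mathrm{II}(u,v),
\]
where $\nabla^N$ denotes the Levi-Civita connection of the induced metric on $N$ and $\mathrm{II}(u,v)\in (T_xN)^{\perp}$ is the second fundamental form. This is the only nontrivial ingredient of the argument and is a standard consequence of the torsion-free and metric compatibility axioms for $\nabla^M$ applied to tangent and normal components.

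Now I would simply compute and compare the two Hessians at $x$. Using $f=F\circ i$ and the fact that $v$ is tangent to $N$, one has $v(F)=v(f)$ along $N$, hence $u\bigl(v(F)\bigr)=u\bigl(v(f)\bigr)$ at $x$. Therefore
\[
\mathrm{Hess}^M F(u,v) \;=\; u\bigl(v(F)\bigr)-\bigl(\nabla^M_u v\bigr)(F)
\;=\; u\bigl(v(f)\bigr)-\bigl(\nabla^N_u v\bigr)(F)-\mathrm{II}(u,v)(F).
\]
Since $\nabla^N_u v\in T_xN$ we have $(\nabla^N_u v)(F)=(\nabla^N_u v)(f)$, and since $\mathrm{II}(u,v)$ is a genuine tangent vector of $M$ we have $\mathrm{II}(u,v)(F)=dF\bigl(\mathrm{II}(u,v)\bigr)=\langle \nabla^M F,\mathrm{II}(u,v)\rangle$. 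Rearranging,
\[
\mathrm{Hess}^N f(u,v) \;=\; u\bigl(v(f)\bigr)-\bigl(\nabla^N_u v\bigr)(f)
\;=\; \mathrm{Hess}^M F(u,v)+\langle \nabla^M F,\mathrm{II}(u,v)\rangle,
\]
which is exactly the claimed formula.

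There is no real obstacle here beyond bookkeeping: the only point that deserves a line of justification is that both sides depend only on $u(x),v(x)$, so that extending the vectors and restricting to $N$ is legitimate; the heart of the argument is a single application of the Gauss formula.
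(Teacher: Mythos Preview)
Your argument is correct: the identity follows at once from the Gauss formula $\nabla^M_u v=\nabla^N_u v+\mathrm{II}(u,v)$ applied inside the definition $\mathrm{Hess}^M F(u,v)=u(v(F))-(\nabla^M_u v)F$, together with the observation that $(\nabla^N_u v)F=(\nabla^N_u v)f$ and $\mathrm{II}(u,v)(F)=\langle\nabla^M F,\mathrm{II}(u,v)\rangle$. The paper itself does not supply a proof of this proposition---it is stated there as a standard fact ``recalled without proving it''---so there is nothing to compare against; your proof is exactly the textbook one.
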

Now we are ready to prove the inequalities in \ref{inequality_Rauch_irreducible}. We shall focus only on one of them: the proof of the other one is exactly the same.\\\\
\noindent $ \rho^{g_{\varepsilon}}_{x'}(x) \cdot D_x d_x\rho^{g_\varepsilon}_{x'}(v,v) \le D_x d_x(\rho^{g_\varepsilon}_{x'})^2(v,v)$.\\
We shall compare the curvature between the two distance spheres $C_2=S(z_o, l')$ and $C_1=S(\gamma(R), R$. Denote by $\nu$ the unitary internal normal at $x$, which is common to both the hypersurfaces. Fix $ u \in T_x C_1$, and let $\alpha_u(s) $ be the geodesic on $C_1$ such that $c_u'(0)=u$. Let us consider the function $\rho^{g_\varepsilon}_{z_0}(s)=\rho^{g_\varepsilon}_{z_0}(\alpha_u(s))$, that is the restriction of $\rho^{g_\varepsilon}_{z_0}$ to the geodesic $\alpha_u$. Then, applying the last proposition with $N= \alpha_u$ and $f(s)= \rho^{g_\varepsilon}_{z_0}(s)$ we get:

$$(\rho^{g_\varepsilon}_{z_0})''= (Hess^{\widetilde{X}}\rho^{g_\varepsilon}_{z_0})(u,u)- \left\langle \nabla \rho^{g_\varepsilon}_{z_0}, \Rmnum{2}_{\gamma(R)}(u,u)\right\rangle  .$$

Furthermore, we recall that the Hessian of $\rho^{g_\varepsilon}_{z_0}$ gives the second fundamental form of $S(z_0, l')=C_2$, namely $$Hess(\rho^{g_\varepsilon}_{z_0})_x= \Rmnum{2}^{S(z_0,l')}_x .$$
	Hence we get:
	
	$$(\rho^{g_\varepsilon}_{z_0})''(0)=\Rmnum{2}^{S(z_0,l')}_{z_0}-  \Rmnum{2}^{S(z_0,l')}_{\gamma(R)}.$$
	Moreover, the Claim implies that $\rho^{g_\varepsilon}_{z_0}(s)$ has a maximum at $s=0$ (as $x$ is a maximum of the restriction of $\rho^{g_\varepsilon}_{z_0}$ to $\alpha_u$). Hence we get:
	
	$$\big < D_v d \rho^{g_\varepsilon}_{z_0},v\big > \le \big < D_v d \rho^{g_\varepsilon}_{ \gamma(R)},v \big > . $$
	 
Exactly in the same way, it can be proved that 

$$ \big < D_v d (d^{g_\varepsilon}(., N_R)),v\big > \le \big < D_v d \rho^{g_\varepsilon}_{z_0},v \big >.$$\\
Hence we get the following chain of inequalities:
\begin{equation} \label{Rauch_irreducible_intermediate_1}
\big < D_v d (d^{g_\varepsilon}(., N_R)),v\big > \le \big < D_v d \rho^{g_\varepsilon}_{z_0},v \big > \le  \big < D_v d \rho^{g_\varepsilon}_{ \gamma(R)},v \big >.
\end{equation}\\

By assumption, tha ball  $B(x, R+1)$ is hyperbolic, \textit{i.e.}, there exists an isometric embedding $i: (B(x,R+1), g_{\varepsilon}) \hookrightarrow (\mathbb{H}^n, g_{hyp})$ mapping $N_R$ in a $(n-1)$-ball contained in $\mathbb{H}^{n-1} \subset \mathbb{H}^n$ and $\gamma(R) \in N_R$ in $0 \in \mathbb{H}^n$. Then, there exists $R_{\varepsilon}>0$ such that for any $z \in \mathbb{H}^n$ such that $d_{\mathbb{H}^n}(z, \mathbb{H}^{n-1}) >R_\varepsilon$, for any $w \in T_z \mathbb{H}^n$:

$$\left\langle \nabla _w^{\mathbb{H}^n} \nabla ^{\mathbb{H}^n} d_{\mathbb{H}^n}(., \mathbb{H}^{n-1}), w \right\rangle  ) \ge (1- \varepsilon) \left( ||w||_{\mathbb{H}^n}^2- \left\langle \nabla^{\mathbb{H}^n} d_{\mathbb{H}^n}(., \mathbb{H}^{n-1}),w\right\rangle _{\mathbb{H}^n}\right) 
 $$

Hence, since $i$ is an isometric embedding, for any $R+1 \ge R_{\varepsilon}$ and $v \in T_x \widetilde{X}$ we get:
\begin{equation} \label{Rauch_irreducible_intermediate_2}
 \left\langle  \nabla_v \nabla \rho^{g_\varepsilon}(.,N_R,v) \right\rangle \ge (1- \varepsilon) \left( ||v||^2_{\varepsilon}- d_x\rho^{g_\varepsilon}(v) \otimes d_x\rho^{g_\varepsilon}(v)  \right)  
\end{equation}
It is straightforward to verify the following inequality:

$$ \frac{1}{2} Dd(\rho^{g_\varepsilon})^2= \rho^{g_\varepsilon}Dd\rho^{g_\varepsilon}+ d\rho^{g_\varepsilon} \otimes d\rho^{g_\varepsilon} \ge \rho^{g_\varepsilon}Dd\rho^{g_\varepsilon}.$$

Thus, putting together the estimates above, we get:

\small
$$
\left( 1- \varepsilon \right) \left( \rho^{g_\varepsilon}  ||v||_{\varepsilon}^2- d\rho^{g_\varepsilon}(v) \otimes  d \rho^{g_\varepsilon}(v) \right) 
 \overset{\ref{Rauch_irreducible_intermediate_2}}{\le}  \rho^{g_\varepsilon}Dd(d^{g_\varepsilon})(., N_R)(v,v) \overset{\ref{Rauch_irreducible_intermediate_1}}{\le} 
%\overset{\ref{Rauch_irreducible_intermediate_2}}{\ge} 
\rho^{g_\varepsilon} Dd \rho^{g_\varepsilon}_{z}\le  \frac{1}{2} Dd (\rho^{g_\varepsilon}_{z})^2$$
\normalsize
The last chain of inequalities gives, in particular, the required estimate.
%\begin{oss}
%	We remark that, since the gradient for any $z' \in [\gamma(R), \gamma(l)]$ the gradient ot the distance function from $z'$ computed at $x$ is unitary and directed along $\gamma$, the same estimate holds for any such $z'$.
	
%\end{oss}
\end{proof}

\end{Souto}
\subsection{The estimate of the jacobian}

We are now ready to prove the next proposition. Even if we will be interested only in the setting of $3$-manifolds, we shall keep writing the dependence on the dimension $n$, to avoid cumbersome numerical constants:

\begin{prop} \label{proposition_gestimate_jacobian_irreducible}
	With the notation introduced in this chapter,
	for every $\varepsilon >0$ there exists $R_{\varepsilon}>0$ such that, if the ball centred at $\widetilde F_{c}(y)$ of radius $R_\varepsilon$ in $(\widetilde X, \tilde g_0)$ is hyperbolic, the following estimate for the jacobian holds:
	$$ \lvert \jac_y \widetilde F_{c}\rvert \le \frac{1}{\left( 1- \varepsilon\right) ^n} \cdot \left( \frac{c}{n-1} \right)^n. $$
\end{prop}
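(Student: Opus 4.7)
The plan is to combine the three tools developed earlier in the chapter: the computation of the Jacobian in Lemma \ref{lemma_computation_jacobian}, the Hessian estimate of Corollary \ref{cor_estimate_quadratic_forms} applied at points sitting deep in a hyperbolic ball, and the purely algebraic optimisation of Lemma \ref{lemma_algebraic}. Concretely, fix $\varepsilon>0$ and let $R_\varepsilon$ be the radius furnished by Lemma \ref{lem_estimate_Hessian_distance} (equivalently by Corollary \ref{cor_estimate_quadratic_forms}), namely $R_\varepsilon=\ln\sqrt{2/\varepsilon}$. Assume the ball of radius $R_\varepsilon$ around $x=\widetilde F_c(y)$ is isometric to a hyperbolic ball.

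First I would invoke Lemma \ref{lemma_computation_jacobian} to obtain
\begin{equation*}
|\jac_y\widetilde F_c|\ \le\ \frac{c^n}{n^{n/2}}\cdot\frac{(\det H^X_{c,y})^{1/2}}{\det K^X_{c,y}},
\end{equation*}
together with the two structural properties $\tr_{\tilde g_0}H^X_{c,y}=1$ and $\mathrm{spec}(H^X_{c,y})\subset[0,1]$, so that $H^X_{c,y}\in\mathcal K$ in the sense of Lemma \ref{lemma_algebraic}. Next, the hyperbolicity assumption on the ball $B(x,R_\varepsilon)$ allows me to apply Corollary \ref{cor_estimate_quadratic_forms} to the quadratic form $k^X_{c,y}$: this yields the pointwise inequality of symmetric operators
\begin{equation*}
K^X_{c,y}\ \ge\ (1-\varepsilon)\,(\mathrm{Id}-H^X_{c,y}).
\end{equation*}
Since both sides are positive symmetric endomorphisms of the tangent space (recall the eigenvalues of $H^X_{c,y}$ lie in $[0,1]$), taking determinants preserves the inequality and gives $\det K^X_{c,y}\ge (1-\varepsilon)^n\det(\mathrm{Id}-H^X_{c,y})$.

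Substituting this into the Jacobian bound yields
\begin{equation*}
|\jac_y\widetilde F_c|\ \le\ \frac{c^n}{n^{n/2}}\cdot\frac{1}{(1-\varepsilon)^n}\cdot\frac{(\det H^X_{c,y})^{1/2}}{\det(\mathrm{Id}-H^X_{c,y})}.
\end{equation*}
The last factor is exactly $\varphi(H^X_{c,y})$ in the notation of Lemma \ref{lemma_algebraic}, so the algebraic lemma bounds it by $n^{n/2}/(n-1)^n$ (the maximum being attained at $H=\tfrac{1}{n}\mathrm{Id}$). The factors $n^{n/2}$ cancel, leaving the announced estimate
\begin{equation*}
|\jac_y\widetilde F_c|\ \le\ \frac{1}{(1-\varepsilon)^n}\left(\frac{c}{n-1}\right)^n.
\end{equation*}
No step here is delicate in itself; the only substantive input is Corollary \ref{cor_estimate_quadratic_forms}, whose use is legitimate precisely because of the hypothesis on the large hyperbolic neighbourhood of $\widetilde F_c(y)$. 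The main obstacle in this chapter is therefore not the proof of this proposition but rather ensuring, in the subsequent global argument, that for $\delta$ small enough most of the mass of the push-forward measure $\widetilde f_*\mu_{c,y}$ is concentrated at points whose image under $\widetilde F_{c,\delta}$ lies deep inside a hyperbolic region, so that this pointwise Jacobian estimate can actually be integrated to recover the sharp lower bound for $\minent(Y)$.
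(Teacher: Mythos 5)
Your proof is correct and follows essentially the same route as the paper: Lemma \ref{lemma_computation_jacobian} for the starting estimate, the Hessian/quadratic-form inequality $K^X_{c,y}\ge(1-\varepsilon)(\Id-H^X_{c,y})$ coming from the large hyperbolic ball, then Lemma \ref{lemma_algebraic}. The only cosmetic difference is that the paper's proof cites Proposition \ref{proposition_jacobi_fields} directly and leaves the integration step (passing from the pointwise Hessian bound to the quadratic forms $K^X_{c,y}$, $H^X_{c,y}$) implicit, whereas you invoke Corollary \ref{cor_estimate_quadratic_forms}, which packages that integration explicitly — arguably the cleaner reference.
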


\begin{proof}
	We have already proved in Lemma \ref{lemma_computation_jacobian} that:
	$$ \lvert  \jac_y \widetilde F_{c}\rvert \le \frac{c^n}{n^{\frac{n}{2}}} \cdot \frac{\det (H^X_{c,y})^{\frac{1}{2}} }{\lvert \det (K^X_{c,y})\rvert}$$
	Furthermore, it follows from Proposition \ref{proposition_jacobi_fields} that, if $\widetilde F_{c}(y)$ admits an hyperbolic ball $B(\widetilde F_{c}(y), R_{\varepsilon})$ then, for any $v \in T_{\widetilde F_{c}(y)}\widetilde X$:
	\small
	$$(1- \varepsilon ) (\tilde \rho_{0})_{\tilde f(y)} \tilde g_{0}(v,v)- d_{\widetilde F_c(y)}(\tilde \rho_0)_{\tilde f(y)}(v) \otimes  d_{\widetilde F_c(y)} (\tilde \rho_0)_{\tilde f(y)}(v) \le$$$$\le  (\tilde\rho_{0})_{\tilde f(y)} (Dd(\tilde\rho_{0})_{\tilde f(y)})_{\widetilde F_c(y)}\le (Dd ((\tilde \rho_{0})_{\tilde f(y)})^2)_{\widetilde F_c(y)}$$
	\normalsize
 Recalling the definition of the quadratic forms $H^X_{c, y}$ and $K^X_{c, y}$, the last inequality is equivalent to:

$$\label{inequality_quadratic_forms}
\left( 1- \varepsilon\right) (\cdot \Id- H^X_{c,y}) \le K^X_{c,y}.  
$$
Thus we get:
\small
$$
 \lvert \jac_y \widetilde F_{c}\rvert \le \frac{c^n}{n^{\frac{n}{2}}} \cdot \frac{ \left(\det H^X_{c,y}\right) ^{\frac{1}{2}}}{(1- \varepsilon)^n\lvert \det (\Id- H^X_{c,y}) \rvert }
   \le  \frac{c^n}{\cancel{n^{\frac{n}{2}}}}\cdot \frac{\cancel{n^{\frac{n}{2}}}}{(n-1)^n}= \frac{1}{(1- \varepsilon)^n}\cdot \left( \frac{c}{n-1}\right)^n ,
$$
\normalsize
where the last inequality follows applying Lemma \ref{lemma_algebraic}, once we recall that the endomorphism $H_{c,y}^X$ is symmetric, has trace equal to $1$ and eigenvalues pinched between $0$ and $1$ (see Lemma \ref{lemma_computation_jacobian}).
\end{proof}

\section{Proof of Theorem \ref{thm_lower_estimate_irreducible}}
\label{section_conclusion_lower_estimate_irrducible}
Let $Y$ be an orientable, closed, irreducible $3$-manifold with non-trivial $JSJ$ decomposition and at least one hyperbolic $JSJ$ component. For the application we have in mind, we shall fix the Riemannian manifold $(Y,g)$ and we shall consider for every sufficiently small $\delta$ as target space $(X,g_0)$ the Riemannian manifold $(Y,g_\delta)$, endowed with the $C^2$ Riemannian metrics which we found in Chapter \ref{section_a_conjectural_minimizing_sequence_irreducible}. The starting map $f$ will be the identity map. Notation will be modified coherently with these choices. Any possible confusion arising from the fact that we are considering Riemannian metrics on the same manifold should be avoided by keeping in mind the general discussion of the previous section.\\
 Recall that we denoted by $X_1,..., X_k$ the hyperbolic $JSJ$ components of $Y$. By the argument used to prove  Theorem \ref{thm_construction_sequence_on_irreducible_Y} and the statement of Theorem \ref{thm_metrics_hyperbolic_pieces} we know that for every $i=1,...,k$ and any $\delta>0$ there exists an open subset $V_\delta^i$ of $(X_i, g_{\delta}|_{X_i})\subseteq(Y,g_\delta)$ such that:
\begin{enumerate}
	\item $(V_\delta^i, g_\delta|_{V_\delta^i})$ is isometric to the complement of a horoball neighbourhood $\mathcal U_\delta^i$ of the cusps of $(int(X_i), hyp_i)$;
	\item $\mathcal U_\delta^i\subseteq \mathcal U_{\delta'}^i$ for every $\delta<\delta'$;
	\item $\Vol(V_{\delta}^i, g_\delta|_{V_\delta^i})/\Vol(X_i, g_\delta|_{X_i})\rightarrow 1$ as $\delta\rightarrow0$.
\end{enumerate}
For every $\varepsilon>0$, $\delta>0$ and $i=1,...,k$ denote by $V_{\delta,\varepsilon}^i\subseteq V_{\delta}^i$ the subsets of those points $p\in V_{\delta}^i$  such that the restriction of $g_\delta$  to $B_{g_\delta}(x,R_\varepsilon)$ is a smooth Riemannian metric of constant curvature $-1$. By construction $\Vol(V_{\varepsilon,\delta}^i)=\Vol(int(X_i)\smallsetminus N_{R_\varepsilon}(\mathcal U_\delta^i), hyp_i)$ where we denoted by $N_r(\cdot)$ the tubular neighbourhood of radius $r$. It is straightforward to check that  properties (1)---(3) of the $V_\delta^i$'s are satisfied by the $V_{\delta,\varepsilon}^i$'s with $\mathcal U_{\delta,\varepsilon}^i=N_{R_\varepsilon}(\mathcal U_\delta^i)$. Moreover, the $V_{\delta,\varepsilon}^i$ satisfy the following additional property:
\begin{itemize}
	\item [(4)] $(N_{R_\varepsilon}(V_{\delta,\varepsilon}^i), g_\delta)$ has curvature constantly equal to $-1$;
\end{itemize}
Now, applying Corollary \ref{cor_estimate_quadratic_forms} to the universal cover, and from the equivariance of the $\widetilde F_c$'s we know that:
$$|\jac_y F_c|\le\frac{1}{ (1-\varepsilon)^3}\left(\frac{c}{2}\right)^3$$
for every $y\in Y$ such that $F_c(y)\in V_{\delta, \varepsilon}^i$. Now observe that, using the coaerea formula:
$$\int_{F_c^{-1}(V_{\delta,\varepsilon}^i)}|\jac_y F_c|dv_g(y)=\int_{V_{\delta,\varepsilon}^i}(\# F_c^{-1}(y))dv_g(y)\ge$$$$\ge \int_{V_{\delta,\varepsilon}^i}1\cdot dv_g(y)=\Vol(V_{\delta,\varepsilon}^i, g_\delta)$$
Now use the estimate for $|\jac_yF_c|$ for $y\in F_c^{-1}(V_{\delta,\varepsilon}^i)$:
$$\Vol(V_{\delta,\varepsilon}^i, g_\delta)\le \int_{F_c^{-1}(V_{\delta,\varepsilon}^i)} |\jac_y F_c|dv_g(y)\le \frac{1
}{(1-\varepsilon)^3}\cdot\left(\frac{c}{2}\right)^3\cdot\Vol(F_c^{-1}(V_{\delta,\varepsilon}^i))$$
Taking the sum over $i=1,...,k$  and observing that $F_{c}^{-1}(V_{\delta,\varepsilon}^i)\cap F_c^{-1}(V_{\delta,\varepsilon}^j)$ has zero measure  for $i\neq j$ we get that:
$$\sum_{i=1}^k\Vol(V_{\delta,\varepsilon}^i, g_\delta)\le\frac{1}{(1-\varepsilon)^3}\cdot\left(\frac{c}{2}\right)^3\cdot\Vol(Y,g).$$
Now for $\delta\rightarrow 0$, $\varepsilon\rightarrow 0$ and $c\rightarrow\ent(g)$ we get:
$$\EntVol(Y,g)\ge 2 \cdot \left(\sum_{i=1}^k\Vol(int(X_i), hyp_i)\right)^{1/3}.$$
Since $g$ is any Riemannian metric on $g$ we conclude that:
$$\minent(Y)=\inf_{g} \EntVol(Y,g)\ge 2 \cdot \left( \sum_{i=1}^k\Vol(int(X_i), hyp_i) \right)^{1/3}.$$
\qed

\chapter [The lower bound: reducible $3$-manifolds]{The lower bound via the barycentre method:\\ reducible $3$-manifolds} \label{section_a_lower_bound_reducible}

 In this Chapter, we shall prove the optimal lower estimate of the minimal entropy in the  case of a reducible $3$-manifold $Y$. To this purpose, we shall generalize the notion of barycentre to a $\mbox{CAT}(0)$ space. A fundamental role in the estimate of the Volume Entropy will be played by the metrics that we have defined in Chapter \ref{section_a_conjectural_minimizing_sequence_reducible} on each irreducible piece, and the characteristics of the particular family of (singular)   metric spaces we are going to use as target spaces for the barycenter method.
 \begin{thm}\label{thm_lower_estimate_reducible}
 	Let $Y$ be a closed, orientable, reducible $3$-manifold, and denote by $Y= Y_1 \# \dots \# Y_m $ a minimal decomposition in prime summands. For every $i=1, \dots, m$ denote by $X_i^1, \dots, X_i^{n_i}$ the $JSJ$ hyperbolic components of $Y_i$. Let $g$ be any Riemannian metric on $Y$. Then:
 	\small
 	$$\VolEnt(Y,g) \ge 2 \left(\sum_{i=1}^m \sum_{j=1}^{n_i} \Vol(int(X_i^j), hyp_i^j)\right)^{1/3}.$$
 	\normalsize
 		As a consequence, 
 		\small
 		$$ \minent(Y) \ge 2 \left( \sum_{i=1}^m \sum_{j=1}^{n_i} \Vol(int(X_i^j), hyp_i^j) \right)^{1/3}.$$
 		\normalsize
 \end{thm}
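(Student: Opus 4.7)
The plan is to adapt the barycentre method of Chapter \ref{section_a_lower_bound_irreducible} to the reducible setting, with two essential modifications dictated by the topology of a connected sum: the target space cannot be $Y$ itself (there is no non-positively curved metric on $Y$ respecting all prime summands simultaneously in the limit), so we take a \emph{singular} CAT(0) target space, and the barycentre theory must be extended to that setting. Specifically, I would take as target the wedge $X=\bigvee_{i=1}^k Y_i$ of the prime summands containing at least one hyperbolic JSJ component, endowed with the family of metrics $d_\delta$ obtained by equipping each $Y_i$ with the $C^2$ non-positively curved metric $g_\delta^i$ furnished by Theorem \ref{thm_construction_sequence_on_irreducible_Y} and identifying chosen basepoints to a single vertex $x_0$. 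Since each $(Y_i,g_\delta^i)$ is non-positively curved and the gluing is along a single point, Reshetnyak's gluing theorem ensures that $(X,d_\delta)$ is locally CAT(0); the Riemannian universal cover $(\widetilde X,\tilde d_\delta)$ is therefore a global CAT(0) tree-of-spaces, obtained by gluing copies of the $(\widetilde Y_i,\widetilde{g_\delta^i})$ along lifts of $x_0$ in the pattern prescribed by the free product $\pi_1(X)\simeq \pi_1(Y_1)\ast\cdots\ast\pi_1(Y_k)$. The natural map $f\colon Y\to X$ is the one collapsing $Y\smallsetminus\bigcup_{i=1}^k\mathrm{int}(\bar Y_i)$ to $x_0$ and mapping each hyperbolic-containing summand identically onto its copy in $X$.

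Next I would develop the CAT(0) barycentre machinery. In any CAT(0) space the squared distance function $x\mapsto \tilde d_\delta(x,x')^2$ is uniformly convex with modulus independent of $x'$, so the Leibniz function $B_\mu(x)=\int_{\widetilde X}\tilde d_\delta(x,x')^2\,d\mu(x')$ associated with a finite measure $\mu$ of finite second moment is strictly convex, coercive, and thus admits a unique minimum $\bary[\mu]$. Using the Poincaré measures $d\mu_{c,y}(y')=e^{-c\tilde d(y,y')}\,dv_{\tilde g}(y')$ on $\widetilde Y$ (which are finite for $c>\ent(g)$), this defines a $\lambda$-equivariant family of maps $\widetilde F_{c,\delta}(y)=\bary[\tilde f_\ast\mu_{c,y}]$, where $\lambda=f_\ast:\pi_1(Y)\to\pi_1(X)$. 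The equivariance argument of Lemma \ref{lemma_equivariance_f_epsilon} transfers without change, and the induced quotient $F_{c,\delta}:Y\to X$ is homotopic to $f$ (since $X$ is aspherical and $(F_{c,\delta})_\ast=\lambda$).

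Third, I would establish the Jacobian estimate on the large hyperbolic regions. By Theorem \ref{thm_metrics_hyperbolic_pieces} and the construction of Chapter \ref{section_a_conjectural_minimizing_sequence_irreducible}, on each hyperbolic JSJ component $X_i^j$ of $Y_i$ the metric $g_\delta^i$ coincides with $hyp_i^j$ on an open set $V_{\delta,\varepsilon}^{i,j}$ whose $R_\varepsilon$-neighbourhood is also hyperbolic, and which exhausts $int(X_i^j)$ as $\delta,\varepsilon\to 0$. Whenever $F_{c,\delta}(y)$ lies in the (smooth) interior of such a set, the implicit-equation computation of Lemmata \ref{lemma_implicit_equations_f_epsilon}--\ref{lemma_computation_jacobian} is local around $F_{c,\delta}(y)$ and only involves the Riemannian structure there; coupled with the almost-optimal Hessian estimate of Lemma \ref{lem_estimate_Hessian_distance} and Corollary \ref{cor_estimate_quadratic_forms}, the algebraic Lemma \ref{lemma_algebraic} then yields
\[
|\jac_y F_{c,\delta}|\le \frac{1}{(1-\varepsilon)^3}\left(\frac{c}{2}\right)^3
\]
exactly as in Proposition \ref{proposition_gestimate_jacobian_irreducible}. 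Applying the coarea formula as in \S\ref{section_conclusion_lower_estimate_irrducible} to the union $\bigsqcup_{i,j} V_{\delta,\varepsilon}^{i,j}\subset X$ and letting $c\searrow\ent(g)$ then $\varepsilon,\delta\searrow 0$ gives
\[
\sum_{i=1}^k\sum_{j=1}^{n_i}\Vol(int(X_i^j),hyp_i^j)\le \frac{\ent(g)^3}{8}\Vol(Y,g),
\]
which is the desired lower bound on $\VolEnt(Y,g)^3$.

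The main obstacle will be to control the behaviour of $F_{c,\delta}$ near the singular locus $\widetilde X_{\mathrm{sing}}$, i.e.\ the discrete set of lifts of the wedge vertex $x_0$. At such points the Riemannian Jacobian is not classically defined, and the image of $F_{c,\delta}$ could in principle land there for a set of $y$ of positive measure; moreover, for the per-summand estimate one needs the image of points in the hyperbolic part of $Y_i$ to actually land in the corresponding copy of $\widetilde Y_i$ in $\widetilde X$, rather than wandering into a different summand via $x_0$. The former is handled by the uniqueness of the minimiser in a CAT(0) space together with a direct convexity argument showing that the singular locus has measure zero in $\bary[\tilde f_\ast\mu_{c,y}]$-image. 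The latter follows from a convexity/projection argument in the tree-of-spaces $\widetilde X$: the measure $\tilde f_\ast\mu_{c,y}$ is concentrated on lifts of the single summand $Y_i$ containing $y$, and convex projection onto the corresponding convex subcomplex of $\widetilde X$ moves the barycentre into that subcomplex. Making this step rigorous---and verifying that it does not degrade the constant $(1-\varepsilon)$ appearing in the Jacobian bound---is the technically delicate point of the proof.
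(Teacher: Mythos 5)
Your outline follows the paper's route almost step for step: the wedge target $X=\bigvee_{i}Y_i$ of the prime summands containing a hyperbolic JSJ piece, the singular length metrics $d_\delta$, the CAT(0) property of $(\widetilde X,\tilde d_\delta)$ via gluing along a point, a CAT(0) barycentre built from the uniform convexity of $\tilde d_\delta(\cdot,x')^2$, the natural maps $F_{c,\delta}$, the Jacobian bound on large hyperbolic balls, and the coarea argument. All of this is exactly what the chapter does.

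The concluding paragraph, however, is off the mark in a way worth flagging. First, the claim that ``the measure $\tilde f_*\mu_{c,y}$ is concentrated on lifts of the single summand $Y_i$ containing $y$'' is simply false: $\mu_{c,y}$ has full support on $\widetilde Y$ (its density is $e^{-c\tilde d(y,\cdot)}$), hence $\tilde f_*\mu_{c,y}$ has full support on $\widetilde X$ regardless of where $y$ sits. Second, and more importantly, the ``tracking'' concern is a red herring. The proof never needs to know, for a given $y\in Y$, which summand the barycentre $F_{c,\delta}(y)$ lands in. The coarea formula converts the Jacobian bound $|\jac_y F_{c,\delta}|\le (1-\varepsilon)^{-3}(c/2)^3$, valid whenever $F_{c,\delta}(y)$ lies in one of the good hyperbolic regions $V_{i,\delta,\varepsilon}^j\subset X\smallsetminus\{x_0\}$, directly into
$$
\sum_{i,j}\Vol(V_{i,\delta,\varepsilon}^j,g_\delta^i)\le\int_{\bigcup_{i,j}V_{i,\delta,\varepsilon}^j}\#F_{c,\delta}^{-1}(x)\,d\mu_\delta(x)\le\frac{1}{(1-\varepsilon)^3}\Bigl(\frac{c}{2}\Bigr)^3\Vol(Y,g),
$$
using only the surjectivity of $F_{c,\delta}$ and the disjointness of the $V_{i,\delta,\varepsilon}^j$'s inside $X$. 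No projection/convexity argument in the tree-of-spaces is required, and the constant $(1-\varepsilon)$ is untouched. As for the singular locus: the paper never tries to show that $F_{c,\delta}$ avoids $S_{x_0}$ generically; the vertex $x_0$ is simply deleted from the target-side integral, where it has $\mu_\delta$-measure zero, and the Jacobian estimates (Lemmata on regularity and the implicit equation at manifold points) are formulated only at those $y$ with $F_{c,\delta}(y)\notin S_{x_0}$. So what you describe as the ``technically delicate point'' is in fact an issue the argument sidesteps by design.
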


\section{The family of the target spaces $\{(X,d_\delta)\}$}

Up to reordering the prime decomposition $Y=Y_1\#\cdots \# Y_m$, we may assume that for $i=1, \dots, k$ each $Y_i$ has at least a hyperbolic piece in its $JSJ$ decomposition, while $Y_i$ is purely graph (or Seifert fibred) for $i>k$. We can assume that the connected sum is realized by performing the connected sum of every $Y_i$ to the same copy $Y_0$ of $S^3$.  This can be done by \lq\lq sliding" the gluing spheres along $Y$ via an ambient isotopy. Let $\bar Y_i$, for $i=1,..,k$ be the complement of the ball $B_i^3$ removed from $Y_i$ to realize the connected sum. Now consider the following space: for any $i=1,...,k$ fix a point $y_i\in Y_i$ and let $$X=\bigvee_{y_1=\cdots=y_k} Y_i$$
be the space obtained by shrinking $Y\smallsetminus \bigcup_{i=1}^k int(\bar Y_i)$ to a single point $x_0$. We shall denote by $f:Y\rightarrow X$ the quotient map, see Figure \ref{fig:quotient:space}.\\
\begin{figure}
	\centering
	\includegraphics[width=0.7\linewidth]{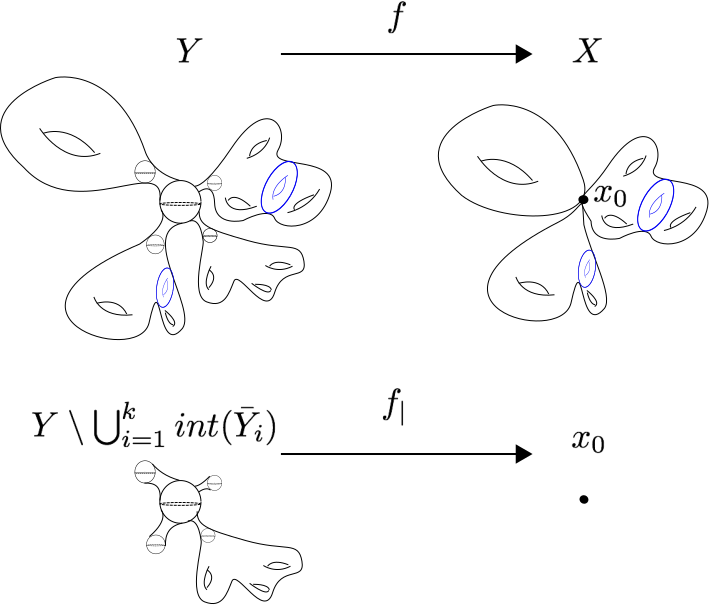}
	\caption[The target spaces]{The map $f:Y \rightarrow X$, where $X= \bigvee_{y_1=\cdots=y_k} Y_i$, collapsing $Y\smallsetminus \bigcup_{i=1}^k int(\bar Y_i)$ to the point $x_0$. }
	\label{fig:quotient:space}
\end{figure}

By construction, $X$ can be presented as $X=\sqcup_{i=1}^k Y_i/\sim$ where $y\sim z$ if and only if $y=y_i$ and $z=y_j$ for some choice of $1\le i, j \le k$. Then $x_0=[y_i]_{\sim}$ (for any choice of $i=1,...,k$). Let $q:\sqcup_{i=1}^k Y_i\rightarrow X$ be the quotient map. Let $d_{\delta}$ be the unique length metric such that $q|_{Y_i}: (Y_i, g_\delta^i)\rightarrow (X, d_\delta)$ is an isometry.  Since the metric space $(X,d_\delta)$ is semi-locally simply connected, it has a universal covering $p:(\widetilde X,\tilde d_\delta)\rightarrow (X, d_\delta)$, which has a Riemannian structure except at those points in $p^{-1}(x_0)=\pi_1(X).\tilde x_0$ where $\tilde x_0$ is a chosen preimage of $x_0$.

\subsection{The universal cover $\widetilde{X}$}$\phantom{a}$
\begin{figure}[h] 
	\centering
	\includegraphics[scale=0.4]{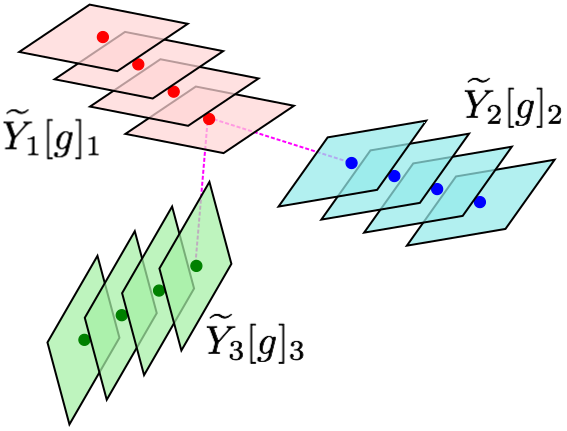}
	\caption[The universal cover $\widetilde X$]{Intersection of $k=3$ leaves in the universal cover.}
	\label{universal_cover}
\end{figure}
\vspace{2mm}

\noindent We now introduce some notation in order to efficiently describe the universal cover of the singular space $X$, and the action by automorphisms of its fundamental group. We remark that, by construction, the fundamental group of $X$ splits as a free product: $\pi_1(X) \simeq \pi_1(Y_1) \ast \dots \ast \pi_1(Y_k)$. Let us denote by $G=\pi_1(X, x_0)$ and by $G_j=(\iota_{j})_*\pi_1(Y_j,y_j)$ where $\iota_j: Y_i\hookrightarrow X$ is the natural inclusion.
%Roughly speaking, $\widetilde{X}$ consists of many copies of $\widetilde{Y}_1, \dots , \widetilde{Y}_k$, the universal covers of each Riemannian manifold $Y_i$. Furthermore, $k$ copies of the irreducible pieces $Y_i$'s ---exactly one for any $i=1, \dots, k$--- intersect at any point in the orbit of the singular one. 
%Now we shall make the last assertion more precise.
Let $\tilde x_0\in\widetilde X$ be a lift of $x_0\in X$. Roughly speaking, $\widetilde{X}$ consists of $G/G_i$ copies of the universal cover $\widetilde{Y}_i $ of each manifold $Y_i$,  glued along the points of the orbit $G.\tilde x_0$.  Let us summarize the structure of $\widetilde X$ in the next proposition:

\begin{prop}[On the structure of $\widetilde X$]
	The universal covering $\widetilde X$ of $X$ is obtained as follows:
	$$\widetilde X=\bigsqcup_{\mathclap{\substack{i=1,...,k \\ [g]_i\in G/G_i}}}\widetilde Y_i[g]_i$$
	where each $\tilde Y_i[g]_i$ is homeomorphic to $\tilde Y_i$.\\ Moreover, $\widetilde Y_i[g]_i\cap\widetilde Y_i[g']_i=\varnothing$ if $[g]_i\neq [g']_i$ and $\widetilde Y_i[g]_i\cap\widetilde Y_j[g']_j\neq \varnothing$ if and only if either $g^{-1}g'\in G_i$ or $(g')^{-1}g\in G_j$ and their intersection is either $\{gh_i.\tilde x_0\}$ for some $h_i\in G_i$ or $\{g'h_j.\tilde x_0\}$ for some $h_j\in G_j$. Finally, notice that for every $g\in G$ the point $g.\tilde x_0$ belongs to $\widetilde Y_1[g]_1, \dots \widetilde Y_k[g]_k$.
\end{prop}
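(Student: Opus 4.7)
The plan is to exhibit an explicit candidate space $\widehat X$ together with a projection to $X$, verify that this projection is a covering map, and verify that $\widehat X$ is simply connected; by uniqueness of the universal covering this forces $\widehat X\cong\widetilde X$ and yields at the same time the claimed structural description.

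First I would construct $\widehat X$ as follows. Let $\widetilde Y_i$ denote the universal cover of $Y_i$, with a fixed lift $\tilde y_i$ of $y_i$, and $G_i = \pi_1(Y_i,y_i)$ acting by deck transformations (so that the preimage of $y_i$ in $\widetilde Y_i$ is $G_i.\tilde y_i$). For each $i\in\{1,\ldots,k\}$ and each left coset $[g]_i=gG_i\in G/G_i$ take a labelled copy $\widetilde Y_i[g]_i$ of $\widetilde Y_i$, together with the marked points $gh.\tilde y_i \in \widetilde Y_i[g]_i$ for $h\in G_i$ (thought of as labelled by the group element $gh\in G$). In the disjoint union $\bigsqcup_{i,[g]_i}\widetilde Y_i[g]_i$ identify two marked points $gh.\tilde y_i\in\widetilde Y_i[g]_i$ and $g'h'.\tilde y_j\in\widetilde Y_j[g']_j$ whenever $gh=g'h'$ in $G$. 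The resulting quotient space $\widehat X$ has a distinguished collection of singular points, the images of the marked points, which is indexed bijectively by $G$; the point indexed by $g$ will be called $g.\tilde x_0$.

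Second I would show that the projection $p:\widehat X\to X$ obtained by composing the inclusion $\widetilde Y_i[g]_i\hookrightarrow\widehat X$ with the covering $\widetilde Y_i\to Y_i\hookrightarrow X$ (which is insensitive to the label $[g]_i$ on the target side) is well defined and a covering map. Away from the singular fibre $p^{-1}(x_0)$, the map $p$ is locally a homeomorphism because each $\widetilde Y_i\to Y_i$ is. At a singular preimage $g.\tilde x_0$, a small wedge-of-balls neighbourhood $U\subset X$ of $x_0$ (the wedge of the $g_\delta$-balls $B_i(y_i,r)\subset Y_i$) has preimage in $\widehat X$ around $g.\tilde x_0$ consisting of exactly $k$ metric balls, one in each $\widetilde Y_i[g]_i$, glued at the single point $g.\tilde x_0$, and this preimage is homeomorphic to $U$ via $p$.

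Third, and most delicate, I would prove simple connectedness of $\widehat X$. Let $\ell(g)$ denote the word length of $g\in G$ with respect to the free product decomposition $G=G_1*\cdots*G_k$ and define the nested subspaces
$$\widehat X_n \;=\; \bigcup_{\ell(g)\le n}\widetilde Y_i[g]_i .$$
Then $\widehat X_0$ is the wedge $\widetilde Y_1\vee\cdots\vee\widetilde Y_k$ at $\tilde x_0$, hence simply connected. The passage from $\widehat X_{n-1}$ to $\widehat X_n$ amounts to attaching at each singular point $g.\tilde x_0$ of $\widehat X_{n-1}$ with $\ell(g)=n$ a new simply connected piece $\widetilde Y_i[g]_i$ along the single point $g.\tilde x_0$; Van Kampen applied to an open thickening $V_g$ of each attached piece together with $\widehat X_{n-1}$ (so that $V_g\cap \widehat X_{n-1}$ deformation retracts to $g.\tilde x_0$) yields $\pi_1(\widehat X_n)=1$ inductively, and passing to the ascending union gives $\pi_1(\widehat X)=1$.

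Once $\widehat X\cong\widetilde X$, the intersection statements are immediate from the construction: two labelled copies $\widetilde Y_i[g]_i$ and $\widetilde Y_j[g']_j$ can meet only at singular points, and a common singular point $gh.\tilde y_i = g'h'.\tilde y_j$ forces $g^{-1}g'=h(h')^{-1}$, which (since $h\in G_i$, $h'\in G_j$) lies in $G_i$ or in $G_j$; in the case $i=j$ this collapses to $[g]_i=[g']_i$, and in any case there is exactly one such identified point. The fact that $g.\tilde x_0$ belongs to all of $\widetilde Y_1[g]_1,\ldots,\widetilde Y_k[g]_k$ is built into the gluing relation (with $h=1$ in each piece). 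The main obstacle in this plan is the inductive Van Kampen step: one must arrange the neighbourhoods so that each attachment is genuinely along a single point and no new loops are created, which is what the coset indexing is designed to guarantee.
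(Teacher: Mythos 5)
The paper offers no proof of this proposition at all: it is stated as a summary of the preceding informal description (``roughly speaking, $\widetilde X$ consists of $G/G_i$ copies of $\widetilde Y_i$ glued along the orbit $G.\tilde x_0$'') and is followed only by a remark. Your plan -- build an explicit candidate $\widehat X$ from labelled copies of the $\widetilde Y_i$, check that the natural projection to $X$ is a covering, prove simple connectedness via a Van Kampen induction along the syllable-length filtration, and invoke uniqueness of the universal cover -- is a complete and correct argument of the Bass--Serre/graph-of-spaces type, and it supplies exactly what the paper leaves unproved. In the induction step you should be a bit more careful to say explicitly that (a) several pieces, not one, are wedged on at each $g.\tilde x_0$ with $\ell(g)=n$ (one $\widetilde Y_j[g]_j$ for each $j$ such that $g$ does not end in a $G_j$-syllable), and (b) the pieces added at stage $n$ meet $\widehat X_{n-1}$ in exactly one marked point each and meet one another only at those marked points; both facts follow from uniqueness of normal forms in $G_1*\cdots *G_k$, but they need to be verified for Van Kampen to apply as you claim.

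There is, however, a genuine gap in your final paragraph. From $gh.\tilde y_i = g'h'.\tilde y_j$ you deduce $g^{-1}g' = h(h')^{-1}$ and then assert that this element ``lies in $G_i$ or in $G_j$.'' That is false in general: when $i\neq j$ and both $h\in G_i\smallsetminus\{1\}$ and $h'\in G_j\smallsetminus\{1\}$, the word $h(h')^{-1}$ has syllable length two and lies in neither free factor. For a concrete counterexample take $G=\langle a\rangle *\langle b\rangle$, $g=1$, $g'=ab^{-1}$, $i=1$, $j=2$: then $gG_1\cap g'G_2=\{a\}$ is nonempty, yet $g^{-1}g'=ab^{-1}\notin G_1$ and $(g')^{-1}g=ba^{-1}\notin G_2$. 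The correct and representative-independent criterion is that $\widetilde Y_i[g]_i\cap\widetilde Y_j[g']_j\neq\varnothing$ iff $gG_i\cap g'G_j\neq\varnothing$ (equivalently $g^{-1}g'\in G_iG_j$), in which case the intersection is the single point $\gamma.\tilde x_0$ with $\gamma$ the unique element of $gG_i\cap g'G_j$ (uniqueness because $G_i\cap G_j=\{1\}$). The dichotomy asserted in the proposition holds only after normalizing the coset representatives -- for instance by taking the unique shortest representative in free-product normal form, so that $g$ does not end in a $G_i$-syllable and $g'$ does not end in a $G_j$-syllable; then normal-form uniqueness forces one of $h,h'$ to be trivial and the stated alternative follows. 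You should either adopt this normalization explicitly and rerun your last computation under it, or replace the asserted dichotomy by the coset-intersection criterion above.
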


\begin{rmk}
	Let us observe that using the notation introduced above:
	\begin{itemize}
		\item $h. \widetilde Y_i[g]_i=\widetilde Y_i[h\,g]_i$;
		\item for every $i=1,.., k$ we have $p^{-1}(\tilde x_0)\cap\widetilde Y_i[g]_i=\{g\,h_i.\tilde x_0\,|\,h_i\in G_i\}$;
	\end{itemize}
\end{rmk}

\begin{comment}
Let us fix a lift $x_0 $ of the singular point. 
For every $g \in \pi_1(X)$, for every $i=1, \dots, k$, the point $g . x_0$ belongs to a unique copy $\widetilde{Y}_i$. Furthermore, taken two elements $g, g' \in \pi_1(X)$ the points $g . x_0$ and $g'.x_0$ belong to the same copy of $\widetilde{Y}_i$ if and only if $g$ and $g'$ define the same coset with respect to $\pi_1(Y_i)$, that is if and only if $g. \pi_1(Y_i)= g' .\pi_1(Y_i)$. For every $g \in \pi_1(X)$ and for every $i=1 , \dots, k$ we fix a representative $\omega_i(g)$ for the coset $g . \pi_1(Y_i)$, and we denote by $\widetilde{Y}_i^{\omega_i}$ the \textit{leaf} in $\widetilde{X}$ corresponding to the copy of $\widetilde{Y}_i$ containing the singular point $g. x_0$.
We remark that, for every $i$, the universal cover $\widetilde{Y}_i$ is a disjoint union of fundamental domains, each containing a singular point. We label each fundamental domain with a double index, such that $\widetilde{Y}_i^{\omega, \gamma}$ is the fundamental domain contained in the leaf $\widetilde{Y}_i^{\omega}$ and whose singular point is $\omega \gamma .x_0$. Moreover, two fundamental domains $\widetilde{Y}_i^{\omega, \gamma}$ and $ \widetilde{Y}_j^{\omega', \gamma'}$ with $i \not = j$ intersect if and only if $\omega \gamma= \omega' \gamma'$.

%Then there are copies of $Y_1, \dots, Y_k$ in $\widetilde{X}$ sharing the point $x_0$; we shall denote these copies $\widetilde{Y_i}^{e,e}$, where $e$ is the neutral element of $\pi_1(X).$ An analogous configuration appears in correspondence of any point $\gamma \cdot x_0 $ in the orbit of the singular point: locally, there is a bunch of $k$ manifolds ---a copy of $Y_i$ for any $i=1, \dots, k$--- sharing the $\gamma \cdot x_0$. The universal cover of $X$ can be described as a graph $\Gamma $ of topological spaces. The vertices of $\Gamma$ are in bijection with the classes in $\pi_1(Y_i) / \pi_1(X)$, for any $i=1, \dots, k$. Let $g$ be an element in $\pi_1(X)$. For any $i=1, \dots, k$ fix a representative $\gamma(i)(g)$ for $g_i\cdot \pi_1(X)$. This way, for any $i=1, \dots, k$ the chosen element $g \in \pi_1(X)$ uniquely determines $\gamma_i \in \pi_1(X)$. For any $g \in \pi_1(X)$, in the universal cover $\widetilde{X}$ of $X$ there is a copy of the universal cover $\widetilde{Y_i}$ of $Y_i$ containing the point $g \cdot x_0$. We denote by $\widetilde{Y_i}^{\gamma_i}$ this copy, and we will refer to it as to a \textit{leaf } of the universal cover. We remark that $\widetilde{Y_i}^{\gamma_i(\gamma)}$  as a copy of the universal cover of $Y_i$ contains infinitely many points in the orbit of the singular point $p_i$. Let $g' \cdot x_0$ be any of these points belonging to the same leaf of $\gamma \cdot x_0$. Then by construction the $g'$ defines the same coset as $g$, \textit{i.e.}, $g \cdot \pi_1(X)= g' \cdot \pi_1(X)$. Then, there exists a unique $\omega(g') \in \pi_1(X)$ such that $g' \cdot \omega(g')= \gamma$. We shall denote by $ \widetilde{Y_i}^{\gamma(i), \omega(g')}$ the fundamental domain in $\widetilde{Y_i}^{\gamma}$ containing the singular point $g' \cdot x_0$.
%From now on, we shall denote by $\widetilde{Y_i}^{\gamma,  \omega}$ the fundamental domain contained in the leaf $\widetilde{Y_i}^{\gamma}$ and intersecting $\pi_1(X) \cdot x_0$ in $\gamma \cdot \omega$.
\end{comment}

\subsection{The distance function $\tilde d_\delta(\cdot, \cdot)$}
First of all remark that, by construction, the distance function $\tilde d_\delta$ on the singular space $\widetilde{X}$ restricts at any leaf $\widetilde{Y_i}[g]$ to the distance $\tilde g_\delta^i$ corresponding to the Riemannian metric on $(\widetilde{Y}_i,\tilde g_\delta^i)$. Moreover,

\begin{lem}\label{lemma_CAT(0)}
	$(\widetilde{X}, d^{\widetilde{X}}_{\varepsilon})$ is a $\mbox{CAT}(0)$ space. 
\end{lem}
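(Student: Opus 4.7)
My plan is to reduce the CAT(0) property of $(\widetilde X,\tilde d_\delta)$ to three facts: every leaf is itself CAT(0); each gluing occurs at a single point; and the combinatorial pattern of the gluings is tree-like.

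First I would verify that every leaf $(\widetilde Y_i[g]_i,\tilde d_\delta|_{\widetilde Y_i[g]_i})$ is CAT(0). Each prime summand $Y_i$ with $i\le k$ is irreducible and possesses at least one hyperbolic $JSJ$ component, so by the construction of Chapter \ref{section_a_conjectural_minimizing_sequence_irreducible} it carries a $C^2$ Riemannian metric $g_\delta^i$ of non-positive sectional curvature. The Cartan-Hadamard theorem (valid in $C^2$ regularity) then tells us that $(\widetilde Y_i,\tilde g_\delta^i)$ is a complete, simply connected $C^2$ Riemannian manifold of non-positive curvature, and hence a CAT(0) space. Every leaf of $\widetilde X$ is isometric to such a universal cover by construction.

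Next I would analyze the combinatorics of the gluings by introducing the bipartite incidence graph $\mathcal T$ whose vertices are the leaves $\widetilde Y_i[g]_i$ together with the singular points $g.\tilde x_0$, with an edge linking a leaf to a singular point whenever the latter belongs to the former. The key claim is that $\mathcal T$ is a tree: this is essentially the Bass-Serre tree associated to the free-product splitting $\pi_1(X)\simeq G_1\ast\cdots\ast G_k$, and a cycle in $\mathcal T$ would translate, through the vertex-groups encountered along the cycle, into a non-trivial relation in $G_1\ast\cdots\ast G_k$, contradicting the normal form for free products.

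With these two ingredients I would invoke Reshetnyak's gluing theorem (Bridson--Haefliger, \emph{Metric Spaces of Non-Positive Curvature}, Theorem II.11.1): the gluing of two CAT(0) spaces along isometric copies of a complete convex subspace is again CAT(0); since a single point is trivially convex, gluing two CAT(0) spaces at a point yields a CAT(0) space. To pass from the two-leaf case to the infinite wedge I would argue locally. Given any three points $p,q,r\in\widetilde X$, they lie in a finite collection of leaves which, because $\mathcal T$ is a tree, form a finite subcomplex $\widetilde X_{p,q,r}\subset\widetilde X$. Iterating Reshetnyak gluing along the finite subtree yields that $\widetilde X_{p,q,r}$ is CAT(0), and moreover the tree structure of $\mathcal T$ forces every $\tilde d_\delta$-geodesic joining two of its points to remain inside $\widetilde X_{p,q,r}$; hence the CAT(0) comparison inequality for the triangle with vertices $p,q,r$ in $\widetilde X$ reduces to the same inequality in $\widetilde X_{p,q,r}$, where it holds by construction.

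The main obstacle I foresee is the rigorous verification that $\tilde d_\delta$-geodesics decompose leaf-by-leaf through a unique finite sequence of singular points: one must check that the concatenation of two leaf-geodesics arriving at and departing from a singular point $g.\tilde x_0$ is locally, and hence globally, length-minimizing in $\widetilde X$. This is exactly the angle condition entering Reshetnyak's theorem, and in our setting it follows from the fact that a point is trivially convex in each adjacent CAT(0) leaf together with the non-positive curvature of the leaves. Once this leaf-by-leaf decomposition is established, the uniqueness of geodesics inside each CAT(0) leaf and the tree structure of $\mathcal T$ together yield the uniqueness of geodesics in $\widetilde X$, completing the verification that $(\widetilde X,\tilde d_\delta)$ is a CAT(0) space.
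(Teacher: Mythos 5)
Your argument is correct and follows essentially the same route the paper takes: the paper's one-sentence proof appeals to the same Reshetnyak gluing theorem from Bridson--Haefliger (Chapter II.11, Theorem 11.1), treating each point $g.\tilde x_0$ as a (trivially convex) gluing locus between CAT(0) leaves. Your proposal simply fills in the details the paper leaves implicit --- the Cartan--Hadamard reduction, the Bass--Serre tree structure of the gluing pattern, and the verification that geodesics decompose leaf-by-leaf.
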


The Lemma is a classical result about $CAT(\kappa)$ spaces: indeed, the gluing of two locally $CAT(\kappa)$ spaces along convex subsets is still a locally $CAT(\kappa)$ space (see \cite{bridson2013metric}, Chapter II.11, Theorem 11.1).

	\begin{figure}[h]
		\centering
		\includegraphics[scale=0.35]{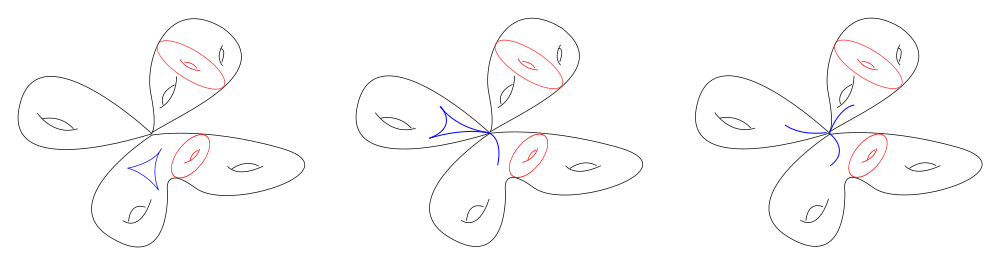}
		\caption[Geodesic triangles in $X$]{The three possible configurations for the vertices of a triangle in $X$.}
		\label{universal_cover}
	\end{figure}

\subsection{Geodesic balls in $(\widetilde{X}, \tilde d_\delta)$}
Let $\tilde x \in  \widetilde{X}$ be any point in the universal cover of $X$, and fix $R \in \mathbb{R}^+$.  It is worth for the sequel to have a good intuition of what the geodesic  ball $B_{\widetilde{X}}(\tilde x, R)$ looks like. Roughly speaking, the ball of radius $R$ centred at any point $\tilde x$ is the union of some metric balls with respect to the metrics $\widetilde{g}_\delta^i$ defined on each leaf $\widetilde{Y_i}[g]_i$.   
More precisely, pick a point $\tilde x \in \widetilde{Y}_i[g]_i$ and fix $R \in \mathbb{R}^+$.  By construction, $\widetilde{Y}_i[g]_i$ is a copy of $\widetilde{Y}_i$, and the restriction of the singular metric $\tilde d_\delta$ coincides with $\tilde{g}^i_{\delta}$.\\
If $p^{-1}(x_0)\cap B_{\widetilde{X}}(\tilde x, R)=\varnothing$, then $B_{\widetilde{X}}(\tilde x, R) \subset \widetilde{Y}_i[g]_i$. Otherwise, the Riemannian ball  $B_{\widetilde Y_i[g]_i}(\tilde x, R)=B_{\widetilde{X}}(\tilde x, R)\cap\widetilde Y_i[g]_i$ ramifies at points $g\,h_{i,\ell} . \tilde x_0$ where $h_{i, \ell}$ is an element of $G_i$ for every $\ell=1,..., \ell_{i,1}$, and for every $j=1,...,k$ the intersection $B_{\widetilde{X}}(\tilde x,R) \cap \widetilde{Y}_j[g\,h_{i,\ell}]_j$ coincides with the Riemannian ball $B_{\widetilde{Y}_j[g\,h_{i,\ell}]_j}(gh_{i,\ell} . x_0, R- \tilde d_\delta(x, g h_{i,\ell}.x_0))$. 
The balls keep ramifying until there are no singular points in its restriction to the leaves. In the end we obtain the following description (see Figure \ref{fig:erikapallemetrichewood} for an evoking image):
$$B_{\widetilde{X}}(\tilde x, R)=B_{\widetilde Y_i[g_i]}(\tilde x, R)\bigcup_{\substack{g'\in G\,:\\ \tilde d_\delta(\tilde x, g'\tilde x_0)<R}}\bigcup_{j=1}^k B_{\widetilde Y_j[g']_j}(g'\tilde x_0, R-\tilde d_\delta(\tilde x,g'\tilde x_0))$$

\begin{figure}
	\centering
	\includegraphics[width=0.5\linewidth]{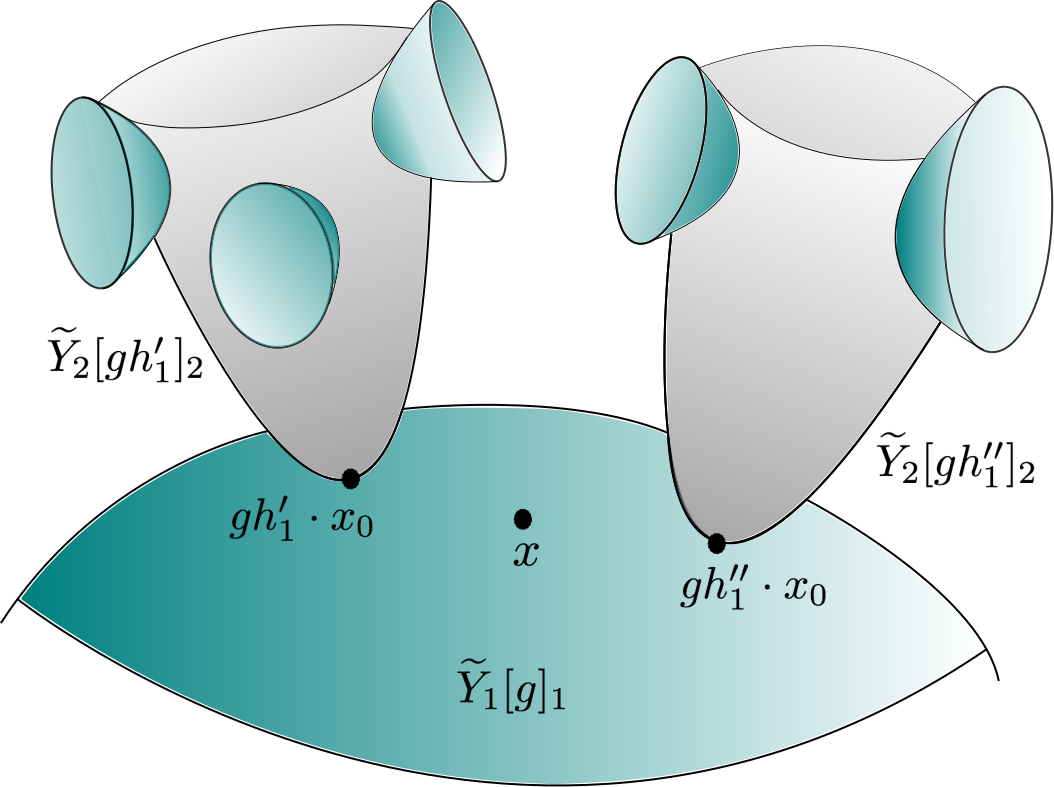}
	\caption[Metric balls in $\widetilde X$]{Metric ball in the universal cover $\widetilde{X}$ of the singular space $X$ associated with a $3$-manifold having two components of hyperbolic type.}
	\label{fig:erikapallemetrichewood}
\end{figure}

% If the ball $B_{Y_i}(x, R)$ does not contain $p_i$, then $B_{\widetilde{X}}(x,R) \subset \widetilde{Y_i^{\omega,\gamma}}$, and coincides with $B_{Y_i}(x,R)$. Otherwise, the metric ball $B_{\widetilde{X}}(x,R)$ ramifies at $\gamma \cdot x_0$, and restricts to a metric ball $B_{Y_j}(p_j,r(j))$ for any $j$ and for suitable $r(j)$.
%  See Figure for an evoking image.

%
%\textcolor{red}{Qui dobbiamo richiamare le forme quadratiche ecc utilizzate già nel caso irriducibile. Per le questioni di regolarità spiegate sotto le derivate ecc funzionano esattamente come nel caso precedente, ma dobbiamo dimostrare l'esistenza e l'unicità del baricentro in questo contesto degli spazi CAT(0). In particolare, lo dimostreremo nel contesto di curvatura sezionale non positiva ma la dimostrazione funziona esattamente allo stesso modo --- tramite il teorema di Alexandrov}.

\section{Barycenter in a CAT(0) space} \label{section_barycenter_CAT(0)}
In this section we shall adapt the notion of barycenter of a measure in the wider context of $CAT(0)$ spaces. The resulting notion will generalize the one in \cite{besson1995entropies} and \cite{sambusetti1999minimal}. To start with, we shall derive an elementary result of Euclidean geometry, which turns out to be fundamental in defining the generalized version of the barycenter. 
First of all, we introduce some notation. We shall denote with capitol letters points in the Euclidean space $\mathbb E^n$ and with tiny letters points in other metric spaces. Moreover, given three points $A,B,C$ we will denote by $[A,B,C]$ the triangle in $\mathbb E^n$ having them as vertices, while given $a,b,c$ we will denote by $[a,b,c]$ the geodesic triangle having them as vertices.
Moreover, in the Euclidean space we will denote by $AB$ the segment with endpoints $A$ and $B$, and with $\overset{\rightarrow}{AB}$ the oriented segment with endpoints $A$ and $B$, parameterized so that $A$ is its source and $B$ is its target. Moreover, we will denote by $[a,b]$ the geodesic segment joining $a$ and $b$. Finally, we denote by $AB^2$ the square length of the segment $AB$.
	\begin{lem} \label{lem_euclidean}
	Let $[A,B,C]$ be a triangle in the $\mathbb E^n$, and take a point $M$ on the side $[B,C]$. Then, the following relation between the lengths of the sides holds:
	$$ AM^2 + BM \cdot CM = AB^2 \cdot \frac{CM}{BC} + AC^2 \cdot \frac{BM}{BC}$$
\end{lem}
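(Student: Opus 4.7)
The statement is the classical Stewart's theorem for a cevian in an affine Euclidean triangle, so I expect the proof to be a short direct computation rather than something conceptual. The plan is to exploit the affine structure of $\mathbb{E}^n$ by setting $t = BM/BC \in [0,1]$ and writing $M = (1-t)B + tC$ as a convex combination of position vectors. This immediately gives $BM = t \cdot BC$ and $CM = (1-t) \cdot BC$, so the whole identity reduces to a one-parameter polynomial identity in $t$.

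Next I will compute $AM^2$ by expanding
\[
AM^2 = |M - A|^2 = |(1-t)(B-A) + t(C-A)|^2
\]
via the Euclidean scalar product, obtaining $AM^2 = (1-t)^2 AB^2 + 2t(1-t)\langle B-A, C-A\rangle + t^2 AC^2$. Then I will substitute the polarization identity $\langle B-A, C-A\rangle = \tfrac{1}{2}(AB^2 + AC^2 - BC^2)$ and collect the coefficients of $AB^2$, $AC^2$ and $BC^2$. A short simplification shows that the coefficient of $AB^2$ becomes $(1-t)$, the coefficient of $AC^2$ becomes $t$, and the coefficient of $BC^2$ becomes exactly $-t(1-t)$, which equals $-BM\cdot CM/BC^2$. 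Moving the $BC^2$ term to the left-hand side turns it into $BM \cdot CM$, and since $1-t = CM/BC$ and $t = BM/BC$, the right-hand side is exactly $AB^2\cdot CM/BC + AC^2 \cdot BM/BC$, as claimed.

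There is no genuine obstacle here: the argument is purely algebraic and the only non-trivial ingredient is the polarization identity, which holds in any inner product space and a fortiori in $\mathbb{E}^n$. An equivalent route would be to apply the law of cosines in the two sub-triangles $[A,B,M]$ and $[A,C,M]$ at the common vertex $M$ and exploit the fact that the two angles at $M$ are supplementary, so that the cosine terms cancel upon elimination; I prefer the vector computation above as it avoids any case discussion on the sign of the cosine and is uniform in the ambient dimension $n$.
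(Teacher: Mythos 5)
Your proof is correct and is essentially the same vector computation as in the paper: both exploit the affine parameterization $M = (1-t)B + tC$ and reduce the claim to the polynomial identity $AM^2 = (1-t)\,AB^2 + t\,AC^2 - t(1-t)\,BC^2$. The paper arrives at this by expanding $\overrightarrow{AM}^2$ twice (once via $\overrightarrow{AB}+\overrightarrow{BM}$, once via $\overrightarrow{AC}+\overrightarrow{CM}$) and taking the convex combination $(1-t)\cdot(\ref{eq_euclidean_1}) + t\cdot(\ref{eq_euclidean_2})$ so that the cross terms collapse via $\overrightarrow{AB}-\overrightarrow{AC}=\overrightarrow{CB}$; you instead expand once and eliminate the cross term with the polarization identity $\langle B-A, C-A\rangle = \tfrac{1}{2}(AB^2+AC^2-BC^2)$. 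These are the same elimination performed in two slightly different orders, so the comparison is cosmetic rather than substantive; your version is perhaps marginally cleaner since it avoids the double expansion and the sign bookkeeping in the second one.
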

\begin{proof}
	Let us parameterize the side $BC$ as the interval $[0,1]$, and choose $t \in [0,1]$ such that $\overset{\rightarrow}{BM}= t \cdot \overset{\rightarrow}{BC}$ and $\overset{\rightarrow}{MC}=(1-t) \cdot \overset{\rightarrow}{BC} $ .
	Let us denote by $\overset{\rightarrow}{AM}^2= \langle \overset{\rightarrow}{AC} , \overset{\rightarrow}{AC}\rangle$. Then we have:
	\begin{equation} \label{eq_euclidean_1}
	\!\!\!\!\!\!\!\!\!\! \overset{\rightarrow}{AM}^2=(\overset{\rightarrow}{AB}+ \overset{\rightarrow}{BM})^2=(\overset{\rightarrow}{AB}+ t  \overset{\rightarrow}{BC})^2= \overset{\rightarrow}{AB}^2+t^2\overset{\rightarrow}{BC}+ 2t \langle \overset{\rightarrow}{AB}, \overset{\rightarrow}{BC} \rangle
	\end{equation}
	Similarly,
	\begin{equation} \label{eq_euclidean_2}
	\!\!\!\!\!\!\!\!\!\! 
	\overset{\rightarrow}{AM}^2=(\overset{\rightarrow}{AC}+ \overset{\rightarrow}{CM})^2=(\overset{\rightarrow}{AC}+ (1-t) \overset{\rightarrow}{BC})^2= \overset{\rightarrow}{AB}^2+(1-t)^2\overset{\rightarrow}{BC}- 2(1-t) \langle \overset{\rightarrow}{AC}, \overset{\rightarrow}{BC} \rangle
	\end{equation}
	Putting together Equations \ref{eq_euclidean_1} and \ref{eq_euclidean_2}, we finally get:
	
	\begin{equation*}
	\overset{\rightarrow}{AM}^2=(1-t) \overset{\rightarrow}{AM}^2 + t \overset{\rightarrow}{AM}^2=
	\end{equation*}
	
	\begin{equation*}
	=(1-t) \overset{\rightarrow}{AB}^2+ t^2 (1-t) \overset{\rightarrow}{BC}^2+ t \overset{\rightarrow}{AC}^2+t(1-t)^2 \overset{\rightarrow}{BC}^2+ 2t (1-y) \langle \overset{\rightarrow}{AB}-\overset{\rightarrow}{AC}, \overset{\rightarrow}{BC} \rangle=
	\end{equation*}
	
	\begin{equation*}
	=(1-t) \overset{\rightarrow}{AB}^2+ t^2 (1-t) \overset{\rightarrow}{BC}^2+ t \overset{\rightarrow}{AC}^2+t(1-t)^2 \overset{\rightarrow}{BC}^2+ 2t (1-y) \langle \overset{\rightarrow}{AB}-\overset{\rightarrow}{AC}, \overset{\rightarrow}{BC} \rangle=
	\end{equation*}
	
	\begin{equation*}
	=(1-t) \overset{\rightarrow}{AB}^2+ t^2 (1-t) \overset{\rightarrow}{BC}^2+ t \overset{\rightarrow}{AC}^2+t(1-t)^2 \overset{\rightarrow}{BC}^2+ 2t (1-y) \langle \overset{\rightarrow}{BC}, \overset{\rightarrow}{BC} \rangle=
	\end{equation*}
	
	Hence, 
	$$ \overset{\rightarrow}{AM}^2= (1-t) \overset{\rightarrow}{AB}^2+ t \overset{\rightarrow}{AC}^2-t(1-t)\overset{\rightarrow}{BC}^2,$$
	and the required equality holds plugging $t=\frac{BM}{BC}$ and $(1-t)=\frac{CM}{BC}$ in the last one.
\end{proof}

\begin{definizioniEteoria}
The previous result of elementary Euclidean geometry easily generalizes to $CAT(0)$ spaces.  Before stating an analogous of Lemma \ref{lem_euclidean} in the context of $CAT(0)$ spaces, we shall introduce some theory.

\begin{lem} [Lemma+ definition: comparison triangles]
	\textcolor{blue}{Definire anche per la curvatura positiva?}\\
	Lat $k$ be a real number, and let $p,q, r$ be three distinct points in the metric space $X$. If  $k>0$, assume furthermore that $d_X(p,q)+d_X(q,r)+d_X(p,r) < D_k$.
	Then, there exist points $\bar{p}, \bar{q}, \bar{r} \in M^2_k$ such that $d_X(p,q)=d(\bar{p},\bar{q})$, $d_X(q, r)= d(\bar{q},\bar{r})$ and $d_X(p,r)=d(\bar{p}, \bar{r})$.	
	If $\Delta(p,q,r)$ is a geodesic triangle with vertices $p,q,r$ then the triangle $\bar{\Delta}(\bar{p}, \bar{q}, \bar{r}) \subseteq M^2_k$ with vertices $p,q,r$ is called a comparison triangle for $\Delta(p,q,r)$.
	
\end{lem}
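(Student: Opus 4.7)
The plan is to construct $\bar p, \bar q, \bar r \in M^2_k$ explicitly by exploiting the two-point homogeneity of the model space. First I would set $a := d_X(q,r)$, $b := d_X(p,r)$, $c := d_X(p,q)$, and observe that the triangle inequality in $X$ gives the three inequalities $a \le b+c$, $b \le a+c$, $c \le a+b$, so that the three candidate side lengths are compatible. In the case $k > 0$, the additional hypothesis $a+b+c < D_k$ will be used crucially to avoid the spherical degeneracy in which two sides ``wrap around'' the model sphere.

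Next I would fix an arbitrary basepoint $\bar p \in M^2_k$ and a unit tangent vector $v \in T_{\bar p} M^2_k$, and set $\bar q := \exp_{\bar p}(c\, v)$. Since $c < D_k$ (trivially when $k \le 0$, and a consequence of $a+b+c < D_k$ together with $a,b > 0$ when $k > 0$), the geodesic segment from $\bar p$ to $\bar q$ is minimising and has length exactly $c$. I would then seek $\bar r$ in the intersection $S(\bar p, b) \cap S(\bar q, a)$ of the two metric circles in $M^2_k$. To show this intersection is non-empty, I would parameterise the geodesic $\gamma$ through $\bar p$ and $\bar q$ and consider the continuous function $t \mapsto d(\bar p, \gamma(t))$ restricted to the circle $S(\bar q, a)$; by the intermediate value theorem the triangle inequalities $|b-c| \le a \le b+c$ place $b$ within the range of this function, producing a valid $\bar r$. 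Equivalently, one could invoke the explicit law of cosines in each of the three model geometries to solve for the angle at $\bar q$: this gives a continuous, strictly monotone bijection between $\theta \in (0,\pi)$ and the opposite side length ranging over $(|b-c|, b+c)$ in the Euclidean and hyperbolic cases, and over the analogous interval bounded by the perimeter constraint in the spherical case.

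The main (and only) obstacle is the spherical case $k > 0$: without the perimeter hypothesis the two circles $S(\bar p, b)$ and $S(\bar q, a)$ may fail to meet, because the would-be third vertex gets pushed past the antipode of $\bar p$ or of $\bar q$ and the distance functions cease to be monotone. The condition $a+b+c < D_k$ is precisely the classical Alexandrov assumption that rules this out and restores existence. Once $\bar r$ has been chosen, the three equalities $d(\bar p,\bar q)=c$, $d(\bar p,\bar r)=b$, $d(\bar q,\bar r)=a$ hold by construction, which is all the lemma requires. Finally I would remark that the arbitrariness in the construction --- the choice of $\bar p$, the choice of $v$, and the choice of the side of $\gamma$ on which $\bar r$ lies --- is absorbed by the transitive action of $\mathrm{Isom}(M^2_k)$ on pointed, oriented orthonormal frames, so that $\bar\Delta(\bar p, \bar q, \bar r)$ is unique up to isometry of $M^2_k$, which is what is implicitly needed in order to speak of \emph{the} comparison triangle in the subsequent definition.
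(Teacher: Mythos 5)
The paper does not actually prove this lemma: it is quoted as a classical fact (essentially Lemma~I.2.14 of the cited reference \cite{bridson2013metric}), and the whole \texttt{definizioniEteoria} block containing it is suppressed from the compiled thesis by an \texttt{excludecomment} directive, so the statement never even appears in the final document. There is therefore no argument of the paper's to compare against; your task was to reconstruct a standard textbook proof, and in outline you have done so correctly.

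One step is garbled as written. You propose to ``parameterise the geodesic $\gamma$ through $\bar p$ and $\bar q$ and consider the continuous function $t \mapsto d(\bar p, \gamma(t))$ restricted to the circle $S(\bar q, a)$,'' but $\gamma(t)$ traces the geodesic, not the circle, so this function cannot be restricted to $S(\bar q,a)$. The argument you clearly intend --- and which your law-of-cosines reformulation carries out cleanly --- is to parameterise $S(\bar q,a)$ by the angle $\theta$ at $\bar q$ between the initial segment of $[\bar q,\bar p]$ and the initial segment of $[\bar q,\bar r(\theta)]$, observe that $\theta\mapsto d(\bar p,\bar r(\theta))$ is continuous and, in all three model geometries (using $a+b+c<D_k$ when $k>0$), ranges monotonically over $[\,|a-c|,\,a+c\,]$ as $\theta$ runs over $[0,\pi]$, and then invoke the triangle inequality $|a-c|\le b\le a+c$ in $X$. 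Note also that you should let $\theta$ take the endpoint values $0$ and $\pi$, not just the open interval $(0,\pi)$: when one triangle inequality in $X$ holds with equality the comparison triangle is degenerate (collinear), and the lemma still asserts its existence. These are matters of phrasing rather than substance; the construction, the use of two-point homogeneity of $M^2_k$, and the role of the perimeter hypothesis for $k>0$ are all correctly identified, and your closing remark on uniqueness up to isometry is the right thing to add since the lemma as stated does not even claim it.
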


\end{definizioniEteoria}

\begin{lem}  \label{lemma_comparison_triangle}
	Let $(X,d)$ be a $CAT(0)$ space,
%	 \textcolor{blue}{Check: the proof holds for sure for $(X,g)$ a simply connected Riemannian manifold with non positive sectional curvature}
	 let $[a,b,c]$ be any geodesic triangle. Then, for any $m \in [b,c]  $ the following inequality holds:
	\small
	
	$$ d(a,m)^2 + d(m,b) \cdot d(c,m) \le d(a,b)^2 + \frac{d(c,m)}{d(b,c)}+ d(a,c)^2 \cdot \frac{d(b,m)}{d(b,c)}.$$
	\normalsize
\end{lem}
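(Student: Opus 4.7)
The plan is to deduce the inequality directly from the Euclidean identity proved in Lemma \ref{lem_euclidean} together with the defining comparison property of $\mathrm{CAT}(0)$ spaces. More precisely, given the geodesic triangle $[a,b,c]$ in $X$ and the point $m\in[b,c]$, I will pass to a comparison triangle in the model plane $\mathbb{E}^2$, transport $m$ to its comparison point, apply the Euclidean identity there, and finally use the $\mathrm{CAT}(0)$ inequality to push the resulting identity back to $X$ as an inequality.

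First I would choose a comparison triangle $[\bar a,\bar b,\bar c]\subset\mathbb{E}^2$ for $[a,b,c]$; that is, three points satisfying $|\bar a\bar b|=d(a,b)$, $|\bar b\bar c|=d(b,c)$, $|\bar a\bar c|=d(a,c)$. Such a triangle exists (and is unique up to isometry of $\mathbb{E}^2$) because the three side lengths of $[a,b,c]$ satisfy the triangle inequality, and because in the $\mathrm{CAT}(0)$ setting there is no upper bound on admissible perimeters. Next I would produce the comparison point of $m$: since $m\in[b,c]$, let $\bar m$ be the unique point of the Euclidean segment $[\bar b,\bar c]$ with $|\bar b\bar m|=d(b,m)$ (and hence automatically $|\bar c\bar m|=d(c,m)$).

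The $\mathrm{CAT}(0)$ hypothesis then yields the key estimate
$$d(a,m)\le |\bar a\bar m|,$$
and in particular $d(a,m)^2\le |\bar a\bar m|^2$. Applying Lemma \ref{lem_euclidean} to the Euclidean triangle $[\bar a,\bar b,\bar c]$ with the point $\bar m$ on the side $[\bar b,\bar c]$ gives
$$|\bar a\bar m|^2+|\bar b\bar m|\cdot|\bar c\bar m|=|\bar a\bar b|^2\cdot\frac{|\bar c\bar m|}{|\bar b\bar c|}+|\bar a\bar c|^2\cdot\frac{|\bar b\bar m|}{|\bar b\bar c|}.$$
Replacing the Euclidean side lengths on the right-hand side by their equal counterparts $d(a,b)$, $d(a,c)$, $d(b,c)$, $d(b,m)$, $d(c,m)$ and combining with $d(a,m)^2\le|\bar a\bar m|^2$ produces exactly the claimed inequality.

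The proof is therefore essentially immediate once the correct comparison point is chosen, and there is no real obstacle: the only non-trivial ingredient is the Euclidean identity of Lemma \ref{lem_euclidean}, which is already established. I would nevertheless remark (useful for what follows in the chapter) that equality holds precisely when $[a,b,c]$ bounds a Euclidean triangle in $X$, and that in the special case where $m$ is the midpoint of $[b,c]$ the inequality specializes to the Bruhat–Tits semi-parallelogram law, the standard tool for defining and controlling the barycentre in $\mathrm{CAT}(0)$ targets.
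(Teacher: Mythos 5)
Your proof is correct and follows essentially the same route as the paper: build a Euclidean comparison triangle and comparison point, apply the Euclidean identity of Lemma \ref{lem_euclidean}, and conclude via the $CAT(0)$ comparison inequality $d(a,m)\le|\bar a\bar m|$. (Note in passing that the displayed inequality in the statement has a typo --- the term $d(a,b)^2+\frac{d(c,m)}{d(b,c)}$ should read $d(a,b)^2\cdot\frac{d(c,m)}{d(b,c)}$ --- and your argument, like the paper's, proves the correct version.)
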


\begin{proof}
	Let us construct the comparison triangle $[A,B,C]$ for $[a,b,c]$ in the Euclidean space. Let $M$ be the point dividing the segment $[B,C]$ in the same proportion as $m$ divides $[b,c]$. By the definition of comparison triangle we have:
	$AB= d(a,b),$ $ AC= d(a,c), BC= d (b,c)$ and by construction $BM= d(b,m)$ and $CM=d(c,m)$. Furthermore, the $CAT(0)$ assumption implies $AM \ge d(a,m)$, and we get:
	\small
	
	\begin{eqnarray} \nonumber
	d(a,m)^2  & \le& AM^2 = AB^2 \cdot \frac{CM}{BC} + AC^2 \cdot \frac{BM}{BC} - BM \cdot CM \\
	\nonumber
	& =& d(a,b)^2 \cdot \frac{d(c,m)}{d(b,c)}+ d(a,c)^2 \cdot \frac{d(b,m)}{d(b,c)}- d(b,m) \cdot d(c,m)
	\end{eqnarray}
	\normalsize

\end{proof}

Now we shall exploit the previous lemma to prove the existence and uniqueness of the barycenter.

\begin{prop} \label{prop_bary_CAT0}
	Let $\mu$ be a positive measure on a $CAT(0)$ space $(X,d)$ such that there exists a point $x_0$ satisfying:
	\begin{equation} \label{eq_auxiliary_bary_CAT0}
	 \int_X (1+ d(x_0,z))^2 d \mu(z) < + \infty 
	 \end{equation}
	and consider the associated Leibniz function
	$$ B_{\mu}(x)= \int _X d^2(x,z) d\mu(z).$$
	This function is continuous on $X$ and $B_{\mu}(x)\rightarrow+\infty$ when $d(x,x_0)\rightarrow+\infty$.
		Moreover, the following conditions hold:
			\begin{itemize}
			\item[i)] the function $B_{\mu}$ has a unique minimum point $\bary[\mu]$, called the \textit{barycenter of $\mu$};
			\item[ii)] for any $x \in X$, we have
			$$ d^2(\bary[\mu],x) \cdot \mu(X) \le B_{\mu}(x)- B_{\mu}(\bary[\mu]).$$
		\end{itemize}
	\end{prop}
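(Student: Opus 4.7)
The key ingredient is the strong convexity inequality obtained by combining Lemma \ref{lemma_comparison_triangle} with integration against $\mu$. Once we have this, existence, uniqueness and property (ii) all flow from it. The plan is to (a) show $B_\mu$ is finite-valued and continuous, (b) establish coercivity so that minima are sought in a bounded set, (c) establish a quantitative midpoint convexity estimate, (d) use this estimate to turn any minimizing sequence into a Cauchy sequence, and (e) extract property (ii) from the same estimate applied along a geodesic from $\bary[\mu]$ to $x$.

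\textbf{Finiteness, continuity, coercivity.} First I would observe that the integrability assumption (\ref{eq_auxiliary_bary_CAT0}) forces $\mu(X)<\infty$. For any $x\in X$, the triangle inequality $d(x,z)\le d(x,x_0)+d(x_0,z)$ gives
$$d(x,z)^2\le 2d(x,x_0)^2+2d(x_0,z)^2\le 2d(x,x_0)^2+2(1+d(x_0,z))^2,$$
which is $\mu$-integrable; hence $B_\mu$ is finite on $X$. Continuity follows from the dominated convergence theorem applied along sequences $x_n\to x$, using the same majorant. For coercivity, the reverse triangle inequality yields $d(x,z)\ge d(x,x_0)-d(x_0,z)$, so for every $R>0$, the set $A_R=\{z:d(x_0,z)\le R\}$ satisfies $d(x,z)\ge d(x,x_0)-R$ on $A_R$; picking $R$ large so that $\mu(A_R)\ge \mu(X)/2$ (possible since $\mu(X)<\infty$) gives $B_\mu(x)\ge \frac{\mu(X)}{2}(d(x,x_0)-R)^2$ whenever $d(x,x_0)\ge R$, hence $B_\mu(x)\to+\infty$ as $d(x,x_0)\to+\infty$.

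\textbf{Strong midpoint convexity.} For any $b,c\in X$ let $m$ be the midpoint of the geodesic $[b,c]$ (which exists and is unique in a CAT(0) space). Applying Lemma \ref{lemma_comparison_triangle} with $a=z$ and this $m$ gives
$$d(z,m)^2+\frac{d(b,c)^2}{4}\le \frac{d(z,b)^2+d(z,c)^2}{2}\qquad\forall z\in X.$$
Integrating against $d\mu(z)$ yields the fundamental estimate
\begin{equation}\label{eq_strong_conv}
B_\mu(m)+\frac{\mu(X)}{4}\,d(b,c)^2\le \frac{B_\mu(b)+B_\mu(c)}{2}.
\end{equation}

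\textbf{Existence and uniqueness of the minimum.} Let $m_0=\inf_X B_\mu$, which is finite by continuity and coercivity (the infimum is attained on a bounded, hence by CAT(0)-completeness closed, sublevel set; here I would invoke that the CAT(0) spaces arising in this paper are complete, as they are universal covers of compact spaces). If $(x_n)$ is a minimizing sequence and $m_{n,p}$ denotes the midpoint of $[x_n,x_p]$, inequality (\ref{eq_strong_conv}) gives
$$\frac{\mu(X)}{4}d(x_n,x_p)^2\le \frac{B_\mu(x_n)+B_\mu(x_p)}{2}-B_\mu(m_{n,p})\le \frac{B_\mu(x_n)+B_\mu(x_p)}{2}-m_0\longrightarrow 0,$$
so $(x_n)$ is Cauchy and converges to some $\bary[\mu]\in X$ with $B_\mu(\bary[\mu])=m_0$ by continuity. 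Uniqueness is immediate: if $b$ and $c$ both realize $m_0$, applying (\ref{eq_strong_conv}) to their midpoint forces $d(b,c)=0$.

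\textbf{Property (ii).} For arbitrary $x\in X$, let $y_t$ denote the point at parameter $t\in[0,1]$ on the geodesic from $\bary[\mu]$ to $x$. Applying Lemma \ref{lemma_comparison_triangle} with $b=\bary[\mu]$, $c=x$, $m=y_t$ and $a=z$ (using $d(b,m)=t\,d(b,c)$, $d(c,m)=(1-t)\,d(b,c)$) and integrating against $\mu$ gives
$$B_\mu(y_t)+t(1-t)\,\mu(X)\,d(\bary[\mu],x)^2\le (1-t)B_\mu(\bary[\mu])+tB_\mu(x).$$
Since $B_\mu(y_t)\ge B_\mu(\bary[\mu])$, dividing by $t>0$ and letting $t\to 0^+$ produces
$$\mu(X)\,d(\bary[\mu],x)^2\le B_\mu(x)-B_\mu(\bary[\mu]),$$
which is exactly (ii).

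\textbf{Expected main obstacle.} The only non-formal point is ensuring that the minimizing sequence actually has a limit in $X$; everything else reduces mechanically to integrating Lemma \ref{lemma_comparison_triangle}. This forces us to use completeness of $(X,d)$ (automatic in the CAT(0) settings we use, but worth stating explicitly), and to confirm coercivity from the integrability hypothesis, which is where one must be careful since $\mu$ need not be a probability measure and the support of $\mu$ may be unbounded.
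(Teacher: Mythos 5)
Your proof is correct and follows essentially the same route as the paper: everything reduces to integrating the CAT(0) comparison inequality (Lemma \ref{lemma_comparison_triangle}) against $\mu$, and the derivation of property (ii) by applying it at the point at parameter $t$ along $[\bary[\mu],x]$ and letting $t\to 0^+$ is precisely what the paper does. The one place where you genuinely go beyond the paper is existence of the minimizer. The paper simply states that continuity together with coercivity imply that the infimum is attained; in a general (possibly non--locally--compact) CAT(0) space this does not follow from those two facts alone, and the paper does not make completeness or convexity explicit at that step. Your route --- integrating the midpoint case of the lemma to get the strong convexity estimate $B_\mu(m)+\tfrac{\mu(X)}{4}d(b,c)^2\le\tfrac{1}{2}(B_\mu(b)+B_\mu(c))$, showing any minimizing sequence is Cauchy, and invoking completeness of $X$ --- is the standard and correct way to close this gap, and it also furnishes uniqueness for free. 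The coercivity argument you give (cutting off at a ball $A_R$ with $\mu(A_R)\ge\mu(X)/2$) is slightly more circuitous than the paper's one-line use of $(a-b)^2\ge\tfrac12 a^2-b^2$, but both are fine. The only thing worth flagging in your write-up is that you should state explicitly (rather than parenthetically) that you assume $(X,d)$ is complete; the proposition as stated in the paper omits this hypothesis, and your proof is one that actually needs it.
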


\begin{rmk}
	We remark that the hypothesis provided by Inequality \ref{eq_auxiliary_bary_CAT0} simultaneously ensures that $\mu$ is a finite measure on $X$, and the functions $\rho_{x_0}(\cdot)= d (x_0, \cdot)$ and $\rho_{x_0}^2$ are in $L^1(X, \mu)$.
\end{rmk}

\begin{proof} [Proof of Propositon \ref{prop_bary_CAT0}]
Let us show first that the function $B_{\mu}$ is continuous on X.
Let us fix $x_0 \in X$. For $x\in B(x_0,R)$ the following chain of estimates holds:
\small
$$d^2(x,z) \le (d(x,x_0)+ d(x_0,z))^2 \le (R+ d(x_0,z))^2 \le R^2 (1+ d(x_0,z))^2=f_R(z)$$
\normalsize
The function $f_R$ does not depend on $x\in B(x_0,R)$, and is $\mu$-integrable by assumption. Hence for $\{x_k\}_{k\in\mathbb N}\subset B(x_0,R)$ such that $x_k\rightarrow x$ the $\mu$-integrable functions $d^2(x_k, \cdot)$ are converging pointwise to the $\mu$-integrable function $d^2(x_0, \cdot)$ and are smaller than or equal to $f_R$. Thus Lebesgue's Dominated Convergence Theorem implies the continuity of $B_{\mu}$.\\ 

\noindent Now, remark that $B_{\mu}(x) \longrightarrow + \infty$ as $d(x_0,x) \longrightarrow \infty$. Indeed,
recalling the elementary inequality $(a-b)^2 \ge \frac{1}{2}a^2 - b^2$ , we get
\small 
\begin{eqnarray}
\nonumber
B_{\mu}(x) &\ge& \int_X (d(x,x_0)- d(x_0,z))^2 d \mu(z) \ge \frac{1}{2} d(x,x_0)^2 \mu(X)- \int_X d(x_0,z)^2 d\mu(z)\\
\nonumber
& = &\frac{1}{2} d(x,x_0)^2 \mu(X)-B_{\mu}(x_0).
\end{eqnarray}

\normalsize

\noindent We conclude observing that $B_{\mu}(x_0) < \infty$, and by assumption $d(x,x_0) \rightarrow \infty$.\\ 

\noindent \textit{Existence.} The last two properties imply that there exist points $b \in X $ such that $B_{\mu}(b)=\underset{x \in X}{\inf}B_{\mu}(x)$; namely there exists at least a minimum for the function $B_{\mu}$. Now we shall prove that this minimum value is attained at a unique point. \newline

\noindent\textit{Uniqueness.}
\noindent Let $b \in X$ be a point as above, \textit{i.e.}, a point realizing the minimum for the function $B_{\mu}$.
Then Lemma \ref{lemma_comparison_triangle} implies that, for any $x ,z \in X$ and for any $m \in [b,x] $ (where by $[b,x]$ we intend the ---unique--- geodesic segment joining the two points) we have:
\small
$$d(m,z)^2 \le d(b, z)^2 \cdot \frac{d(m,x)}{d(b,x)} + d(x,z)^2 \cdot \frac{d(b,m)}{d(b,x)}- d(m,b) d(m,x).$$
\normalsize
\noindent Integrating the last inequality with respect to the measure $\mu$, we get:

$$ B_{\mu}(b) \le B_{\mu}(m) \le B_{\mu}(b) \cdot \frac{d(m,x)}{d(b,x)} + B_{\mu}(x) \cdot \frac{d(b,m)}{d(b,x)}- d(b,m) d(m,x) \mu(X),$$
where the first inequality holds as $b$ is a minimum for $B_{\mu}$.
Thus we have:

$$ B_{\mu}(b) \cdot \frac{d(b,m)}{d(b,x)} \le B_{\mu}(x) \cdot \frac{d(b,m)}{d(b,x)}- d(b,m) d(m,x) \mu(X).$$

\noindent Multiplying the last relation by $\frac{d(b,x)}{d(m,b)}$ and letting the distance $d(b,m)$ go to zero, we obtain:
$$ B_{\mu}(b) \le B_{\mu}(x) - d^2(b,x) \mu(X),$$
which is clearly equivalent to 
\begin{equation}
\label{eq_unicity_barycentre}
d^2(b,x) \mu(X) \le B_{\mu}(x)-B_{\mu}(b).
\end{equation}
We stress here that Inequality (\ref{eq_unicity_barycentre}) is precisely the point $(ii)$ of the statement. Now assume that $b$ and $b'$ are two points where the function $B_{\mu}$ achieves its minimum. Then Inequality 
(\ref{eq_unicity_barycentre}) implies:
$$ d(b,b')^2 \mu(X) \le B_{\mu}(b')- B_{\mu}(b)=0,$$
whence $d(b,b')=0$ and $b=b'$.
This completes the proof of the uniqueness statement.
\end{proof}

\section{The natural maps}
Let $g$ be any Riemannian metric on $Y$; we shall denote by $d$ the Riemannian distance on  $Y$ and by $\tilde g, \tilde d$ the Riemannian metric and distance  on $\widetilde Y$ respectively. Moreover, for every $i=1, \dots k$ we shall denote  $g^{i}_{\delta}$ the sequence of Riemannian metrics on the irreducible manifold $Y_i$  constructed as in Chapter \ref{section_a_conjectural_minimizing_sequence_irreducible}. Furthermore, we denote by $\rho_{\delta,z}(.)= \tilde d_\delta(z,\,\cdot)$, the distance function from a fixed point $z\in\widetilde X$.
Let us consider the following immersion of $\widetilde Y$ into the space $\mathcal M(Y,g)$ of finite, positive Borel measures on $\widetilde Y$: $$y\mapsto d\mu_{c, y}(y'):= e^{-c\, \tilde d(y,y')}dv_{\tilde g}(y'),$$
for $c>\ent(g)$ and let $$\tilde f: (\widetilde{Y},\tilde g) \longrightarrow (\widetilde{X},\tilde d_\delta)$$ be the lift of the quotient map $f$. Then we compose the latter map with the push-forward via $\tilde f$, thus obtaining the (finite, positive, Borel) measures $\tilde f_*\mu_{c, y}$ in $\mathcal M(\widetilde X,\tilde d)$. Let $B_{c,y}^\delta$ denote the Leibniz function associated with the measure $\tilde f_*\mu_{c,y}$, that is:
\small
$$B_{c,y}^\delta(x)=\int_{\widetilde X}[\rho_{\delta, x}(z)]^2 \, d(\tilde f_{\ast}\mu_{c,y})(z)= \int_{\widetilde Y} (\rho_{\delta, x}(\tilde f(y')))^2 e^{-c \tilde d(y,y')}dv_{\tilde g}(y').$$
\normalsize
Since $(\widetilde X, \tilde d_\delta)$ is a $CAT(0)$ space (Lemma \ref{lemma_CAT(0)}) it follows from Proposition \ref{prop_bary_CAT0} that $B_{c,y}^\delta$ admits a unique minimum point for every $y\in\widetilde Y$. Hence, for every $y\in\widetilde Y$ we define the map
$$\widetilde F_{c,\delta}:(\widetilde Y,\tilde g)\rightarrow (\widetilde X,\tilde d_\delta),\qquad\widetilde F_{c,\delta}(y)=\bary[\tilde f_*\mu_{c,y}]$$
and we shall refer to the maps $\widetilde F_{c,\delta}$ as to the {\it natural maps} on $(\widetilde X, d_{\delta})$.

\section{Properties of the natural maps}

We shall now list the properties of the natural maps $\widetilde F_{c,\delta}$.

\subsection{Equivariance, homotopy type, surjectivity}

To begin with, we shall observe that the natural maps $\widetilde F_{c,\delta}$ are $\lambda$-equivariant, where $\lambda=f_*:\pi_1(Y, y_0)\rightarrow \pi_1(X, x_0)$ is the homomorphism induced by $f$ ---where $y_0\in f^{-1}(x_0)$.

\begin{lem}[Equivariance]\label{lemma_equivariance_sing_space}
	Choose a point $y_0\in f^{-1}(x_0)\subset Y$. The natural maps $\widetilde F_{c,\delta}$ are $\lambda$-equivariant with respect to the surjective homomorphism 
	$\lambda=f_*:\pi_1(Y,y_0)\rightarrow\pi_1(X,x_0)$
\end{lem}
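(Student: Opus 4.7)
The plan is to mirror the strategy used in Lemma \ref{lemma_equivariance_f_epsilon} for the irreducible case, where equivariance of the barycentre map was reduced to an invariance property of the Leibniz function. The key observation will be that proving
$$B_{c,h.y}^{\delta}(\lambda(h).x) = B_{c,y}^{\delta}(x) \qquad \forall\, h \in \pi_1(Y,y_0),\ \forall\, x \in \widetilde{X}$$
suffices, because by Proposition \ref{prop_bary_CAT0} the Leibniz function $B_{c,y}^{\delta}$ admits a \emph{unique} minimum on the $CAT(0)$ space $(\widetilde X, \tilde d_\delta)$. Once the displayed identity is established, $\lambda(h).\widetilde F_{c,\delta}(y)$ is the unique minimum of $B_{c,h.y}^{\delta}$, hence equals $\widetilde F_{c,\delta}(h.y)$.

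First, I would collect the three invariance ingredients that make the change of variable work. (a) The initial map $f:Y\rightarrow X$ lifts to a map $\tilde f:\widetilde Y\rightarrow \widetilde X$ which, by definition of $\lambda=f_*$ and the choice of basepoints, satisfies the equivariance relation $\tilde f(h.y')=\lambda(h).\tilde f(y')$ for every $h\in\pi_1(Y,y_0)$ and $y'\in\widetilde Y$. (b) Deck transformations of $\widetilde Y$ preserve $\tilde d$ and $dv_{\tilde g}$ since they are isometries of $(\widetilde Y,\tilde g)$. (c) Deck transformations of $\widetilde X$ preserve $\tilde d_{\delta}$: by construction of the length metric $d_\delta$ on $X$, the group $\pi_1(X,x_0)$ acts by isometries on $(\widetilde X,\tilde d_\delta)$, because it permutes the Riemannian leaves $\widetilde Y_i[g]_i$ and acts isometrically on each of them, and the identifications at preimages of $x_0$ are respected.

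Next, I would perform the computation directly. Starting from the definition
$$B_{c,h.y}^{\delta}(\lambda(h).x)=\int_{\widetilde Y}\bigl[\tilde d_\delta(\lambda(h).x,\tilde f(y'))\bigr]^{2}\,e^{-c\,\tilde d(h.y,y')}\,dv_{\tilde g}(y'),$$
I substitute $y'=h.y''$, invoking invariance of $dv_{\tilde g}$ under $h$, then use (a) to rewrite $\tilde f(h.y'')=\lambda(h).\tilde f(y'')$, (c) to cancel $\lambda(h)$ in the distance on $\widetilde X$, and (b) to cancel $h$ in the distance on $\widetilde Y$. The integrand collapses to $[\tilde d_\delta(x,\tilde f(y''))]^{2}\,e^{-c\,\tilde d(y,y'')}$ and we recover $B_{c,y}^{\delta}(x)$.

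The main conceptual point to justify carefully is ingredient (c), namely that the covering action on $(\widetilde X, \tilde d_\delta)$ is by isometries in the singular setting. This is the only place where the $CAT(0)$/length-metric structure enters in an essential way, and it is precisely where the argument departs from the smooth Riemannian proof of Lemma \ref{lemma_equivariance_f_epsilon}. A short remark explaining that $d_\delta$ is defined as the unique length metric making each $q|_{Y_i}$ an isometry, and that the covering projection $p:\widetilde X\rightarrow X$ is a local isometry of length spaces, will handle this. No genuine difficulty remains, since $\pi_1(X,x_0)$ coincides with the deck group and every deck transformation is a local isometry whose action permutes leaves; together with Proposition \ref{prop_bary_CAT0}, the uniqueness of the barycentre then closes the argument.
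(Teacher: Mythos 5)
Your proof is correct and takes essentially the same route as the paper: the paper's own proof is a one-line reference back to the irreducible-case computation of Lemma~\ref{lemma_equivariance_f_epsilon}, which reduces to verifying $B_{c,h.y}^{\delta}(\lambda(h).x)=B_{c,y}^{\delta}(x)$ and then invoking uniqueness of the barycentre from Proposition~\ref{prop_bary_CAT0}. You additionally make explicit the one ingredient that is genuinely new in the singular setting --- that $\pi_1(X,x_0)$ acts by isometries on $(\widetilde X,\tilde d_\delta)$ --- which the paper's terse proof leaves implicit.
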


\begin{proof}
	As in the irreducible case we have that $B_{c, h.y}^\delta(\lambda(h). x)=B_{c,y}^\delta(x)$, and thus we obtain the $\lambda$-equivariance of the natural maps.
\end{proof}

Since the maps $\widetilde F_{c,\delta}$ are $\lambda$-equivariant, they induce maps $F_{c,\delta}:Y\rightarrow X$ between the quotients. Now observe that:

\begin{lem}[Homotopy type]\label{lemma_homotopy_sing_space}
	For every $\delta>0$ and every $c>0$ the natural map $F_{c,\delta}$ is homotopic to $f$.
\end{lem}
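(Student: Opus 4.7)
The plan is to deduce the homotopy from the asphericity of $X$, exactly as in the smooth case treated earlier in the excerpt, once we verify that the singular target causes no trouble. Concretely, I would organize the argument in four steps.

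First, I would observe that the universal covering $(\widetilde X, \tilde d_\delta)$ is a $\mathrm{CAT}(0)$ space by Lemma \ref{lemma_CAT(0)}, hence contractible: the geodesic retraction $H:\widetilde X\times[0,1]\to\widetilde X$, $H(\tilde x, t)=\gamma_{\tilde x_0, \tilde x}(t)$ along the unique $CAT(0)$-geodesic from a fixed basepoint $\tilde x_0$ to $\tilde x$, is continuous and deformation retracts $\widetilde X$ onto $\{\tilde x_0\}$. Therefore $X$ is aspherical, i.e.\ a $K(\pi_1(X),1)$. Since $Y$ has the homotopy type of a CW complex, the classical theorem on maps into aspherical spaces gives a bijection
$$[Y,X]\;\longleftrightarrow\;\mathrm{Hom}\bigl(\pi_1(Y,y_0),\pi_1(X,x_0)\bigr)\big/\mathrm{Inn}\bigl(\pi_1(X,x_0)\bigr),$$
where a continuous map $\varphi:Y\to X$ corresponds to the conjugacy class of $\varphi_*$.

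Second, I need to know that $F_{c,\delta}$ descends to a \emph{continuous} map $Y\to X$. The map $(x,y)\mapsto B_{c,y}^\delta(x)$ is jointly continuous on $\widetilde X\times \widetilde Y$ by Lebesgue's Dominated Convergence Theorem: on compact sets of $(x,y)$ a dominating integrable function $f_R(y')=R^2(1+\tilde d_0(\tilde x_0,\tilde f(y')))^2 e^{-c\,\tilde d(y_0,y')}$ controls $\rho_{\delta,x}(\tilde f(y'))^2 e^{-c\,\tilde d(y,y')}$ (this is integrable thanks to $c>\ent(g)$ and compactness of $Y$). Given $y_k\to y$, extract a subsequence along which $\widetilde F_{c,\delta}(y_k)$ stays in a bounded set (this follows from the growth $B_{c,y}^\delta(x)\to\infty$ as $\tilde d_\delta(x,\tilde x_0)\to\infty$, uniformly in $y$ on compacta). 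Applying Proposition \ref{prop_bary_CAT0} (ii) to $\mu=\tilde f_*\mu_{c,y}$ at $x=\widetilde F_{c,\delta}(y_k)$ gives
$$\tilde d_\delta\bigl(\widetilde F_{c,\delta}(y),\widetilde F_{c,\delta}(y_k)\bigr)^2\,\mu_{c,y}(\widetilde Y)\le B_{c,y}^\delta\bigl(\widetilde F_{c,\delta}(y_k)\bigr)-B_{c,y}^\delta\bigl(\widetilde F_{c,\delta}(y)\bigr),$$
and the right-hand side tends to $0$ by joint continuity of $B^\delta_{c,\cdot}(\cdot)$ and the minimizing property $B_{c,y_k}^\delta(\widetilde F_{c,\delta}(y_k))\le B_{c,y_k}^\delta(\widetilde F_{c,\delta}(y))$. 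Hence $\widetilde F_{c,\delta}$ is continuous on $\widetilde Y$, and by the $\lambda$-equivariance of Lemma \ref{lemma_equivariance_sing_space} it descends to a continuous map $F_{c,\delta}:Y\to X$.

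Third, by Lemma \ref{lemma_equivariance_sing_space} the induced homomorphism $(F_{c,\delta})_*:\pi_1(Y,y_0)\to\pi_1(X,x_0)$ agrees with $\lambda=f_*$ (up to the conjugation ambiguity coming from the choice of a lift of $y_0$, which is exactly the ambiguity quotiented out by $\mathrm{Inn}(\pi_1(X))$). By the bijection of step one, $F_{c,\delta}$ and $f$ represent the same homotopy class, proving the lemma.

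The only delicate point is the continuity step: the singular target requires that the dominated convergence argument and the $CAT(0)$ quadratic-lower-bound estimate be used in place of the implicit function theorem, so I expect this to be the main obstacle in turning the plan into a full proof.
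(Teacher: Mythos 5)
Your proposal follows the paper's argument: the paper observes that $X$ is aspherical (its universal cover $\widetilde X$ being $\mathrm{CAT}(0)$, hence contractible) and concludes via the bijection $[Y,X]\cong\mathrm{Hom}(\pi_1(Y),\pi_1(X))/\mathrm{Inn}(\pi_1(X))$ together with the $\lambda$-equivariance of $\widetilde F_{c,\delta}$. Your second step, verifying continuity of $F_{c,\delta}$ via the quantitative lower bound of Proposition~\ref{prop_bary_CAT0}(ii), is a legitimate and welcome supplement: the paper's terse proof takes this for granted, and the $C^2$ regularity established in Lemma~\ref{lemma_regularity_barycentre_sing} is only stated on $\widetilde X\smallsetminus S_{x_0}$, so your argument fills a real gap.
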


\begin{proof}
The proof goes as in the irreducible case, once observed that $X$ is an aspherical $CW$-complex.
\end{proof}

Finally, we observe that:

\begin{lem}[Surjectivity]
	For every $\delta>0, c>\ent(g)$ the natural map $F_{c,\delta}$ is surjective.
\end{lem}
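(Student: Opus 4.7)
The strategy is to reduce the surjectivity of $F_{c,\delta}$ to a classical degree-theoretic argument, applied one prime summand at a time. For each $i=1,\dots,k$ let $r_i:X\rightarrow Y_i$ be the natural retraction onto the $i$-th wedge factor, namely the identity on $Y_i$ and the constant map to $x_0=y_i$ on every other factor $Y_j$, $j\neq i$; this is well-defined and continuous by the universal property of the wedge sum. First I would observe, using Lemma \ref{lemma_homotopy_sing_space}, that $r_i\circ F_{c,\delta}$ is homotopic to $r_i\circ f:Y\rightarrow Y_i$, so the two maps induce the same map on $H_3(\,\cdot\,,\mathbb Z)$.

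Next I would identify $\phi_i:=r_i\circ f$ as a degree $1$ map between closed orientable $3$-manifolds. By construction, $f$ collapses $Y\smallsetminus\bigcup_{j=1}^k\mathrm{int}(\bar Y_j)$ to $x_0$ and sends each $\bar Y_j$ inside the factor $Y_j\subset X$. Therefore $\phi_i$ sends $Y\smallsetminus\mathrm{int}(\bar Y_i)$ to the single point $y_i$ and coincides on $\bar Y_i$ with the quotient map $q_i:\bar Y_i\rightarrow \bar Y_i/\partial\bar Y_i\cong Y_i$ obtained by collapsing the boundary sphere $\partial\bar Y_i$ to a point. The homeomorphism $\bar Y_i/\partial\bar Y_i\cong Y_i$ comes from the decomposition $Y_i=\bar Y_i\cup B^3$ and the fact that crushing a tame $3$-ball in a closed $3$-manifold yields back a homeomorphic manifold. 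In particular, for any $p\in Y_i\smallsetminus\{y_i\}$, $\phi_i^{-1}(p)=q_i^{-1}(p)$ is a single point at which $\phi_i$ is a local homeomorphism, and choosing orientations in the natural way shows that $\deg(\phi_i)=\pm 1$.

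Once the degree has been computed, I would invoke the standard fact that a continuous map of non-zero degree between closed oriented manifolds of the same dimension is surjective. Applied to $r_i\circ F_{c,\delta}$, this yields $(r_i\circ F_{c,\delta})(Y)=Y_i$ for every $i=1,\dots,k$. Now fix any point $p\in Y_i\smallsetminus\{x_0\}$; since $r_i$ sends every factor $Y_j$ with $j\neq i$ to $x_0\neq p$, the preimage $r_i^{-1}(p)$ reduces to the single point $p\in Y_i$, and consequently any $y\in Y$ with $r_i(F_{c,\delta}(y))=p$ must in fact satisfy $F_{c,\delta}(y)=p$. Hence $Y_i\smallsetminus\{x_0\}\subset F_{c,\delta}(Y)$ for every $i$, and therefore $X\smallsetminus\{x_0\}\subset F_{c,\delta}(Y)$.

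Finally, since $Y$ is compact and $F_{c,\delta}$ is continuous, $F_{c,\delta}(Y)$ is a closed subset of $X$ containing the dense set $X\smallsetminus\{x_0\}$, so $F_{c,\delta}(Y)=X$. The only mildly delicate point in the whole argument is the identification $\bar Y_i/\partial\bar Y_i\cong Y_i$ together with the degree computation, and this is the step I would write out with some care; the rest is essentially a formal consequence of the equivariance and homotopy-type statements of Lemmata \ref{lemma_equivariance_sing_space} and \ref{lemma_homotopy_sing_space}.
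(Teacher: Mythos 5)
Your argument is essentially the paper's own: both proofs consider the projections $p_i:X\rightarrow Y_i$ (your $r_i$) collapsing the other wedge factors to $x_0$, use the homotopy $F_{c,\delta}\simeq f$ to deduce that $p_i\circ F_{c,\delta}$ has degree $1$, and then conclude surjectivity of $F_{c,\delta}$ from surjectivity of each $p_i\circ F_{c,\delta}$. The only cosmetic differences are that the paper phrases the degree computation in terms of $H_3(X,\mathbb Z)\cong\mathbb Z^k$ and the class $\alpha_X=f_*[Y]$, whereas you argue directly that $p_i\circ f$ is a quotient map collapsing a tame ball and hence a local homeomorphism of degree $\pm1$ away from $x_0$; and you spell out the final step (that $r_i^{-1}(p)=\{p\}$ for $p\neq x_0$ forces $Y_i\smallsetminus\{x_0\}\subset F_{c,\delta}(Y)$, and closedness of the image then yields $x_0$ as well) a little more explicitly than the paper does. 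Both are correct and the approach is the same.
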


\begin{proof}
Observe that $H_3(X,\mathbb Z)\cong\mathbb Z^k=(\iota_1)_*[Y_1]\mathbb Z\oplus\cdots\oplus(\iota_k)_*[Y_k]\,\mathbb Z$ where $\iota_i: Y_i\hookrightarrow X$ is the natural inclusion.  For every $i=1,..,k$ let us denote by $p_i: X\rightarrow Y_i$ the map sending $Y_i$ identically into $Y_i$ and $X\smallsetminus Y_i$ into $y_i$ (the point on $Y_i$ identified to $x_0$). Let us consider $\alpha_X=(\iota_1)_*[Y_1]\oplus\cdots\oplus(\iota_k)_*[Y_k]\in H_3(X,\mathbb Z)$, then $(p_i)_*(\alpha_X)=[p_i(X)]=[Y_i]\in H_3(Y_i,\mathbb Z)$. Moreover, we have that $f_*[Y]=\alpha_X$. On the other hand, for every $i=1,...,k$ the map $p_i\circ F_{c,\delta}$ is homotopic to the map $p_i\circ f$, hence $(p_i\circ F_{c,\delta})_*[Y]=[Y_i]$ ---{\it i.e.} $p_i\circ F_{c,\delta}$ has degree $1$, which implies the surjectivity of the map $p_i\circ F_{c,\delta}$. Since this holds for every $i=1,...,k$ and the space $X$ is obtained by identifying for $i=1, \dots, k$ the points $\{y_i\}$ to $x_0$ we conclude that $F_{c,\delta}$ is surjective.
\end{proof}

\subsection{Regularity}
In this paragraph we shall be concerned with derivatives of the $\widetilde F_{c,\delta}$'s wherever this makes sense. Notice that $\widetilde X$ has a Riemannian manifold structure outside the set $S_{x_0}=p^{-1}(x_0)$. Hence, it makes sense to consider derivatives outside this set. Notice that, since we are dealing with a singular space, we have to face some difficulties concerning the regularity of the distance function. In this section we shall explain how this issue can be circumvented.
We start proving the following lemma.
\begin{lem}[Regularity] \label{lemma_regularity_barycentre_sing}
	The function $B_{c,y}^\delta(x)$ is $C^{2}$ on $\widetilde{X} \setminus S_{x_0}$.
\end{lem}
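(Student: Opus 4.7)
Fix $x_\ast \in \widetilde X \setminus S_{x_0}$. Then $x_\ast$ lies in the interior of a unique leaf $L_0 = \widetilde Y_i[g]_i$, which is a complete, simply connected Riemannian manifold carrying the non-positively curved metric $\tilde g_\delta^i$ (with sectional curvature bounded below by $-k^2$ for some $k>0$, by the construction of Chapter~\ref{section_a_conjectural_minimizing_sequence_irreducible}). Since $L_0 \cap S_{x_0}$ is a discrete orbit, $r_0 := \inf_{q \in L_0 \cap S_{x_0}} \tilde d_\delta(x_\ast, q) > 0$, and I set $U := B_{\widetilde X}(x_\ast, r_0/2) \subset L_0 \setminus S_{x_0}$. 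The main structural input is a local formula for the distance function. Since $\widetilde X$ is simply connected and its leaves are glued in a tree-like pattern along isolated points of $S_{x_0}$, there is a unique finite chain of leaves joining $L_0$ to the leaf containing any given $z \in \widetilde X$. Consequently, for every $z \in \widetilde X$ there exists a well-defined $q_1(z) \in \bar L_0$, depending only on $z$ and not on $x \in U$, such that
\begin{equation*}
\tilde d_\delta(x, z) \;=\; d_{L_0}(x, q_1(z)) + C(z), \qquad x \in U,
\end{equation*}
where $C(z) := \tilde d_\delta(q_1(z), z) \ge 0$, with $q_1(z) = z$, $C(z) = 0$ when $z \in L_0$, and $q_1(z) \in L_0 \cap S_{x_0}$, $C(z)>0$ otherwise.

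Given this formula, the integrand $x \mapsto \tilde d_\delta(x, z)^2$ is $C^\infty$ on $U$ for every $z$: when $z \in L_0$, because $d_{L_0}(\cdot, z)^2$ is smooth on the whole non-positively curved (hence cut-locus-free) manifold $L_0$; when $z \notin L_0$, because
\begin{equation*}
\tilde d_\delta(x, z)^2 = d_{L_0}(x, q_1(z))^2 + 2 C(z)\, d_{L_0}(x, q_1(z)) + C(z)^2
\end{equation*}
is smooth on $L_0 \setminus \{q_1(z)\}$, and $q_1(z) \notin U$ by construction. The identity $\tfrac12 \mathrm{Hess}\,\tilde d_\delta^2 = d\tilde d_\delta \otimes d\tilde d_\delta + \tilde d_\delta \cdot \mathrm{Hess}\,\tilde d_\delta$, together with the Rauch bound $\|\mathrm{Hess}_x d_{L_0}(\cdot, q)\| \le k\,\coth(k\, d_{L_0}(x, q))$ valid in $L_0$, then yields the $z$-uniform estimates
\begin{equation*}
|\nabla_x \tilde d_\delta(x, z)^2| \le 2\,\tilde d_\delta(x, z), \qquad |\mathrm{Hess}_x \tilde d_\delta(x, z)^2| \le C_1 + C_2\, \tilde d_\delta(x, z), \quad x \in U,
\end{equation*}
with $C_1, C_2$ depending only on $k$ and $r_0$, since $d_{L_0}(x, q_1(z)) \ge r_0/2$ for every $z$ and every $x \in U$.

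To conclude, I would translate these bounds to the integration variable $y' \in \widetilde Y$ via $z = \tilde f(y')$: since $\tilde f$ is Lipschitz with some constant $K$, $\tilde d_\delta(x, \tilde f(y')) \le D_0 + K \tilde d(y, y')$ for $x \in U$, with $D_0$ depending on $U$ and $y$. The elementary estimate $\int_{\widetilde Y} \tilde d(y, y')^p\, e^{-c\, \tilde d(y, y')}\,dv_{\tilde g}(y') < +\infty$ for every $p\ge 0$ and $c > \ent(g)$ (a standard consequence of the integral characterization of entropy recalled in Section~\ref{section_integral_characterization}) provides integrable dominating functions. Lebesgue's dominated convergence then permits differentiation under the integral sign up to second order in the definition of $B_{c,y}^\delta$, and the resulting derivatives inherit continuity in $x$ from that of $\nabla_x \tilde d_\delta(x, z)^2$ and $\mathrm{Hess}_x \tilde d_\delta(x, z)^2$. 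Arbitrariness of $x_\ast$ yields $C^2$ regularity on all of $\widetilde X \setminus S_{x_0}$.

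\textbf{Main obstacle.} The critical step is the structural observation that $q_1(z)$ is independent of $x \in U$: this is what makes the Hessian bound uniform in $z$, since otherwise the term $C(z)\cdot\mathrm{Hess}\,d_{L_0}(x,q_1(z))$ would blow up whenever $q_1(z)$ approached $x$ while $C(z)$ stayed bounded away from zero. Once this is established, everything else—the Rauch comparison, dominated convergence, and smoothness of $d^2$ in a complete NPC manifold—is classical. Articulating the tree-like combinatorics of $\widetilde X$ carefully (uniqueness of the chain of leaves joining any two points, which follows from simple connectedness and the fact that leaves meet only at points) is thus the heart of the argument.
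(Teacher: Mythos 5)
Your proof is correct and follows essentially the same route as the paper: the decisive structural input is identical (the tree-like gluing of leaves guarantees that $\tilde d_\delta(x,z)$ equals, for $x$ confined to the leaf $L_0$, a constant $C(z)$ plus the smooth Riemannian distance in $L_0$ to a fixed point $q_1(z)\in \bar L_0$ — the paper writes this as $\tilde\rho_{\delta,z}(x)=K+\tilde d_\delta(x,P_{j,g'}(z))$ using nearest-point projections onto the convex leaves), and the conclusion is drawn via Lebesgue's dominated convergence. You merely flesh out the domination bounds (which the paper leaves implicit, referring to the analogous computations already carried out in the irreducible case), and this is harmless.
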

Since $B_{c,y}^\delta$ is obtained by integrating $(\tilde\rho_{\delta,x})^2$,  Lemma \ref{lemma_regularity_barycentre_sing} is a consequence of the next Lemma, concerning the regularity of the function $\tilde{\rho}_{\delta, z}=\tilde d_\delta(z,\cdot)$, and of Lebesgue's Dominated Convergence Theorem.
\begin{lem}
	Let $z \in \widetilde{X}$ be any point. The function $\tilde \rho_{\delta,z}: \widetilde{X} \longrightarrow \mathbb{R}^+$ is $C^{2}$ on $\widetilde{X}\smallsetminus S_{x_0}.$
\end{lem}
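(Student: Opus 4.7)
The plan is to exploit the singular CAT(0) structure of $(\widetilde X,\tilde d_\delta)$ in order to express $\tilde\rho_{\delta,z}$ locally, near any regular point, as the sum of a constant and a Riemannian distance function in a single leaf, and then to invoke the standard regularity of Riemannian distance functions on Hadamard manifolds.

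First I would fix a point $x\in\widetilde X\smallsetminus S_{x_0}$. Since the leaves meet only at singular points, $x$ lies in the interior of a unique leaf $\widetilde Y_i[g_0]_i$ and admits an open neighbourhood $U$ entirely contained in this leaf, on which the singular distance $\tilde d_\delta$ reduces to the Riemannian distance $\tilde d^{(i)}_\delta$ associated with $\tilde g^i_\delta$. By Lemma \ref{lemma_CAT(0)} there is a unique CAT(0) geodesic $\gamma\colon[0,T]\to\widetilde X$ from $z$ to $x$; since it has finite length and transitions between leaves can occur only at the discrete set $S_{x_0}$, the geodesic $\gamma$ meets only finitely many singular points. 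Let $p$ denote the last such singular point along $\gamma$, with the convention $p=z$ in the degenerate case where $z$ and $x$ lie in the same leaf (so $\gamma$ stays entirely in that leaf).

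The key step, which I expect to be the main technical obstacle, is a \emph{combinatorial stability} statement: for every $y\in U$ sufficiently close to $x$, the unique CAT(0) geodesic from $z$ to $y$ has the same last singular point $p$. To see this, one builds a natural candidate by concatenating $\gamma|_{[0,t_0]}$ (where $\gamma(t_0)=p$) with the unique $\tilde g^i_\delta$-geodesic from $p$ to $y$ inside $\widetilde Y_i[g_0]_i$; this candidate has length $\tilde d_\delta(z,p)+\tilde d^{(i)}_\delta(p,y)$. It is indeed a CAT(0) geodesic: both pieces are geodesics in their leaves, and at the singular point $p$ the tangent cone is the wedge of the tangent spaces of the leaves meeting at $p$, in which two unit vectors lying in distinct leaves are at distance $2$, so the comparison angle at $p$ between the incoming and outgoing directions (which lie in different leaves, by the choice of $p$) equals $\pi$. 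Uniqueness of CAT(0) geodesics then identifies this candidate with $\gamma_y$, provided no shorter path entering the leaf of $y$ through a different singular point $p'\neq p$ exists. If it did, one could extract a sequence $y_n\to x$ whose geodesics have last singular points $p_n\neq p$; either $p_n$ stays bounded and a subsequence converges to some $p_\infty\neq p$, producing by continuity a second CAT(0) geodesic from $z$ to $x$ through $p_\infty$ of the same length as $\gamma$ and contradicting uniqueness, or $\tilde d_\delta(z,p_n)\to\infty$, forcing $\tilde\rho_{\delta,z}(y_n)\to\infty$ and contradicting the continuity of $\tilde\rho_{\delta,z}$ at $x$.

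Once combinatorial stability is established, on $U$ we have the identity
\[
\tilde\rho_{\delta,z}(y)=\tilde d_\delta(z,p)+\tilde d^{(i)}_\delta(p,y).
\]
The first summand is constant in $y$, so the regularity of $\tilde\rho_{\delta,z}$ on $U$ reduces to that of the Riemannian distance function from the fixed point $p$ in $(\widetilde Y_i,\tilde g^i_\delta)$. Since $\tilde g^i_\delta$ is $C^2$ and non-positively curved, $(\widetilde Y_i,\tilde g^i_\delta)$ is a Hadamard manifold: the exponential map $\exp_p$ is a $C^2$ diffeomorphism from $T_p\widetilde Y_i$ onto the leaf, no cut locus occurs, and consequently $\tilde d^{(i)}_\delta(p,\,\cdot\,)$ is $C^2$ on $\widetilde Y_i\smallsetminus\{p\}$. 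Shrinking $U$ if necessary to avoid $p$ (and, in the degenerate case $p=z$, to avoid $z$ as well), this yields the $C^2$ regularity of $\tilde\rho_{\delta,z}$ on $U$, and since $x$ was arbitrary in $\widetilde X\smallsetminus S_{x_0}$, the claim follows.
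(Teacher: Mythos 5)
Your proof is correct and reaches the same local normal form as the paper's, namely that on a sufficiently small neighbourhood $U$ of a regular point inside a single leaf $\widetilde Y_j[g']_j$ one has $\tilde\rho_{\delta,z}(y)=K+\tilde d^{(j)}_\delta(p,y)$ for a constant $K$ and a fixed singular point $p$ in that leaf, after which the $C^2$ regularity is just the regularity of the Riemannian distance function on a non-positively curved $C^2$ leaf. The difference is in how you produce the fixed point $p$ and justify the constancy of $K$. You do it by introducing the ``last singular point'' of the CAT(0) geodesic from $z$ and proving a combinatorial stability lemma (nearby points have geodesics with the same last singular point), using a compactness/limiting argument for the dichotomy. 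The paper instead invokes the nearest-point projections $P_{i,g}$, $P_{j,g'}$ onto the convex leaves $\widetilde Y_i[g]_i$, $\widetilde Y_j[g']_j$, writes $\tilde d_\delta(z,x)=\tilde d_\delta(z,P_{i,g}(x))+\tilde d_\delta(P_{i,g}(x),P_{j,g'}(z))+\tilde d_\delta(P_{j,g'}(z),x)$, and observes directly that $P_{i,g}(x)$ and $P_{j,g'}(z)$ are the (fixed) singular entry points of those leaves, so the first two terms are constant as $x$ ranges over $\widetilde Y_j[g']_j$. Your $p$ is exactly $P_{j,g'}(z)$. The projection route is shorter and avoids the Arzelà--Ascoli-style argument entirely, since constancy of the entry point is built into the CAT(0) projection onto a convex subset; it also gives the global statement on the whole leaf at once rather than only on a shrunken neighbourhood $U$. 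Your route is still sound, because $S_{x_0}=\pi_1(X).\tilde x_0$ is discrete so the limit $p_\infty$ in your dichotomy is eventually constant and the contradiction with uniqueness of CAT(0) geodesics goes through; the only minor inefficiency is that you end up shrinking $U$ to avoid $p$, which is harmless but not needed in the paper's version.
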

\begin{proof}
	Let $x\in\widetilde{X} \smallsetminus S_{x_0}$. Assume that $z \in \widetilde{Y}_i[g]_i$ and $x \in \widetilde{Y}_j[g']_j$. Since $\widetilde Y_i[g]_i$, $\widetilde Y_j[g']_j$ are convex subsets of $(\widetilde X,\tilde d_{\delta})$ we can define $P_{i,g}$, $P_{j,g'}$ as the projections on these subsets. By construction it is readily seen that $P_{i,g}(x)= (g\cdot h_i). \tilde x_0$  and $P_{j, g'}(z)=(g'\cdot h_j).\tilde x_0$, for suitably chosen $h_i$, $h_j$ (depending only on $\widetilde Y_i[g]_i$, $\widetilde Y_j[g']_j$). Then,
	$$\tilde\rho_{\delta,z}(x)=\tilde d_{\delta}(x,z)=\tilde d_\delta(z, P_{i,g}(x))+\tilde d_{\delta}(P_{i,g}(x), P_{j,g'}(z))+\tilde d_{\delta}(P_{j,g'}(z), x).$$
	Notice that, once $z$ has been fixed, the first summand $\tilde d_\delta(z, P_{i,g}(x))$ and the second summand $\tilde d_{\delta}(P_{i,g}(x), P_{j,g'}(z))$  are constant as long as $x$ varies in $\widetilde Y_j[g']_j$. Moreover, by construction, the restriction of $\tilde d_\delta$ to $\widetilde Y_{i}[g]_i$ is equal to the Riemannian distance function on $(\widetilde Y_i, \tilde g_\delta^i)$ (and similarly the restriction of $\tilde d_{\delta}$ to $\widetilde Y_j[g']_j$). 
	Hence, there exists a constant $K=K(z, g, g')$ such that
	\begin{equation}\label{eq_tigger}
	\tilde\rho_{\delta,z}(x)=K+\tilde d_{\delta}(x, P_{j,g'}(z))
	\end{equation}
	for every point $x\in\widetilde Y_j[g']_j$. By this remark, it is clear that we can take derivatives up to the second order (recall that the Riemannian metrics $\tilde g_{\delta}^i$'s are $C^2$) of $\tilde\rho_{\delta,z}$ at every point $x\in \widetilde X\smallsetminus S_{x_0}$, using the Levi-Civita connection with respect to the metric $\tilde g_\delta^j$ of the appropriate leaf $\widetilde Y_j[g']_j$ to which $x$ belongs.
\end{proof}

Since we arleady know that for every $y\in\widetilde Y$ the minimum of $B_{c,y}^\delta$ is attained at unique point, {\it i.e.} the barycentre $\widetilde F_{c,\delta }(y)$ of the measure $\tilde f_*\mu_{c,y}$,  Lemma \ref{lemma_regularity_barycentre_sing} allows us to give the following characterization of $\widetilde F_{c,\delta}(y)$ when $\widetilde F_{c,\delta}(y)\in\widetilde X\smallsetminus S_{x_0}$.
The proofs of the following Lemmata \ref{lem_implicit_equation_manifold_point_sing}, \ref{lemma_derivatives_of_the_implicit_function_sing} and \ref{lemma_Jacobian_sing} follow exactly the same line as those of Lemmata \ref{lemma_implicit_equations_f_epsilon}, \ref{lemma_derivatives_of_the_implicit_function} and \ref{lemma_computation_jacobian} in Chapter \ref{section_a_lower_bound_irreducible}.

\begin{lem}[The implicit equation at manifold points] \label{lem_implicit_equation_manifold_point_sing}
	For every $\delta>0$, $c>\ent(g)$ and every $y\in\widetilde Y$ such that $x=\widetilde F_{c,\delta}(y)\in \widetilde X\smallsetminus S_{x_0}$, the point $x$ is characterized by the equation:
	\small
	\begin{equation}\label{eq:char_barycenter_at_manifold_points}
	 \int_{\widetilde X}\tilde\rho_{\delta, z}(x)d(\tilde\rho_{\delta, z})_x(v) d(\tilde f_*\mu_{c,y})(z)=0,\qquad\forall v\in T_x\widetilde X
	\end{equation}
	\normalsize
	In other words, the map $\widetilde F_{c,\delta}$ is univocally determined on the complement of the preimage of the singular set $S_{x_0}$ by the vectorial implicit equation $G_{c,\delta}(\widetilde F_{c,\delta}(y),y)=0$, where $G_{c,\delta}=(G_{c,\delta}^i): (\widetilde{X}\smallsetminus S_{x_0}) \times \widetilde{Y} \longrightarrow \mathbb R^n$ is defined as
	\small
	$$ G_{c,\delta}^i(x,y)= \frac{1}{2} \int_{\widetilde Y} d_x [(\tilde\rho_{\delta, \tilde f(y')}]^2(E_i) e ^{-c \tilde d(y,y')} dv_{\tilde g}(y'),$$
	\normalsize
	where $\{E_i\}_{i=1}^n$ is a global $\tilde g_\delta^j$-orthonormal basis for the tangent space $T_{x} \widetilde{X}$ where $x\in\widetilde Y_j[g']_j$ for some $j$, and $\rho_{\delta,\tilde f(y')}(\cdot)= \tilde d_{\delta}(\tilde f(y'), \cdot)$.
\end{lem}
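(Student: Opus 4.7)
The plan is to transpose the argument of Lemma \ref{lemma_implicit_equations_f_epsilon} to the singular setting, observing that the singularities of $\widetilde X$ play no role as long as we stay on the open set $\widetilde X\smallsetminus S_{x_0}$ where $\widetilde F_{c,\delta}$ lands.

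First I would note that, by Proposition \ref{prop_bary_CAT0}, the point $x=\widetilde F_{c,\delta}(y)$ is the unique minimum of $B_{c,y}^\delta$ on the $CAT(0)$ space $(\widetilde X,\tilde d_\delta)$; in particular $x$ is a local minimum. Since we are assuming $x\in\widetilde X\smallsetminus S_{x_0}$, the point $x$ lies on a unique leaf $\widetilde Y_j[g']_j$, which is open in $\widetilde X\smallsetminus S_{x_0}$ and carries a genuine $C^2$ Riemannian structure $\tilde g_\delta^j$. By Lemma \ref{lemma_regularity_barycentre_sing} the function $B_{c,y}^\delta$ is $C^2$ near $x$, so the minimality forces $(dB_{c,y}^\delta)_x(v)=0$ for every tangent vector $v\in T_x\widetilde X$.

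Next I would differentiate $B_{c,y}^\delta$ under the integral sign to obtain (\ref{eq:char_barycenter_at_manifold_points}). For a fixed $v\in T_x\widetilde X$ and a geodesic $x(t)$ in $(\widetilde Y_j[g']_j,\tilde g_\delta^j)$ with $x(0)=x$, $\dot x(0)=v$, Equation (\ref{eq_tigger}) gives, for every $z$ in any fixed leaf,
\[
\tilde\rho_{\delta,z}(x(t))=K(z)+\tilde d_\delta(x(t),P(z)),
\]
where $P$ is the projection onto the leaf containing $x$. Hence $t\mapsto\tilde\rho_{\delta,z}(x(t))^2$ is $C^2$ with derivative $2\,\tilde\rho_{\delta,z}(x(t))\cdot d_{x(t)}(\tilde\rho_{\delta,z})(\dot x(t))$, and the usual quadratic estimate
\[
\bigl|\tilde\rho_{\delta,z}(x(t))^2-\tilde\rho_{\delta,z}(x)^2\bigr|/|t|\le\bigl(2\tilde\rho_{\delta,z}(x)+o(1)\bigr)\cdot|\dot x(0)|_{\tilde g_\delta^j},
\]
together with $c>\mathrm{Ent}(g)$, provides an integrable $L^1(\widetilde Y,dv_{\tilde g})$-dominating function (exactly as in the proof of Lemma \ref{lemma_implicit_equations_f_epsilon}, since $\tilde\rho_{\delta,z}(\tilde f(y'))\,e^{-c\tilde d(y,y')}$ is integrable). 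Lebesgue's Dominated Convergence Theorem then lets us pass the derivative through the integral, yielding
\[
0=(dB_{c,y}^\delta)_x(v)=2\int_{\widetilde X}\tilde\rho_{\delta,z}(x)\,(d\tilde\rho_{\delta,z})_x(v)\,d(\tilde f_*\mu_{c,y})(z).
\]
Applying this with $v=E_i$ and recalling that $d(\tilde\rho_{\delta,z}^2)_x=2\tilde\rho_{\delta,z}(x)(d\tilde\rho_{\delta,z})_x$ gives the vectorial implicit equation $G_{c,\delta}(x,y)=0$ of the statement, and the uniqueness of $x$ as a minimum of $B_{c,y}^\delta$ ensures that this equation characterizes $\widetilde F_{c,\delta}(y)$ on $\widetilde X\smallsetminus S_{x_0}$.

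The only substantive issue, compared with the smooth case, is verifying the domination bound for the difference quotient of $\tilde\rho_{\delta,z}^2$: this could look delicate because geodesics from $x$ to $z$ may cross several singular points of $S_{x_0}$. However, formula (\ref{eq_tigger}) reduces the variation of $\tilde\rho_{\delta,z}$ near $x$ to the smooth Riemannian distance on the single leaf $\widetilde Y_j[g']_j$, so the singularities disappear from the estimate and the classical argument of Lemma \ref{lemma_implicit_equations_f_epsilon} goes through verbatim. This is the main (modest) obstacle; once it is handled, the proof is complete.
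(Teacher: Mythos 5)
Your proposal is correct and follows essentially the same approach the paper takes: the paper itself declines to spell out the argument, remarking only that the proofs of Lemmata \ref{lem_implicit_equation_manifold_point_sing}, \ref{lemma_derivatives_of_the_implicit_function_sing} and \ref{lemma_Jacobian_sing} ``follow exactly the same line'' as those of Lemmata \ref{lemma_implicit_equations_f_epsilon}, \ref{lemma_derivatives_of_the_implicit_function} and \ref{lemma_computation_jacobian}. You correctly identify the one point that must be checked in the singular setting, namely that the key domination estimate in the proof of Lemma \ref{lemma_implicit_equations_f_epsilon} still holds near a non-singular $x$, and you correctly observe that Equation (\ref{eq_tigger}) reduces $\tilde\rho_{\delta,z}$ locally to a constant plus the \emph{smooth} Riemannian distance on the single leaf $\widetilde Y_j[g']_j$ containing $x$, so the classical Lebesgue domination argument goes through unchanged.
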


Since $x\in\widetilde X\smallsetminus S_{x_0}$ we are allowed to take derivatives (up to the first order) of the function $G_c(x,y)$. Hence, for those $y\in \widetilde Y$ such that $x=\widetilde F_{c,\delta}(y)\in\widetilde X\smallsetminus S_{x_0}$ we have:

\begin{lem} [Derivatives of the implicit function $G_{c,\delta}$] \label{lemma_derivatives_of_the_implicit_function_sing}
	Using the notation introduced above, the map  $G_{c,\delta}: (\widetilde{X}\smallsetminus S_{x_0})\times \widetilde{Y} \rightarrow \mathbb{R}^n$ is $C^1$ and for $x=\widetilde F_{c,\delta}(y)\in\widetilde X\smallsetminus S_{x_0}$, the differential $(dG_{c,\delta})_{(x,y)}|_{T_x\widetilde{X}}$ is non-singular. Hence, for $x=\widetilde F_{c,\delta}(y)\in\widetilde X\smallsetminus S_{x_0}$ the map $\widetilde F_{c,\delta}: \widetilde{Y} \rightarrow \widetilde{X}$ is $C^1$ and satisfies:
	$$ d_y\widetilde F_{c,\delta}= -\left( dG_{c,\delta}\right)_{(x,y)} |^{-1}_{T_x \widetilde{X} }\circ \left(dG_{c,\delta}\right)_{(x,y)}|_{T_y \widetilde{Y}}.$$
	Furthermore, if $u \in T_y \widetilde{Y}$ and $v \in T_x \widetilde{X}$ we have:
	\small
	$$
	(dG_{c,\delta}^i)_{(x,y)}(u)= - c \int_{\widetilde{Y}}\tilde \rho_{\delta,\tilde f(y')}(x) \cdot d_x ( \tilde \rho_{\delta,\tilde f(y')}(E_i))\cdot d_y \tilde\rho_{y'}(u) e^{-c\,\tilde d(y,y')}dv_{\tilde g}(y')
	$$
	$$ (dG_{c}^i)_{(x,y)}(v)= \frac{1}{2} \int_{\widetilde{Y}} Dd_x[\tilde \rho_{\delta,\tilde f(y')}]^2(v,E_i) e^{-c\, \tilde d(y,y')}dv_{\tilde g}(y')$$
	\normalsize	
\end{lem}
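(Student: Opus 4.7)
The plan is to follow the argument used in Lemma \ref{lemma_derivatives_of_the_implicit_function} for the smooth setting, localizing the computation to a neighbourhood of $x=\widetilde F_{c,\delta}(y)\in\widetilde X\smallsetminus S_{x_0}$ where $\widetilde X$ carries a genuine Riemannian structure. By the argument of Lemma \ref{lemma_regularity_barycentre_sing}, for every fixed $z\in\widetilde X$ the function $\tilde\rho_{\delta,z}$ is $C^2$ on an open neighbourhood of $x$ inside the leaf $\widetilde Y_j[g']_j$ to which $x$ belongs, and coincides there, up to the additive constant $K=K(z,g,g')$ of formula (\ref{eq_tigger}), with the Riemannian distance on $(\widetilde Y_j,\tilde g_\delta^j)$ from the projected point $P_{j,g'}(z)$. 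In particular, all derivatives taken below can be meaningfully computed via the Levi-Civita connection of the leaf.

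First I would justify differentiating $G_{c,\delta}^i(\cdot,y)$ in the $x$-variable. Let $v\in T_x\widetilde X$ and let $x(t)$ be the geodesic in $(\widetilde Y_j[g']_j,\tilde g_\delta^j)$ with $x(0)=x$, $\dot x(0)=v$. The identity $\tfrac12 Dd\tilde\rho_{\delta,z}^2=\tilde\rho_{\delta,z}\, Dd\tilde\rho_{\delta,z}+d\tilde\rho_{\delta,z}\otimes d\tilde\rho_{\delta,z}$ combined with the uniform Rauch-type bound $\|Dd\tilde\rho_{\delta,z}\|\le k$ (which holds on the leaf because $\tilde g_\delta^j$ has sectional curvature bounded below by $-(1+\delta)^2$, cf.\ Chapter \ref{section_a_conjectural_minimizing_sequence_irreducible}) produces a Lagrange-type majorant $1+(k+\|D_v E_i\|+\eta)\,\tilde d_\delta(\tilde f(y'),x)$ for the difference quotients of $d_x[\tilde\rho_{\delta,\tilde f(y')}]^2(E_i)$. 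Since $f$ does not increase distances we have $\tilde d_\delta(\tilde f(y'),x)\le \tilde d(y,y')+\tilde d_\delta(\tilde f(y),x)$, and the hypothesis $c>\ent(g)$, together with the integral characterisation of the entropy of \S\ref{section_integral_characterization}, shows that this majorant times $e^{-c\,\tilde d(y,y')}$ is in $L^1(\widetilde Y, dv_{\tilde g})$. Lebesgue's Dominated Convergence Theorem then authorises the exchange of limit and integral, and the implicit equation (\ref{eq:char_barycenter_at_manifold_points}) of Lemma \ref{lem_implicit_equation_manifold_point_sing} at $x=\widetilde F_{c,\delta}(y)$ makes the term $\big(d[\tilde\rho_{\delta,\tilde f(y')}]^2\big)_x(D_v E_i)$ integrate to zero, yielding the second displayed formula.

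Differentiation in $y$ is strictly analogous to the smooth case. Letting $y(t)$ be the geodesic in $(\widetilde Y,\tilde g)$ with $\dot y(0)=u$, the elementary bound $|e^{-c\tilde d(y(t),y')}-e^{-c\tilde d(y,y')}|/|t|\le 2(c+\eta)\,e^{-c\tilde d(y,y')}$ valid for small $t$, multiplied by $|d_x[\tilde\rho_{\delta,\tilde f(y')}]^2(E_i)|\le 2\,\tilde d_\delta(\tilde f(y'),x)$, furnishes again an $L^1$ dominant for $c>\ent(g)$, and Dominated Convergence delivers the first displayed formula. The non-singularity of $(dG_{c,\delta})_{(x,y)}|_{T_x\widetilde X}$ follows from the strict convexity of each $\tilde\rho_{\delta,z}^2$, a consequence of $(\widetilde X,\tilde d_\delta)$ being $CAT(0)$ (Lemma \ref{lemma_CAT(0)}): at the manifold point $x$ the Hessian computed in the leaf via (\ref{eq_tigger}) reduces to the Hessian of the Riemannian squared distance on $(\widetilde Y_j,\tilde g_\delta^j)$ from $P_{j,g'}(z)$, hence is positive definite. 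Averaging positive-definite quadratic forms against the positive measure $\mu_{c,y}$ preserves positive-definiteness, so $(dG_{c,\delta})_{(x,y)}|_{T_x\widetilde X}$ is invertible. The Implicit Function Theorem applied to $G_{c,\delta}(\widetilde F_{c,\delta}(y),y)=0$ then gives the $C^1$ regularity of $\widetilde F_{c,\delta}$ at $y$ and the displayed expression for $d_y\widetilde F_{c,\delta}$.

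The main obstacle is producing an integrable dominating function for the difference quotients despite the singular geometry of the target: the Rauch bound $\|Dd\tilde\rho_{\delta,z}\|\le k$ is a statement about a single Riemannian leaf, whereas $z$ may lie on a completely different leaf of $\widetilde X$. The device that overcomes this is exactly (\ref{eq_tigger}): once $x$ and a variation $x(t)$ are confined to the smooth leaf $\widetilde Y_j[g']_j$, the function $\tilde\rho_{\delta,z}$ is an additive constant plus a Riemannian distance computed entirely within that leaf, so the curvature bounds of $(Y_j,g_\delta^j)$ alone suffice to control its Hessian. After this reduction, every step is a verbatim adaptation of the proof of Lemma \ref{lemma_derivatives_of_the_implicit_function}.
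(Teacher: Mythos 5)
Your proof is correct and takes essentially the same approach as the paper. The paper in fact gives no standalone proof of this lemma, stating only that it ``follows exactly the same line'' as Lemma \ref{lemma_derivatives_of_the_implicit_function} in the smooth case; your write-up supplies precisely the missing details, and the key reduction you identify --- namely that Equation (\ref{eq_tigger}) lets one confine the computation to a single Riemannian leaf $\widetilde Y_j[g']_j$, where $\tilde\rho_{\delta,z}$ differs from a genuine Riemannian distance only by the additive constant $K(z,g,g')$, so that the Rauch-type Hessian bound and the Lebesgue dominated-convergence argument transfer verbatim --- is exactly the device tacitly invoked by the paper.

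Two small imprecisions worth noting, neither of which harms the argument. First, you assert that ``$f$ does not increase distances''; there is no reason for $f\colon(Y,g)\rightarrow(X,d_\delta)$ to be $1$-Lipschitz when $g$ is arbitrary. What is true, and what the paper implicitly uses already in the smooth case, is that the $\lambda$-equivariant lift $\tilde f$ between a space with cocompact $\pi_1(Y)$-action and one with cocompact $\lambda(\pi_1(Y))$-action is coarsely Lipschitz; this still yields the needed $L^1$ dominant for $c>\ent(g)$. Second, the Hessian of $\tilde\rho_{\delta,z}^2$ at $x$ does not literally ``reduce to'' the Hessian of $(\tilde\rho^j_{\delta,P_{j,g'}(z)})^2$: since $\tilde\rho_{\delta,z}=K+\tilde\rho^j_{\delta,P_{j,g'}(z)}$ with $K\ge 0$, one has $Dd[\tilde\rho_{\delta,z}^2]=2K\,Dd\tilde\rho^j_{\delta,P_{j,g'}(z)}+Dd[(\tilde\rho^j_{\delta,P_{j,g'}(z)})^2]$, a nonnegative combination of a positive-semidefinite and a positive-definite form on a nonpositively curved leaf, so positive definiteness --- and hence the non-singularity of $(dG_{c,\delta})|_{T_x\widetilde X}$ --- still follows.
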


Finally, we have the following estimate of the Jacobian of the map $\widetilde F_{c,\delta}$ at those points $y\in \widetilde Y$ such that $\widetilde F_{c,\delta}(y)\in\widetilde X\smallsetminus S_{x_0}$:

\begin{lem} [Computation of the Jacobian of $\widetilde F_{c,\delta}$ ] \label{lemma_Jacobian_sing}
	Let us fix $y \in \widetilde{Y}$ such that  $x= \widetilde F_{c,\delta}(y)\in\widetilde X\smallsetminus S_{x_0}$ and define  $$\nu_{c,y}^\delta(y')= \tilde d_\delta(\tilde f(y),\tilde f(y'))\cdot \mu_{c,y}(y') .$$ Furthermore, let us introduce the positive quadratic forms $k_{c,\delta,y}^X$, $h_{c,\delta, y}^X$ and $h_{c,\delta,y}^Y$ defined on $T_x \widetilde{X}$, $T_x \widetilde{X}$ and $T_y \widetilde{Y}$ respectively. \\
	
	\noindent $k_{c,\delta, y}^X (v,v)= \frac{1}{\nu_{c,y}^\delta(\widetilde{Y})} \int_{\widetilde{Y}} \frac{1}{2} Dd_x [\tilde\rho_{\delta, \tilde f(y')}]^2(v,v) d\mu_{c,y}(y'), \;\;\qquad \forall v \in T_x \widetilde{X}$\\
	
	\noindent $h_{c,\delta,y}^X(v,v)= \frac{1}{\nu_{c,y}^\delta(\widetilde{Y})} \int_{\widetilde{Y}}d_x [\tilde\rho_{\delta,\tilde f(y')}]^2(v,v) d\nu_{c,y}^\delta(y'),\qquad \qquad\forall v \in T_x \widetilde{X}$\\
	
	\noindent $  h_{c,\delta, y}^Y(u,u)=  \frac{1}{\nu_{c,y}^\delta(\widetilde{Y})} \int_{\widetilde{Y}} d_y [(\tilde\rho)_{y'}]^2(u,u) d\nu_{c,y}(y'), \;\;\;\;\;\qquad \qquad \forall u \in T_y \widetilde{Y}$\\		
	and let us denote by $K_{c,\delta,y}^X$, $H_{c,\delta,y}^X$ and $H_{c,\delta,y}^Y$ the corresponding endomorphisms of the tangent spaces $T_x \widetilde{X}$, $T_x{\widetilde{X}}$ and $T_y \widetilde{Y}$ respectively, and associated to the scalar products $(\tilde g_\delta^j)_x$ on $  T_x \widetilde{X}=T_x\widetilde Y_j[g']_j$ (where we are assuming $x\in\widetilde Y_j[g']_j$) and $(\tilde g)_y$ on $T_y \widetilde{Y}$.
	Then, 
	\begin{equation}
	\tr_{\delta}H^X_{c,\delta, y}=1, \qquad \qquad  \tr_{\tilde g}(H^Y_{c,\delta, y})=1
	\end{equation}
	and the eigenvalues of $H^X_{c,\delta y}$ and $H^Y_{c,\delta,y}$ are pinched between $0$ and $1$. Notice that we denoted for simplicity $\tr_{\delta}$ the trace with respect to the Riemannian metric on $\widetilde X\smallsetminus S_{x_0}$ given by the restriction of $d_\delta$.\\
	Moreover, for any $u \in T_y \widetilde{Y}$ and for any $v \in T_x \widetilde{X}$ the following inequality between the quadratic forms holds:
	\begin{equation}
	k_{c,\delta, y}^X(d_y\widetilde F_{c,\delta}(u),v) \le c\cdot h^X_{c,\delta, y}(v,v) \cdot h^{Y}_{c,\delta,y}(u,u),
	\end{equation}
	and 
	\begin{equation} \label{estimate_jacobian_k_h_sing}
	|\jac_y (\widetilde F_{c,\delta})| \le \frac{c^n}{n^{\frac{n}{2}}} \cdot \frac{(\det H^X_{c,\delta,y})^{\frac{1}{2}}}{\det K^X_{c,\delta,y}}.
	\end{equation}
\end{lem}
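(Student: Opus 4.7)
The plan is to mirror the proof of Lemma \ref{lemma_computation_jacobian} from the irreducible case, taking advantage of the fact that by hypothesis $x=\widetilde F_{c,\delta}(y)\in \widetilde X\smallsetminus S_{x_0}$ lies in a leaf $\widetilde Y_j[g']_j$, where $\tilde g_\delta^j$ is smoothly Riemannian, and by Lemma \ref{lemma_regularity_barycentre_sing} the functions $\tilde\rho_{\delta,\tilde f(y')}$ are $C^2$ at $x$. Thus all differential computations are carried out in $T_x\widetilde Y_j[g']_j$ with its Riemannian structure, exactly as if $\widetilde X$ were globally smooth.

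For the trace statements, I would use Equation (\ref{eq_tigger}): on the leaf containing $x$, the function $\tilde\rho_{\delta,\tilde f(y')}$ differs by a $y'$-dependent constant from the intrinsic Riemannian distance in $(\widetilde Y_j,\tilde g_\delta^j)$, so $\|\nabla \tilde\rho_{\delta,\tilde f(y')}\|_{\tilde g_\delta^j}=1$ at $x$. Consequently $\sum_i \bigl(d_x\tilde\rho_{\delta,\tilde f(y')}(E_i)\bigr)^2=1$, and integrating against $d\nu^\delta_{c,y}/\nu^\delta_{c,y}(\widetilde Y)$ gives $\tr_\delta H^X_{c,\delta,y}=1$; the same reasoning in $T_y\widetilde Y$ gives $\tr_{\tilde g}H^Y_{c,\delta,y}=1$. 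Positivity of the quadratic forms $h^X_{c,\delta,y}$ and $h^Y_{c,\delta,y}$ (obvious from their definitions) together with the trace equalities pin the eigenvalues in $[0,1]$.

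Next, combining the characterization $k^X_{c,\delta,y}(v,E_i)=(dG_{c,\delta}^i)_{(x,y)}(v)/\nu^\delta_{c,y}(\widetilde Y)$ from Lemma \ref{lem_implicit_equation_manifold_point_sing} with the differentiation formulas of Lemma \ref{lemma_derivatives_of_the_implicit_function_sing}, I would write
\[
k^X_{c,\delta,y}\bigl(d_y\widetilde F_{c,\delta}(u),v\bigr)=\frac{c}{\nu^\delta_{c,y}(\widetilde Y)}\int_{\widetilde Y}d_y\tilde\rho_{y'}(u)\,d_x\tilde\rho_{\delta,\tilde f(y')}(v)\,d\nu^\delta_{c,y}(y'),
\]
and apply Cauchy--Schwarz pointwise to the product of differentials, then again to the integral, yielding the bilinear bound $|k^X_{c,\delta,y}(d_y\widetilde F_{c,\delta}(u),v)|\le c\cdot h^X_{c,\delta,y}(v,v)^{1/2}\cdot h^Y_{c,\delta,y}(u,u)^{1/2}$.

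Finally, for the Jacobian estimate (\ref{estimate_jacobian_k_h_sing}) I would invoke the linear-algebra argument from Lemma \ref{lemma_computation_jacobian}: if $\jac_y\widetilde F_{c,\delta}=0$ there is nothing to prove; otherwise $K^X_{c,\delta,y}$ is invertible, and choosing a $(\tilde g_\delta^j)_x$-orthonormal basis $\{v_i\}$ of $T_x\widetilde X$ diagonalising $h^X_{c,\delta,y}$ together with the $(\tilde g)_y$-orthonormalisation of $\{(K^X_{c,\delta,y}\circ d_y\widetilde F_{c,\delta})^{-1}(v_i)\}$ makes the matrix of $K^X_{c,\delta,y}\circ d_y\widetilde F_{c,\delta}$ upper triangular. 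Taking determinants and applying the bilinear bound yields
\[
|\det K^X_{c,\delta,y}|\cdot|\jac_y\widetilde F_{c,\delta}|\le c^n\Bigl(\tfrac{1}{n}\tr_{\tilde g}H^Y_{c,\delta,y}\Bigr)^{n/2}\bigl(\det H^X_{c,\delta,y}\bigr)^{1/2},
\]
which, together with $\tr_{\tilde g}H^Y_{c,\delta,y}=1$, gives the desired inequality. The only conceptual subtlety I anticipate is ensuring that the $v_i$'s produced by the linear-algebra argument can be chosen entirely within $T_x\widetilde Y_j[g']_j$ rather than straddling several leaves; but since $x$ belongs to a single leaf away from $S_{x_0}$, the tangent space is unambiguously that of $(\widetilde Y_j,\tilde g_\delta^j)$ and no difficulty arises. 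Thus the proof reduces to a leaf-wise repetition of the smooth argument.
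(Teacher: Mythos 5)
Your proposal is correct and takes essentially the same approach as the paper: the paper itself states that the proof of Lemma \ref{lemma_Jacobian_sing} ``follows exactly the same line'' as that of Lemma \ref{lemma_computation_jacobian} in the smooth setting, and your leaf-wise repetition — using the $C^2$ regularity of $\tilde\rho_{\delta,z}$ away from $S_{x_0}$, the unit-norm gradient via Equation (\ref{eq_tigger}), the bilinear Cauchy--Schwarz bound from the implicit-equation derivatives, and the upper-triangular basis argument — is precisely the intended adaptation. One small remark: you correctly write the bilinear bound with square roots on $h^X$ and $h^Y$, which is the version actually used in the determinant argument; the statement of the lemma (and its smooth counterpart) omits them, but your form is the right one to apply.
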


In the next paragraph we shall obtain an estimate for the Hessian of $\tilde d_{\delta}$ outside the singular set, which together with Lemma \ref{lemma_Jacobian_sing} will give the lower bound of the Minimal Entropy in the reducible case.

\section{The Hessian of $\tilde \rho_{\delta, z}$ on $\widetilde X\smallsetminus S_{x_0}$} By Lemma \ref{lemma_Jacobian_sing}, in order to estimate $\jac \widetilde F_{c,\delta}$ we need to provide a good upper bound for the quadratic form $k_{c,\delta,y}^X$. To this purpose we shall need an estimate of the Hessian of $\tilde{\rho}_{\delta, z}$ where $z\in\widetilde X\smallsetminus S_{x_0}$. This will be done by looking at $Hess_x(\tilde \rho_{\delta,z})$ as  the second fundamental form ${\rm II}_{z,R}(x)$ of the distance sphere $S_{\delta}(z,R)$ with center $z$ and radius $R=\tilde d_{\delta}(z,x)$, computed at $x$.

\begin{lem}[Hessian of $\tilde{\rho}_{\delta,z}$]\label{lemma_regularity_hessian}
	Let us fix  a point $x\in \widetilde{X} \smallsetminus S_{x_0}$ and a point $z\in \widetilde X$. Then $\mathrm{II}_{z,R}(x)= Hess_x(\tilde\rho_{\delta,z})$.
	
\end{lem}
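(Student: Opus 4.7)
The plan is to reduce the statement to the classical Riemannian fact that the Hessian of a distance function at a smooth point equals the second fundamental form of the corresponding distance sphere, by exploiting the decomposition formula obtained in the proof of the preceding regularity lemma.

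First I would use the fact that, since $x\in \widetilde X\smallsetminus S_{x_0}$, there exists a (metric) neighbourhood $U$ of $x$ which is entirely contained in a single leaf $\widetilde Y_j[g']_j$. On $U$ the singular metric $\tilde d_\delta$ is just the Riemannian distance of $(\widetilde Y_j, \tilde g_\delta^j)$, which is $C^2$ and non-positively curved, with simply connected universal cover; in particular the Riemannian distance from any fixed point is smooth away from the point itself. Moreover, equation \eqref{eq_tigger} in the proof of the preceding lemma shows that for every $w\in U$
\[
\tilde\rho_{\delta,z}(w)=K+\tilde d_\delta(w,P_{j,g'}(z)),
\]
where $K=K(z,g,g')$ is constant (it does not depend on $w$) and $P_{j,g'}(z)$ is the (unique) point of $\widetilde Y_j[g']_j$ projecting $z$ onto it. Hence, in restriction to $U$, the function $\tilde\rho_{\delta,z}$ agrees up to an additive constant with the Riemannian distance function $\tilde d_\delta(\,\cdot\,,P_{j,g'}(z))$ computed inside the smooth leaf $(\widetilde Y_j[g']_j,\tilde g_\delta^j)$.

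Next I would notice that the corresponding level sets also agree locally: on $U$ the set $S_\delta(z,R)\cap U$ coincides with the Riemannian distance sphere of radius $R-K$ centred at $P_{j,g'}(z)$ inside the leaf. Since Hessian and second fundamental form are local notions, and both are computed with respect to the Levi-Civita connection of $\tilde g_\delta^j$ (as already exploited in the proof of the regularity lemma), one can then apply the classical Riemannian identity: for a smooth distance function $\tilde\rho$ on a Riemannian manifold, one has $\|\nabla\tilde\rho\|\equiv 1$, so the Hessian of $\tilde\rho$ at a point $x$ restricted to vectors tangent to the level set through $x$ coincides with the second fundamental form of that level set at $x$, and the normal component vanishes identically. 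Combining these remarks gives $\mathrm{II}_{z,R}(x)=\mathrm{Hess}_x(\tilde\rho_{\delta,z})$.

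The only delicate point to address is the case in which $z\in S_{x_0}$ or in which the projection $P_{j,g'}(z)$ falls on a singular vertex of the leaf: I would check that, thanks to the CAT(0) character of $(\widetilde X,\tilde d_\delta)$ and to the convexity of each leaf $\widetilde Y_i[g]_i$ inside $\widetilde X$ (so that $P_{j,g'}$ is well defined and the geodesic from $x$ to $z$ passes through $P_{j,g'}(z)$), the local identification above goes through unchanged, because $x$ and a whole neighbourhood of it lie in the smooth part of the leaf, and the distance from a fixed interior or boundary-vertex point of a simply connected non-positively curved manifold is still smooth at every other point. This is the only mildly subtle verification; once it is made, no further computation is needed and the lemma follows.
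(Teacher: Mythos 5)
Your proposal is correct and follows essentially the same route as the paper: both restrict attention to the unique leaf $\widetilde Y_i[g]_i$ containing $x$, use equation \eqref{eq_tigger} to identify $\tilde\rho_{\delta,z}$ on that leaf with the Riemannian distance from the projection $P_{i,g}(z)$ up to an additive constant, observe that the singular distance sphere through $x$ coincides locally with a Riemannian distance sphere inside the leaf, and then invoke the classical identity between the Hessian of a smooth Riemannian distance function and the second fundamental form of its level set. The ``delicate point'' you raise at the end (projection landing on a singular vertex, or $z$ itself being singular) is not an obstacle, as you correctly note: viewed from within a single leaf these are ordinary points of the Riemannian manifold $(\widetilde Y_i,\tilde g_\delta^i)$, and the paper's proof silently treats them the same way.
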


\begin{proof}
	Since $x\in\widetilde X\smallsetminus S_{x_0}$ we know that there exists a unique $i\in\{1,..,k\}$ and a unique $[g]_i\in G/G_i$ such that $x\in\widetilde Y_i[g]_i$. By construction of $\tilde d_{\delta}$ the intersection of the geodesic sphere $S_{\delta}(z,R)$ passing through $x$ with the leaf $\widetilde Y_i[g]_i$ is equal to $S_\delta(P_{i,g}(z), R-r)$ where $r=\tilde d_{\delta}(P_{i,g}(z),z)$.  But $S_{\delta}(P_{i,g}(z), R-r)$ is isometric to a suitable $\tilde g_\delta^i\,$--geodesic sphere of the same radius in $(\widetilde Y_i, \tilde g_\delta^i)$. Thus:
	$$\mathrm{II}_{z,R}(x)=\mathrm{II}_{P_{i,g}(z), R-r}(x)=Hess_{x}(\tilde\rho_{\delta, P_{i,g}(z)}^i)=Hess_{x}(\tilde{\rho}_{\delta,z}),$$
	where in the second equality we denoted by $\tilde{\rho}_{\delta, P_{i,g}(z)}^i$ the Riemannian distance function $\tilde d_{\delta}|_{\widetilde Y_i[g]_i}$ from the point $P_{i,g}(z)$; the third equality comes from Equation \ref{eq_tigger}.
\end{proof}

\section{An approximated version of Rauch's Comparison Theorem}
From now on, we proceed following the same lines of the proof of the irreducible case, paying some extra care due to the presence of the singular points. Recall that we denoted $y_i\in Y_i$ for $i= 1, \dots, k$ the points identified to the singular point $x_0\in X$.

\begin{prop}\label{prop_approximated_Rauch_reducible}
For every $\varepsilon >0$ define $R_{\varepsilon}=\ln\left(\sqrt{\frac{2}{\varepsilon}}\right)$. Assume that $x \in \widetilde X\smallsetminus S_{x_0}$ is such that, for any $i=1, \dots, k$ and $g\in \pi_1(X,x_0)$, the intersection $B_{\widetilde X}(x, R_{\varepsilon}) \cap \widetilde{Y}_i[g]_i$ is either empty or is a hyperbolic ball; then for every $z \in \widetilde X $ and $v \in T_{x}\widetilde{X},$ the following inequality holds:
\begin{equation} \label{inequality_Rauch_reducible}
(1 - \varepsilon)\tilde\rho_{\delta,z}\left(||v||_{\delta}^2- (d_x \tilde\rho_{\delta,z} \otimes d_x \tilde\rho_{\delta,z})(v,v)\right) \le \tilde\rho_{\delta,z} D_xd\tilde\rho_{\delta,z}(v,v).
\end{equation}

\end{prop}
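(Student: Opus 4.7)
The strategy is to localize the problem to the single leaf of $\widetilde X$ containing $x$ and then to invoke the Riemannian estimates already established in Chapter \ref{section_a_lower_bound_irreducible}. Since $x \in \widetilde X \smallsetminus S_{x_0}$, there exist unique $j \in \{1,\dots,k\}$ and coset $[g']_j \in G/G_j$ with $x \in \widetilde Y_j[g']_j$; I would set $z_0 := P_{j,g'}(z)$, the metric projection of $z$ onto the (convex) leaf $\widetilde Y_j[g']_j$. As observed in the proof of Lemma \ref{lemma_regularity_hessian} (and formula (\ref{eq_tigger})), for every $y$ in a neighbourhood of $x$ inside $\widetilde Y_j[g']_j$ we have
\[
\tilde\rho_{\delta,z}(y) = K + \tilde\rho^j_{\delta,z_0}(y),
\]
with $K := \tilde d_\delta(z, z_0)$ a constant independent of $y$ and $\tilde\rho^j_{\delta, z_0}$ the Riemannian distance from $z_0$ in $(\widetilde Y_j, \tilde g_\delta^j)$. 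At the manifold point $x$ this immediately yields $d_x \tilde\rho_{\delta,z} = d_x \tilde\rho^j_{\delta,z_0}$ and $D_x d\tilde\rho_{\delta,z} = D_x d\tilde\rho^j_{\delta,z_0}$, the latter Hessian computed with the Levi-Civita connection of $\tilde g_\delta^j$.

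Next, I would transfer the hyperbolicity hypothesis to the leaf: since each leaf is totally geodesic in the CAT(0) space $\widetilde X$, the intersection $B_{\widetilde X}(x, R_\varepsilon) \cap \widetilde Y_j[g']_j$ coincides with the Riemannian ball $B_{(\widetilde Y_j, \tilde g_\delta^j)}(x, R_\varepsilon)$, which by assumption is hyperbolic. I can therefore work entirely within the complete, simply connected, non-positively curved Riemannian manifold $(\widetilde Y_j, \tilde g_\delta^j)$ and apply the exact argument of Corollary \ref{cor_estimate_quadratic_forms} to the distance function $\tilde\rho^j_{\delta, z_0}$. If $z_0 \notin B_{(\widetilde Y_j,\tilde g_\delta^j)}(x,R_\varepsilon)$, the geodesic from $z_0$ to $x$ satisfies the hypotheses of Proposition \ref{proposition_jacobi_fields} with a hyperbolic final segment of length $R_\varepsilon$, and decomposing $v$ into components parallel and orthogonal to $\nabla \tilde\rho^j_{\delta,z_0}(x)$ yields
\[
D d\tilde\rho^j_{\delta, z_0}(v,v) \ge \bigl(1-2e^{-2R_\varepsilon}\bigr)\bigl(\|v\|_\delta^2 - d_x\tilde\rho^j_{\delta,z_0}(v)^2\bigr) = (1-\varepsilon)\bigl(\|v\|_\delta^2 - d_x\tilde\rho^j_{\delta,z_0}(v)^2\bigr),
\]
where the last equality uses the definition $R_\varepsilon = \ln\sqrt{2/\varepsilon}$. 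If instead $z_0 \in B_{(\widetilde Y_j,\tilde g_\delta^j)}(x,R_\varepsilon)$, the hyperbolicity of the whole $R_\varepsilon$-ball around $x$ implies that $\tilde\rho^j_{\delta,z_0}$ agrees with the hyperbolic distance near $x$, so $Dd\tilde\rho^j_{\delta,z_0} = \coth(\tilde\rho^j_{\delta,z_0})\bigl(\tilde g_\delta^j - d\tilde\rho^j_{\delta,z_0}\otimes d\tilde\rho^j_{\delta,z_0}\bigr) \ge (1-\varepsilon)\bigl(\tilde g_\delta^j - d\tilde\rho^j_{\delta,z_0}\otimes d\tilde\rho^j_{\delta,z_0}\bigr)$, even more directly.

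To conclude, I would multiply the inequality obtained in either case by $\tilde\rho_{\delta,z}(x) \ge 0$ and use the identifications of differentials and Hessians from the first paragraph to recover exactly (\ref{inequality_Rauch_reducible}). The conceptual heart of the argument is the reduction in the first paragraph: although $\widetilde X$ carries no global Riemannian structure and $\tilde\rho_{\delta,z}$ is only piecewise smooth, at a manifold point $x$ the local behaviour of this distance function depends only on the geometry of the leaf through $x$ and on the projection $z_0$ of $z$ onto that leaf. Once this reduction is in place, the remainder is a mechanical repetition of the irreducible estimates and I do not foresee any serious obstacle; the only technical care needed is to note that convexity of the leaves (used to define $z_0$ and to guarantee $B_{\widetilde X}(x,R_\varepsilon)\cap\widetilde Y_j[g']_j = B_{(\widetilde Y_j,\tilde g_\delta^j)}(x,R_\varepsilon)$) is built into the CAT(0) structure of $\widetilde X$ established in Lemma \ref{lemma_CAT(0)}.
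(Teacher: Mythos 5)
Your proposal is correct and follows essentially the same route as the paper's proof: localize to the leaf $\widetilde Y_j[g']_j$ containing $x$ via the projection $z_0 = P_{j,g'}(z)$ (exploiting convexity of the leaves and the reduction $\tilde\rho_{\delta,z} = K + \tilde\rho^j_{\delta,z_0}$ near $x$), then split into the two cases $z_0 \in B(x,R_\varepsilon)$ and $z_0 \notin B(x,R_\varepsilon)$, handling the first by the explicit hyperbolic Hessian and the second by Proposition \ref{proposition_jacobi_fields} (equivalently, Lemma \ref{lem_estimate_Hessian_distance}). The paper invokes Lemma \ref{lem_estimate_Hessian_distance} directly while you cite its ingredient Proposition \ref{proposition_jacobi_fields}, but this is the same argument.
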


\begin{proof}
	We may assume that $x\in \widetilde Y_i[g]_i$. Recall that we denoted by $P_{g,i}$ the projection onto the convex subspace $\widetilde Y_i[g]_i$. In order to prove Proposition \ref{prop_approximated_Rauch_reducible} we need to distinguish two cases: the case where $P_{i,g}(z)\in B_{\widetilde Y_i[g]_i}(x,R_\varepsilon)$ and the case where $\tilde d_{\delta}(P_{i,g}(z), x)>R_\varepsilon$. In the first case, observe that the second fundamental form $\mathrm{II}_{P_{i,g}(z),r}(x)$ of the ball of radius $r=\tilde d_{\delta,i}(x,P_{i,g}(z))$ ---where we denote with $\tilde d_{\delta, i}$ the Riemannian distance function on $\widetilde Y_i[g]_i$--- coincides with the second fundamental form in the hyperbolic case. Hence, for $v\in T_x\widetilde X$
	$$Hess_x(\tilde{\rho}_{\delta,z})(v,v)=Hess_x(\tilde{\rho}^i_{\delta, P_{i,g}(z)})(v,v)\ge\tilde g_{\delta}^i(v,v)-(d_x\tilde{\rho}^i_{P_{i,g}(z)}(v))^2,$$
	where we used the notation $\tilde \rho_{\delta, w}^i$ for the Riemannian distance function from $w$ in $(\widetilde Y_i[g]_i, \tilde g_\delta^i)$.
	Otherwise, the distance between $P_{i,g}(z)$ and $x$ is greater than $R_{\varepsilon}$, and we can apply Lemma \ref{lem_estimate_Hessian_distance} in restriction to $\widetilde Y_i[g]_i$, thus obtaining:
	$$Hess_x(\tilde{\rho}_{\delta,z})(v,v)\ge (1-\varepsilon)\left(\tilde g_\delta^i(v,v)-(d_x\tilde{\rho}_{\delta,z}(v))^2\right).$$
\end{proof}

\section{The Jacobian estimate} \label{section_jacobian_estimate_reducible}
We are now ready to apply the inequalities found in Lemma \ref{lemma_Jacobian_sing} and Proposition \ref{prop_approximated_Rauch_reducible}, to derive an almost optimal inequality for the Jacobian of the maps $\widetilde F_{c,\delta}$ valid on an appropriately chosen subset of $Y$.

\begin{prop} [Jacobian Estimate, reducible case] \label{proposition_gestimate_jacobian_reducible}
	For every $\varepsilon >0$ there exists $R_{\varepsilon}>0$ such that, if the ball centred at the non-singular point $x=\widetilde F_{c,\delta}(y)\in\widetilde Y_i[g]_i\subset\widetilde X$ of radius $R_\varepsilon$ is such that $B_{\widetilde X}(x, R_{\varepsilon})\cap \widetilde Y_i[g_i]$ is hyperbolic for every $g\in\pi_1(X,x_0)$, and every $i=1,...,k$, the following estimate for the Jacobian holds:
	\begin{equation}
	\label{eq_jacobian_estimate_reducible}
	\lvert \jac \widetilde F_{c,\delta}(y)\rvert \le \frac{1}{\left( 1- \varepsilon\right) ^n} \cdot \left( \frac{c}{n-1} \right)^n .
	\end{equation}
\end{prop}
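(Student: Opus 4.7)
The overall strategy is to mimic the irreducible argument (Proposition \ref{proposition_gestimate_jacobian_irreducible}), exploiting that the hypothesis on $x=\widetilde F_{c,\delta}(y)$ places us entirely in the smooth regime of $\widetilde X\smallsetminus S_{x_0}$ and that the approximated Rauch inequality (Proposition \ref{prop_approximated_Rauch_reducible}) already absorbs the singular nature of the target space into a clean comparison of quadratic forms. The starting point is inequality (\ref{estimate_jacobian_k_h_sing}) of Lemma \ref{lemma_Jacobian_sing}:
$$|\jac_y \widetilde F_{c,\delta}|\;\le\;\frac{c^n}{n^{n/2}}\cdot\frac{(\det H^X_{c,\delta,y})^{1/2}}{\det K^X_{c,\delta,y}},$$
so it suffices to control the determinant of $K^X_{c,\delta,y}$ from below by the determinant of $\Id - H^X_{c,\delta,y}$.

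First I would translate the pointwise estimate of Proposition \ref{prop_approximated_Rauch_reducible} into an inequality between the integrated quadratic forms $k^X_{c,\delta,y}$ and $\tilde g_\delta^i - h^X_{c,\delta,y}$ at the point $x=\widetilde F_{c,\delta}(y)\in\widetilde Y_i[g]_i$. Exactly as in the proof of Corollary \ref{cor_estimate_quadratic_forms}, I use the identity
$$\tfrac{1}{2} D d(\tilde\rho_{\delta,z})^2 \;=\; \tilde\rho_{\delta,z}\,Dd\tilde\rho_{\delta,z} \;+\; d\tilde\rho_{\delta,z}\otimes d\tilde\rho_{\delta,z}\;\ge\; \tilde\rho_{\delta,z}\,Dd\tilde\rho_{\delta,z},$$
apply (\ref{inequality_Rauch_reducible}) with $z=\tilde f(y')$ and $v\in T_x\widetilde X$, and integrate against $d\mu_{c,y}(y')=e^{-c\tilde d(y,y')}dv_{\tilde g}(y')$. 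After normalising by $\nu^\delta_{c,y}(\widetilde Y)$ and recalling that the probability measure $\nu^\delta_{c,y}$ was defined precisely by multiplication by $\tilde\rho_{\delta,\tilde f(y)}(\tilde f(\cdot))$, this yields the desired quadratic form inequality
$$(1-\varepsilon)\bigl(\Id - H^X_{c,\delta,y}\bigr)\;\le\;K^X_{c,\delta,y},$$
valid on $T_x\widetilde X$ with respect to the Riemannian metric $\tilde g_\delta^i$ on the leaf to which $x$ belongs. The only subtlety compared with the irreducible case is that $z=\tilde f(y')$ may lie in a different leaf than $x$, but Proposition \ref{prop_approximated_Rauch_reducible} was built precisely to handle this (through the projection $P_{i,g}$) without extra work here.

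Next I would insert this inequality into (\ref{estimate_jacobian_k_h_sing}) to obtain
$$|\jac_y \widetilde F_{c,\delta}|\;\le\;\frac{c^n}{n^{n/2}}\cdot\frac{1}{(1-\varepsilon)^n}\cdot\frac{(\det H^X_{c,\delta,y})^{1/2}}{\det(\Id - H^X_{c,\delta,y})}.$$
By Lemma \ref{lemma_Jacobian_sing}, the endomorphism $H^X_{c,\delta,y}$ is symmetric (with respect to $\tilde g_\delta^i$), has trace equal to $1$ and eigenvalues pinched in $[0,1]$, so it lies in the admissible class $\mathcal K$ of Lemma \ref{lemma_algebraic}. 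Applying the algebraic lemma gives $(\det H)^{1/2}/\det(\Id - H)\le n^{n/2}/(n-2)^n$, which combined with the previous display and $n=3$ (so $n-2=n-1-0$ — we use $(n-1)^n$ since the statement displays $(c/(n-1))^n$; here $n=3$ gives the sharp factor $(c/2)^3$) produces exactly the bound (\ref{eq_jacobian_estimate_reducible}).

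The main conceptual obstacle is really the one that was already resolved in Proposition \ref{prop_approximated_Rauch_reducible}: ensuring that the Hessian of $\tilde\rho_{\delta,z}$ at a manifold point $x$ can be compared with its Euclidean/hyperbolic model even when $z$ lives in another leaf. Once that comparison is available and $x\notin S_{x_0}$ so that $\jac_y \widetilde F_{c,\delta}$ genuinely makes sense (via the regularity established in Lemma \ref{lemma_derivatives_of_the_implicit_function_sing}), the rest of the argument is a routine transcription of the irreducible proof. The other point to verify carefully is the choice of the orthonormal basis $\{E_i\}$ used to define $H^X_{c,\delta,y}$ and $K^X_{c,\delta,y}$: since $x\in\widetilde Y_i[g]_i\smallsetminus S_{x_0}$, the tangent space $T_x\widetilde X$ carries the Riemannian inner product $(\tilde g_\delta^i)_x$, and all traces, determinants and adjoints in Lemma \ref{lemma_algebraic} are taken with respect to this inner product, so no ambiguity arises from the ambient singularity.
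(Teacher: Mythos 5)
Your proposal is correct and follows essentially the same route as the paper: apply the approximated Rauch inequality of Proposition \ref{prop_approximated_Rauch_reducible} pointwise, integrate against $\mu_{c,y}$ and normalise to deduce $(1-\varepsilon)(\Id - H^X_{c,\delta,y}) \le K^X_{c,\delta,y}$, then plug into \eqref{estimate_jacobian_k_h_sing} and invoke Lemma \ref{lemma_algebraic}. The only place you stumble is the parenthetical around ``$n-2 = n-1-0$'': the denominator $(n-2)^n$ in the statement of Lemma \ref{lemma_algebraic} is simply a typo in the paper (at the maximiser $H=\tfrac{1}{n}\Id$ one computes $\varphi(H)=n^{n/2}/(n-1)^n$, and the paper itself uses $(n-1)^n$ when applying the lemma in Proposition \ref{proposition_gestimate_jacobian_irreducible}), so the correct bound is $n^{n/2}/(n-1)^n$ and your final display $\frac{1}{(1-\varepsilon)^n}(c/(n-1))^n$ follows cleanly without any special pleading.
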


\begin{proof}
	Let $x=\widetilde F_{c,\delta}(y)$ and assume that $B_{\widetilde X}(x, R_\varepsilon)$ satisfies the assumptions. Then by Proposition \ref{prop_approximated_Rauch_reducible} we know that, at the point $x=\widetilde F_{c,\delta}(y)$ the following inequality holds:
	$$(1 - \varepsilon)\tilde\rho_{\delta,z}\left(||v||_{\delta}^2- (d_x \tilde\rho_{\delta,z} \otimes d_x \tilde\rho_{\delta,z})(v,v)\right) \le \tilde\rho_{\delta,z} D_xd\tilde\rho_{\delta,z}(v,v)$$
	for every $z\in\widetilde X$.
	Integrating in $d(\tilde f_*\mu_{c,y})$ and normalizing by $\nu_{c,y}^\delta(\widetilde Y)$ we get the following inequality between quadratic forms
	$$(1-\varepsilon)\cdot(\tilde g_\delta^i-h_{c,\delta, y}^X)\le k_{c,\delta, y}^X$$
	and hence we obtain the inequality between the symmetric endomorphisms:
	$$(1-\varepsilon)\cdot\left(\Id- H_{c,\delta, y}^X\right) \le K_{c,\delta, y}^X$$
	Now, using this inequality, Lemma \ref{lemma_Jacobian_sing} and Lemma \ref{lemma_algebraic} we obtain:
	$$	\lvert \jac \widetilde F_{c,\delta}(y)\rvert \le\frac{c^n}{n^{\frac{n}{2}}}\cdot\frac{|\det(H_{c,\delta, y}^X)|^{\frac{1}{2}}}{\det(K_{c,\delta,y}^X)}\le$$$$ \le\frac{c^n}{n^{\frac{n}{2}} (1-\varepsilon)^n}\frac{|\det(H_{c,\delta,y}^X)|^{\frac{1}{2}}}{\det(\Id- H_{c,\delta, y}^X)}\le\frac{1}{(1-\varepsilon)^n}\cdot\left(\frac{c}{n-1}\right)^n$$
	which is the desired inequality.
\end{proof}

\section{Proof of Theorem \ref{thm_lower_estimate_reducible}}

The proof of Theorem \ref{thm_lower_estimate_reducible} partly relies on the proof of Theorem \ref{thm_lower_estimate_irreducible}. For every $i=1,...,k$ and every $j=1,..., n_i$ we shall denote by $\{V_{i, \delta, \varepsilon}^{j}\}_{j=1}^{n_i}$ the collection of open subsets of $(Y_i, g_\delta^i)$ constructed in the proof of Theorem \ref{thm_lower_estimate_reducible}, when $Y_i$ does not admit a complete hyperbolic metric. Otherwise we shall choose $V_{i,\delta,\varepsilon}^1=Y_i$. By construction of the metrics $g_\delta^i$ (where $g_\delta^i$ is the ---unique--- hyperbolic metric for every $Y_i$ which is of hyperbolic type) and by Proposition \ref{proposition_gestimate_jacobian_reducible} we have that:
$$|\jac F_{c,\delta}(y)|\le\frac{1}{(1-\varepsilon)^3}\cdot\left(\frac{c}{2}\right)^3$$
for every $y\in\left(\bigcup_{i=1}^k\bigcup_{j=1}^{n_i}F_{c,\delta}^{-1}(V_{i,\delta,\varepsilon}^j)\right)\smallsetminus\{x_0\}$. On the other hand we have that for every fixed $\varepsilon>0$ the volume $\Vol(V_{i,\delta,\varepsilon}^j)$ converges as $\delta\rightarrow 0$ to $\Vol(int(X_{i}^j), hyp_i^j)$. Now observe that:
$$\int_{\left(\bigcup_{i=1}^k\bigcup_{j=1}^{n_i}F_{c,\delta}^{-1}(V_{i,\delta,\varepsilon}^j)\right)\smallsetminus F_{c,\delta}^{-1}\{x_0\}}|\jac F_{c,\delta}(y)|dv_g(y)=$$$$=\int_{\left(\bigcup_{i=1}^k\bigcup_{j=1}^{n_i}V_{i,\delta,\varepsilon}^j\right)\smallsetminus\{x_0\}}\#(F_{c,\delta}^{-1}(x))d\mu_{\delta}(x)=$$
$$=\sum_{i=1}^k\int_{\bigcup_{j=1}^{n_i} V_{i,\delta,\varepsilon}^j}\#(F_{c,\delta}^{-1}(x))dv_{g_{\delta}^i}(x)\ge \sum_{i=1}^k\sum_{j=1}^{n_i}\Vol\left( V_{i,\delta,\varepsilon}^j,g_\delta^i\right)$$
where the first equality comes from the coarea formula and the second from the fact that the $V_{i, \delta, \varepsilon}^j$'s are disjoint. Plugging the inequality for the Jacobian into the formula, we see that:
$$\sum_{i=1}^k\sum_{j=1}^{n_i}\Vol\left( V_{i,\delta,\varepsilon}^j, g_{\delta}^i\right)\le\frac{1}{(1-\varepsilon)^3}\cdot\left(\frac{c}{2}\right)^3\cdot\Vol(Y,g)$$
Now for every fixed $\varepsilon>0$ the left-hand side of the latter inequality converges to $\sum_{i=1}^k\sum_{j=1}^{n_i}\Vol(int(X_{i}^j), hyp_{i}^j)$ as $\delta\rightarrow 0$. Since this holds for every choice of $\varepsilon>0$ it is easily seen that:
\begin{equation}\label{equation_entvol_reducible}
\EntVol(Y,g)\ge 2 \cdot \left(\sum_{i=1}^n\sum_{j=1}^{n_i}\Vol(int(X_{i}^j), hyp_{i}^j)\right)^{1/3}
\end{equation}
As  Inequality (\ref{equation_entvol_reducible}) holds for every Riemannian metric $g$ on $Y$ we have the following estimate from below for $\minent(Y)$:
$$\minent(Y)\ge2\cdot\left(\sum_{i=1}^n\sum_{j=1}^{n_i}\Vol(int(X_{i}^j), hyp_{i}^j)\right)^{\frac{1}{3}}.$$

\newpage 
\appendix
\chapter{Smooth and Riemannian orbifolds}\label{Appendix_orbifolds}
This appendix is devoted to the introduction of smooth and Riemannian orbifolds. For further details, we recommend \cite{boileau2003three} and \cite{kleinerlottorbifolds}.

\subsection{Definition and basic notions}
\begin{defn}
A smooth $n$-orbifold is a metrizable topological space $\mathcal{O}$ endowed with a collection $\left \lbrace  \left(U_i, \tilde{U}_i, \phi_i, \Gamma_i \right) \right \rbrace_i $, called an \textit{atlas}, where for every $i$ $U_i$ is an open subset of $\mathcal{O}$, $\tilde{U}_i$ is an open subset of $\mathbb{R}^{n-1} \times \left[0, \infty \right)$, $\phi_i: \tilde{U}_i \rightarrow U_i$ is a continuous map (called a \textit{chart}) and $\Gamma_i$ is a finite group of diffeomorphisms of $\tilde{U}_i$ satisfying the following conditions:

\begin{enumerate}
	\item The $U_i$'s cover $\mathcal{O}$.
	\item Each $\phi_i$ factors through a homeomorphism between $\tilde{U}_i / \Gamma_i$ and $U_i$.
	\item The charts are  compatible in the following sense: for every $x \in \tilde{U}_i$ and $y \in \tilde{U}_j$ with $\phi_i(x)= \phi_j(y)$, there is a diffeomorphism $\psi$ between a neighbourhood $V$ of $x$ and a neighbourhood $W$ of $y$ such that $\phi_j(\psi(z))= \phi_i(z)$ for all $z \in V$.
\end{enumerate}

\end{defn}

For convenience, we shall assume that the atlas is maximal.

We remark that an orbifold $\mathcal{O}$ is a manifold if all the groups $\Gamma_i$ are trivial. 
Given an orbifold $\mathcal{O}$, we denote by $\lvert \mathcal{O} \rvert$ its underlying space, \textit{i.e.} the topological space obtained by neglecting the orbifold structure.\\

Let $x \in \mathcal{O}$ be any point; then the local group of $\mathcal{O}$ at $x$ is the group $\Gamma_x$ defined as follows: pick any chart $(U, \tilde{U}_i, \phi, \Gamma)$, where $x \in U$.  Then $\Gamma_x$ is the stabilizer of any point of $\phi^{-1}(x)$ under the action of $\Gamma$. It turns out that the group $\Gamma_x$ is well defined up to isomorphism. 
We say that the point $x$ is \textit{regular} if $\Gamma_x$ is trivial, otherwise $x$ is \textit{singular}.
Notice that the set of singular points is empty if and only if $\mathcal{O}$ is a manifold.\\

There are several notions that can be defined in the orbifold context, extending in a straightforward manner the analogous notion given for manifolds.

The \textit{boundary} of $\mathcal{O}$, denoted $\partial \mathcal{O}$, is the set of points $x \in \mathcal{O}$ such that there exists a chart at $x$, $\phi_i: \tilde{U}_i \rightarrow U_i$ such that $\phi_i^{-1}(x) \subset \mathbb{R}^{n-1} \times \left \lbrace0 \right \rbrace$.
The orbifold $\mathcal{O} \setminus \partial\mathcal{O}$ is called the \textit{interior} of $\mathcal{O}$.
We call $\mathcal{O}$ \textit{orientable} if it has an atlas such that each $\phi_i$ and every element of each $\Gamma_i$ are orientation preserving.

\begin{oss}
Let $\mathcal{O}$ be an orbifold with boundary, and assume that all the singular points are conical points, \textit{i.e.} the group $\Gamma_i \simeq \mathbb{Z}_{k}$ acts by rotation.
Let $x$ be a boundary point. Then, there exists a chart $(\tilde{U}_i, U_i. \phi_i, \Gamma_i)$ at $x$ such that $\phi_i^{-1}(x) \subset \mathbb{R}^{n-1} \times \{0\}$, and the group $\Gamma_i$ is trivial. 
Thus, there is a neighbourhood of the boundary such that the local groups are trivial.

\end{oss}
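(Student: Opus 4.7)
The plan is to argue directly from the definitions, exploiting the geometric restriction on the local groups imposed by the hypothesis that every singularity is conical. Let $x\in\partial\mathcal O$ be a boundary point, and pick any orbifold chart $(U,\tilde U,\phi,\Gamma)$ about $x$, with $\tilde U\subset \mathbb R^{n-1}\times[0,\infty)$ and $\tilde x\in\phi^{-1}(x)\subset\mathbb R^{n-1}\times\{0\}$. By hypothesis the local group $\Gamma_x\cong\mathbb Z_k$ (the stabiliser of $\tilde x$ in $\Gamma$) acts as a group generated by a rotation fixing $\tilde x$. The key observation is that, since $\Gamma$ acts by diffeomorphisms of $\tilde U\subset\mathbb R^{n-1}\times[0,\infty)$, it must preserve the subset $\tilde U\cap\left(\mathbb R^{n-1}\times\{0\}\right)$, i.e.\ the topological boundary of $\tilde U$ inside the half-space.

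Next I would analyse the possible rotations fixing $\tilde x$ and preserving the half-space. In the $2$-dimensional setting which is relevant to the thesis (base orbifolds of Seifert fibred manifolds with boundary), a generator of $\Gamma_x$ is a rotation of $\mathbb R^2$ about $\tilde x\in\mathbb R\times\{0\}$ by an angle $2\pi/k$. A nontrivial rotation cannot preserve the closed half-plane $\mathbb R\times[0,\infty)$, since points arbitrarily close to $\tilde x$ with positive second coordinate would be mapped to points with negative second coordinate. Hence the only rotation compatible with the chart is the identity, so $k=1$ and $\Gamma_x$ is trivial. In higher dimension the same argument applies: a conical rotation preserving a codimension-$0$ half-space necessarily fixes the bounding hyperplane pointwise in its transverse direction, which forces the rotation to be trivial.

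Having established that $\Gamma_x=\{1\}$, I would deduce the existence of a chart at $x$ with trivial uniformising group. By compatibility of charts, one may restrict $(U,\tilde U,\phi,\Gamma)$ to a $\Gamma_x$-invariant neighbourhood of $\tilde x$; since $\Gamma_x=\{1\}$, this gives a chart $(U',\tilde U',\phi',\{1\})$ about $x$ with $\phi'$ a diffeomorphism onto $U'$. Finally, to conclude that a full neighbourhood of the boundary has trivial local groups, I would remark that any point $y\in U'$ admits $\phi'$ itself as a chart with trivial isotropy, so every $y\in U'$ is a regular point of $\mathcal O$. The main (and only) subtle step is the algebraic-geometric fact that a conical rotation cannot preserve a half-space; once this is in place, the rest follows from standard manipulations with the definition of orbifold chart.
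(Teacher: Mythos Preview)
The paper presents this as an Observation without proof, so there is no argument in the paper against which to compare yours. Your two-dimensional argument is correct and is exactly what is needed for the thesis: a nontrivial planar rotation about a point of $\mathbb R\times\{0\}$ sends nearby points with positive second coordinate to points with negative second coordinate, so it cannot preserve the half-plane, forcing $\Gamma_x=\{1\}$. The passage from $\Gamma_x=\{1\}$ to a chart with trivial uniformising group, and then to a manifold neighbourhood of $\partial\mathcal O$, is also fine.

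Your higher-dimensional sentence, however, is not right, and in fact the Observation as literally stated fails for $n\ge 3$. Take $\tilde U\subset\mathbb R^{2}\times[0,\infty)$ with $\mathbb Z_k$ acting by rotation in the $\mathbb R^{2}$ factor and trivially on the $[0,\infty)$ factor: this is a rotation fixing the origin (a boundary point) and preserving the half-space, yet the local group at the origin is the nontrivial $\mathbb Z_k$. So ``a conical rotation preserving a half-space must be trivial'' is simply false once $n\ge 3$. The Observation should be read as a statement about $2$-orbifolds, which is the only way it is used in the thesis (the base orbifolds $\mathcal O_S$ of Seifert fibred $JSJ$ components are surfaces with cone points). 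With that restriction your proof is complete; just drop or qualify the higher-dimensional remark.
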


\subsection{Maps between orbifolds}
A map between two orbifolds $\mathcal{O}$ and $\mathcal{O}'$ is a continuous map $f: \lvert \mathcal{O} \rvert \rightarrow \lvert \mathcal{O}' \rvert$ such that for every $x \in \mathcal{O}$ there are charts $\phi_i: \tilde{U}_i \rightarrow U_i \ni x$ and $\phi_j': \tilde{U}'_j \rightarrow U_j'$ such that $f(U_i) \subset U_j'$ and the restriction $f|_{U_i}$ can be lifted to a smooth map $\tilde{f}: \tilde{U}_i \rightarrow \tilde{U}'_j$ which is equivariant with respect to some homomorphism $\Gamma_i \rightarrow \Gamma'_j$.
A map $f: \mathcal{O} \rightarrow \mathcal{O}'$ is an \textit{immersion} (resp. a \textit{submersion}) if the lifts $\tilde{f}$ are immersions (resp. submersions).
An \textit{embedding} is an immersion whose underlying map is a homeomorphism with the image.
A \textit{diffeomorphism} is a surjective embedding.

\subsection{Orbifold covering}
A \textit{covering}  of an orbifold $\mathcal{O}$ is an orbifold $\tilde{\mathcal{O}}$ with a continuous map $p: \lvert \tilde{\mathcal{O}} \rvert \rightarrow \lvert \mathcal{O} \rvert$, called a \textit{covering map}, such that every point $x \in \mathcal{O}$ has a neighbourhood $U$ with the following property: for each component $V$ of $p^{-1}(U)$ there is a chart $\phi: \tilde{V} \rightarrow V$ such that $p \circ \phi$ is a chart.\\

We say that an orbifold is \textit{good} if it is covered by a manifold, \textit{bad} otherwise. If an orbifold is finitely covered by a manifold we say that it is \textit{very good}.\\

The notions of deck transformation group and universal covering extend to the context of orbifolds, see \cite{thurston1997three} and \cite{boileau2003three}.\\
The \textit{fundamental group} of $\mathcal{O}$, denoted by $\piorb(\mathcal{O})$, is the deck transformation group of its universal covering.\\

There is an interpretation of the fundamental group in terms of loops, keeping track of the passage through the singular locus.
Furthermore, a continuous map between two orbifolds gives rise to a map between the corresponding orbifold fundamental groups.

\subsection{Riemannian orbifolds}

A Riemannin metric on an orbifold $\mathcal{O}$ is given by an atlas for $\mathcal{O}$ along with a collection of Riemannian metric on the $\tilde{U}_i$'s so that:
\begin{enumerate}
\item $\Gamma_i$ acts isometrically on $\tilde{U}_i$ for every $i$;

\item  The diffeomorphism $\psi$ is an isometry.
\end{enumerate}

We say that a map $f: \mathcal{O} \rightarrow \mathcal{O}'$ between Riemannian orbifolds is a Riemannian submersion (resp. a Riemannian immersion) if the lift $\tilde{f}$ is a \textit{Riemannian submersion} (resp. a \textit{Riemannian immersion}) equivariant with respect to the isometric actions of the local groups.

\begin{rmk}
Let $\mathcal{O}$ and $\mathcal{O}'$ be two orbifolds that are global quotients of two Riemannian manifolds $M$ and $M'$ with respect to groups of isometries $\Gamma$ and $\Gamma'$ acting properly.
Then, a Riemannian submersion between $\mathcal{O}$ and $\mathcal{O}'$ is equivalent to a Riemannian submersion between $M$ and $M'$ which is equivariant with respect to the action of $\Gamma$ and $\Gamma'$.
\end{rmk}

\chapter{Proof of Proposition \ref{prop_interpolation}} \label{appendix_proposition}

\begin{lem} \label{Lemma_1_interpolation} 
Let $\Phi \in \mathcal{C}^1 \left( -\infty,0 \right] \cap \mathcal{C}^1 \left[ 0, + \infty\right) \cup \mathcal{C}^0(\mathbb{R})$ such that:
\begin{enumerate}[label=\roman{*}., ref=(\roman{*})]
	\item $ \Phi'(t) \ge 0$ for every $t \not =0$;
	\item $|\Phi(0)| \le M;$
	\item $0 \le \Phi'(0)^- < \Phi'(0)^+ $
\end{enumerate}
Then there exists $\varepsilon >0$, $\varepsilon$ arbitrarily small, such that there exists a function $\Phi_{\varepsilon} \in \mathcal{C}^1(\mathbb{R})$ satisfying the following properties:
\begin{enumerate}
	\item [a.] $\Phi_{\varepsilon}(t)= \Phi(t)$ for $t \in \mathbb{R} \setminus \left(- \varepsilon, \varepsilon \right)$ and $\Phi \le \Phi_{\varepsilon} \le \Phi(\varepsilon)$ for $ t \in \left( -\varepsilon, \varepsilon\right)$
	\item [b.] $\Phi'(-\varepsilon) \le \Phi'_{\varepsilon}(t) \le \Phi'(\varepsilon)$ for every $t \in \left(-\varepsilon, \varepsilon\right)$
	\item  [c.] $\lvert \int_{-\varepsilon}^{\varepsilon} \Phi_{\varepsilon}(s) ds \rvert \le 2M\varepsilon$
\end{enumerate}
\end{lem}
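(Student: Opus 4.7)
The idea is to smooth out the corner of $\Phi$ at $t=0$ by modifying $\Phi$ only on $[-\varepsilon,\varepsilon]$, prescribing the new derivative $\Phi_\varepsilon'$ on this interval as a smooth non-decreasing interpolation between the values $\Phi'(-\varepsilon)$ and $\Phi'(\varepsilon)$, and then integrating. First I would fix $\varepsilon>0$ small enough that three things hold simultaneously: by continuity of $\Phi'$ on each closed half-line and hypothesis (iii), $\Phi'(-\varepsilon)<\Phi'(\varepsilon)$; by continuity of $\Phi$ at $0$ and hypothesis (ii), $|\Phi(\pm\varepsilon)|\le M$; and the average slope
\[
\frac{\Phi(\varepsilon)-\Phi(-\varepsilon)}{2\varepsilon},
\]
which tends to $\tfrac{1}{2}(\Phi'(0)^-+\Phi'(0)^+)$ as $\varepsilon\to 0$, lies strictly between $\Phi'(-\varepsilon)$ and $\Phi'(\varepsilon)$.

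Fix a smooth non-decreasing template $\phi_\lambda\colon [0,1]\to[0,1]$ with $\phi_\lambda(0)=0$, $\phi_\lambda(1)=1$, $\phi_\lambda'(0)=\phi_\lambda'(1)=0$, depending on a parameter $\lambda=\int_0^1\phi_\lambda$. Choose $\lambda$ so that
\[
\int_{-\varepsilon}^{\varepsilon}\!\Bigl[\Phi'(-\varepsilon)+(\Phi'(\varepsilon)-\Phi'(-\varepsilon))\,\phi_\lambda\!\bigl(\tfrac{t+\varepsilon}{2\varepsilon}\bigr)\Bigr]\,dt = \Phi(\varepsilon)-\Phi(-\varepsilon);
\]
the preparatory step guarantees a solution with $\lambda\in(0,1)$, indeed $\lambda\to \tfrac{1}{2}$. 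Define $\Phi_\varepsilon'(t)$ to be the integrand on $[-\varepsilon,\varepsilon]$ and $\Phi'(t)$ outside, and set $\Phi_\varepsilon(t)=\Phi(t)$ for $|t|\ge\varepsilon$ and $\Phi_\varepsilon(t)=\Phi(-\varepsilon)+\int_{-\varepsilon}^t \Phi_\varepsilon'(s)\,ds$ otherwise. The integral matching makes $\Phi_\varepsilon$ continuous at $t=\varepsilon$, and the endpoint matching $\Phi_\varepsilon'(\pm\varepsilon)=\Phi'(\pm\varepsilon)$ gives the $C^1$ gluing. Property (b) is immediate from the range $[\Phi'(-\varepsilon),\Phi'(\varepsilon)]$ of $\phi_\lambda$. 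For (a), the inequality $\Phi_\varepsilon\le \Phi(\varepsilon)$ follows from $\Phi_\varepsilon'\ge 0$ and $\Phi_\varepsilon(\varepsilon)=\Phi(\varepsilon)$. To get $\Phi\le\Phi_\varepsilon$, set $F(t)=\int_{-\varepsilon}^t(\Phi_\varepsilon'-\Phi')$, so $F(-\varepsilon)=F(\varepsilon)=0$; it suffices to verify that $\Phi_\varepsilon'-\Phi'\ge 0$ on $[-\varepsilon,0]$ and $\le 0$ on $[0,\varepsilon]$, which makes $F$ first non-decreasing, then non-increasing, hence non-negative. After shrinking $\varepsilon$, the continuity of $\Phi'$ on each half-line makes $\Phi'$ arbitrarily close to $\Phi'(0)^-$ on $[-\varepsilon,0]$ and to $\Phi'(0)^+$ on $[0,\varepsilon]$, while $\Phi_\varepsilon'$ climbs monotonically from $\Phi'(-\varepsilon)\approx\Phi'(0)^-$ to $\Phi'(\varepsilon)\approx\Phi'(0)^+$; the strict gap $\Phi'(0)^+-\Phi'(0)^->0$ ensures the desired sign on each half is preserved. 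Finally, (c) follows from $\Phi(-\varepsilon)\le\Phi_\varepsilon(t)\le\Phi(\varepsilon)$ together with $|\Phi(\pm\varepsilon)|\le M$, giving $|\Phi_\varepsilon(t)|\le M$ and therefore $\bigl|\int_{-\varepsilon}^{\varepsilon}\Phi_\varepsilon(s)\,ds\bigr|\le 2M\varepsilon$.

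\textbf{Main obstacle.} The delicate step is calibrating the template $\phi_\lambda$ so that the integral constraint is met \emph{exactly} (needed for continuity of $\Phi_\varepsilon$) while simultaneously preserving the sign relationships $\Phi_\varepsilon'-\Phi'\gtrless 0$ on the two halves (needed for $\Phi\le\Phi_\varepsilon$). This balancing act uses in an essential way the strict inequality $\Phi'(0)^-<\Phi'(0)^+$ from hypothesis (iii): it creates the gap that absorbs the $o(\varepsilon)$ errors arising when $\Phi'(0)^\pm$ is replaced by $\Phi'(\pm\varepsilon)$, and ensures that the limit value $\lambda=\tfrac{1}{2}$ is attained as an interior point of $(0,1)$ rather than a boundary one.
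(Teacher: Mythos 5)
Your template-plus-integral-calibration construction is an explicit and reasonable alternative to the paper's proof (which just asserts that such a $\Phi_\varepsilon$ exists): the $C^1$ gluing, property (b), the bound $\Phi_\varepsilon\le\Phi(\varepsilon)$, and property (c) are all handled correctly, and the calibration of $\lambda$ via the mean slope is the right idea.

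The gap is in the lower bound $\Phi\le\Phi_\varepsilon$ of (a). You reduce it to the pointwise sign conditions $\Phi_\varepsilon'\ge\Phi'$ on $[-\varepsilon,0]$ and $\Phi_\varepsilon'\le\Phi'$ on $[0,\varepsilon]$, and justify them by the gap $\Phi'(0)^+-\Phi'(0)^-$. But that gap controls the middle of $[-\varepsilon,\varepsilon]$, not the endpoints: at $t=\pm\varepsilon$ you have $\Phi_\varepsilon'(\pm\varepsilon)=\Phi'(\pm\varepsilon)$ by construction, so the sign of $\Phi_\varepsilon'-\Phi'$ just inside the interval is governed by the relative \emph{rates} at which the two quantities move off $\Phi'(\pm\varepsilon)$, and the hypotheses ($\Phi$ only $C^1$ on each half-line, no modulus of continuity for $\Phi'$) do not control those rates. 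Worse, your requirement $\phi_\lambda'(0)=\phi_\lambda'(1)=0$ --- which is not needed for the $C^1$ gluing, only $\phi_\lambda(0)=0$, $\phi_\lambda(1)=1$ are --- actually forces the failure: just to the right of $t=-\varepsilon$ one has $\Phi_\varepsilon'(t)-\Phi'(-\varepsilon)=(\Phi'(\varepsilon)-\Phi'(-\varepsilon))\,\phi_\lambda\bigl(\tfrac{t+\varepsilon}{2\varepsilon}\bigr)=o(t+\varepsilon)$, whereas $\Phi'(t)-\Phi'(-\varepsilon)$ can be of order $t+\varepsilon$ whenever $\Phi'$ is increasing near $0^-$ (take for instance $\Phi'(t)=\Phi'(0)^-+t$ in a left neighbourhood of $0$). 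Then $\Phi_\varepsilon'<\Phi'$ on a right neighbourhood of $-\varepsilon$, and since $\Phi_\varepsilon(-\varepsilon)=\Phi(-\varepsilon)$ this gives $\Phi_\varepsilon<\Phi$ there, directly violating (a). So the claim that ``the strict gap ensures the desired sign on each half is preserved'' is not correct. A repair would have to either drop the vanishing endpoint slopes and argue that a positive slope of $\phi_\lambda$ at $0$ eventually outpaces the modulus of continuity of $\Phi'$ over $[-\varepsilon,0]$ as $\varepsilon\to 0$ (a uniformity argument you have not supplied), or abandon the pointwise sign approach and establish $F\ge0$ by other means. For what it is worth, the paper's own proof only states $\Phi(-\varepsilon)\le\Phi_\varepsilon\le\Phi(\varepsilon)$ rather than the full $\Phi\le\Phi_\varepsilon\le\Phi(\varepsilon)$, so the step you are trying to carry out explicitly is also passed over silently there.
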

\begin{proof}

Property $iii$ implies that there exists $\varepsilon_1$ such that, for $\varepsilon < \varepsilon_1$, inequality $\Phi'(-\varepsilon) < \Phi'(\varepsilon)$ holds. Let us choose $\varepsilon < \varepsilon_1$; then it si possible to construct a function $\Phi_{\varepsilon}$ such that $\Phi_{\varepsilon}(\pm \varepsilon)= \Phi(\pm \varepsilon)$, $\Phi_{\varepsilon}'(\pm \varepsilon)= \Phi'(\pm \varepsilon)$ and $\Phi'(-\varepsilon) \le \Phi'(t) \le \Phi'(\varepsilon)$.
Furthermore, we can choose $\Phi_{\varepsilon}$ such that $\Phi(-\varepsilon) \le \Phi_{\varepsilon}(t) \le \Phi(\varepsilon)$.
Moreover, since $|\Phi(0)| <M$ there exists $\varepsilon_2$ such that for $\varepsilon < \varepsilon_2$ inequality $|\Phi(t)| \le M$ holds for $t \in (-\varepsilon, \varepsilon)$. Thus, provided $\varepsilon < \min (\varepsilon_1,\varepsilon_2)$:
$$ \left \lvert \int_{-\varepsilon}^{\varepsilon} \Phi_{\varepsilon}(s) ds \right \rvert \le \int_{-\varepsilon}^{\varepsilon} \lvert  \Phi_{\varepsilon} (s)\rvert ds \le 2M \varepsilon$$
\end{proof}

\begin{lem} \label{Lemma_2_interpolation} 
Let $\varphi \in \mathcal{C}^2\left(- \infty, 0 \right] \cap \mathcal{C}^2 \left[0, + \infty \right) \cap \mathcal{C}^1(\mathbb{R})$ with $\varphi(0)  \ge 0$ such that:
\begin{enumerate}[label=\roman{*}., ref=(\roman{*})]
	\item $\varphi$ convex;
	\item $|\varphi'(0)| \le M$;
	\item $0 \le \varphi''(0)^- < \varphi''(0)^+$
\end{enumerate}
Then there exists $\varepsilon >0$
arbitrarily small and $\varphi_{\varepsilon} \in \mathcal{C}^2(\mathbb{R})$ such that:
\begin{enumerate}
	\item [A.] $\varphi'(t) \le \varphi'_{\varepsilon}(t) \le \varphi'(\varepsilon)$ for every $t \in \left(-\varepsilon, \varepsilon \right)$ ;
	\item [B.] $\varphi''(-\varepsilon) \le \varphi_{\varepsilon}''(t) \le \varphi''(\varepsilon)$ for $t \in (-\varepsilon, \varepsilon)$;
	\item [C.]
	$	\left \lbrace \begin{array}{l}
	\varphi_{\varepsilon}= \varepsilon \quad t\le -\varepsilon \\
	\varphi_{\varepsilon}= \varepsilon+c \quad t\ge \varepsilon, |c|<2M\varepsilon\\
	|\varphi_{\varepsilon}- \varphi| < 3M \varepsilon
	\end{array}  \right.$
\end{enumerate}
\end{lem}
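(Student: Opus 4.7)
My plan is to deduce Lemma \ref{Lemma_2_interpolation} from Lemma \ref{Lemma_1_interpolation} by applying the latter to $\Phi:=\varphi'$ and then integrating. First I would check that $\Phi=\varphi'$ satisfies the three hypotheses of Lemma \ref{Lemma_1_interpolation}: $\Phi$ is $\mathcal C^1$ on each of the two half-lines and $\mathcal C^0$ on $\mathbb R$ because $\varphi$ is $\mathcal C^2$ on each half-line and $\mathcal C^1$ everywhere; $\Phi'(t)=\varphi''(t)\ge 0$ for $t\ne 0$ is exactly convexity of $\varphi$; $|\Phi(0)|=|\varphi'(0)|\le M$ is hypothesis (ii); and the jump condition $0\le \Phi'(0)^- < \Phi'(0)^+$ is hypothesis (iii). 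Invoking Lemma \ref{Lemma_1_interpolation} yields, for every sufficiently small $\varepsilon>0$, a function $\Phi_\varepsilon=:\psi_\varepsilon\in\mathcal C^1(\mathbb R)$ with $\psi_\varepsilon\equiv\varphi'$ outside $(-\varepsilon,\varepsilon)$, with $\varphi'(t)\le\psi_\varepsilon(t)\le\varphi'(\varepsilon)$ on $(-\varepsilon,\varepsilon)$, with $\varphi''(-\varepsilon)\le\psi_\varepsilon'(t)\le\varphi''(\varepsilon)$ there, and with $\bigl|\int_{-\varepsilon}^{\varepsilon}\psi_\varepsilon(s)\,ds\bigr|\le 2M\varepsilon$.

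Next I would define
$$\varphi_\varepsilon(t):=\varphi(t)\ \ \text{for }t\le-\varepsilon,\qquad \varphi_\varepsilon(t):=\varphi(-\varepsilon)+\int_{-\varepsilon}^{t}\psi_\varepsilon(s)\,ds\ \ \text{for }t\ge-\varepsilon.$$
Since $\psi_\varepsilon$ coincides with $\varphi'$ in a neighbourhood of $t=-\varepsilon$, the two pieces glue $\mathcal C^2$-smoothly and $\varphi_\varepsilon\in\mathcal C^2(\mathbb R)$ because $\psi_\varepsilon\in\mathcal C^1(\mathbb R)$. Properties (A) and (B) are then immediate by differentiating, as $\varphi_\varepsilon'=\psi_\varepsilon$ and $\varphi_\varepsilon''=\psi_\varepsilon'$, and the bounds come from parts (a), (b) of Lemma \ref{Lemma_1_interpolation}. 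For property (C), the equality $\varphi_\varepsilon=\varphi$ on $(-\infty,-\varepsilon]$ holds by construction; for $t\ge\varepsilon$ one has $\varphi_\varepsilon(t)=\varphi(t)+c$ with
$$c:=\int_{-\varepsilon}^{\varepsilon}\bigl(\psi_\varepsilon(s)-\varphi'(s)\bigr)\,ds,$$
and for $t\in(-\varepsilon,\varepsilon)$ one has $\varphi_\varepsilon(t)-\varphi(t)=\int_{-\varepsilon}^{t}(\psi_\varepsilon-\varphi')\,ds$.

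The main obstacle I foresee is bookkeeping the constants in (C): a crude triangle inequality gives only $|c|\le 4M\varepsilon$, whereas the statement demands $|c|<2M\varepsilon$ (and $|\varphi_\varepsilon-\varphi|<3M\varepsilon$). To obtain these sharp bounds I would exploit the one-sided inequality $\psi_\varepsilon\ge\varphi'$ on $(-\varepsilon,\varepsilon)$ coming from Lemma \ref{Lemma_1_interpolation}(a), which removes the absolute value and makes $c=\int_{-\varepsilon}^{\varepsilon}(\psi_\varepsilon-\varphi')\,ds\ge 0$. Pointwise one has $0\le \psi_\varepsilon(s)-\varphi'(s)\le \varphi'(\varepsilon)-\varphi'(-\varepsilon)$, and by continuity of $\varphi'$ at $0$ the modulus $\varphi'(\varepsilon)-\varphi'(-\varepsilon)$ can be made smaller than any preassigned constant by shrinking $\varepsilon$; in particular, for $\varepsilon$ small enough $c\le 2\varepsilon\bigl(\varphi'(\varepsilon)-\varphi'(-\varepsilon)\bigr)<2M\varepsilon$, as required. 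The pointwise bound $|\varphi_\varepsilon-\varphi|<3M\varepsilon$ then follows since on $[-\varepsilon,\varepsilon]$ it is controlled by $c$, and on $[\varepsilon,+\infty)$ it equals exactly $|c|$. The verification of regularity at $t=\pm\varepsilon$ and the remaining elementary inequalities are routine and I would leave them to a direct check.
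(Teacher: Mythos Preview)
Your approach is exactly the paper's: set $\Phi=\varphi'$, apply Lemma~\ref{Lemma_1_interpolation}, and define $\varphi_\varepsilon(t)=\varphi(-\varepsilon)+\int_{-\varepsilon}^t\Phi_\varepsilon$. Your treatment of the constants in (C) is in fact cleaner than the paper's own proof, which just uses the crude triangle inequality $|c_1+c_2|\le|\int_{-\varepsilon}^\varepsilon\Phi_\varepsilon|+|\varphi(-\varepsilon)-\varphi(\varepsilon)|\le 4M\varepsilon$ (so it does not actually attain the $2M\varepsilon$ stated); your exploitation of the one-sided inequality $\psi_\varepsilon\ge\varphi'$ to get $0\le c\le 2\varepsilon\bigl(\varphi'(\varepsilon)-\varphi'(-\varepsilon)\bigr)$ genuinely recovers the stated bound for small $\varepsilon$.
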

\begin{proof}
Denote by $\Phi= \varphi'$. Then, $\Phi$ satisfies the hypotheses of Lemma \ref{Lemma_1_interpolation} and applying it we get a function $\Phi_{\varepsilon}$ such that properties \textit{a,b,c} of Lemma \ref{Lemma_1_interpolation} hold.
Let us define: $$\varphi_{\varepsilon}(t)= \int_{-\varepsilon}^t \Phi_{\varepsilon}(s) ds + \varphi(-\varepsilon)$$
We show that $\varphi_{\varepsilon}$ satisfies property $C$.\\ \medskip
For $t \le -\varepsilon$,
\small
$$\varphi_{\varepsilon}(t)= \int_{-\varepsilon}^t \Phi_{\varepsilon}(s) ds + \varphi(-\varepsilon)= - \int_{t}^{-\varepsilon} \Phi_{\varepsilon}(s) ds+ \varphi(-\varepsilon)=\cancel{ -\varphi(-\varepsilon)}+ \varphi(t)+ \cancel{\varphi(-\varepsilon)}= \varphi(t)$$
\normalsize
Thus $\varphi_{\varepsilon} = \varphi$ for $t \le -\varepsilon$.\\ 
\medskip
For $t \ge \varepsilon:$
\begin{multline*}
\varphi_{\varepsilon}(t)= \int_{-\varepsilon}^t \Phi_{\varepsilon}(s) ds + \varphi(-\varepsilon)= \int_{-\varepsilon}^{\varepsilon} \Phi_{\varepsilon}(s) ds + \int_{\varepsilon}^{t} \Phi(s) ds + \varphi(-\varepsilon) =\\
= \int_{-\varepsilon}^{\varepsilon} \Phi_{\varepsilon}(s) ds + \int_{\varepsilon}^{t} \varphi'(s)ds + \varphi(-\varepsilon)= 
\int_{-\varepsilon}^{\varepsilon} \Phi_{\varepsilon}(s) ds + \phi(t)-\phi(\varepsilon)+\varphi(-\varepsilon)=\\ = \phi(t)+c_1+c_2
\end{multline*}

where $c_1= \int_{-\varepsilon}^{\varepsilon} \Phi_{\varepsilon}(s) ds$, $c_2= \varphi(-\varepsilon)- \varphi(\varepsilon)$.
\bigskip

Furthermore,
$$\lvert c_1+c_2 \rvert \le  \left \lvert c_1  \right \rvert + \left \lvert c_2 \right \rvert \le \left \lvert \int_{-\varepsilon}^{\varepsilon} \Phi_{\varepsilon}(s)ds \right \rvert + \left \lvert \varphi(-\varepsilon)- \varphi(\varepsilon) \right \rvert \le 4M\varepsilon$$
where Inequality $\int_{-\varepsilon}^{\varepsilon} \Phi_{\varepsilon}(s) ds \le 2M \varepsilon$ follows by Lemma \ref{Lemma_1_interpolation}.
\bigskip 

For $t \in \left(-\varepsilon, \varepsilon \right)$, the same computation just held gives:
$$ \lvert \varphi_{\varepsilon}(t)- \varphi(t )\rvert \le \left \lvert \int_{-\varepsilon}^t \Phi_{\varepsilon}(s) ds \right \rvert + \left \lvert  \varphi(-\varepsilon)- \varphi(t) \right \rvert \le 4M \varepsilon$$

Finally, we observe that property \textit{a} of Lemma \ref{Lemma_1_interpolation} implies the thesis \textit{A}, and part \textit{B} of the thesis follows from property \textit{b}.

\end{proof}
\begin{prop}
For every fixed $\ell, \delta >0$ there exists $\varepsilon >0$ arbitrarily small and $\varphi_{\varepsilon} \in \mathcal{C}^2(\mathbb{R})$ such that:
\begin{enumerate}
	\item $\varphi_{\varepsilon}$ is not increasing and convex, such that
	
	$ \left \lbrace \begin{array}{l}
	\varphi_{\varepsilon}(t)= \varphi_0(t)= \ell e^{-t} \, \, \mbox{ for } 	 t \le -\varepsilon \\
	\varphi_{\varepsilon}(t)= \ell' \, \, \mbox{ for } 	 t \ge t_{\delta} + \varepsilon
	\end{array} \right.$\\
	where:
	$$ \ell \frac{\sqrt{\delta (1 + \delta)}}{1+ 2 \delta} \le \ell' \le 4\ell \frac{\sqrt{\delta(1+ \delta)}}{1+2\delta}$$

	\item $\frac{(\varphi_{\varepsilon}')^2}{\varphi_{\varepsilon}^2} < (1+ 2 \delta)^2 \frac{(\varphi_0')^2}{\varphi_0^2}$ for every $t \in \mathbb{R}$
	\item $\frac{\varphi_{\varepsilon}''}{\varphi_{\varepsilon}} \le (1+2\delta)^2$ for every $t \in (-\varepsilon, t_{\delta}+ \varepsilon)$
\end{enumerate}
where $t_{\delta}= \frac{1}{2(1+2 \delta)} \ln \left(1+ \frac{1}{\delta}\right)$

\end{prop}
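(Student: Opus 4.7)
The plan is to first exhibit an explicit, only $C^{1}$, piecewise-defined candidate $\tilde\varphi$ and then apply Lemma \ref{Lemma_2_interpolation} at each of its two corners to upgrade it to a $C^{2}$ function $\varphi_{\varepsilon}$ still meeting the required bounds. The key observation is that the particular choice $t_{\delta}=\frac{1}{2(1+2\delta)}\log(1+\tfrac{1}{\delta})$ is precisely the one making the following $C^{1}$ junction possible. Put
\[
A=\frac{2\ell\sqrt{\delta(1+\delta)}}{1+2\delta}
\]
and define
\[
\tilde\varphi(t)=\begin{cases}\ell e^{-t}, & t\le 0,\\[2pt]A\,\cosh\!\big((1+2\delta)(t-t_{\delta})\big), & 0\le t\le t_{\delta},\\[2pt]A, & t\ge t_{\delta}.\end{cases}
\]
A direct computation using $e^{(1+2\delta)t_{\delta}}=\sqrt{(1+\delta)/\delta}$ gives $\cosh((1+2\delta)t_{\delta})=\tfrac{1+2\delta}{2\sqrt{\delta(1+\delta)}}$ and $\tanh((1+2\delta)t_{\delta})=\tfrac{1}{1+2\delta}$, so $\tilde\varphi$ and $\tilde\varphi'$ match at $t=0$ (both branches give value $\ell$ and derivative $-\ell$) and at $t=t_{\delta}$ (both branches give value $A$ and derivative $0$). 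Hence $\tilde\varphi\in C^{1}(\mathbb R)\cap C^{2}(\mathbb R\setminus\{0,t_{\delta}\})$; it is manifestly non-increasing and convex (both $\ell e^{-t}$ and $\cosh$ are), and the required value $\ell'=A$ lies well inside the admissible interval $[\ell\sqrt{\delta(1+\delta)}/(1+2\delta),\,4\ell\sqrt{\delta(1+\delta)}/(1+2\delta)]$.

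Next I would check that on each smooth piece the bounds $(2)$ and $(3)$ are met with strict slack. On $(-\infty,0]$ one has $\tilde\varphi'/\tilde\varphi=-1$ and $\tilde\varphi''/\tilde\varphi=1$; on $[0,t_{\delta}]$ one has $|\tilde\varphi'/\tilde\varphi|=(1+2\delta)|\tanh((1+2\delta)(t-t_{\delta}))|<1+2\delta$ and $\tilde\varphi''/\tilde\varphi=(1+2\delta)^{2}$; and on $[t_{\delta},+\infty)$ both quotients vanish. In particular the second derivative of $\tilde\varphi$ has an \emph{upward} jump at $t=0$ (from $\ell$ to $(1+2\delta)^{2}\ell$) and a \emph{downward} jump at $t=t_{\delta}$ (from $(1+2\delta)^{2}A$ to $0$).

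Now I would smooth the two corners. At $t=0$, the hypotheses of Lemma \ref{Lemma_2_interpolation} are satisfied by $\tilde\varphi$ (with $M=\ell$ and $\tilde\varphi''(0)^{-}=\ell<(1+2\delta)^{2}\ell=\tilde\varphi''(0)^{+}$); the lemma produces a $C^{2}$ modification on a neighbourhood $(-\varepsilon_{1},\varepsilon_{1})$ of $0$, whose first derivative stays in $[\tilde\varphi'(-\varepsilon_{1}),\tilde\varphi'(\varepsilon_{1})]$ and whose second derivative stays in $[\tilde\varphi''(-\varepsilon_{1}),\tilde\varphi''(\varepsilon_{1})]\subset[0,(1+2\delta)^{2}\ell]$, at the price of an additive shift $c_{1}$ on the right with $|c_{1}|\le 2\ell\varepsilon_{1}$. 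For the corner at $t=t_{\delta}$ the sign of the jump is opposite, so Lemma \ref{Lemma_2_interpolation} does not apply directly; I would circumvent this by applying the same lemma to the reflected function $\psi(s):=\tilde\varphi(t_{\delta}-s)$ near $s=0$, where now $\psi''(0)^{-}=0<(1+2\delta)^{2}A=\psi''(0)^{+}$ and $|\psi'(0)|=0$, and then unfold. The resulting $\varphi_{\varepsilon}$ is $C^{2}$, agrees with $\ell e^{-t}$ for $t\le -\varepsilon$, is constant equal to $\ell'=A+c_{1}+c_{2}$ for $t\ge t_{\delta}+\varepsilon$, and remains non-increasing and convex (convexity, the delicate point, follows from the fact that Lemma \ref{Lemma_2_interpolation} preserves monotonicity of $\tilde\varphi'$, which is exactly convexity of $\tilde\varphi$).

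Finally, I would verify $(2)$ and $(3)$ on the two smoothing windows. In each window the second derivative of $\varphi_{\varepsilon}$ lies between the one-sided limits of $\tilde\varphi''$ at the corresponding corner, hence is at most $(1+2\delta)^{2}\max(\ell,A)$, while $\varphi_{\varepsilon}$ differs from $\ell$ (resp.\ $A$) by at most $3\ell\varepsilon$ (resp.\ $3A\varepsilon$) by property (C) of Lemma \ref{Lemma_2_interpolation}; for $\varepsilon$ small enough this yields $\varphi_{\varepsilon}''/\varphi_{\varepsilon}\le(1+2\delta)^{2}$. An analogous computation, bounding $|\varphi_{\varepsilon}'|$ by the one-sided values of $\tilde\varphi'$ at the endpoints of the window, gives $(\varphi_{\varepsilon}'/\varphi_{\varepsilon})^{2}<(1+2\delta)^{2}$ on the whole of $\mathbb R$. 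Choosing $\varepsilon$ small enough also guarantees that $|c_{1}|+|c_{2}|$ is so small that $\ell'$ remains in $[\ell\sqrt{\delta(1+\delta)}/(1+2\delta),\,4\ell\sqrt{\delta(1+\delta)}/(1+2\delta)]$. The hard point throughout is the asymmetry between the two corners forcing the reflection trick at $t=t_{\delta}$ and a careful bookkeeping of the admissible ranges of first and second derivatives produced by Lemma \ref{Lemma_2_interpolation}; everything else is straightforward calculus on the two explicit branches $\ell e^{-t}$ and $A\cosh((1+2\delta)(t-t_{\delta}))$.
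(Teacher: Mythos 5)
Your proof is correct and takes essentially the same route as the paper. You and the paper both glue $\ell e^{-t}$, the solution of $\varphi''/\varphi=(1+2\delta)^2$ with matching data at $0$, and a constant tail, then smooth the two $C^{1}$ corners by invoking Lemma \ref{Lemma_2_interpolation} at $t=0$ directly and at $t=t_{\delta}$ after the same reflection trick; the only difference is presentational, namely that you write the middle branch as $A\cosh\bigl((1+2\delta)(t-t_{\delta})\bigr)$, which makes the $C^{1}$-matching identities at both endpoints immediate, whereas the paper writes it as a linear combination of $e^{\pm(1+2\delta)t}$ and identifies $t_{\delta}$ as its minimum.
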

\begin{proof}
The function $\varphi_0(t)= \ell e^{-t}$ satisfies the differential equation $\frac{\varphi_0''}{\varphi}=1$, with initial conditions
$$ \left \lbrace \begin{array}{l}
\varphi_0(0)= \ell \\
\varphi_0'(0)= -\ell
\end{array} \right.$$

Let us denote by $\varphi_{\delta}$ the solution of the differential equation $\frac{\varphi_{\delta}''}{\varphi_{\delta}}= \left(1+ 2 \delta\right)^2$ with the same initial conditions.
Namely,

$$ \varphi_{\delta}(t)= \ell \left( \frac{1+ \delta}{1+ 2 \delta }\right) e^{-(1+2 \delta)t} + \ell \frac{\delta}{1+ 2 \delta} e^{(1+2\delta)t}$$

Straightforward computations show that, for $\delta >0$, the function $\varphi_{\delta}$ is more convex than $\varphi_0$, and $\varphi_{\delta}$ has a unique stationary point, which turns to be a minimum, at $t_{\delta}= \frac{1}{2(1+ 2 \delta)} \ln \left(1+ \frac{1}{\delta}\right)$. We denote by $\ell_{\delta}$ the corresponding minimal value.

We define the following function:

$$ \varphi(t)= \left \lbrace
\begin{array}{ll}
\varphi_0(t) & \text{for } t<0 \\
\varphi_{\delta}(t) & \text{for } t\in \left[0,\right] \\
\ell_{\delta} & \text{for } t > t_{\delta}
\end{array}
\right.$$
We observe that $\varphi(t) \in \mathcal{C}^1(\mathbb{R}) \cap \mathcal{C}^2\left(\left(- \infty,0\right] \right)\cap \mathcal{C}^2(\left[0, t_{\delta}\right]) \cap \mathcal{C}^2 ( \left[ t_{\delta}, + \infty\right))$, but is not $\mathcal{C}^2(\mathbb{R})$. Furthermore, $\varphi$ is not increasing, convex and satisfies condition $(1)$.
Moreover, we observe that, for every $\delta >0$:
\begin{equation} \label{equation_interpolation_first_derivative} \frac{\varphi_{\delta}'^2}{\varphi_{\delta}^2} \le \frac{\varphi_0'^2}{\varphi_0^2}.
\end{equation}
Indeed, on the interval $\left[0, t_{\delta}\right]$ the following inequality holds:
$$ \frac{\varphi_{\delta}''}{\varphi_{\delta}} \ge \frac{\varphi_0''}{\varphi_0}$$
and $\varphi_{\delta}(0)= \varphi_0(0)$, $\varphi_{\delta}'(0)= \varphi'_{\delta}(0)$.
Then, by Sturm-Liouville Theorem, $0 > \varphi_{\delta}' \ge \varphi_0'$ and $\varphi_{\delta} \ge \varphi_0 >0$ and
$$ \left \lvert  \frac{\varphi_{\delta}'}{\varphi} \right\rvert \le  \left\lvert  \frac{\varphi_0'}{\varphi} \right\rvert.$$
The latter Inequality implies Equation \ref{equation_interpolation_first_derivative}.
In order to obtain a $\mathcal{C}^2$ function satisfying the required properties we will apply (twice) Lemma \ref{Lemma_2_interpolation}, as follows.\\ \medskip

Firstly, we observe that $\varphi$ satisfies the hypotheses of Lemma \ref{Lemma_2_interpolation}; hence, there exists a function $\varphi_{\varepsilon}$, enjoying properties $(A), (B), (C)$. In particular, $\varphi_{\varepsilon}$ is convex, not increasing, and
$$ \left \lbrace 
\begin{array}{ll}
\varphi_{\varepsilon}= \varphi & \mbox{for } t\le \varepsilon\\
\varphi_{\varepsilon}= \varphi+c(\varepsilon) & \mbox{for } t \ge \varepsilon \\
\lvert	\varphi_{\varepsilon}- \varphi \rvert \le 3M \varepsilon & \mbox{for } t \in \left[-\varepsilon, \varepsilon\right]
\end{array}
\right.$$
We claim that $\varphi_{\varepsilon}$ satisfies properties $2)$ and $3)$.
Indeed, Lemma \ref{Lemma_2_interpolation} implies:
$$ \varphi'(t) \le \varphi'_{\varepsilon}(t) \le \varphi'(\varepsilon)$$
for $t \in \left(-\varepsilon, \varepsilon\right) $.
Thus:
\small
$$ \left(  \frac{\varphi_{\varepsilon}'(t)}{\varphi_{\varepsilon}(t)}\right)^2 \le \left( \frac{\varphi'(t)}{\varphi_{\varepsilon}(t)}\right)^2 \le \left(\frac{\varphi'(t)}{\varphi(t)-3M\varepsilon}\right)^2 $$
\normalsize
Now choose $\varepsilon \le \min \left \lbrace \frac{1}{2}, \frac{\delta \ell}{6M \sqrt{e}}\right \rbrace$ so that, in particular, $\varepsilon \le \frac{\delta \varphi(\varepsilon)}{6M}$; then we have
\small
$$ \left(\frac{\varphi'(t)}{\varphi(t)-3M\varepsilon}\right)^2 \le \left(\frac{\varphi'(t)}{\left(1- \frac{\delta}{2}\right) \varphi(t)}\right) ^2$$
\normalsize
Thus,
\small $$ \left( \frac{\varphi_{\varepsilon}'(t)}{\varepsilon_{\varepsilon}(T)}\right)^2 \le  \left(\frac{\varphi'(t)}{\varphi(t)-3M\varepsilon}\right)^2 \le \left(\frac{\varphi'(t)}{\left(1- \frac{\delta}{2}\right) \varphi(t)}\right) ^2 \le \frac{1}{\left(1-\frac{\delta}{2}\right)^2}\le \left( 1+4 \delta \right)$$
\normalsize
where the latter inequality holds provided that $\delta < \frac{1}{2}$.
Thus, $\varphi_{\varepsilon}$ satisfies Property $2$.\\ \medskip
We claim that $\varphi_{\varepsilon}$ also satisfies Property $3$. Lemma \ref{Lemma_2_interpolation} implies:
$$ \varphi''(-\varepsilon) \le \varphi_{\varepsilon}''(t) \le \varphi''(\varepsilon)$$
Hence we have: 
\small 
$$ \frac{\varphi_{\varepsilon}''(t)}{\varphi_{\varepsilon}(t)} \le \frac{\varphi''(\varepsilon)}{\varphi(t)-3M\varepsilon} \le \frac{\varphi''(\varepsilon)}{\varphi(\varepsilon)-3M\varepsilon}$$
\normalsize

We choose $\varepsilon \le \min \left \lbrace \frac{1}{2} , \frac{\delta^2 \ell}{2M \sqrt{e}}\right \rbrace$, so that $3M\varepsilon \le \delta^2 \varphi(\varepsilon)$. With this choice, 
\small 
$$ \frac{\varphi''(\varepsilon)}{\varphi(\varepsilon)-3M \varepsilon} \le \frac{\varphi''(\varepsilon)}{\varphi(\varepsilon)- \delta^2 \varphi(\varepsilon)}
$$
\normalsize
and so
\small
$$ \frac{\varphi_{\varepsilon}''(t)}{\varphi_{\varepsilon}(t)} \le \frac{\varphi''(\varepsilon)}{\varphi(\varepsilon)-3M \varepsilon} \le \frac{\varphi''(\varepsilon)}{\varphi(\varepsilon)- \delta^2 \varphi(\varepsilon)}= \frac{\varphi'_{\delta}(\varepsilon)}{\varphi_{\delta}(\varepsilon)} \frac{1}{(1- \delta)^2}= \left( \frac{1+2 \delta}{ 1-\delta  } \right)^2 \le \left(1+ 4 \delta\right)^2$$
\normalsize
where the latter inequality holds provided that $\delta \le \frac{1}{2}$.\\ \bigskip

\noindent Let us define $\psi_{\varepsilon}(t)= \varphi_{\varepsilon}(t_{\delta}-t)$. We observe that $\psi_{\varepsilon}(0)= \varphi_{\varepsilon}(t_{\delta})$, $\psi_{\varepsilon}(\varepsilon)= \varphi_{\varepsilon}(t_{\delta}- \varepsilon)$ and $\psi_{\varepsilon}(-\varepsilon)= \varphi_{\varepsilon}(t_{\delta}+ \varepsilon)$.
Furthermore, $\psi_{\varepsilon}$ satisfies the hypothesis of Lemma \ref{Lemma_2_interpolation}. Then there exists $\varepsilon'$ sufficiently small, and a convex function $\psi_{\varepsilon, \varepsilon'}(t)$ such that:
\small
$$ \left \lbrace 
\begin{array}{ll}
\psi_{\varepsilon, \varepsilon'}= \psi_{\varepsilon} & \mbox{for } t\le \varepsilon'\\
\psi_{\varepsilon, \varepsilon'}= \psi_{\varepsilon}+c(\varepsilon') & \mbox{for } t \ge \varepsilon' \\
\lvert	\psi_{\varepsilon, \varepsilon'}- \psi_{\varepsilon}\rvert \le 3M \varepsilon' & \mbox{for } t \in \left[-\varepsilon,' \varepsilon'\right]
\end{array}
\right.$$
\normalsize
Furthermore, $\psi_{\varepsilon, \varepsilon'}$ satisfies:
\small
\begin{equation} \label{eq_1_lemma_2_at_tdelta}
\psi_{\varepsilon}'(t) \le \psi_{\varepsilon, \varepsilon'}(t) \le \psi_{\varepsilon}'(\varepsilon') \Longleftrightarrow - \varphi_{\varepsilon}'(t_{\delta}-t) \le \psi_{\varepsilon, \varepsilon'}(t) \le - \varphi_{\varepsilon}'(t_{\delta}-\varepsilon')
\end{equation}
\begin{equation} \label{eq_2_lemma_2_at_tdelta}
\psi_{\varepsilon}''(- \varepsilon') \le \psi_{\varepsilon, \varepsilon'}(t) \le \psi_{\varepsilon}''(\varepsilon') \Longleftrightarrow \varphi_{\varepsilon}''(t+\varepsilon') \le \psi_{\varepsilon, \varepsilon'}''(t) \le \varphi_{\varepsilon}''(t_{\delta}-t)
\end{equation}
\normalsize
In order to prove that $\psi_{\varepsilon, \varepsilon'}$ satisfies Property $2$, we perform the following estimates, exploiting Equation \ref{eq_1_lemma_2_at_tdelta}:
\small
$$ \left( \frac{\psi_{\varepsilon, \varepsilon'}'(t)}{\psi_{\varepsilon, \varepsilon'}} \right)^2 \le \left( \frac{\psi_{\varepsilon}'(\varepsilon')}{\psi_{\varepsilon, \varepsilon'}(t)}\right)^2 \le \left(  \frac{ \psi'_{\varepsilon}(\varepsilon')}{ \varphi(t_{\delta}-t)+ c_{\varepsilon}-3M \varepsilon} \right)^2 \le \left( \frac{\psi'_{\varepsilon'}}{\psi(t_{\delta}-t)-\frac{\varphi_{\delta}(t_{\delta})}{100}-3M \varepsilon'}\right)^2$$
\normalsize
where the latter inequality holds provided that we have chosen $\varepsilon$ sufficiently small so that $\lvert c_{\varepsilon}\rvert \le \frac{\delta \varphi_{\delta}(t_{\delta})}{100}$.

We remark that $\min_{t \in \left( t_{\delta}-\varepsilon', t_{\delta}+ \varepsilon'\right)}= \varphi(t_{\delta})$. 
We choose $\varepsilon'$ sufficiently small so that $3M \varepsilon' \le \frac{\delta \varphi(t_{\delta})}{100}$; thus we have:
\small
$$ \varphi(t_{\delta}-t) + c_{\varepsilon}-3M \varepsilon'  \ge \varphi(t_{\delta})- \frac{\delta \varphi (t_{\delta})}{100}-3M \varepsilon' \ge \varphi(t_{\delta}) \left(1- \frac{\delta}{50}\right) $$
\normalsize
Hence, recalling Inequality \ref{eq_1_lemma_2_at_tdelta}
\small
$$ \left( \frac{\psi_{\varepsilon, \varepsilon'}(t)}{\psi_{\varepsilon, \varepsilon'}(t)}\right)^2 \le \left( \frac{\psi'_{\varepsilon'}}{\psi(t_{\delta}-t)-\frac{\varphi_{\delta}(t_{\delta})}{100}-3M \varepsilon'}\right)^2 \le \left(  \frac{-\varphi_{\delta}'(t_{\delta}- \varepsilon')}{\left(1- \frac{\delta}{ro}\right) \varphi(t_{\delta})}\right)^2 \le$$ $$ \le  \frac{1}{4 \left(1-\frac{\delta}{50}\right)^2} \left \lvert  \frac{\varphi_0'}{\varphi_0} \right \rvert \le  \frac{1}{4 \left(1- \frac{\delta}{50}\right)^2} \le 1$$
\normalsize
where the second inequality holds if we choose $\varepsilon'$ small enough so that $$- \varphi_{\delta}'(t_{\delta}- \varepsilon') < \frac{1}{2} \varphi(t_{\delta})$$

Finally, we check that $\psi_{\varepsilon, \varepsilon'}$ satisfies Property $3$. Exploiting Equation \ref{eq_2_lemma_2_at_tdelta} we obtain:
\small
$$ \frac{\psi_{\varepsilon, \varepsilon'}''(t)}{\psi_{\varepsilon, \varepsilon'}(t)} \le \frac{\varphi_{\varepsilon}''(t_{\delta}- \varepsilon')}{\varphi_{\varepsilon}(t_{\delta}-t)-3M \varepsilon'} \le \frac{\varphi_{\varepsilon}''(t_{\delta}- \varepsilon')}{\varphi(t_{\delta}+ \varepsilon')-3M\varepsilon'- \lvert c_{\varepsilon} \rvert} $$
\normalsize
We choose $\varepsilon'$ sufficiently small so that $$\lvert c_{\varepsilon} \rvert +3M \varepsilon' \le \frac{\delta}{10} \varphi(t_{\delta)}$$ Then:
\small 
$$\frac{\psi_{\varepsilon, \varepsilon'}''(t)}{\psi_{\varepsilon, \varepsilon'}(t)} \le \frac{\varphi_{\varepsilon}''(t_{\delta}- \varepsilon')}{\varphi(t_{\delta}+ \varepsilon')-3M\varepsilon'- \lvert c_{\varepsilon} \rvert} \le \frac{\varphi''(t_{\delta}- \varepsilon')}{\varphi(t_{\delta}) \left( 1- \frac{\delta}{10}\right)} \le \frac{(1+2 \delta)^2}{1- \frac{\delta}{10}}
$$
\normalsize
where the latter inequality holds provided that $\varepsilon'$ is sufficiently small so that $$\varphi(t_{\delta}- \varepsilon') \left(1 - \frac{\delta}{10}\right) \le \varphi(t_{\delta})$$\\ \bigskip
Finally, we call 
$$ \widetilde{\psi}_{\varepsilon, \varepsilon'}(t)= \psi_{\varepsilon, \varepsilon'}-c_{\varepsilon'}$$
so that $\widetilde{\psi}_{\varepsilon, \varepsilon'}(t)= e^{t_{\delta}-t}$ for $t \le t_{\delta}- \varepsilon'$, and we define
$$ \psi_{\varepsilon, \varepsilon'}(t)= \widetilde{\psi}_{\varepsilon, \varepsilon'}(t- t_{\delta})$$
By construction, this function satisfies the required properties.

\end{proof}

\addcontentsline{toc}{chapter}{Bibliography}
\bibliographystyle{alpha}
\bibliography{erika1}

\end{document}